\newtheorem{theorem}{Theorem}[section]
\newtheorem{lemma}[theorem]{Lemma}
\newtheorem{coro}[theorem]{Corollary}
\newtheorem{prop}[theorem]{Proposition}
\newtheorem{proposition}[theorem]{Proposition}
\newtheorem{Claim}{Claim}
\newtheorem*{assumptions*}{Assumptions}
\newtheorem*{rem*}{Remark}
\newtheorem{rem}[theorem]{Remark} 
\theoremstyle{remark}
\newtheorem*{remark*}{Remark}
\theoremstyle{definition}
\newtheorem{definition}{Definition}
\newcommand{\B}{{\mathbf B}}
\newcommand{\C}{{\mathbf C}}
\newcommand{\E}{{\mathbf E}}
\newcommand{\I}{{\mathbf I}}
\newcommand{\Q}{{\mathbf Q}}
\newcommand{\R}{{\mathbf R}}
\newcommand{\T}{{\mathbf T}}
\newcommand{\V}{{\mathbf V}}
\newcommand{\W}{{\mathbf W}}
\newcommand{\Z}{{\mathbf Z}}
\newcommand{\CC}{{\mathbb C}}
\newcommand{\DD}{{\mathbb D}}
\newcommand{\MM}{{\mathbb M}}
\newcommand{\RR}{{\mathbb R}}
\newcommand{\TT}{{\mathbb T}}
\newcommand{\ZZ}{{\mathbb Z}}
\newcommand{\SL}{{\rm SL}}
\newcommand{\GL}{{\rm GL}}
\newcommand{\Sp}{{\rm Sp}}
\newcommand{\HSp}{{\rm HSp}}
\newcommand{\tr}{{\rm Tr}}
\newcommand{\diag}{{\rm diag}}
\DeclareMathOperator{\Tr}{{\rm Tr}}
\newcommand{\be}[1]{\begin{equation} \label{#1} }
\newcommand{\ee}{\end{equation}}
\newcommand{\beq}{\begin{equation}}
\def \W{{\mathcal W}}
\def \C{{\mathcal{C}}}
\def \cC{{\mathcal{C}}}
\def \W{\mathcal{W}}
\def \hx0{\hat{x_0}}
\def \id{\mathrm{id}}
\def \E{\mathcal{E}}
\def \B{\mathcal{B}}
\def \V{\mathcal{V}}
\def \Z{\mathcal{Z}}
\def\Z{{\mathbb Z}}
\def\C{{\mathbb C}}
\def\R{{\mathbb R}}
\def\Q{{\mathbb Q}}
\def\T{{\mathbb T}}
\begin{document}
\title[Subordinacy Theory for Long-Range Operators]{Subordinacy Theory for Long-Range Operators: Hyperbolic Geodesic Flow Insights and Monotonicity Theory}

\author{Zhenfu Wang}
\address{Chern Institute of Mathematics and LPMC, Nankai University, Tianjin 300071, China}
\email{zhenfuwang@mail.nankai.edu.cn}

\author{Disheng Xu}
\address{School of Science, Great Bay University and Great bay institute for advanced study, 
Songshan Lake International Innovation Entrepreneurship Community A5, Dongguan 523000, China}
\email{xudisheng@gbu.edu.cn}

\author{Qi Zhou}
\address{
Chern Institute of Mathematics and LPMC, Nankai University, Tianjin 300071, China
}
\email{qizhou@nankai.edu.cn}

\begin{abstract}
We introduce a comprehensive framework for subordinacy theory applicable to long-range operators on $\ell^2(\mathbb{Z})$, bridging dynamical systems and spectral analysis. For finite-range operators, we establish a correspondence between the dynamical behavior of partially hyperbolic (Hermitian-)symplectic cocycles and the existence of purely absolutely continuous spectrum, resolving an open problem posed by Jitomirskaya. For infinite-range operators—where traditional cocycle methods become inapplicable—we characterize absolutely continuous spectrum through the growth of generalized eigenfunctions, extending techniques from higher-dimensional lattice models.


Our main results include the \textit{first} rigorous proof of purely absolutely continuous spectrum for quasi-periodic long-range operators with analytic potentials and Diophantine frequencies—in particular, the \textit{first} proof of the \textbf{all-phases} persistence for finite-range perturbations of subcritical almost Mathieu operators—among other advances in spectral theory of long-range operators.

The key novelty of our approach lies in the unanticipated connection between stable/vertical bundle intersections in geodesic flows—where they detect conjugate points—and their equally fundamental role in governing (de-)localization for Schrödinger operators. The geometric insight, combined with a novel coordinate-free monotonicity theory for general bundles (including its preservation under center-bundle restrictions) and adapted analytic spectral and KAM techniques, enables our spectral analysis of long-range operators.



\end{abstract}

\maketitle

\section{Introduction}
In this work, we study the spectral properties of self-adjoint long-range operators on $\ell^2(\mathbb{Z})$ defined by
\begin{equation}\label{long-range-op}
    (L_{v,w}u)_n = \sum_{k=-\infty}^{\infty} w_k u_{n+k} + v_nu_n, \quad n \in \mathbb{Z},
\end{equation}
where $w = \{w_k\}_{k \in \mathbb{Z}}$ is a sequence of hopping amplitudes satisfying $\overline{w_{-k}} = w_k$, and $v = \{v_n\}_{n \in \mathbb{Z}}$ is the on-site potential, with $v_n\in \R \text{ and }\sup_n |v_n| < \infty$.
When $w_k = 0$ for $|k| \neq 1$, \eqref{long-range-op} reduces to a Jacobi operator. If additionally $w_{\pm 1} = 1$, it becomes the classical Schr\"odinger operator:
\begin{equation}\label{schrodinger}
    (H_{v}u)_n = u_{n+1} + u_{n-1} + v_nu_n,
\end{equation}
which has served as a foundational model for electronic structure calculations in solid-state physics \cite{Anderson,AA}. However, the short-range nature of \eqref{schrodinger} neglects critical long-range interactions present in real materials. The inclusion of non-zero $w_k$ in \eqref{long-range-op} provides a more physically realistic framework \cite{bid,Rod,Sar}.

A central object of study is the quasi-periodic Schr\"odinger operator
\begin{equation}\label{quasi-periodic}
    (H_{ v,\alpha,\theta}u)_n = u_{n+1} + u_{n-1} +  v(\theta + n\alpha)u_n, \quad n \in \mathbb{Z},
\end{equation}
where $v \in C^0(\mathbb{T}^d, \mathbb{R})$ is the potential, $\theta \in \mathbb{T}^d$ is the phase, and $\alpha \in \T^d$ is a rationally independent frequency vector. These operators have profound connections to condensed matter physics and dynamical systems. For comprehensive reviews, see \cite{DF,DF2,J,Y}.  Notably, \eqref{long-range-op} arises as the Aubry dual of \eqref{quasi-periodic}, i.e. $w_k$ in \eqref{long-range-op} is defined by the Fourier coefficients of $v(\cdot)$ in \eqref{quasi-periodic} and $v_n$ in \eqref{long-range-op} is defined by $2\cos(2\pi(\theta + n\alpha))$. Aubry duality has been instrumental in analyzing localization-delocalization transitions (\cite{AJ2,AYZ,BJ,GJ,JK,Puig1}), Cantor spectrum problem (\cite{AJ3,GJY,Puig1,Puig11}) and remains a pivotal tool in the spectral theory of quasi-periodic operators.

\subsection{Spectral Types and Subordinacy Theory}
All of these factors motivate the investigation of the spectral properties of long-range operators defined by \eqref{long-range-op}. In this paper, we concentrate on subordinacy theory and its applications.

A central problem in the spectral theory of Schr\"odinger operators is the classification of spectral types: pure point, absolutely continuous, and singular continuous spectra. This classification underpins our understanding of quantum dynamical behavior \cite{DF}. Pure point spectrum corresponds to localized eigenstates, characteristic of strongly disordered systems. Absolutely continuous spectrum reflects delocalized states with ballistic transport, as seen in periodic structures. Singular continuous spectrum, though rare, emerges in critical systems with anomalous diffusive properties.

A fundamental question in spectral theory of Schr\"odinger operators is whether spectral types can be characterized by the behavior of generalized eigenfunctions. This issue is elegantly addressed in one dimension through the \emph{Gilbert-Pearson subordinacy theory} \cite{GP}, which establishes a relationship between spectral measures and the growth of solutions: the absolutely continuous spectrum is characterized by the existence of bounded solutions at the corresponding energies, a version that is most frequently employed \cite{Simon}. Subsequently, Jitomirskaya and Last \cite{JL1,JL2} further connected subordinacy to the analytic properties of the Weyl $m$-function, i.e. they proved that 
$$ \frac{5-\sqrt{24}}{|m(E+i \epsilon)|}  \leq \frac{\|u_2\|_{L(\epsilon)}}{\|u_1\|_{L(\epsilon)}} \leq \frac{5+\sqrt{24}}{|m(E+i \epsilon)|},$$
where $u_1,u_2$ are two linearly independent solutions.
Subordinacy has numerous important applications, including the characterization of purely absolutely continuous spectra \cite{Avila: ac,WXYZZ}, Hausdorff-dimensional spectral analysis for discrete Schrödinger operators \cite{DKL,Z}, and quantum dynamical bounds via solution growth rates \cite{DT1,KKL}.

The framework has been extended to diverse settings: Jacobi operators \cite{KL}; whole-line continuum Schr\"odinger operators \cite{G}; CMV matrices (one- and two-sided) \cite{GDO,SO}; Jacobi matrices on specific graphs \cite{Le}.
Despite these advances, a \emph{long-range operator subordinacy theory} remained open.

\subsubsection{Subordinacy theory for finite-range operator}
In 2015, Jitomirskaya posed a foundational question to the authors developing \textit{the full version of} subordinacy theory for finite-range operators.
There have been some progresses in recent years \cite{MS,OC}; however, they are not satisfactory. 

To explain this, let's consider the Schr\"odinger operators on strips. Let $(\Omega,T)$ be a topological dynamical system, i.e. $\Omega$ is a compact metric space and $T:\Omega\to \Omega$ is a homeomorphism. Assume $V(\cdot) \in C^0(\Omega,\mathrm{Her}(m,\mathbb{C}))$, where $\mathrm{Her}(m,\mathbb{C})$ denotes the space of $m \times m$ Hermitian matrices. We study the 
Schr\"odinger operator $H_{V,T,\omega}$  with a dynamically-defined potential induced by $V$  acting on $\ell^2(\mathbb{Z},\mathbb{C}^m)$:
\begin{equation}\label{strip-operator}
(H_{V,T,\omega}\vec{u})_n = C\vec{u}_{n+1} + V(T^n(\omega))\vec{u}_n + C^*\vec{u}_{n-1}, \quad n \in \mathbb{Z},
\end{equation}
where $\omega \in \Omega$ is the phase and $C \in \mathrm{GL}(m,\mathbb{C})$. These operators arise naturally in:
analytic theory of matrix orthogonal polynomials \cite{DPS}, XY spin chain models \cite{HSS}, and 
Dirac-Harper models \cite{BGW}. In particular, one can rewrite the dynamical defined finite-range operators \begin{equation*}
    (L_{v,w}u)_n = \sum_{k=-m}^{m} w_k u_{n+k} + v_nu_n, \quad n \in \mathbb{Z},
\end{equation*}
as  Schr\"odinger operators on strips \cite{Puig}.

The Schr\"odinger operator on the strip \eqref{strip-operator} induces a \emph{Schr\"odinger cocycle} 
\begin{equation}\label{eqn: Sc ccy}
A_E(\omega) = \begin{pmatrix}
    C^{-1}(EI - V(\omega)) & -C^{-1}C^* \\
    I & 0
\end{pmatrix},
\end{equation}
defined via the skew-product:
\[
(T,A_E): \begin{cases}
    \Omega \times \mathbb{C}^{2m} \to \Omega \times \mathbb{C}^{2m} \\
    (\omega,v) \mapsto (T(\omega), A_E(\omega)v)
\end{cases}.
\] For $n \in \mathbb{Z}$, define the cocycle iterates (or the transfer matrix):
$A_{0}(\omega)=I$,
\begin{equation*}
	(A_E)_{n}(\omega)=\prod_{j=n-1}^{0}A_E(T^{j}(\omega)),\  \text{ and } A_{-n}(\omega)=A_{n}(T^{-n}(\omega))^{-1} \text{ for }\ n\ge1.
\end{equation*}
We denote the Lyapunov exponents\footnote{A precise definition can be found in Section \ref{com}.} of $(T, A_E)$ by the following sequence: $$L_1(E) \geq L_2(E) \geq \ldots \geq L_m(E) \geq 0 \geq -L_m(E) \geq \ldots  \geq -L_2(E) \geq -L_1(E),$$ where the exponents are repeated according to their multiplicities. These exponents occur in pairs, as $(T, A_E)$ is Hermitian-symplectic \cite{Puig}\footnote{More precisely, it is Hermitian-symplectic with respect to a Hermitian-symplectic structure defined by $C$, see Section \ref{subsec: H-S}.}.

As a generalization of Kotani's theory \cite{Ko} concerning the Schr\"odinger operator, Kotani and Simon \cite{KS} demonstrated that the set
\begin{equation}\label{kotani}
\{ E \mid \text{exactly } 2j \text{ Lyapunov exponents are } 0 \}
\end{equation}
represents the essential support of the absolutely continuous spectrum with multiplicity $2j$. While \cite{MS,OC} established that 
\begin{equation}\label{grow}
  \{ E \mid \liminf_{L\rightarrow \infty} \frac{1}{L} \sum_{n=1}^L\|(A_E)_{n}\|<\infty\}  
\end{equation}
is contained in the essential support of the absolutely continuous spectrum, thereby excluding hyperbolicity (which is associated with positive Lyapunov exponents) entirely. 
Meanwhile, recall a cocycle $(T,A_E)$ is called \emph{bounded} if
$
\sup_{n \in \mathbb{Z}} \|(A_E)_n(\omega)\| < \infty.
$
In the scalar case ($m=1$), subordinacy theory \cite{Simon} essentially asserts that the operator $H_{V,T,\omega}$ is purely absolutely continuous on the set
$
\{E \mid (T,A_E) \text{ is bounded}\},
$
or on the set \eqref{grow} \cite{LS}. 

These findings imply that Kotani's theory \cite{KS} has already provided insights into the essential support. Jitomirskaya's inquiry, however, seeks to characterize the set where $H_{V,T,\omega}$ is \textit{purely} absolutely continuous. The primary challenge lies in managing the hyperbolic components of the associated cocycles, a difficulty stemming from the interplay between multiple length scales and non-commuting matrix products, as we will explain more clearly later. 

The resolution to this challenge emerged from insights derived from smooth dynamical systems. In establishing quantitative versions of Avila's global theory for one-frequency cocycles, the work in \cite{GJYZ} reveals that the \emph{partial hyperbolicity} of the quasi-periodic cocycle plays a crucial role. Indeed, partial hyperbolicity naturally arises as the dual model of a quasi-periodic Schr\"odinger operator \cite{GJYZ} and in a near-constant Schr\"odinger operator on a strip \cite{WXYZ}. This framework has been successfully extended to address other spectral problems \cite{GJ,GJY}.

Recall $(T,A_E)$ is \emph{partially hyperbolic} if there exists an $A_E$-invariant dominated splitting
$$
\mathbb{C}^{2m} = E^u_{A_E} \oplus E^c_{A_E} \oplus E^s_{A_E},
$$
such that $A_E$ uniformly expands $E^u_{A_E}$ and uniformly contracts $E^s_{A_E}$ (see Section~\ref{secds} for definitions). Note that we allow $E^\ast$ for $\ast \in \{c,u,s\}$ to be trivial.

In higher dimensions ($m \geq 2$), the Weyl $m$-function generalizes to a $m \times m$ matrix-valued function \cite{KS}. While one might attempt to extend the Jitomirskaya-Last inequality by considering two linearly independent fundamental solution matrices $U_1, U_2: \mathbb{Z} \to \mathbb{C}^{m\times m}$ and controlling $|m(E+i\epsilon)|$ through the ratio $\|U_1\|_{L(\epsilon)}/\|U_2\|_{L(\epsilon)}$, this approach generally fails due to intrinsic matrix obstructions. The non-commutativity and non-conformality of matrices result in the norms $\|U_i\|$ detecting only the fastest-growing solutions while neglecting the slower-growing solutions that actually govern the behavior of the $m$-function—specifically, those solutions that determine the absolute continuity of the spectral measure. This obstruction becomes particularly evident when $E^u_{A_E} \oplus E^s_{A_E} \neq \{0\}$, complicating the development of a practical higher-dimensional generalization of the Jitomirskaya-Last inequality,  this is the primary reason why \cite{MS,OC} do not succeed in establishing the full version of Jitomirskaya's open problem.
Our key insight is that partial hyperbolicity isolates the central subspace $E^c_{A_E}$, whose boundedness governs the spectral type. This insight resolves Jitomirskaya's problem concerning finite-range operators:

\begin{theorem}\label{thm:main-spectral} Assume $(T,A_E)$ is partially hyperbolic. Then for all $\omega \in \Omega$, $H_{V,T,\omega}$ is purely absolutely continuous on
\begin{equation}\label{defb}
\mathcal{B}_\omega:= \Big\{ E :  \sup_{n \in \mathbb{Z}} \|(A_E)_n(\omega)|_{E^c_{A_E}}\| < \infty \Big\}.
\end{equation}
\end{theorem}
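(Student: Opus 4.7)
The plan is to extend the Jitomirskaya--Last subordinacy theory to the strip setting by showing that, under partial hyperbolicity, the spectral analysis of $H_{V,T,\omega}$ reduces to the dynamics on the central bundle $E^c_{A_E}$. First I would exploit the dominated splitting $\CC^{2m}=E^u_{A_E}\oplus E^c_{A_E}\oplus E^s_{A_E}$ (which is continuous in $\omega$) to make a continuous base change that puts $A_E(\omega)$ in block-triangular form with diagonal blocks $A_E|_{E^\ast_{A_E}}$, $\ast\in\{u,c,s\}$. Solutions in the hyperbolic blocks grow or decay exponentially in either direction; they pin down the $\ell^2$-solution space at complex energies $E+i\epsilon$ but contribute no absolutely continuous spectral measure. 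What remains to be analyzed is the central block, on which $\mathcal{B}_\omega$ imposes a uniform bound.

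The technical core, and the step I expect to be the main obstacle, is to construct a reduced Weyl $m$-theory on $E^c_{A_E}$. Because $(T,A_E)$ is Hermitian-symplectic with respect to the form associated to $C$, the bundles $E^u_{A_E}$ and $E^s_{A_E}$ are isotropic and dually paired, and $E^c_{A_E}$ inherits a symplectic form. Using the coordinate-free monotonicity framework described earlier in the paper---in particular, the preservation of monotonicity under restriction to the center bundle---I would define central Weyl disks by propagating Lagrangian subspaces of $E^c_{A_E}$ forward and backward, obtain a matrix-valued Herglotz function $m^c_\pm(E+i\epsilon)$ as the limit of these shrinking disks, and identify it with the appropriate Schur-complement block of the full strip $m$-function. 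From this I would derive a matrix Jitomirskaya--Last inequality of the form
\[
\frac{c_1}{\|m^c(E+i\epsilon)\|}\;\le\;\frac{\|U^c_2\|_{L(\epsilon)}}{\|U^c_1\|_{L(\epsilon)}}\;\le\;\frac{c_2}{\|m^c(E+i\epsilon)\|},
\]
where $U^c_1,U^c_2$ are fundamental solution blocks propagated along $E^c_{A_E}$ and $L(\epsilon)$ is the usual Jitomirskaya--Last length scale.

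With the reduced inequality in hand, the remainder follows the classical pattern. For $E\in\mathcal{B}_\omega$ the hypothesis $\sup_n\|(A_E)_n(\omega)|_{E^c_{A_E}}\|<\infty$ forces $\|U^c_1\|_{L(\epsilon)}$ and $\|U^c_2\|_{L(\epsilon)}$ to grow comparably, so $\im m^c(E+i\epsilon)$ and its inverse stay bounded as $\epsilon\to 0^+$. Combining this with the block structure of the full matrix $m$-function (whose hyperbolic-bundle contributions are regular at such $E$) and standard Herglotz/spectral-averaging arguments yields that the spectral measure of $H_{V,T,\omega}$ on $\mathcal{B}_\omega$ is purely absolutely continuous with density bounded above and below by positive constants. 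The passage from a fixed $\omega$ to all $\omega\in\Omega$ is immediate, since both the splitting and the restricted cocycle are continuous in $\omega$ and $\mathcal{B}_\omega$ is constant along $T$-orbits.

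The principal difficulty throughout, as highlighted in the introduction, is the non-commutative and non-conformal matrix geometry that causes naive matrix norms to ignore the slow-growing solutions governing $m$; this is exactly what the coordinate-free monotonicity on $E^c_{A_E}$ is designed to circumvent. Once monotonicity (and hence a workable Weyl disk calculus) is established on the central bundle, the extension of the scalar subordinacy arguments to this setting is essentially routine, although some care is required to ensure that all estimates are uniform in $\omega$ and $\epsilon$ so as to preserve the two-sided Jitomirskaya--Last bound across the reduction.
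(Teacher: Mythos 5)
Your proposal takes a genuinely different route from the paper---which does not build a reduced Weyl theory on $E^c_{A_E}$ and does not invoke the Section~\ref{idea2} monotonicity machinery for this theorem (that is reserved for Theorem~\ref{type1ac})---and the central step as written would not go through. The proposed two-sided matrix Jitomirskaya--Last inequality $c_1/\|m^c\|\le\|U^c_2\|_{L(\epsilon)}/\|U^c_1\|_{L(\epsilon)}\le c_2/\|m^c\|$ is precisely the kind of inequality the paper argues \emph{fails} in the matrix setting: ratios of matrix norms detect only the fastest-growing solution blocks and ignore the slower-growing ones that actually control the $m$-function. Restricting to the center bundle does not automatically repair this, since $\dim E^c_{A_E}=2k$ with possibly $k>1$ and the restricted cocycle remains non-conformal; you give no construction of a bona fide $m^c$-function, no proof of the claimed two-sided bound, and no bridge from a real-energy bound on $(A_E)_n|_{E^c_{A_E}}$ to the complexified cocycle. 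The paper's actual argument avoids any such matrix JL inequality: it estimates $\tr\Im M^+_{E+i\epsilon}$ directly by decomposing vectors of $\E^s_{A_{E+i\epsilon}}(\omega)$ (which determine $M^+$) into stable and central parts, bounding the central part from below via a uniform angle estimate (Proposition~\ref{lem: key ang est1}, enabled by $E^s_{A_E}\cap\mathcal{V}_\omega=\{0\}$), and controlling the complexified center dynamics by a non-stationary telescoping inequality (Lemma~\ref{lem: key esti}, Proposition~\ref{prop: variation center}) rather than by any monotonicity input.

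The second gap is that your sketch silently assumes the dominated splitting aligns with the position/momentum decomposition $\mathbb{C}^m(0,\omega)\times\mathbb{C}^m(1,\omega)$ so that $m^c$ can be read off as a Schur complement of the full $M$-function. That alignment fails exactly when $E^s_{A_E}(\omega)\cap\mathcal{V}_\omega\ne\{0\}$, i.e.\ at half-line eigenvalues, and this exceptional set is a genuine obstacle, not a degenerate corner. The paper devotes separate lemmas to it: the symplectic Wronskian argument (Lemma~\ref{wron}) rules out whole-line eigenvalues in $\mathcal{B}_\omega$, and countability of half-line eigenvalues (Lemma~\ref{eig}) shows $\mu_\omega(\mathcal{B}_\omega\setminus\mathcal{G}_\omega)=0$, after which the subordinacy estimate (Corollary~\ref{JL}) is only applied on $\mathcal{B}_\omega\cap\mathcal{G}_\omega$. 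Without a substitute for this dichotomy your argument is incomplete even on the ``good'' set. Finally, your concluding assertion that the spectral density is ``bounded above and below by positive constants'' overstates the theorem, which claims pure absolute continuity on $\mathcal{B}_\omega$ but no two-sided density bound; and the last sentence about passing from fixed $\omega$ to all $\omega$ is vacuous, since the theorem is already stated and proved for each fixed $\omega$.
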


\begin{rem}
\begin{enumerate}
 \item Subordinacy theory (as in Theorem \ref{thm:main-spectral}) is a powerful tool, providing foundational techniques to establish purely absolutely continuous spectrum for all phases (for example, Theorem \ref{type1ac} and Corollary \ref{pamo}).
This ``all phases" result remains unattainable by other methods. For example, Kotani's gem \cite{kotani2}—a profound result \cite{Da} relating the density of the absolutely continuous part of the density of states measure to the spectral measure—can only ensure such spectrum for almost every phase \cite{AD}, not universally.
\item     From this aspect, we should mention two recent results \cite{GJY, GX} that established Kotani's theory for one-frequency Schr\"odinger operators on the strip, assuming the partial hyperbolicity of the cocycle. It is interesting to consider whether the purity of the spectral measure following scheme of  Kotani's gem. 
    However, applying partial reflectionlessness \cite{GJY, GX}  directly presents challenges due to several factors: the non-commutativity of matrices, the definition of partial hyperbolicity being limited to a local neighborhood, and the complexity of the Green’s function at the center, which complicates control.
\end{enumerate}

\end{rem}

\subsubsection{Subordinacy theory for infinite-range operator}
While cocycles play a crucial role in analyzing strip Schr\"odinger operators on $\ell^2(\mathbb{Z},\mathbb{C}^m)$ with $m < \infty$, their direct extension to $m = \infty$ faces a fundamental obstruction: transfer matrices become undefined due to the absence of infinite-dimensional cocycle theory. Notably, infinite-range operators share deeper structural similarities with Schr\"odinger operators on $\ell^2(\mathbb{Z}^d)$:
\begin{equation*}\label{schrodinger-zd}
    (H^d u)_{\mathbf{n}} = \sum_{\|\mathbf{m}-\mathbf{n}\|_{1}=1} u_\mathbf{m} + v_\mathbf{n}u_\mathbf{n},
\end{equation*}
where $\|\cdot\|_1$ denotes the $\ell^1$-norm. This connection suggests a potential strategy to study \(H^d\) via approximations by Schr\"odinger operators on the strip \cite{Dam}.

For $\mathbb{Z}^d$ operators, a pivotal insight characterizes the absolutely continuous spectrum through the existence of weakly bounded solutions — a criterion independent of cocycle. Indeed, Kislev and Last \cite{KL1} proved that $H^d$ is 
absolutely continuous on the set 
\begin{equation}\label{weak-bdd-sol}
    \mathcal{WB} = \left\{ E : \liminf_{R \to \infty} R^{-1} \sum_{\|\mathbf{n}\|_\infty < R} |u(\mathbf{n},E)|^2 < \infty \right\},
\end{equation}
where $u(\mathbf{n},E)$ solves $(H^d - E)u(\mathbf{n},E) = 0$. While \eqref{weak-bdd-sol} excludes bounded solutions in $\Z^d$ ($d\geq 2$), we extend this framework to infinite-range operators.

For any $\phi \in \ell^2(\mathbb{Z})$, let $\mu_\phi$ be its spectral measure. Define the \emph{upper $\alpha$-derivative}:
\[
D_{\mu_\phi}^{+,\alpha}(E) := \limsup_{\varepsilon \to 0} \frac{\mu_\phi\big(E-\varepsilon,E+\varepsilon\big)}{(2\varepsilon)^\alpha},
\]
which quantifies the continuity/singularity of $\mu_\phi$. Our main result is:
\begin{theorem}\label{sub}
Let $H$ be an infinite-range operator defined in \eqref{long-range-op}, satisfying $|w_k| < \frac{C}{k^3}$ for $k \in \mathbb{Z}$. 
If there exists a nontrivial solution $u = \{u_n\}$ satisfying
\[
\liminf_{R \to \infty} R^{-\alpha} \sum_{k=-R}^R |u_n|^2 < \infty,
\]
then for any compactly supported $\phi$ with $\sum_n \overline{\phi_n} u_n \neq 0$, we have $
D_{\mu_\phi}^{+,\alpha}(E) > 0.$
\end{theorem}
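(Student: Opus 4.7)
The plan is to adapt the Kiselev--Last strategy for $\mathbb{Z}^d$ operators to the infinite-range setting on $\mathbb{Z}$, with the central new ingredient being a commutator estimate that exploits the decay $|w_k|\leq C/|k|^3$. First, I would introduce a Lipschitz cutoff $\chi_R$ supported in $[-2R,2R]$, equal to $1$ on $[-R,R]$, with $|\chi_R(n)-\chi_R(n')|\leq |n-n'|/R$, and set $u_R:=\chi_R u\in\ell^2(\mathbb{Z})$. Since $(H-E)u=0$ pointwise, the resolvent identity yields $(H-E)u_R=[H,\chi_R]u$, a ``small'' error concentrated in the transition region. For $R$ large enough that $\chi_R\equiv 1$ on $\operatorname{supp}(\phi)$, the hypothesis $\sum_n\overline{\phi_n}u_n\neq 0$ gives $\langle\phi,u_R\rangle=c_0\neq 0$.

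Second, I would apply a spectral projection argument in the Kiselev--Last spirit: decomposing $\phi$ via $P_y:=P_{[E-y,E+y]}(H)$, Cauchy--Schwarz together with the functional-calculus bound $\|P_y^\perp u_R\|^2\leq \|(H-E)u_R\|^2/y^2=\|[H,\chi_R]u\|^2/y^2$ yields
\[
 |c_0|^2\ \leq\ 2\,\mu_\phi([E-y,E+y])\,\|u_R\|^2\ +\ 2\,\|\phi\|^2\,\|[H,\chi_R]u\|^2/y^2.
\]
Rearranging, $\mu_\phi([E-y,E+y])$ is bounded below by a multiple of $c_0^2/\|u_R\|^2$ whenever the second term is dominated, i.e.\ whenever $y$ exceeds a critical scale $y_R$ comparable to $\|[H,\chi_R]u\|$.

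Third --- the heart of the proof --- I would establish a commutator estimate for the infinite-range operator. Writing
\[
 ([H,\chi_R]u)_n\ =\ \sum_k w_k\,(\chi_R(n+k)-\chi_R(n))\,u_{n+k},
\]
and using $|\chi_R(n+k)-\chi_R(n)|\leq C\min(1,|k|/R)$ together with Cauchy--Schwarz, the $k$-sum is controlled via a dyadic decomposition by $\sum_k|w_k|\min(1,k^2/R^2)\leq C(\log R)/R^2$, where the cubic decay $|w_k|\leq C/|k|^3$ is precisely what keeps this sum convergent. Summing over $n$ in (and beyond) the transition region, using the polynomial bound $\sum_{|m|\leq L}|u_m|^2\leq CL^\alpha$ inherited from the $\liminf$ hypothesis and handling the far-field $|n|\gg R$ via the tail of $w_k$, yields
\[
 \|[H,\chi_R]u\|^2\ \leq\ C\,R^{\alpha-2}
\]
along the subsequence $R_j\to\infty$ where the polynomial bound on $u$ is attained.

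Fourth, combining $\|[H,\chi_R]u\|\lesssim R^{(\alpha-2)/2}$ and $\|u_R\|^2\lesssim R^\alpha$ with the Kiselev--Last inequality, and optimizing $y=y_{R_j}$ along the subsequence, the balancing of exponents produces $\mu_\phi([E-y_{R_j},E+y_{R_j}])/(2y_{R_j})^\alpha\geq c>0$, which yields $D_{\mu_\phi}^{+,\alpha}(E)>0$. The main obstacle is the commutator estimate in step three: for finite-range operators $[H,\chi_R]u$ is localized in a thin boundary layer, but for infinite range the tails pollute all of $\mathbb{Z}$, and the cubic decay $|w_k|\leq C/|k|^3$ is the precise threshold that makes the tail contributions absorbable against the polynomial growth of $u$ --- with strictly weaker decay one of the weighted $k$-sums diverges, and the scheme breaks.
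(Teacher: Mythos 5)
Your overall strategy---a Lipschitz cutoff, a commutator estimate exploiting the cubic decay of $w_k$, and a Kiselev--Last style conversion to spectral information---is the right circle of ideas, and your observation that $|w_k|\lesssim |k|^{-3}$ is precisely the threshold keeping the boundary-layer sums convergent matches the paper's mechanism (there implemented through the summed Lagrange form $\sum_{r=1}^R W_{[-r,r]}$, which is morally the same object as $[H,\chi_R]$). The commutator scaling $\|[H,\chi_R]u\|^2\lesssim R^{\alpha-2}$ (up to a logarithm) is also the correct order of magnitude, though the far-field tails involve $u$ outside $[-2R,2R]$, which the $\liminf$ hypothesis alone does not control; the paper quietly absorbs this by requiring $\|g\|_\infty\le 1$ in Proposition~\ref{green1}, effectively restricting to bounded solutions, and you would need an analogous hypothesis or further argument there.

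The genuine gap is Step 2: the hard spectral projection $P_y = P_{[E-y,E+y]}(H)$ is too lossy, and the exponents in Step 4 do not balance. From your inequality $|c_0|^2 \le 2\mu_\phi([E-y,E+y])\|u_R\|^2 + 2\|\phi\|^2\|[H,\chi_R]u\|^2/y^2$, the smallest admissible scale is $y_R\sim\|[H,\chi_R]u\|\sim R^{(\alpha-2)/2}$, giving $\mu_\phi([E-y_R,E+y_R])\gtrsim\|u_R\|^{-2}\sim R^{-\alpha}$, so
\[
\frac{\mu_\phi([E-y_R,E+y_R])}{(2y_R)^\alpha}\ \gtrsim\ \frac{R^{-\alpha}}{R^{\alpha(\alpha-2)/2}}\ =\ R^{-\alpha^2/2}\ \longrightarrow\ 0
\]
for every $\alpha>0$. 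Thus the projection route only yields $D_{\mu_\phi}^{+,\beta}(E)>0$ for $\beta\ge 2\alpha/(2-\alpha)$, strictly weaker than the claimed $D_{\mu_\phi}^{+,\alpha}(E)>0$ (for instance, at $\alpha=1$ it gives $\beta=2$, which cannot detect absolute continuity). The fix is to bypass the hard cutoff and work with the resolvent $v_\epsilon=(H-E-i\epsilon)^{-1}\phi$ directly: writing $\phi=(H-E-i\epsilon)v_\epsilon$ and $(H-E)u_R=[H,\chi_R]u$, self-adjointness of $H$ gives $\langle\phi,u_R\rangle=\langle v_\epsilon,[H,\chi_R]u\rangle-i\epsilon\langle v_\epsilon,u_R\rangle$. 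Taking $\epsilon\sim 1/R$ then forces $\|v_\epsilon\|_{\ell^2(\mathbb{Z})}\gtrsim R^{1-\alpha/2}$, so Lemma~\ref{op-s} gives $\Im F_{\mu_\phi}(E+i\epsilon)=\epsilon\|v_\epsilon\|^2\gtrsim\epsilon^{\alpha-1}$, and Lemma~\ref{der} converts this into $D_{\mu_\phi}^{+,\alpha}(E)>0$. This is exactly the paper's route, with your smooth cutoff replaced by the telescoping sum of Lagrange bilinear forms.
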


We do not pursue the optimal decay rate of $w_k$ in this context. Theorem \ref{sub} is interesting because it offers insights into the pointwise behavior of spectral measures based on straightforward and natural assumptions regarding the behavior of generalized eigenfunctions. Furthermore, it yields immediate corollaries about subordinacy by the standard argument (for details, see \cite{KL1}):

\begin{coro}\label{cor:subordinacy}
Under the assumptions of Theorem \ref{sub}, if there exists a positive measure set $S \subset \mathbb{R}$ such that every $E \in S$ admits a nontrivial bounded solution, then $S$ is contained in the essential support of the absolutely continuous spectrum.
\end{coro}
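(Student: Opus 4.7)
The plan is to apply Theorem~\ref{sub} with $\alpha = 1$ pointwise for each $E \in S$, and then convert positivity of the upper symmetric $1$-derivative $D^{+,1}_{\mu_\phi}(E)$ into membership of $E$ in the essential support of the absolutely continuous spectrum via the classical Lebesgue differentiation theorem. This follows the standard strategy of \cite{KL1}; all the analytic content is already packaged into Theorem~\ref{sub}, and the remaining work is measure-theoretic bookkeeping.

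Fix $E \in S$ and a nontrivial bounded solution $u^E$ of $H u = E u$. Boundedness of $u^E$ immediately yields
\[
\liminf_{R \to \infty} R^{-1} \sum_{k=-R}^{R} |u^E_k|^2 \;\leq\; 2 \sup_n |u^E_n|^2 \;<\; \infty,
\]
so the growth hypothesis of Theorem~\ref{sub} with $\alpha = 1$ holds at $E$. To satisfy the non-degeneracy condition $\sum_n \overline{\phi_n} u^E_n \neq 0$ uniformly in $E$, despite $u^E$ possibly depending on $E$ in a non-measurable way, I work with the countable family $\phi = \delta_n$, $n \in \ZZ$. Decompose $S = \bigcup_{n \in \ZZ} S_n$ with $S_n := \{E \in S : u^E_n \neq 0\}$; this covers $S$ since each $u^E$ is nontrivial. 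For every $n$ and every $E \in S_n$, Theorem~\ref{sub} applied with $\phi = \delta_n$ yields $D^{+,1}_{\mu_{\delta_n}}(E) > 0$.

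Finally, the Lebesgue differentiation theorem for Radon measures on $\RR$ states that for Lebesgue-a.e.\ $E$, $\lim_{\varepsilon \to 0} \mu_{\delta_n}(E-\varepsilon, E+\varepsilon)/(2\varepsilon)$ exists and coincides with the Radon-Nikodym density $d\mu_{\delta_n}^{ac}/dE\,(E)$. Hence for Lebesgue-a.e.\ $E \in S_n$ this density is strictly positive, placing $E$ in the essential support of $\mu_{\delta_n}^{ac}$. Taking the union over $n \in \ZZ$ and discarding a countable union of Lebesgue null sets, Lebesgue-a.e.\ $E \in S$ lies in $\operatorname{supp}(\mu_{\delta_n}^{ac})$ for some $n$. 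Since $\{\delta_n\}_{n \in \ZZ}$ is a countable spanning set for $\ell^2(\ZZ)$, the essential support of the AC part of any total spectral measure of $H$ (for instance $\mu = \sum_n 2^{-|n|} \mu_{\delta_n}/(1+\|\mu_{\delta_n}\|)$) agrees, up to a Lebesgue null set, with $\bigcup_n \operatorname{supp}(\mu_{\delta_n}^{ac})$. Hence $S$ is contained, modulo a Lebesgue null set, in the essential support of the absolutely continuous spectrum, proving the corollary. No genuine obstacle arises; the only mild subtlety---the potentially non-measurable dependence of $u^E$ on $E$---is sidestepped by the countable covering by the sets $S_n$.
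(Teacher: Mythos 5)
Your proof is correct and is precisely the standard argument from \cite{KL1} that the paper invokes without spelling out (the paper only says the corollary follows "by the standard argument" and cites \cite{KL1}). The two points you isolate---covering $S$ by the countable family $S_n=\{E\in S: u^E_n\neq 0\}$ to dodge any nonmeasurable dependence of $u^E$ on $E$, and the Lebesgue differentiation theorem (a.e.\ vanishing of the symmetric derivative of the singular part) to upgrade positivity of $D^{+,1}_{\mu_{\delta_n}}$ on $S_n$ to positivity of the a.c.\ density a.e.\ on $S_n$---are exactly the ingredients the cited reference supplies, so nothing is missing.
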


\begin{rem}
One should compare Corollary \ref{cor:subordinacy} with Theorem \ref{thm:main-spectral}, here we only assume one nontrivial bounded solution (now the solution space is infinite dimensional), thus, in general, one can't anticipate $H$ is purely absolutely continuous on $S$.
\end{rem}

\subsection{Applications: absolutely continuous spectrum for long-range operator}\label{app:ac}

Building on Theorems~\ref{thm:main-spectral}, \ref{sub},  one can also establish fundamental relations between spectral properties, generalized eigenfunctions, and quantum dynamics—particularly in bounding transport properties of quantum systems. For detailed proofs and related results, see \cite{DT1,KKL,KL1} and references therein.  

\subsubsection{Absolutely continuous spectrum for Schr\"odinger operator}
Our focus here is the absolutely continuous spectrum of quasi-periodic long-range operators:
\begin{equation}\label{long-range-qp}
    (L_{\varepsilon v,w,\alpha,\theta}u)_n = \sum_{k=-\infty}^\infty w_k u_{n+k} + \varepsilon v(\theta + n\alpha)u_n, \quad n \in \mathbb{Z},
\end{equation}
where $w_k$ are Fourier coefficients of $w(\cdot) \in C^\omega(\mathbb{T}^d, \mathbb{R})$, $v(\cdot) \in C^\omega(\mathbb{T}^d, \mathbb{R})$ is the potential, $\theta\in \T^d$ is the phase, and $\alpha \in \T^d$ is a rationally independent frequency vector.

We begin by reviewing key milestones in the study of absolutely continuous spectrum for quasi-periodic Schr\"odinger operators. Recall that  $\alpha\in\T^d$ satisfies the Diophantine condition $\mathrm{DC}(\gamma, \tau)$: 
    \begin{equation*}\label{diophantine}
        \inf_{j \in \mathbb{Z}} |\langle k, \alpha \rangle + j| \geq \frac{\gamma}{|k|^\tau} \quad \forall k \in \mathbb{Z}^d \setminus \{0\}
    \end{equation*}
    for some $\gamma > 0$, $\tau > 0$, and denote $DC=\cup_{\gamma,\tau}DC(\gamma,\tau)$. For Diophantine frequencies $\alpha$  and sufficiently small $\varepsilon > 0$ (depending on $\alpha$ and $V$), Dinaburg and Sinai \cite{DS} established the existence of absolutely continuous spectrum using KAM techniques. 
Eliasson \cite{Eli92} refined the KAM scheme, proving that under the same Diophantine conditions, $H_{\varepsilon v,\alpha,\theta}$ exhibits \emph{purely} absolutely continuous spectrum for all $\theta$. This marked the first complete characterization of spectral type in the small coupling regime.

Another advancement in establishing purely absolutely continuous spectrum is achieved through localization techniques. Utilizing non-perturbative localization and Aubry duality \cite{DF2,GJLS}, for Diophantine frequencies, Jitomirakaya  \cite{J1999} proved the almost Mathieu operator $H_{2\lambda \cos,\alpha,\theta}$ has purely absolutely continuous spectrum for $|\lambda|<1$ and a.e. $\theta$. Subsequently, Bourgain and Jitomirskaya \cite{BJ} extended this result to general analytic quasi-periodic potentials.

 In the one-frequency case, the breakthrough can be traced back to Avila \cite{avila}, who introduced a tripartite classification (subcritical/critical/supercritical) linking spectral types to Lyapunov exponents. The Almost Reducibility Conjecture (ARC) connects subcriticality to almost reducibility \cite{avila2010almost,avila-kam}, implying pure absolutely continuous spectrum \cite{arcl,Avila: ac}.

 \subsubsection{Absolutely continuous spectrum for finite-range operators}

When there exists a value of $ |k| > 1 $ such that $ w_k \neq 0 $, research on the absolutely continuous spectrum of the long-range operator \eqref{long-range-qp} is \text{rare}, and there is \text{no} result regarding the pure absolutely continuous spectrum.
 Instead of concentrating on the  absolutely continuous spectrum, Wang et al. \cite{WXYZ} studied the absolute continuity of the integrated density of states (IDS), which can be interpreted as the average spectral measures of an ergodic family of self-adjoint operators $ \{ L_{\varepsilon v, w, \alpha, \theta} \}_{\theta \in \mathbb{T}^d} $ over $ \theta $:
\begin{equation}\label{sids}
    \mathcal{N}(E) = \int_{\theta \in \mathbb{T}^d} \mu_\theta(-\infty, E] d\theta,
\end{equation}
where $ \mu_\theta $ denotes the associated spectral measure of $ L_{\varepsilon v, w, \alpha, \theta} $. They demonstrated that in the perturbative regime, $ \mathcal{N}(\cdot) $ is absolutely continuous when $ \alpha $ is Diophantine and $ w(\cdot) $ is a trigonometric polynomial \cite{WXYZ}. If $ \mu_\theta $ is absolutely continuous for almost every $ \theta $, it follows that $ \mathcal{N}(\cdot) $ is absolutely continuous. However, the converse does not hold: the average of the singular spectral measure may also lead to absolute continuity \cite{AD}. This raises a natural question: Is $ \mu_\theta $ absolutely continuous for almost every $ \theta $? In this paper, we address this question:

\begin{theorem}\label{ac}
    Let $\alpha \in \mathrm{DC}$, $w(\cdot)$ be a trigonometric polynomial, and $v(\cdot) \in C^\omega(\mathbb{T}^d, \mathbb{R})$. There exists $\varepsilon_0 = \varepsilon_0(\alpha, v, w) > 0$ such that for $|\varepsilon| < \varepsilon_0$, the operator $L_{\varepsilon v, w, \alpha, \theta}$ is purely absolutely continuous  for almost every $\theta$.  
    Furthermore, it has no point spectrum for all $\theta$.
\end{theorem}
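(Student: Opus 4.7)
The plan is to recast $L_{\varepsilon v,w,\alpha,\theta}$ as a Schr\"odinger operator on a strip, establish an Eliasson-type almost reducibility for the resulting Hermitian-symplectic Schr\"odinger cocycle, and then feed the output into Theorem~\ref{thm:main-spectral}. Since $w(\cdot)$ is a trigonometric polynomial of some degree $m$, the operator is finite-range and, by Puig's construction \cite{Puig}, unitarily equivalent to $H_{V_\varepsilon,T,\theta}$ on $\ell^2(\mathbb{Z},\mathbb{C}^m)$ with an analytic Hermitian-matrix potential $V_\varepsilon$ over the Diophantine translation $T\theta=\theta+\alpha$. At $\varepsilon=0$ the potential is $\theta$-independent, so the associated cocycle $(T,A_E^0)$ is a constant Hermitian-symplectic matrix whose eigenvalues split into a hyperbolic (resolvent) part and an elliptic part corresponding to the spectrum of $L_{0,w,\alpha,\theta}$, providing the non-degenerate starting point for a perturbative scheme.

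I would then establish an Eliasson-type almost reducibility theorem in this Hermitian-symplectic strip setting: for $\alpha\in\mathrm{DC}$ and $|\varepsilon|<\varepsilon_0(\alpha,v,w)$, there is a subset $\Sigma_{\mathrm{ar}}\subset\Sigma(L_{\varepsilon v,w,\alpha,\cdot})$ of full IDS measure such that for each $E\in\Sigma_{\mathrm{ar}}$ the cocycle $(T,A_E^\varepsilon)$ carries a continuous invariant dominated splitting $\mathbb{C}^{2m}=E^u\oplus E^c\oplus E^s$ and is analytically conjugate on the center bundle to a constant elliptic rotation, the hyperbolic blocks arising as small deformations of the hyperbolic eigenvalues of $A_E^0$. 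In particular $(T,A_E^\varepsilon)$ is partially hyperbolic and the center iterates $(A_E^\varepsilon)_n|_{E^c}$ are uniformly bounded in $n$ and $\theta$, so $\Sigma_{\mathrm{ar}}\subset\mathcal{B}_\theta$ for every $\theta$. Theorem~\ref{thm:main-spectral} then yields pure absolute continuity of $L_{\varepsilon v,w,\alpha,\theta}$ on $\Sigma_{\mathrm{ar}}$ for every $\theta$. Since $\Sigma_{\mathrm{ar}}$ has full IDS mass, Fubini applied to $\mathcal{N}=\int_{\mathbb{T}^d}\mu_\theta\,d\theta$ gives $\mu_\theta(\Sigma\setminus\Sigma_{\mathrm{ar}})=0$ for Lebesgue-a.e.\ $\theta$, yielding pure absolute continuity of $\mu_\theta$ for a.e.\ $\theta$.

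For the stronger no-point-spectrum claim at every $\theta$, I would use almost reducibility to exclude $\ell^2$ solutions at any energy in $\Sigma_{\mathrm{ar}}$---every solution is uniformly bounded and quasi-periodic in the center direction, while the stable/unstable components cannot simultaneously decay at $\pm\infty$---together with a continuity argument at the exceptional resonant set $\Sigma\setminus\Sigma_{\mathrm{ar}}$, whose cocycles are uniform limits of almost reducible ones so that solutions there remain uniformly bounded and hence not in $\ell^2$. The main obstacle is the KAM step: running Eliasson's resonance-removal scheme inside the Hermitian-symplectic group while preserving the dominated splitting and the elliptic character of the central block at every scale requires precisely the coordinate-free, bundle-adapted monotonicity theory highlighted in the introduction, as well as careful control of the central-block eigenvalues so that the center remains analytically conjugate to a rotation after infinitely many resonance cancellations.
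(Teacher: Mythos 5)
Your overall scaffolding matches the paper: pass to the strip Schr\"odinger operator $H_{V,K\alpha,\theta}$, obtain a dominated splitting with a bounded center cocycle for a.e.\ energy via an Eliasson-type reducibility in the Hermitian-symplectic group, and feed this into Theorem~\ref{thm:main-spectral}. The paper imports the reducibility from \cite[Theorem~2.3]{WXYZ} (Lemma~\ref{lem:reduction} here) rather than running a fresh KAM scheme, and the coordinate-free monotonicity theory you cite as the ``main obstacle'' is in fact not used for Theorem~\ref{ac} at all --- it is needed only for the all-phases result, Theorem~\ref{type1ac}.

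The genuine gap in the argument is the assertion that the Eliasson scheme produces a reducible set $\Sigma_{\mathrm{ar}}$ ``of full IDS measure''. What any such scheme produces is full \emph{Lebesgue} measure (more precisely, a Lebesgue-null exceptional set $\mathcal S$ of resonant energies). To upgrade this to full IDS measure --- so that $\mathcal{N}(\mathcal S)=0$ and hence, by the Fubini identity $\mathcal{N}=\int\mu_\theta\,d\theta$, $\mu_\theta(\mathcal S)=0$ for a.e.\ $\theta$ --- you need to know separately that the IDS assigns no mass to Lebesgue-null sets, i.e.\ that the IDS is absolutely continuous. Without this, a singular continuous component of $\mu_\theta$ could a priori sit entirely on the resonant set. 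This is exactly the role of Proposition~\ref{idsac} in the paper, which transfers the absolute continuity of $\mathcal{N}$ from \cite{WXYZ} to the strip IDS $k(\cdot)$ via the two Thouless formulas and the harmonicity lemma (Lemma~\ref{ran}). Your proposal silently assumes this step has been done. You should state it as a separate ingredient and either prove it or cite it.

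The no-point-spectrum argument is also weaker than you suggest. Almost reducibility and boundedness of solutions on $\Sigma_{\mathrm{ar}}$ do exclude $\ell^2$ solutions there, but the claim that at the exceptional resonant energies ``solutions remain uniformly bounded'' because the cocycle is a ``uniform limit of almost reducible ones'' does not hold: resonant energies are precisely where the center block can be parabolic and solutions can have linear or worse growth, and a limit of bounded cocycles need not be bounded. The paper instead proves Proposition~\ref{np} directly via Aubry duality and a Bourgain--Liu Green's function estimate for the dual $\mathbb{Z}^d$ operator, which is robust across all energies and all phases. You should replace the continuity argument with something of that flavor.
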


\begin{rem}
Theorem~\ref{ac} aligns with longstanding expectations in the  community, though we present here its \textbf{first rigorous proof}. 
\end{rem}

 \subsubsection{All phases pure absolutely continuous spectrum}

 A more critical question arises: under the conditions of Theorem~\ref{ac}, does the operator \( L_{\varepsilon v, w, \alpha, \theta} \) exhibit purely absolutely continuous spectrum for  \textbf{all} \(\theta\)? To address this, we first consider cocycles with a two-dimensional center. This class of operators naturally arises as the dual model of the {\it type I operator}, a model that has received significant attention recently \cite{GJ,GJY,GJYZ,hs,HS}.
 
For one-frequency quasi-periodic Schr\"odinger operators \eqref{quasi-periodic}, the Lyapunov exponent of the complexified Schr\"odinger cocycle \( L_y(E) = L(\alpha, A_E(\cdot + iy)) \) is an even, convex, piecewise affine function with integer slopes \cite{avila}. This motivates the following definitions:
 
\begin{definition}\label{acc}\cite{avila,GJY}
Let \( y \leq \infty \) denote the natural boundary of analyticity for \( v(\cdot) \in C^\omega(\T,\R) \). The acceleration is defined by
\[
\omega(E) = \lim_{y \to 0^+} \frac{L_y(E) - L_0(E)}{2\pi y}.
\]
The {\it T-acceleration} is defined by
\[
\bar{\omega}(E) = \lim_{y \to y_1^+} \frac{L_y(E) - L_{y_1}(E)}{2\pi(y - y_1)},
\]
where \( 0 \leq y_1 < y \) is the first turning point for the piecewise affine function \( L_y(E) \). If no such turning point exists, we set \( \bar{\omega}(E) = 0 \).
\end{definition}
 
With this framework, we introduce:
\begin{definition}\label{typei}\cite{GJY}
We say \( E \) is a {\it type I energy} for the operator \( H_{v,\alpha,\theta} \) if \( \bar{\omega}(E) = 1 \). The operator \( H_{v,\alpha,\theta} \) is called a {\it type I operator} if every \( E \) in its spectrum is a type I energy.
\end{definition}
 
From the definition, it follows immediately that if \( E \in \Sigma \) with \( L(E) > 0 \) and \( \omega(E) = 1 \), then \( E \) is a type I energy. In the positive Lyapunov exponent regime, using nonperturbative localization techniques, Han and Schlag \cite{hs,HS} showed that type I operators with even potentials exhibit Anderson localization. Ge and Jitomirskaya \cite{GJ} later removed the evenness assumption by leveraging the reducibility approach of Avila-You-Zhou \cite{AYZ}.
 
These results naturally lead to the conjecture that the dual model \( L_{v,w,\alpha,\theta} \)  of a {\it type I operator} \( H_{v,\alpha,\theta} \) should exhibit purely absolutely continuous spectrum for \textbf{all phases}. Prior to this work, such all-phases results were only achievable for the unperturbed subcritical almost Mathieu operator by Avila \cite{Avila: ac}. We confirm this conjecture for $\alpha\in DC$.
 
\begin{theorem}\label{type1ac}
Let \( \alpha \in DC \), let \( H_{v,\alpha,\theta} \) be a {\it type I operator} with non-constant trigonometric potentials such that \( L(E) > 0 \) for all \( E \in \Sigma \). Then \( L_{v,w,\alpha,\theta} \) has purely absolutely continuous spectrum for all \( \theta \).
\end{theorem}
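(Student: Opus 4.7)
The proof plan proceeds in three main steps with Theorem \ref{thm:main-spectral} as the principal tool. First, since $v(\cdot)$ is a non-constant trigonometric polynomial, its Fourier coefficients $w_k$ vanish for $|k|$ sufficiently large, so $L_{v,w,\alpha,\theta}$ is a \emph{finite-range} operator; by the standard folding procedure \cite{Puig} we rewrite it as a strip Schr\"odinger operator $H_{V,T,\theta}$ on $\ell^2(\mathbb{Z},\mathbb{C}^m)$ with $T(\theta)=\theta+\alpha$ and associated Schr\"odinger cocycle $(T, A_E)$ as in \eqref{eqn: Sc ccy}. To conclude via Theorem \ref{thm:main-spectral} it suffices to prove: (a) $(T,A_E)$ is partially hyperbolic with a two-dimensional center bundle $E^c_{A_E}$ for every $E$ in the spectrum of $L_{v,w,\alpha,\theta}$; and (b) $\sup_{n\in\mathbb{Z}}\|(A_E)_n(\theta)|_{E^c_{A_E}}\|<\infty$ for every $\theta\in\mathbb{T}^d$.

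For step (a), I combine Avila's global theory with the Aubry duality dictionary. By Avila, $L_y(E)$ for the Schr\"odinger cocycle of $H_{v,\alpha,\theta}$ is even, convex, and piecewise affine with integer slopes; the type-I hypothesis $\bar\omega(E)=1$ forces the slope just after the first turning point to equal $1$, while $L(E)>0$ on $\Sigma$ places this turning point at $y_1=0$. Translated through Aubry duality to the dual strip cocycle, this quantization produces exactly one pair of zero Lyapunov exponents, with the remaining $m-1$ pairs strictly separated from zero. Standard dominated-splitting criteria then yield $\mathbb{C}^{2m}=E^u_{A_E}\oplus E^c_{A_E}\oplus E^s_{A_E}$ with $\dim E^c_{A_E}=2$, establishing partial hyperbolicity.

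For step (b), I leverage Anderson localization for $H_{v,\alpha,\theta}$: under the present hypotheses, Ge--Jitomirskaya \cite{GJ} (refining \cite{hs,HS} via \cite{AYZ}) establishes Anderson localization for a full-measure set of phases. Through Aubry duality, each exponentially decaying eigenfunction of $H_{v,\alpha,\theta}$ at $E$ produces a quasi-periodic (Bloch) solution of the dual model at $E$, which in the strip picture amounts to a reducibility of the two-dimensional center cocycle. The quantitative Avila--You--Zhou scheme, once restricted to $E^c_{A_E}$ via the coordinate-free monotonicity framework of this paper and its preservation under center-bundle restriction, upgrades the a.e.-$\theta$ reducibility to full reducibility for \emph{every} $\theta$: on the isolated $2$D center the problem becomes a smooth $\mathrm{SL}(2,\mathbb{C})$-type KAM conjugation, where analyticity of $v,w$ together with the Diophantine condition on $\alpha$ permit analytic extension of the conjugation from a full-measure phase set to all phases. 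This yields the uniform bound in (b), so Theorem \ref{thm:main-spectral} gives purely absolutely continuous spectrum for every $\theta$.

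The hardest part of this plan is step (b), specifically the removal of the measure-zero exceptional set of phases. For the unperturbed subcritical almost Mathieu operator, Avila \cite{Avila: ac} achieved this through methods intrinsic to that model; in our setting the difficulty is amplified by the nontrivial hyperbolic part $E^u_{A_E}\oplus E^s_{A_E}$, which couples to the center under iteration and must be cleanly isolated before any uniform reducibility argument can run. The viability of the plan therefore hinges on verifying that the coordinate-free monotonicity and KAM machinery developed here genuinely descends to the center bundle while preserving the analytic and (Hermitian-)symplectic structures required for uniform-in-$\theta$ control; this preservation under center-bundle restriction, announced in the abstract, is what makes the reducibility scheme applicable uniformly in $\theta$ and thereby unlocks the \textbf{all-phases} conclusion.
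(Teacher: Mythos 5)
Your step (a) matches the paper's use of the quantitative global theory (Lemma~\ref{censub}) to obtain partial hyperbolicity with a two-dimensional center, and is essentially sound. Step (b), however, contains a genuine gap that invalidates the plan as stated.

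You propose to verify the hypothesis of Theorem~\ref{thm:main-spectral} at \emph{every} $E\in\Sigma$ and every $\theta$, i.e.\ to show $\sup_n\|(A_E)_n(\theta)|_{E^c_{A_E}}\|<\infty$ throughout the spectrum, by upgrading a.e.-phase Anderson localization of $H_{v,\alpha,\theta}$ to reducibility of the center cocycle for \emph{all} $E$. This cannot work, because it is not true even for the subcritical almost Mathieu model: for a positive (in fact generic) set of energies the cocycle is only almost reducible, and at the countably many parabolic/resonant energies the transfer matrices grow unboundedly. Avila's proof for the almost Mathieu case \cite{Avila: ac}—and the paper's proof here—does not claim boundedness at all energies. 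Instead, the structure is measure-theoretic: let $\mathcal{B}$ be the set of energies where the center cocycle $C_E$ is bounded; one shows (via the quantitative subordinacy estimate Corollary~\ref{JL} and Lemma~\ref{inej}) that the spectral measure is a.c.\ on $\mathcal{B}$, and then—this is the heart of the argument—that $\mu_{\theta}(\Sigma\setminus\mathcal{B})=0$. The latter is achieved by splitting $\Sigma\setminus\mathcal{B}$ into the reducible-but-unbounded part (countable, parabolic, carrying no point mass by Proposition~\ref{apoint}) and the non-reducible (resonant) part, which is handled by Borel--Cantelli: the quantitative almost-reducibility of Proposition~\ref{local kam} bounds $\mu_\theta$ on each resonance window $K_m$, and a H\"older \emph{lower} bound for the rotation number of $C_E$ (Proposition~\ref{holderl}) controls how many such windows there are, giving $\sum_m\mu_\theta(\overline{K}_m)<\infty$.

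Note also that the monotonicity theory (Theorem~\ref{mon1}, Proposition~\ref{cr}) is deployed for a different purpose than you suggest. It is not a bridge to uniform reducibility; rather, it yields the monotonicity of the induced $\mathrm{SL}(2,\mathbb{R})$ center cocycle in $E$ and the consequent estimate $L(C_{E+i\varepsilon})\ge\kappa|\varepsilon|$, which is precisely what feeds the H\"older lower bound of Proposition~\ref{holderl}. Without that lower bound the Borel--Cantelli sum does not converge, and the all-phases conclusion is out of reach. Your plan is missing both ingredients: the reduction to a zero-measure statement for the unbounded set, and the H\"older/Borel--Cantelli mechanism that establishes it.
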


As a corollary, \textit{any finite-range perturbation of a subcritical almost Mathieu operator retains purely absolutely continuous spectrum for all phases}. This extends Avila's well-known result for the unperturbed case \cite{Avila: ac}.

\begin{coro}\label{pamo}
Let \( \alpha \in DC \), \( 0 < |\lambda| < 1 \), and \( w(\theta) = \sum_{k=-m}^m w_k e^{2\pi i k\theta} \). There exists \( \varepsilon_0 = \varepsilon_0(\alpha, w, \lambda) > 0 \) such that for \( |\varepsilon| < \varepsilon_0 \), the operator
\begin{equation}\label{long-range-qp-1}
(\tilde{L}_{w,\alpha,\theta}u)_n = \varepsilon \sum_{k=-m}^m w_k u_{n+k}+u_{n+1}+u_{n-1} + 2\lambda\cos 2\pi(\theta + n\alpha)u_n, \quad n \in \mathbb{Z},
\end{equation}
has purely absolutely continuous spectrum for all \( \theta \).
\end{coro}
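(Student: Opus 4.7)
The plan is to derive Corollary \ref{pamo} from Theorem \ref{type1ac} via Aubry duality. Writing $\tilde L_{w,\alpha,\theta}=L_{V,W,\alpha,\theta}$ with on-site potential $V(\theta)=2\lambda\cos(2\pi\theta)$ and hopping function $W(\theta)=2\cos(2\pi\theta)+\varepsilon w(\theta)$, a direct computation identifies the Aubry dual $L_{W,V,\alpha,\theta}$ with the scaled Schr\"odinger operator $\lambda H_{v^*,\alpha,\theta}$, where
\[
v^*(\theta)=2\lambda^{-1}\cos(2\pi\theta)+\lambda^{-1}\varepsilon\,w(\theta).
\]
The point of dualizing is that, while $H_{2\lambda\cos,\alpha,\theta}$ is \emph{sub}critical for $0<|\lambda|<1$ (so Theorem \ref{type1ac} does not apply to it directly), the unperturbed dual $H_{2\lambda^{-1}\cos,\alpha,\theta}$ is the \emph{super}critical almost Mathieu operator, for which $L(E)\equiv\log|\lambda^{-1}|>0$ and $\omega(E)\equiv\bar\omega(E)\equiv 1$ on the spectrum, making it a type I operator with $L>0$.

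Next, I would verify that these two properties persist for $H_{v^*,\alpha,\theta}$ when $|\varepsilon|$ is sufficiently small. Positivity of $L$ follows from continuity of the Lyapunov exponent for analytic cocycles in the positive-exponent regime. Preservation of type I follows from integer quantization together with the upper semicontinuity of Avila's acceleration $\omega$ and T-acceleration $\bar\omega$: for small analytic perturbations of a cocycle with $\omega\equiv 1$ on the spectrum, neither $\omega$ nor $\bar\omega$ can jump off the integer value $1$. This yields $\varepsilon_0=\varepsilon_0(\alpha,w,\lambda)>0$ such that, for every $|\varepsilon|<\varepsilon_0$, the Schr\"odinger operator $H_{v^*,\alpha,\theta}$ satisfies all hypotheses of Theorem \ref{type1ac}, with $v^*$ a non-constant trigonometric polynomial.

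Theorem \ref{type1ac} then yields purely absolutely continuous spectrum for all $\theta$ for every long-range operator $L_{v^*,w^{\sharp},\alpha,\theta}$ with trigonometric polynomial hopping $w^{\sharp}$. A suitable choice of $w^{\sharp}$, combined with another Aubry-duality and scaling identification as in the first step, produces an operator spectrally equivalent to $\tilde L_{w,\alpha,\theta}$; the final task is to transfer the pure a.c.\ property back to $\tilde L_{w,\alpha,\theta}$ for every phase.

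The main obstacle is precisely this final transfer. Aubry duality at the direct-integral level only preserves spectral types for almost every $\theta$, so the pointwise ``all phases'' statement does not follow automatically. To close this gap, I would use that the proof of Theorem \ref{type1ac} produces, beyond pure a.c.\ spectrum, a strictly stronger structural output: uniform boundedness of the center cocycle on the entire spectrum, obtained through the partial hyperbolicity machinery of Theorem \ref{thm:main-spectral}. The Aubry-dual cocycle inherits this boundedness, and thus the $2$-dimensional center of the finite-dimensional cocycle associated with $\tilde L_{w,\alpha,\theta}$ has bounded iterates on its spectrum for every $\theta$. A final application of Theorem \ref{thm:main-spectral} to this partially hyperbolic cocycle then gives purely absolutely continuous spectrum of $\tilde L_{w,\alpha,\theta}$ for every $\theta$, completing the proof.
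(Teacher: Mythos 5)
Your first two paragraphs match the paper's proof: compute the Aubry dual of $\tilde L_{w,\alpha,\theta}$ to be $\lambda H_{v^*,\alpha,\theta}$ with $v^*=\lambda^{-1}(2\cos+\varepsilon w)$, note that at $\varepsilon=0$ this is the supercritical almost Mathieu operator (so $L\equiv\log|\lambda^{-1}|>0$, $\omega\equiv 1$ on $\mathbb R$), and use continuity of $L$ together with upper semicontinuity of the acceleration to conclude that for small $\varepsilon$ one has $L(E)>0$ and $\omega(E)\leq 1$ throughout a fixed interval containing the spectrum, which by quantization forces $\omega(E)=1$ on the spectrum and hence type I. That part is correct and is exactly what the paper does.

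The second half of your proposal, however, is built on a misreading of Theorem \ref{type1ac}. That theorem is not a statement about long-range operators $L_{v^*,w^\sharp,\alpha,\theta}$ with on-site potential $v^*$ and arbitrary trigonometric hopping $w^\sharp$; it asserts that the \emph{Aubry dual} of a type I Schr\"odinger operator $H_{v,\alpha,\theta}$ with $L>0$ (namely the operator with on-site potential $2\cos$ and hopping determined by the Fourier coefficients of $v$) has purely absolutely continuous spectrum for \emph{all} phases. Since the Aubry dual of $H_{v^*,\alpha,\theta}$ is exactly $\lambda^{-1}\tilde L_{w,\alpha,\theta}$ (duality is an involution, and your own first step already computed this), Theorem \ref{type1ac} applied to $H_{v^*,\alpha,\theta}$ directly gives pure a.c.\ spectrum for $\tilde L_{w,\alpha,\theta}$ for every $\theta$. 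There is no ``second Aubry duality,'' no $w^\sharp$ to choose, and no ``final transfer'' problem; that entire paragraph is spurious.

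The proposed fix also rests on a false claim about the mechanism of Theorem \ref{type1ac}. You assert that its proof ``produces, beyond pure a.c.\ spectrum, a strictly stronger structural output: uniform boundedness of the center cocycle on the entire spectrum,'' and that this boundedness is then ``inherited'' by the dual cocycle. Neither statement is right. The proof of Theorem \ref{type1ac} does not establish boundedness of $(\alpha,C_E)$ for all $E\in\Sigma$; it shows via a KAM/Borel--Cantelli argument (Propositions \ref{holderl}, \ref{local kam} and the covering argument) that the set of $E$ where $(\alpha,C_E)$ is unbounded has spectral measure zero, then uses Corollary \ref{JL} to kill the singular part on the bounded set, and separately excludes point spectrum via the Wronskian argument on the exceptional energies. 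Boundedness genuinely fails at, e.g., parabolic-reducible energies inside the spectrum. Moreover ``the Aubry-dual cocycle inherits this boundedness'' is not a meaningful operation: Aubry duality acts at the level of operators and spectral measures, not of transfer cocycles. The strip cocycle governing $\tilde L_{w,\alpha,\theta}$ is the same cocycle that appears in the proof of Theorem \ref{type1ac} for the dual of $H_{v^*}$; there is only one cocycle in play.

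So: your reduction to the type I + positive Lyapunov exponent regime is correct, but you should stop there and invoke Theorem \ref{type1ac} directly — the theorem already delivers the all-phases conclusion for the operator you started with, and the machinery you invent to ``close the gap'' is both unnecessary and mathematically unsound.
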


\begin{rem}
However, it is still open whether the corresponding result holds if the center is not two dimensional. 
While subordinacy theory provides a critical framework for addressing this, current methodologies \cite{Avila: ac,Eli92} rely heavily on the relationship between resonant energies and sublinear growth of associated cocycles—a connection that fails for high-dimensional cocycles (\(m > 1\)) \cite{WXYZ}. 
 We contend that resolving this challenge demands a fundamentally new perspective on high-dimensional quasi-periodic cocycle dynamics, consult Section \ref{all-ac} for more discussions.
\end{rem}

 \subsubsection{Absolutely continuous spectrum for infinite-range operators}

For finite-range operators, we obtain pure absolutely continuous spectrum for almost all phases with complete exclusion of point spectrum. This result naturally raises the question of whether such spectral properties persists when considering infinite-range operators.

\begin{theorem}\label{acqq}
Let  $\alpha \in \mathrm{DC}$,  $w(\cdot), v(\cdot)\in C^\omega(\mathbb{T}^d, \mathbb{R})$. There exists $\varepsilon_1= \varepsilon_1(\alpha,v,w) > 0$ such that for $|\varepsilon| < \varepsilon_1$,  
 $L_{\varepsilon v, w, \alpha, \theta}$ has absolutely continuous spectrum for all $\theta$. Furthermore, it has no point spectrum for all $\theta$.
\end{theorem}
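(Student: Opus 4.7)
The plan is to construct bounded quasi-periodic generalized eigenfunctions via a Bloch--Floquet KAM scheme and invoke Corollary~\ref{cor:subordinacy}, then rule out point spectrum by Aubry duality. Since $w\in C^\omega(\T^d,\R)$, its Fourier coefficients satisfy $|w_k|\le Ce^{-c|k|}$, so the hypothesis $|w_k|<C/|k|^3$ of Theorem~\ref{sub} is automatic.

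First I would seek solutions of $L_{\varepsilon v,w,\alpha,\theta}u=Eu$ in the form
\begin{equation*}
u_n=e^{2\pi i n\xi}\, f(\theta+n\alpha;\xi),\qquad \xi\in\T,
\end{equation*}
which converts the eigenequation into a functional equation on $\T^d$:
\begin{equation*}
(T_\xi f)(\theta)+\varepsilon v(\theta)f(\theta)=E(\xi)f(\theta),\qquad (T_\xi f)(\theta):=\sum_{k\in\Z}w_k e^{2\pi i k\xi} f(\theta+k\alpha).
\end{equation*}
At $\varepsilon=0$ one has the explicit family $(f,E)=(1,w(\xi))$ for every Bloch phase $\xi\in\T$. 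The linearized cohomological equation to first order reads $\bigl(w(\xi+\langle m,\alpha\rangle)-w(\xi)\bigr)\hat f_m=-\varepsilon \hat v_m$ for $m\in\Z^d$; for $\xi$ outside a small neighborhood of the critical points of $w$, the mean value theorem combined with $\alpha\in\mathrm{DC}(\gamma,\tau)$ yields the small-divisor bound $|w(\xi+\langle m,\alpha\rangle)-w(\xi)|\ge c\gamma/|m|^\tau$. With this in hand, I would iterate an Eliasson-style analytic KAM scheme with Cantor-set exclusion of resonant Bloch phases at each step, to construct analytic $f(\,\cdot\,;\xi)$ and corrected energies $E(\xi)$ on a Cantor set $K\subset\T$ whose Lebesgue measure tends to full measure as $\varepsilon\to0$. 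The exponential decay of $w_k$ provides the analyticity radius needed to close the scheme uniformly in the infinite convolution.

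Crucially, the ansatz depends on $\theta$ only through a shift, so the same $f(\,\cdot\,;\xi)$ produces a uniformly bounded generalized eigenfunction of $L_{\varepsilon v,w,\alpha,\theta}$ at energy $E(\xi)$ for \emph{every} $\theta\in\T^d$. Letting $\mathcal{C}=E(K)\subset\Sigma$, Corollary~\ref{cor:subordinacy} places $\mathcal{C}$ inside the essential support of the absolutely continuous spectrum, proving that $L_{\varepsilon v,w,\alpha,\theta}$ has absolutely continuous spectrum for every $\theta$. To exclude point spectrum I would pass to the Aubry dual $\hat L_{w,\varepsilon v,\alpha,\hat\theta}$, which is a small off-diagonal perturbation of the multiplication operator with entries $w(\hat\theta+n\alpha)$. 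By an Aizenman--Molchanov or Bourgain--Goldstein localization argument (dual to the KAM above), $\hat L$ has pure point spectrum with exponentially decaying eigenfunctions for a.e.\ dual phase. An $\ell^2$ eigenfunction of $L_{\varepsilon v,w,\alpha,\theta_0}$ would, after the standard Aubry/Fourier transformation, produce a bounded extended solution of $\hat L$ at the same energy, contradicting the dual's localization and thereby excluding point spectrum for every $\theta$.

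The principal obstacle I expect is the KAM step itself in the genuinely infinite-range setting: unlike the finite-range case treated via Theorem~\ref{thm:main-spectral}, there is no $\GL(2m)$-valued cocycle to reduce, so the iteration must be carried out directly at the operator level on $C^\omega(\T^d)$, and resonance removal must be controlled uniformly over the infinite convolution series. The exponential decay of $w_k$ (equivalently, the analyticity of $w$) is what permits the Diophantine condition on $\alpha$ to be leveraged to close the scheme, and any attempt to weaken analyticity to finite regularity would face a genuinely new difficulty because Theorem~\ref{sub} already imposes a polynomial-decay floor on $w_k$.
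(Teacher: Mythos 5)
Your overall architecture matches the paper's: construct bounded solutions of $L_{\varepsilon v,w,\alpha,\theta}$ from eigenfunctions of the Aubry dual via a KAM scheme, feed them into Corollary~\ref{cor:subordinacy}, and rule out point spectrum by a localization argument on the dual side. The paper accomplishes the first step by invoking Eliasson's infinite-dimensional perturbation theory for covariant matrices (Theorem~\ref{point}, built on \cite{E}), producing $\ell^1(\Z^d)$ eigenfunctions of $\widehat{L}$ for a.e.\ dual phase; your Bloch--Floquet ansatz $u_n=e^{2\pi i n\xi}f(\theta+n\alpha;\xi)$ is equivalent to this after Fourier transform. The absence-of-point-spectrum argument via Green's function estimates and Aubry duality is also essentially the paper's Proposition~\ref{np}.

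However, there is a genuine gap at the crucial junction. You write ``Letting $\mathcal C=E(K)\subset\Sigma$, Corollary~\ref{cor:subordinacy} places $\mathcal C$ inside the essential support,'' but Corollary~\ref{cor:subordinacy} requires the energy set $S$ to have \emph{positive Lebesgue measure}, and positive measure of the parameter set $K\subset\T$ of surviving Bloch phases does not imply positive measure of its image $E(K)$ under the eigenvalue map --- the map could in principle collapse $K$ onto a null set. Nor is $E(K)$ automatically Lebesgue measurable, since the image of a Borel set under a merely measurable function need not be. This is precisely the difficulty the paper singles out (``the countable cardinality of eigenfunctions per phase obstructs the construction of eigenfunction families with positive measure''); the paper resolves it with two specific ingredients you omit: (i) the non-degeneracy property of Eliasson's limiting eigenvalue function, $m(E_\infty^{-1}(Y))=0$ whenever $m(Y)=0$ (Proposition~\ref{eli}(5)), whose contrapositive forces $m(E_\infty(\tilde{\mathcal A}))>0$; and (ii) Lemma~\ref{leb}, invoking the theory of analytic sets from descriptive set theory, to guarantee that the image of a Borel set under a Borel measurable map is Lebesgue measurable. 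Without some substitute for these two points, the passage from a positive-measure Cantor set of Bloch phases to a positive-measure set of energies is unjustified, and the appeal to Corollary~\ref{cor:subordinacy} does not close.
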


\begin{rem}
\begin{enumerate}
    \item To the best knowledge of the authors, Theorem \ref{acqq} provides the first absolute continuity result for quasi-periodic infinite-range operators. 
    \item While our methods confirm the absence of point spectrum, the infinite-range nature of the interaction poses significant technical challenges for establishing spectral purity (detailed in Section~\ref{proof of inf}). We nevertheless demonstrate that absolutely continuous spectrum persists under long-range coupling, extending key features of short-range behavior to this broader setting.
\end{enumerate}
    
\end{rem}

\subsection{Difficulty and methodology}\label{method}

Establishing spectral results under the partial hyperbolicity assumption for the cocycle \((T, A_E)\) requires addressing several key difficulties:
\begin{enumerate}
    \item The bundles \(E_{A_E}^{*}\) may be non-trivial, obstructing the selection of a global basis;
    \item Non-commuting matrix products complicate the isolation of hyperbolic components' influence;
    \item Most critically, the center bundle \(E_{A_E}^{c}\) lacks Schr\"odinger structure, necessitating recovery of its \(E\)-dependence.
\end{enumerate}
To overcome these challenges, we develop two principal innovations:

\subsubsection{Insights from hyperbolic geodesic flow}\label{hgf}
The core argument of this work establishes a fundamental correspondence between the spectral analysis of discrete Schr\"odinger operators on strips and the dynamical structure of geodesic flows on Riemannian manifolds. This analogy arises from their shared symplectic nature and is reflected in parallel intersection problems within their respective phase spaces.

\begin{itemize}
\item {\textbf{Geometric side} (Geodesic Flows)} 
In the context of geodesic flows, a key question concerns the trivial intersection of the stable subbundle with the vertical subbundle $\mathcal{V}$ of the double tangent bundle (consult Section \ref{ver-geo} for more explanations), a property closely tied to the existence of conjugate points. For example, 
Klingenberg's classical result \cite{Kl74} establishes a hyperbolic geodesic 
\footnote{where the flow restricted to its orbit closure is uniformly hyperbolic.} segment $c([0,t_1])$ contains conjugate points \textit{if and only if} the stable bundle $E^s(t_0)$ intersects the vertical bundle $\mathcal{V}$ non-trivially at some $t_0 \in (0,t_1)$–manifested through vanishing property of Jacobi fields (\cite{Kl74}, Section 6).

\item{\textbf{Spectral side} (Schr\"odinger Operators on the strip)}
In the setting of Schr\"odinger operators, we define an analogous vertical subbundle $\mathcal{V}$ outlined in Section \ref{ver-sch}, inspired by the vertical bundle in Riemannian geometry. A crucial observation emerges: 
The analogous condition $E^s_{A_E} \cap \mathcal{V} \neq \{0\}$ generates exponential localized eigenfunction for the half-line operator, 
when $E^s_{A_E} \cap \mathcal{V} = \{0\}$,
the spectral analysis becomes more manageable due to a uniform bounded angle estimate (Proposition \ref{lem: key ang est1}). This allows us to combine geometric methods (Lemma  \ref{lem: ang imp slow1}) with a non-stationary generalization of telescoping argument (Section \ref{telescoping}), effectively eliminating the influence of hyperbolic component of $A_E$ on estimates of 
 $\Im M$ and yielding the desired spectral measure bounds (Corollary \ref{JL}). 
\end{itemize}

This correspondence identifies $\mathcal{V}$ as a \textit{singularity detector}, a role we summarize into Table 1. This observation illuminates a profound structural correspondence between geometric and spectral theories—one that we anticipate will provide a foundational framework for future research, either from the geodesic flows side or from the spectral theory side.

\begin{table}[htb]   
	\begin{center}   
		\label{table:1} 
		\begin{tabular}{|c|c|c|}   
			\hline   \textbf{} & \textbf{\makecell[c]{Riemannian manifold \\with hyperbolic geodesic flows} } & \textbf{\makecell[c]{Half line\\ Schr\"odinger operators}} \\   
				\hline   Dominated splitting & $TT^1M=E^s\oplus E^c \oplus E^u$& $\C^{2m}=E^s\oplus E^c\oplus E^u$  \\ 
                \hline Vertical bundle $\mathcal{V}$ & $\mathcal{V} = \{0\} \times \{w \in T_xM \mid w \perp v \}$ & $\mathcal{V}=\{0\} \times \mathbb{C}^m$ \\
			\hline  $E^s \cap \mathcal{V} \neq \{0\}$  & existence of conjugate points & point spectral measure  \\ 
			\hline $E^s \cap \mathcal{V} = \{0\}$  & non-existence of conjugate points & continuous  spectral measure \\  
			\hline   
		\end{tabular}     
	\end{center}   
    \caption{Correspondence between Geodesic Flows and Schr\"odinger Operators.}  
\end{table}

\subsubsection{Monotonicity argument}
Monotonic cocycles were first introduced by Avila and Krikorian \cite{AK} 
for $\SL(2,\R)$ cocycles, and  further extend to  symplectic cocycles in 
\cite{LiuXu,Xu}. This framework provides geometric insights into the Schr\"odinger operator through dynamical methods and can be applied to more general cocycles that lack the Schr\"odinger operator's specific structure.

Monotonicity theory serves as a crucial tool for extending Kotani theory \cite{AK}---fundamental to the study of Schr\"odinger operators---to broader classes of cocycles. It is particularly effective for analyzing rotational reducibility of cocycles not homotopic to the identity, as well as relationships between rotation number (or IDS) and Lyapunov exponents \cite{AK}. However, existing monotonicity theories rely on specific coordinate systems, and no general framework exists for cocycles defined on non-trivial bundles. Consequently, there has been no monotonicity theory for cocycles restricted to dynamically defined invariant sub-bundles (which are typically non-trivial).

This limitation arises naturally when studying dual cocycles of Schr\"odinger cocycles---for instance, in establishing Kotani theory. Here, partially hyperbolic cocycles require monotonicity theory for restrictions to their center bundles. Resolving these challenges demands a \textbf{general monotonicity theory for arbitrary bundles}. In this paper, we:
\begin{enumerate}
    \item Introduce a coordinate-independent definition of monotonicity (Definition \ref{def: mono HSp cccle}) and establish a \textbf{comprehensive monotonicity theory} for cocycles on general bundles.
    \item Using differential geometry of principal bundles (Theorem \ref{thm: pure geo}), prove a key result (Theorem \ref{mon1}) critical for spectral theory of dual operators:
\end{enumerate}
\begin{quote}
    \textit{For any monotonic family of partially hyperbolic Hermitian symplectic cocycles, their restriction to the center bundle induces (up to coordinate change) a monotonic family of lower-dimensional Hermitian symplectic cocycles.}
\end{quote}

In the following, we state the ideas of the proof of subordinacy and its application to absolutely continuous spectrum: 

\subsection{Ideas of the proof: subordinacy}

\subsubsection{Finite-range subordinacy:}\label{idea1}
The case when $m=1$ in Theorem \ref{thm:main-spectral} is classical, with multiple existing proofs. One approach, following Simon's ingenious argument \cite{Simon}, employs a complex deformation of cocycles combined with telescoping estimates to derive bounds on the imaginary part of the Weyl $M$-function. 
However, for $m>1$, a direct generalization of Simon's method to estimate the imaginary part of the matrix-valued Weyl $M$-function encounters a fundamental obstruction: it would require control over the growth of the cocycle $A_E$ (not only that of $A_E|_{E^c_{A_E}}$!), which typically exhibits exponential growth (due to interference from the behavior of $A_E$ in both $E^u_{A_E}$ and $E^s_{A_E}$ directions). This explains why Theorem \ref{thm:main-spectral}, though widely believed to be true, has remained unproven.

As discussed in Section \ref{hgf}, insights from hyperbolic geodesic flow show that the condition 
$E^s_{A_E} \cap \mathcal{V} = \{0\}$ enables us to effectively relate the spectral measure 
to the growth of the cocycle restricted to the center bundle (Corollary \ref{JL}). 
Therefore, the key step reduces to analyzing the exceptional energies where 
$E^s_{A_E} \cap \mathcal{V} \neq \{0\}$. In the Schr\"odinger case, localized eigenfunctions emerge, leading to a loss of control over $\Im M^+_{E+i\epsilon}$. To address this challenge, we employ a key insight connecting spectral theory and dynamical systems—generalizing the classical Wronskian argument for scalar Schr\"odinger operators (which plays a pivotal role in Cantor spectrum problems \cite{AJ3,GJY,Puig11}). 
Although it has long been recognized that the Wronskian argument does not extend directly to higher-dimensional matrices due to the coexistence of localized eigenfunctions, we demonstrate that within the symplectic framework, point spectrum is precluded in the central direction. This implies $\mu_\omega(\mathcal{B}_\omega \setminus \mathcal{G}_\omega^s) = 0$, where 
\[
\mathcal{G}^s_\omega = \left\{ E \in \Sigma : E^s_{A_E}(\omega) \cap \mathcal{V}_\omega = \{0\} \right\}.
\]
See Section \ref{comproof} for complete details.

From this aspect, we should mention in the study of hyperbolic geodesic flows and conjugate points, it is also crucial to rule out the possibility that $E^s$ (or more generally, any invariant isotropic subspace) intersects the vertical bundle non-trivially. A classical result by Ma\~n\'e (\cite{Ma}, Lemma III.2) states:
\begin{quote}
\textit{If $\theta \in SM$ and $E \subset S(\theta)$ is a Lagrangian subspace, then the set of $t \in \mathbb{R}$ such that $d\phi_t^\theta (E) \cap \V(\phi_t(\theta)) = \{0\}$ is discrete.}
\end{quote}
In attempts to prove Ma\~n\'e's conjecture, that hyperbolic geodesic flows with curvature bounded below have no conjugate points, establishing the triviality of the intersection $E^s \cap \V$ is a very natural approach, as seen, for instance, in a recent try \cite{Se}.

\subsubsection{Infinite-range subordinacy:}

The analysis of infinite-range operators presents a significant challenge due to the lack of robust tools from dynamical systems. To overcome this challenge,
 we  focus on the primary objective: characterizing the absolutely continuous spectrum in terms of generalized eigenfunctions. As a result, we do not require the dynamically defined potential as presented in Theorem \ref{thm:main-spectral}.

We extend the methodology of Kislev and Last \cite{KL1} for analyzing essential supports of absolutely continuous spectrum in $\mathbb{Z}^d$ (respectively $\mathbb{R}^d$) operators. The approach centers on the \emph{Lagrange bilinear form}:
\begin{equation*}\label{eq:lagrange-form}
    W_{[-r,r]}(f,g) := \langle Hf, g \rangle_r - \langle f, Hg \rangle_r = \sum_{n=-r}^{r} \left[ (Hf)_n \overline{g_n} - f_n \overline{(Hg)_n} \right].
\end{equation*}
The estimation of $ W_{[-r,r]}(f,g) $ is more complex than the one-dimensional case ($ m=1 $) analyzed in \cite{KL1}. The analogous analysis can be generalized to finite-dimensional cases ($ m < \infty $), but it fails in the infinite-dimensional case ($ m = \infty $). A key observation is that if $ w_k $ exhibits moderate decay, then $ W_{[-r,r]}(f,g) $ for the infinite-range operator can be well approximated by the finite-range operator. Specifically,
for $v_\epsilon(\cdot) = (H - E - i\epsilon)^{-1}\phi$ with $\phi$ is chosen to satisfy the conditions of Theorem  \ref{sub}, we establish that
\begin{equation*}
   \bigg|\sum_{r=0}^R \langle \phi,u\rangle_r\bigg|-  \|v_\epsilon\|_{\ell^2({2R})} \|u\|_{\ell^2({2R})} \lesssim \left|\sum_{r=0}^R W_{[-r,r]}(v_\epsilon, u)\right| \lesssim \|v_\epsilon\|_{\ell^2(\mathbb{Z})} + \|v_\epsilon\|_{\ell^2({2R})} \|u\|_{\ell^2({2R})}.
\end{equation*}
This inequality chain enables Green's function estimates  through lower bounds for $\|v_\epsilon\|_{\ell^2(\mathbb{Z})}$.

In contrast to the finite-dimensional case ($m<\infty$), one advantage of the Theorem \ref{sub} and Corollary \ref{cor:subordinacy} is that it requires the construction of only one bounded solution, rather than necessitating that all solutions be bounded, as stated in Theorem \ref{thm:main-spectral}. However, this approach results in the limitation of obtaining only the existence of the absolutely continuous spectrum, thereby sacrificing purity.

Another key observation is that 
while $\mathcal{WB}$ in \eqref{weak-bdd-sol} excludes bounded solutions for $\mathbb{Z}^d$ ($d \geq 2$), it \emph{does} admit such solutions in $\mathbb{Z}^1$. This dichotomy reveals:
subordinacy theory is inherently a one-dimensional phenomenon. 

\subsection{Ideas of the proof: Absolutely continuous spectrum}\label{idea3}
The spectral analysis of finite/infinite-range operators presents significant challenges.  The previous proof of  purely absolutely continuous spectrum relies on the quantitative almost reducibility, which offers a precise estimate of the spectral measure \cite{Avila: ac, Eli92}. However, current KAM techniques do not yield an exact quantitative estimate in the high-dimensional case ($m>1$) and seem powerless in the infinite-dimensional case. An alternative approach is derived from dual arguments. A common belief posits that if $H_\theta$ has pure point spectrum for almost every $\theta$, its dual operator $\widehat{H}_x$ should exhibit purely absolutely continuous spectrum. However, this duality argument crucially depends on the structure of $\widehat{H}_x$ \cite{BJ,DF2,J1999}, which is a scalar Schr\"odinger operator ($m=1$), resulting in the two-dimensional nature of solutions in key cases. 

\subsubsection{Pure AC spectrum of finite-range operators}
For finite-range operators, we circumvent duality arguments by directly analyzing the Schr\"odinger cocycle \((\alpha, A_E)\) through the following framework:  
\begin{itemize}
    \item Apply Theorem~\ref{thm:main-spectral} via KAM (Kolmogorov-Arnold-Moser) methods to prove that \((\alpha, A_E)\) is partially hyperbolic for a full-measure set of energies \(E\);
    \item Establish uniform boundedness of transfer matrices when restricted to the center subspace \(E^c_{A_E}\);
    \item Exclude the singular continuous spectrum using the absolute continuity of the IDS.
\end{itemize}

\subsubsection{All phases AC spectrum of finite-range operators}\label{all-ac}
To establish purely absolutely continuous spectrum for the dual operator of \textit{type I operators} for all phases, our proof relies on three important parts:  our newly developed subordinacy theory (Theorem \ref{thm:main-spectral}) and its proof framework,  Theorem \ref{mon1} in monotonicity theory, an adaptation of the fundamental approach from \cite{Avila: ac}.

Let's explain in details. Main argument in \cite{Avila: ac} relies on three key components:
\begin{enumerate}
    \item Quantitative almost reducibility estimates;
    \item Quantitative subordinacy theory;
    \item Lower bounds for the H\"older exponent of the integrated density of states (IDS).
\end{enumerate}
We address these components as follows, aiming to illustrate how the methodology developed in Section \ref{method} comes into play:
\begin{enumerate}
\item \textbf{Quantitative almost reducibility:} Leveraging the monotonicity argument (Proposition \ref{cr}), we first establish the monotonicity of the center bundle. We then apply the recently developed quantitative version of Avila's global theory \cite{GJYZ} to reduce the problem to a two-dimensional setting. This reduction enables us to utilize well-established results on quantitative almost reducibility \cite{Avila: ac, Eli92, LYZZ,WXYZZ}.
\item \textbf{Quantitative subordinacy theory:} For exceptional energies where \( E^s_{A_E} \cap \mathcal{V} \neq \{0\} \), the existence of a quantitative subordinacy theory is precluded (as detailed in Corollary \ref{JL}). However, Proposition \ref{lem: key ang est1} demonstrates that \( E^s_{A_E} \cap \mathcal{V} = \{0\} \) is an open condition, which supports a local quantitative subordinacy theory. Moreover, since these exceptional energies are countable, standard methods (Section \ref{ab-point}) allow us to eliminate point spectrum.  
\item \textbf{Lower bound for Hölder exponent (Proposition \ref{holderl}):} Two key observations are critical: 
(a) Dominated splitting guarantees that the sum of Lyapunov exponents restricted to \( E^s_{A_E} \) are harmonic in a neighborhood of \( E \), exhibiting minimal variation; consequently, the dominant contribution arises from the center bundle.  
(b) Traditional methods (e.g., Deift-Simon \cite{DeS}), which rely on the Thouless formula and harmonic analysis, fail here because the center cocycle lacks a Schrödinger structure. Notably, the Thouless formula does not yield useful information when restricted to the center bundle. Instead, we adapt techniques from Avila-Krikorian \cite{AK} to prove the monotonicity of a two-dimensional cocycle induced from the center dynamics of our dual Schr\"odinger cocycle (Proposition \ref{cr}), which facilitates estimates of the Lyapunov exponents.  
\end{enumerate}

\subsubsection{Ac spectrum for infinite-range operators:}
For infinite-range operators, Theorem~\ref{sub} necessitates the demonstration of bounded solutions supported on sets of positive measure. While duality principles suggest that $\ell^1(\mathbb{Z}^d)$ eigenfunctions of the dual operator $\widehat{H}_x$ could generate such solutions, a central challenge emerges: the countable cardinality of eigenfunctions per phase obstructs the construction of eigenfunction families with \textbf{positive measure}.  
In the Schr\"odinger operator settings, the regularity of IDS in zero Lyapunov exponent regimes \cite{DeS, Puig1} ensures full-measure spectral conclusions. However, this argument remain inherently two-dimensional in scope. To transcend this limitation, we reinterpret the problem through duality: pure point spectrum corresponds to the diagonalization of an infinite-dimensional matrix. We augment this perspective with refined eigenvalue analysis of the operator \( E_\infty(\cdot ) \).  Crucially, KAM techniques really enable the explicit construction of \textbf{positive-measure} eigenfunctions for dual operators  \cite{E}. This construction yields corresponding bounded solutions for the original infinite-range operators.

\subsection{Outline of the paper}

The remainder of this work is structured as follows. Section \ref{pre} presents preliminary definitions and foundational results. Section \ref{ideah} develops geometric insights from hyperbolic geodesic flows. Section \ref{idea2} establishes monotonicity theory for center-bundle cocycles. Subordinacy theory is developed for finite-range operators in Section \ref{subor} and extended to infinite-range cases in Section \ref{proof of inf}. Section \ref{ab-point} proves the absence of point spectrum for long-range quasi-periodic operators. Finally, we demonstrate three spectral results: Theorem \ref{ac} (absolutely continuous spectrum for finite-range quasi-periodic operators) in Section \ref{proof-finite};
Theorem \ref{type1ac} (pure absolutely continuous spectrum for all phases) in Section \ref{all};
 Theorem \ref{acqq} (infinite-range cases) in Section \ref{proof-infinite}.

\section{Preliminaries}\label{pre}

\subsection{Grassmannian }
In this subsection, we  briefly recall the holomorphic structure of the Grassmannians. The set of $k$-dimensional subspaces of $\C^d$ is a compact Grassmannian manifold with a holomorphic
structure and will be denoted by $G(k,d)$.
Let  $$M_k(d)=\{M\in \mathbb{C}^{d\times k}:\operatorname{rank}(M)=k\}.$$

\begin{theorem} \label{holomorphic}\cite{AJS}The following results hold:
\begin{enumerate}[{\rm (i)}]
\item There is a natural projection $\tilde p:\MM_k(d)\to G(k,d)$ which is also a holomorphic submersion.
\item Locally, for each  $M \in \MM_k(d)$ 
there exists neighborhood $U_M$ of $\tilde p(M)$ and
holomorphic injections $i_M: U_M\to \MM_k(d)$
such that $\tilde p \circ i_M=\id|U_M$.
\item Any $u \in C^0(\mathbb{T},G(k,d))$ can be lifted to a one-periodic function $\Tilde{u}: \T \rightarrow \MM_k(d)$ such that for any $ \theta \in \mathbb{T}, u(\theta)$ is spanned on $\mathbb{C}$ by the $n$ column vectors of $\Tilde{u}(\theta)$
         In other words, there exists $u_1,u_2,\dots,u_k \in C^0(\mathbb{T},\mathbb{C}^d) $ such that for any 
         $\theta \in \mathbb{T}$,
         $u(\theta)=\operatorname{span}_\mathbb{C}\{u_1(\theta),u_2(\theta),\dots,u_k(\theta)\}$.
         Furthermore, if $u \in C^\omega(\mathbb{T},G(k,d))$, be lifted to a one-periodic holomorphic function $\Tilde{u}: \T_\delta \rightarrow \MM_k(d)$.
\end{enumerate}
\end{theorem}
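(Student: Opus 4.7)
The plan is to exhibit $\tilde p:\MM_k(d)\to G(k,d)$ as a holomorphic principal $GL(k,\mathbb{C})$-bundle via explicit affine charts on the Grassmannian, and then lift maps from $\mathbb{T}$ using the connectedness of the fiber group.

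For parts (i) and (ii), I would construct local sections explicitly. Given $M\in\MM_k(d)$, choose an index set $I\subset\{1,\ldots,d\}$ of size $k$ such that the submatrix $M_I$ formed by the rows of $M$ indexed by $I$ is invertible; such $I$ exists since $\operatorname{rank}(M)=k$. Define
\[
U_M := \{W\in G(k,d): \text{the coordinate projection } \pi_I: W\to\mathbb{C}^I \text{ is an isomorphism}\},
\]
an open neighborhood of $\tilde p(M)$. For $W\in U_M$, let $\tilde W(W)$ be the unique spanning matrix normalized by $\tilde W(W)_I=\mathrm{id}_k$ (whose non-$I$ rows are the standard affine coordinates on $U_M$), and set $i_M(W):=\tilde W(W)\cdot M_I$. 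Then $i_M(W)_I=M_I$ and its columns span $W$; the formula is manifestly holomorphic in $W$, injective, and satisfies $\tilde p\circ i_M=\id|_{U_M}$. Since the differentials $d i_M$ provide holomorphic right inverses to $d\tilde p$ at every point, $\tilde p$ is a holomorphic submersion.

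For (iii), I would glue these local sections along $\mathbb{T}$. By compactness of $\mathbb{T}$ and continuity (respectively analyticity) of $u$, cover $\mathbb{T}$ by finitely many consecutive arcs $\Omega_1,\ldots,\Omega_N$ with $u(\Omega_j)\subset U_{M_j}$, and set $v_j := i_{M_j}\circ u:\Omega_j\to\MM_k(d)$. On each overlap $\Omega_j\cap\Omega_{j+1}$, the lifts $v_j$ and $v_{j+1}$ differ by right multiplication by a transition $g_j:\Omega_j\cap\Omega_{j+1}\to GL(k,\mathbb{C})$, continuous or holomorphic in tandem with $u$. Extending each $g_j$ across $\Omega_{j+1}$ and replacing $v_{j+1}$ by $v_{j+1}g_j^{-1}$ inductively, one forces the successive lifts to agree on all but the final overlap, producing a well-defined candidate lift off a single arc.

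The main obstacle is closing the loop. After traversing $\mathbb{T}$ once, a residual transition $g_N:\Omega_N\cap\Omega_1\to GL(k,\mathbb{C})$ remains, and it must be deformed to $\mathrm{id}_k$ within a collar neighborhood in order to glue across the last seam. This is where the topology of the fiber enters: $GL(k,\mathbb{C})$ deformation retracts onto $U(k)$, hence is path-connected, so in the continuous case $g_N$ can be homotoped to the identity inside a thin collar via a path in $GL(k,\mathbb{C})$, delivering the global continuous lift. For the analytic statement, one uses that $\mathbb{T}_\delta$ is a Stein annulus: either invoke the Oka principle (every continuous $GL(k,\mathbb{C})$-trivialization of a holomorphic principal bundle on a Stein space is homotopic to a holomorphic one), or apply an explicit Birkhoff factorization $g_N=g_N^+ g_N^-$ with $g_N^\pm$ extending holomorphically to the two components of $\mathbb{T}_\delta$ cut by a circle, and absorb the factors into the neighboring lifts on a slightly shrunken strip. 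This yields the desired one-periodic holomorphic lift $\tilde u:\mathbb{T}_\delta\to\MM_k(d)$.
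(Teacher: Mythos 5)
The paper itself offers no proof of Theorem \ref{holomorphic}: it is cited directly from \cite{AJS}, so there is no internal argument to compare against. Evaluated on its own merits, your construction for (i) and (ii) via affine Schubert charts (choosing a row index set $I$ with $M_I\in\mathrm{GL}(k,\mathbb C)$, parametrizing $U_M$ by matrices normalized to $\mathrm{id}_k$ on the $I$-rows, and setting $i_M(W)=\widetilde W(W)M_I$) is exactly the standard local section argument and is correct. For (iii), your overall strategy --- interpret the problem as trivializing the pullback of the frame bundle $\MM_k(d)\to G(k,d)$ (equivalently of the tautological rank-$k$ subbundle of $\underline{\mathbb C^d}$), kill the monodromy using $\pi_0(\mathrm{GL}(k,\mathbb C))=0$ in the continuous case, and invoke the Oka--Grauert principle on the Stein annulus $\T_\delta$ in the analytic case --- is the right one. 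In fact the analytic step can be stated slightly more directly: $\T_\delta$ is a non-compact Riemann surface, and by Grauert's theorem every holomorphic vector bundle on a non-compact Riemann surface is trivial, so $u^*\tau$ admits a global holomorphic frame $(u_1,\dots,u_k)$.

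One caveat: the Birkhoff-factorization alternative you sketch for the analytic closing step is not correct as stated. A matrix Birkhoff factorization on a circle has the form $g=g_+\,\Lambda\,g_-$ with a middle diagonal term $\Lambda(z)=\mathrm{diag}(z^{n_1},\dots,z^{n_k})$; dropping $\Lambda$ requires an argument, and the partial indices $n_i$ are not individually controlled by topology (only $\sum n_i$ is, and here you would additionally need to know it vanishes and that all partial indices vanish). Moreover your residual transition $g_N$ lives on a short contractible collar, not on a full circle, so what is really wanted is a Cartan-type splitting lemma for a Cartan pair, whose obstruction is precisely the topological triviality that the Oka--Grauert route already certifies. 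So the factorization sketch does not stand alone; the Oka--Grauert (or Grauert non-compact Riemann surface) argument is the one that actually closes the proof.
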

Though the holomorphic dependent of Grassmannian, we have the following Lemma:   

\begin{lemma}\label{parbasis}
    Suppose \( E_t(\theta) \in C^\omega(\I \times \mathbb{T}, G(k,d)) \), where \( \I \) is a neighborhood of \( 0 \) in \( \mathbb{R} \). Then there exists \( \varepsilon > 0 \) and analytic mappings \( u_{1,t}, u_{2,t}, \dots, u_{k,t} \in C^\omega((-\varepsilon,\varepsilon) \times \mathbb{T}, \mathbb{C}^d) \) such that for any \( \theta \in \mathbb{T} \) and \( t \in (-\varepsilon,\varepsilon) \), the subspace \( u_t(\theta) \) is exactly the complex span of these mappings:
    \[
    E_t(\theta) = \operatorname{span}_\mathbb{C}\{u_{1,t}(\theta), u_{2,t}(\theta), \dots, u_{k,t}(\theta)\}.
    \]
\end{lemma}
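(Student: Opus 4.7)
The plan is to construct the desired analytic basis by producing a parametric analytic lift of the family $E_t(\theta)$ through the holomorphic submersion $\tilde p:\MM_k(d)\to G(k,d)$, thereby upgrading Theorem~\ref{holomorphic}(iii) from a single-slice statement ($t=0$) to a statement on a tubular neighborhood $(-\varepsilon,\varepsilon)\times\T$. The initial slice provides a global reference frame; local parametric lifts (in $t$) then exist by Theorem~\ref{holomorphic}(ii); and the compact one-dimensional topology of $\T$ makes the relevant patching problem tractable.

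In more detail, the first step is to apply Theorem~\ref{holomorphic}(iii) at $t=0$ to obtain an analytic frame $u_{1,0},\dots,u_{k,0}\in C^\omega(\T,\C^d)$ with $E_0(\theta)=\operatorname{span}_\C\{u_{j,0}(\theta)\}$; these assemble into $U_0:\T\to \MM_k(d)$. Next, fix any $\theta_0\in\T$ and apply Theorem~\ref{holomorphic}(ii) with $M=U_0(\theta_0)$: this yields a neighborhood $U_M\subset G(k,d)$ of $E_0(\theta_0)$ and a holomorphic section $i_M:U_M\to\MM_k(d)$ of $\tilde p$. By continuity of $E_t(\theta)$ in $(t,\theta)$ and compactness of $\T$, one can choose a finite cover $\T=\bigcup_{n=1}^N W_n$, points $\theta_n\in W_n$, and a uniform $\varepsilon_0>0$ so that $V^{(n)}_t(\theta):=i_{M_n}(E_t(\theta))$ is well-defined, analytic, and has columns spanning $E_t(\theta)$ for $(t,\theta)\in(-\varepsilon_0,\varepsilon_0)\times W_n$. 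Since both $V^{(n)}_0(\theta)$ and $U_0(\theta)$ are analytic frames of the same subspace on $W_n$, there is an analytic $B_n:W_n\to \GL(k,\C)$ with $V^{(n)}_0(\theta)B_n(\theta)=U_0(\theta)$; define the normalized local frames $\widetilde V^{(n)}_t(\theta):=V^{(n)}_t(\theta)B_n(\theta)$, which agree with $U_0(\theta)$ at $t=0$ and still span $E_t(\theta)$.

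The remaining (and main) step is patching. On each overlap $W_n\cap W_m$ there is an analytic cocycle $G_{nm}:(-\varepsilon_0,\varepsilon_0)\times(W_n\cap W_m)\to \GL(k,\C)$ with $\widetilde V^{(n)}_t=\widetilde V^{(m)}_t\,G_{nm}(t,\theta)$, satisfying the $\check{C}$ech cocycle identities and the key normalization $G_{nm}(0,\theta)=I_k$. One then seeks analytic maps $H_n:(-\varepsilon,\varepsilon)\times W_n\to\GL(k,\C)$ with $H_n(0,\theta)=I_k$ and $G_{nm}=H_n H_m^{-1}$; setting $u_{j,t}(\theta):=j\text{th column of }\widetilde V^{(n)}_t(\theta)H_n(t,\theta)^{-1}$ on $W_n$ then gives the required global analytic basis. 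Existence of the $H_n$ follows by iterating: expanding $G_{nm}(t,\theta)=I+t\,g^{(1)}_{nm}(\theta)+t^2 g^{(2)}_{nm}(\theta)+\cdots$ and matching order by order reduces the problem to solving \emph{linear} $\check{C}$ech problems with values in $\mathfrak{gl}(k,\C)\otimes\mathcal O$ over $\T$ (or its complex strip $\T_\delta$), whose first cohomology vanishes on a suitable refinement of the cover by connected, simply-connected opens. A standard convergence argument, using the uniform analytic estimates provided by Theorem~\ref{holomorphic}(ii), shows the formal solution converges for $|t|<\varepsilon$ with some $0<\varepsilon\le\varepsilon_0$.

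The hard part is precisely this nonlinear patching, since the definitions of $u_{j,t}$ via locally chosen sections of $\tilde p$ do not agree on overlaps for $t\neq 0$, and there is no globally defined holomorphic projection $\C^d\to E_0(\theta)$ one can invoke directly (orthogonal projection involves conjugation and so breaks holomorphy in the complex $\theta$-variable that $C^\omega(\T,\cdot)$ implicitly records). The cocycle formulation sidesteps this issue by placing the entire difficulty into a $\check{C}$ech-cohomological problem which, thanks to the one-dimensionality of $\T$ and the normalization $G_{nm}|_{t=0}=I$, is solvable. This is where the structure of the base and the base-case Theorem~\ref{holomorphic}(iii) combine to make the argument go through.
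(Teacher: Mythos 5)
Your proposal takes a much more elaborate route than the paper's, and in the course of doing so explicitly dismisses the route the paper actually takes. The paper's proof is short: with $\{u_{j,0}\}$ the frame at $t=0$ from Theorem~\ref{holomorphic}(iii), set $u_{j,t}(\theta) := P_t(\theta) u_{j,0}(\theta)$ where $P_t(\theta)$ is the orthogonal projection onto $E_t(\theta)$; for $|t|$ small, $P_t$ stays injective on $E_0(\theta)$ uniformly in $\theta$ (compactness of $\T$), so the images form a basis of $E_t(\theta)$. Your objection that ``orthogonal projection involves conjugation and so breaks holomorphy'' is not correct in this setting: $C^\omega(\T,\cdot)$ is real analyticity, and conjugation of a real-analytic function of a real variable is again real analytic, so $P_t(\theta)=M(M^*M)^{-1}M^*$, computed from any local analytic lift $M$ of $E_t$, is real analytic in $(t,\theta)$ and globally well defined (it does not depend on the choice of $M$). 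Even if one insists on a holomorphic extension to a strip, $z\mapsto M(z)\bigl(\overline{M(\bar z)}^{\mathrm{T}}M(z)\bigr)^{-1}\overline{M(\bar z)}^{\mathrm{T}}$ is holomorphic on a slightly narrower strip since $z\mapsto\overline{M(\bar z)}^{\mathrm{T}}$ is holomorphic. So the one-line projection argument does go through, and it is precisely what the paper does.

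As for your Cech-cocycle route: it is not wrong in spirit, but the phrase ``a standard convergence argument'' is papering over the actual crux. Order by order in $t$ you are solving additive Cousin problems with values in $\mathfrak{gl}(k,\C)$ over $\T$, and while $H^1$ of the relevant sheaf does vanish (this is Cartan/Grauert, not a consequence of choosing a simply-connected cover), to sum the formal series you need uniform quantitative bounds in the splitting at each order, which requires either Hörmander-type $\bar\partial$ estimates or an appeal to the open mapping theorem for the coboundary operator. None of this is supplied. The cleaner way to finish along your lines is to skip the formal expansion entirely and invoke the Oka--Grauert principle (or Grauert's classification of real-analytic bundles): the cocycle $\{G_{nm}\}$ defines a bundle over a Stein neighborhood of $\{0\}\times\T$ which is topologically trivial (it retracts onto the $t=0$ slice, trivialized by the reference frame), hence analytically trivial. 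With that, your argument closes, but it is far heavier machinery than the lemma needs, and the claimed obstruction that motivated it was not real.
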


\begin{proof}
    Let \( P_t(\theta) \) denote the orthonormal projection onto the subspace \( u_t(\theta) \). Since \( u_t(\theta) \) is a holomorphic family of subspaces, the projection operator \( P_t(\theta) \) is also holomorphic in \( (t,\theta) \). By Theorem \ref{holomorphic}, there exist initial analytic basis vectors \( u_{1,0}, u_{2,0}, \dots, u_{k,0} \in C^\omega(\mathbb{T}, \mathbb{C}^d) \) such that for each \( \theta \in \mathbb{T} \),
    \[
    u_0(\theta) = \operatorname{span}_\mathbb{C}\{u_{1,0}(\theta), u_{2,0}(\theta), \dots, u_{k,0}(\theta)\}.
    \]
    Define the time-dependent vectors by \( u_{i,t}(\theta) := P_t(\theta)u_{i,0}(\theta) \). We claim these vectors form an analytic basis for \( u_t(\theta) \).

    Since \( P_t(\theta) \) is holomorphic and \( u_{i,0}(\theta) \) is analytic, the composition \( u_{i,t}(\theta) \) is analytic in \( (t,\theta) \). By construction, each \( u_{i,t}(\theta) \) lies in \( u_t(\theta) \) (as the image of the projection). To see they span \( E_t(\theta) \), note that for \( t=0 \) they recover the original basis, and the holomorphic dependence ensures the span remains \( k \)-dimensional (hence equal to \( u_t(\theta) \)) for sufficiently small \( |t| < \varepsilon \).
\end{proof}

\subsection{Complex  cocycles}\label{com}
Let $T:\Omega\to \Omega$ be a continuous map,  $A\in C^0(\Omega,{\rm M}(m,\C))$, a cocycle $(T, A)$ is a linear skew product:
$$
(T,A)\colon \left\{
\begin{array}{rcl}
	\Omega \times \CC^{m} &\to& \Omega \times \CC^{m}\\[1mm]
	(\omega,v) &\mapsto& (T\omega,A(\omega)v)
\end{array}
\right.  .
$$
For $n\in\mathbb{Z}$, $A_n$ is defined by $(T,A)^n=(T^n,A_n).$ Thus $A_{0}(\omega)=id$,
\begin{equation*}
	A_{n}(x)=\prod_{j=n-1}^{0}A(T^{j}\omega)=A(T^{n-1}\omega)\cdots A(T\omega)A(\omega),\ for\ n\ge1,
\end{equation*}
and $A_{-n}(\omega)=A_{n}(T^{-n}\omega)^{-1}$.

We denote by $L_1(A)\geq L_2(A)\geq...\geq L_m(A)$ the Lyapunov exponents of $(\alpha,A)$ repeated according to their multiplicities, i.e.,
$$
L_k(A)=\lim\limits_{n\rightarrow\infty}\frac{1}{n}\int_{\Omega}\ln(\sigma_k(A_n(x)))d\nu,
$$
where for any matrix $B\in {\rm M}(m,\C)$, we denote by
$\sigma_1(B)\geq...\geq \sigma_m(B)$ its singular values (eigenvalues
of $\sqrt{B^*B}$).  Since the k-th exterior product $\Lambda^k A_n$ satisfies $\sigma_1(\Lambda^k A_n)=\|\Lambda^k A_n\|$, $L^k(A)=\sum_{j=1}^kL_j(A)$ satisfies
$$
L^k(A)=\lim\limits_{n\rightarrow \infty}\frac{1}{n}\int_{\Omega}\ln\|\Lambda^kA_n(x)\|d\nu.
$$

\subsection{Uniform hyperbolicity and dominated splitting}\label{secds}
 Recall that
for complex cocycles $(T,A)\in C^0(\Omega,{\rm M}(m,\C))$, Oseledets theorem provides us with  strictly decreasing
sequence of Lyapunov exponents $L_j \in [-\infty,\infty)$ of multiplicity $m_j$, $1\leq j \leq \ell$ with $\sum_{j}m_j=m$, and for $a.e.$ $\omega$, there exists
a measurable invariant decomposition $$\C^m=E^{1}(\omega)\oplus E^2(\omega)\oplus\cdots\oplus E^{\ell}(\omega)$$ with $\dim E_\omega^j=m_j$ for $1\leq j\leq \ell$ such that $$
\lim\limits_{n\rightarrow\infty}\frac{1}{n}\ln\|A_n(\omega)v\|=L_j,\ \  \forall v\in E_\omega^j\backslash\{0\}.
$$
An invariant decomposition $\C^m=E^{1}(\omega)\oplus E^2(\omega)\oplus\cdots\oplus E^{\ell}{(\omega)}$  is   dominated if for any unit vector $v_j\in E^j(\omega)\backslash \{0\}$, we have $$\|A_n(\omega)v_j\|>\|A_n(\omega)v_{j+1}\|.$$
Oseledets decomposition is a priori measurable,  if an invariant decomposition $\CC^m=E^{1}(\omega)\oplus E^2(\omega)\oplus\cdots\oplus E^{\ell}(\omega)$  is   dominated, then $E^j(\omega)$ depends  continuously on $\omega$.

We also recall that  $(T,A)$  is called $k$-dominated (for some $1\leq k\leq m$) if there exists a dominated decomposition $\CC^m=E^+(\omega) \oplus E^-(\omega) $    with $\dim E^+_\omega =k.$  It follows from the definitions that the Oseledets splitting is dominated if and only if $(T,A)$ is $k$-dominated for each $k$ such that  $L_k(T,A)> L_{k+1}(T,A)$.

\begin{proposition}\label{lem: Hol dep AJS}\cite{AJS}If 
$(T,A)$ is dominated, then for any $1\leq j\leq \ell$,   $E^j(\omega)$ depend holomorphically on $A$.
\end{proposition}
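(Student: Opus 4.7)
The plan is to exhibit each $E^j(\omega)$ as the holomorphic fixed point of a uniform contraction on a Grassmannian fiber bundle, and then invoke the holomorphic dependence of fixed points on parameters.

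First I would reduce to the two-piece case. Since the full dominated Oseledets splitting refines to $k_j$-dominated splittings $\mathbb{C}^m = U_j(\omega) \oplus S_j(\omega)$ with $k_j = m_1 + \cdots + m_j$, where $U_j = E^1 \oplus \cdots \oplus E^j$ and $S_j = E^{j+1} \oplus \cdots \oplus E^\ell$, each Oseledets subspace is recovered as a transverse intersection $E^j(\omega) = U_j(\omega) \cap S_{j-1}(\omega)$ (with the convention $S_0 := \mathbb{C}^m$). In the Grassmannian charts of Theorem \ref{holomorphic}, transverse intersection is a holomorphic operation, so it suffices to prove holomorphic dependence for the top subbundle $E^+$ of a single $k$-dominated splitting.

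Next, for a holomorphic family $z \mapsto A_z$ with $A_0 = A$, I would fix the reference splitting $\mathbb{C}^m = E^+_0(\omega) \oplus E^-_0(\omega)$ and use the local Grassmannian chart from Theorem \ref{holomorphic}(ii) to parametrize nearby $k$-planes as graphs of linear maps $L(\omega) \in \mathrm{Hom}(E^+_0(\omega), E^-_0(\omega))$. Writing $A_z(\omega)$ in $2 \times 2$ block form relative to this reference splitting yields the induced graph transform
\begin{equation*}
    \Gamma_z(L)(T\omega) = \bigl(A_z^{-+}(\omega) + A_z^{--}(\omega)\, L(\omega)\bigr)\bigl(A_z^{++}(\omega) + A_z^{+-}(\omega)\, L(\omega)\bigr)^{-1},
\end{equation*}
which is holomorphic in $(z, L)$ on a uniform neighborhood of the zero section. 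Since dominated splittings are characterized by invariant cone fields, and this characterization is open in $A$, for $z$ in a small polydisk $(T, A_z)$ remains $k$-dominated; consequently, for some fixed $n$, $\Gamma_z^n$ is a uniform contraction on the Banach space of continuous sections $\omega \mapsto L(\omega)$ (with the sup norm), with contraction ratio bounded away from $1$ uniformly in $z$. The parametric Banach fixed-point theorem then yields a unique fixed section $L_z(\omega)$ depending holomorphically on $z$, and uniqueness of the invariant $k$-section in the chosen chart identifies its graph with $E^+_{A_z}(\omega)$.

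The main obstacle is verifying the uniform contraction of $\Gamma_z^n$ in a way that is honestly holomorphic in $z$. One must translate the Lyapunov gap $L_k(A) - L_{k+1}(A) > 0$, which is only an almost-everywhere statement, into the pointwise angle/expansion estimate supplied by the cone-field characterization of domination, and then propagate this estimate uniformly to a complex neighborhood of $A$. Once the uniform holomorphic contraction is in place, the conclusion follows from the holomorphic implicit function theorem applied to the equation $L - \Gamma_z^n(L) = 0$.
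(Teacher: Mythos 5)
Your proof is correct and takes essentially the same route as the one the paper cites from \cite{AJS} and sketches in Appendix \ref{ds-con}: parametrize nearby subbundles as graphs over a reference splitting, observe that the induced graph transform (or a fixed power of it) is a uniform contraction by domination and remains so on a complex neighborhood of $A$ by cone-field robustness, and conclude that the invariant section varies holomorphically. The only cosmetic difference is the final step — the paper's sketch invokes Montel's theorem on the locally uniform limit of the holomorphic maps $(A')^n(T^{-n}\omega)E(A)$, whereas you invoke the parametric Banach fixed-point / holomorphic implicit-function theorem for $L - \Gamma_z^n(L) = 0$; these are interchangeable (the former is essentially how the latter is proved), and your reduction of the $\ell$-piece splitting to a two-piece one via transverse intersections $E^j = U_j \cap S_{j-1}$ is standard and sound.
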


In this paper, we focus on the case that the associated Schr\"odinger cocycle (on the strip) $A_E$ is {\it partially hyperbolic}, which means that there is an $A_E$-invariant dominated splitting $E^u_{A_E}\oplus E^c_{A_E}\oplus E^s_{A_E}$ of $\CC^{2m}$ everywhere, and  there exist  some constants $C>0,c>0$, and for every $n\geqslant 0$,
$$
\begin{aligned}
	\| A_n(\omega)v\| \leqslant Ce^{-cn}\| v\|, \quad & v\in E^s(\omega),\\
	\| A_n(\omega)^{-1}v\| \leqslant Ce^{-cn}\|v\|,  \quad & v\in E^u(T^n\omega).
\end{aligned}
$$ 
By classical cocycle theory, such splitting persists and varies continuously under small perturbation of $A_E$, hence partial hyperbolicity of $A_E$ is a robust property for both $V$ and $E$. 
In particularly, if $\CC^{2m}=E^u_{A_E}\oplus E^s_{A_E}$, we say the  cocycle $(T, A_E)$ is {\it uniformly hyperbolic}.

\subsection{Basic properties of Hermitian symplectic matrices}\label{subsec: H-S}

Recall that \( \Sp(2d,\mathbb{R}) \) (resp. \(\HSp(2d) \)) denotes the set of symplectic (resp. Hermitian symplectic) matrices, defined by
\[
\Sp(2d,\mathbb{R}) = \left\{ A \in \mathbb{R}^{2d \times 2d} \,\big|\, A^T J A = J \right\}, \quad \HSp(2d) = \left\{ A \in \mathbb{C}^{2d \times 2d} \,\big|\, A^* J A = J \right\},
\]
where \( J = \begin{pmatrix} O & -I_d \\ I_d & O \end{pmatrix} \) denotes the standard symplectic structure on \( \mathbb{R}^{2d} \) (resp. \( \mathbb{C}^{2d} \)). 

We now recall the direct sum operation for Hermitian symplectic groups. Let
\[
S_1 = \begin{pmatrix} A_1 & A_2 \\ A_3 & A_4 \end{pmatrix} \in \HSp(2n_1) \quad \text{and} \quad S_2 = \begin{pmatrix} B_1 & B_2 \\ B_3 & B_4 \end{pmatrix} \in \HSp(2n_2),
\]
then their direct sum is defined as
\[
S_1 \diamond S_2 = \begin{pmatrix}
A_1 & O & A_2 & O \\
O & B_1 & O & B_2 \\
A_3 & O & A_4 & O \\
O & B_3 & O & B_4
\end{pmatrix} \in \HSp(2n_1 + 2n_2).
\]
Note that while we state this action for Hermitian symplectic groups, an analogous construction holds for symplectic groups.

We now extend the standard symplectic structure to a more general context.

\begin{definition}
A Hermitian symplectic structure on \( \mathbb{C}^{2m} \) is given by an antisymmetric non-degenerate skew-linear 2-form \( \psi(\cdot, \cdot) \), which is conjugate-linear in the first argument and linear in the second argument, satisfying:
\begin{enumerate}
\item Skew-Hermitian property: \( \psi(X, Y) = -\overline{\psi(Y, X)} \) for all \( X, Y \in \mathbb{C}^{2m} \);
\item Non-degeneracy: If \( \psi(X, Y) = 0 \) for all \( Y \in \mathbb{C}^{2m} \), then \( X = 0 \).
\end{enumerate}
\end{definition}
\begin{definition}
    A vector $v\in \C^{2m}$ is called isotropic if $\psi(v,v)=0$.
\end{definition}

\begin{definition}[Hermitian Symplectic Subspaces]
A Hermitian symplectic subspace of \( \mathbb{C}^{2m} \) is a subspace \( V \subseteq \mathbb{C}^{2m} \) such that the restriction of the symplectic form \( \psi \) to \( V \times V \) is non-degenerate. The Grassmannian of all \( 2k \)-dimensional Hermitian symplectic subspaces of \( \mathbb{C}^{2m} \) is denoted by \( G_{\HSp}(2k, 2m) \).
\end{definition}
\begin{definition}[Signature of a Hermitian symplectic space]
    let $V=\operatorname{span}_{\C}(v_1,v_2,\cdots,v_{2n})$ be a basis of 
the Hermitian symplectic subspace $V \subset \C^{2d}$. We define the  Krein matrix by  
\begin{equation*}
G(v_1,v_2,\cdots,v_{2n})=i(\psi(v_i,v_j))_{1\leq i,j\leq2n} .
\end{equation*}
 Its congruence normal form is given by $ \operatorname{diag}(I_p, -I_q) $, where $ p $ represents the positive inertia index and $ q $ the negative inertia index of $ G $, with the condition that $ p + q = 2n $. It is clear that $ p - q $ are uniquely determined by $V $, referred to as the signature  of $ V $ and denoted by $ \operatorname{sign}(V) $.
\end{definition}

In this paper, unless otherwise specified, we always equipped $\C^{2m}$ the Hermitian symplectic structure introduced by $(T,A_E)$, i.e. 
\begin{equation}\label{syms}
    \psi(X,Y)=X^*SY, \text{ where } S=\begin{pmatrix}
	0&-C^*\\
	C&0
\end{pmatrix}.
\end{equation}
\begin{lemma}\label{wron11}\cite{GJ,WXZ}
   Suppose $(T, A_E)$ is partially hyperbolic, then $E^c_{A_E}$ is a Hermitian-symplectic subspace.    
\end{lemma}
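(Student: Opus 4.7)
The plan is to establish that $\psi|_{E^c_{A_E}}$ is non-degenerate. I will first show that $E^s_{A_E}$ and $E^u_{A_E}$ are each $\psi$-isotropic and $\psi$-orthogonal to $E^c_{A_E}$, and then deduce non-degeneracy on $E^c_{A_E}$ from the global non-degeneracy of $\psi$: if $v\in E^c_{A_E}(\omega)$ satisfies $\psi(v,E^c_{A_E}(\omega))=0$, then combined with the orthogonality statements one gets $\psi(v,\mathbb{C}^{2m})=\psi(v,E^s_{A_E}\oplus E^c_{A_E}\oplus E^u_{A_E})=0$, forcing $v=0$.

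For the isotropy of $E^s_{A_E}$, the identity $A_E^{*}SA_E=S$ translates to $\psi$-invariance under cocycle iteration, so $\psi(v,w)=\psi((A_E)_{n}v,(A_E)_{n}w)$ for all $n\in\mathbb{Z}$. Taking $v,w\in E^s_{A_E}(\omega)$ and estimating the right-hand side by Cauchy--Schwarz,
\[
|\psi(v,w)|=|\psi((A_E)_n v,(A_E)_n w)|\le \|S\|\cdot\|(A_E)_n v\|\cdot\|(A_E)_n w\|\le \|S\|C^{2}e^{-2cn}\|v\|\|w\|\xrightarrow{n\to\infty}0,
\]
giving $\psi(v,w)=0$. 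Applying the same idea to the inverse cocycle yields isotropy of $E^u_{A_E}$.

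The more delicate step will be the $\psi$-orthogonality of $E^c_{A_E}$ to $E^s_{A_E}$. For $v_s\in E^s_{A_E}(\omega)$ and $v_c\in E^c_{A_E}(\omega)$ we again have $\psi(v_s,v_c)=\psi((A_E)_n v_s,(A_E)_n v_c)$, but now I must control $\|(A_E)_n v_c\|$. To do this I would combine two ingredients: (i) the uniform dominated splitting forces the growth rate along $E^c_{A_E}$ to lie strictly below the $u$-hyperbolicity rate $c$; (ii) the Hermitian-symplectic identity $A_E^{-1}=S^{-1}A_E^{*}S$ forces the Lyapunov spectrum of $(T,A_E)$ to be symmetric about $0$, so the growth rate along $E^c_{A_E}$ also stays strictly above $-c$. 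Together these yield a uniform upper bound $\|(A_E)_n|_{E^c_{A_E}}\|\le C' e^{(c-\delta)n}$ for some $\delta>0$, so that $|\psi((A_E)_n v_s,(A_E)_n v_c)|\le C'' e^{-\delta n}\to 0$ and $\psi(v_s,v_c)=0$. The orthogonality $\psi(E^u_{A_E},E^c_{A_E})=0$ follows by the symmetric argument for the backward cocycle.

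The principal obstacle will be producing the uniform growth estimate on $E^c_{A_E}$: uniform domination only gives a \emph{relative} comparison $\|(A_E)_n|_{E^c_{A_E}}\|\lesssim \lambda^{n}\,\|(A_E)_n|_{E^u_{A_E}}\|$, and the $u$-hyperbolicity assumption only yields the \emph{lower} bound $m((A_E)_n|_{E^u_{A_E}})\gtrsim e^{cn}$. Bridging this gap requires genuine use of the Hermitian-symplectic duality, which provides the matching absolute ceiling on the expansion along $E^c_{A_E}$; this is the technical heart of the argument, available in the cited references \cite{GJ,WXZ}.
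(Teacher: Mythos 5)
The paper does not prove this lemma; it cites \cite{GJ,WXZ} and gives no argument of its own, so there is no internal proof to compare against. Evaluating your proposal on its merits: the overall three-step plan (isotropy of $E^s,E^u$; $\psi$-orthogonality of $E^c$ to $E^s,E^u$; then global non-degeneracy of $\psi$) is the standard and correct route, and your isotropy argument is sound.

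The gap is in the orthogonality step, and it is a real one. The intermediate bound
\[
\|(A_E)_n|_{E^c_{A_E}}\|\le C' e^{(c-\delta)n}
\]
does \emph{not} follow from partial hyperbolicity with rate $c$ together with domination (even with the symplectic symmetry of the Lyapunov spectrum). The constant $c$ in the hypothesis is only a \emph{lower} bound on the true contraction rate of $E^s$ and expansion rate of $E^u$; domination of $E^c$ by $E^u$ bounds center growth by $\lambda^n\,m\bigl((A_E)_n|_{E^u}\bigr)$, and $m\bigl((A_E)_n|_{E^u}\bigr)$ can exceed $e^{cn}$ by an exponential margin. A concrete Hermitian-symplectic example on $\mathbb{C}^4$: take a constant block at one base point with eigenvalue moduli $(e^{10},e^{5},e^{-5},e^{-10})$ split as $E^u=(e^{10})$, $E^c=(e^{5},e^{-5})$, $E^s=(e^{-10})$, and a block at a second base point with moduli $(e^{3},e^{2},e^{-2},e^{-3})$. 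The uniform constant is then $c=3$, yet $\|(A_E)_n|_{E^c}\|=e^{5n}$ at the first point, so your bound fails for every $\delta>0$ even with the optimal $c$. Your parenthetical claim (ii) --- that spectral symmetry keeps the center rate ``strictly above $-c$'' --- is equivalent, by that very symmetry, to (i), and fails in the same example.

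What \emph{is} true, and what the symplectic structure genuinely supplies, is the product bound $\|(A_E)_n|_{E^s}\|\cdot\|(A_E)_n|_{E^c}\|\to 0$. The mechanism: $(A_E)_n^{-1}=S^{-1}(A_E)_n^{\*}S$ forces the singular-value reciprocity $\sigma_j\bigl((A_E)_n\bigr)\,\sigma_{2m-j+1}\bigl((A_E)_n\bigr)\in[\kappa(S)^{-1},\kappa(S)]$, which (using the uniform angle between the invariant sub-bundles, a standard fact for dominated splittings) yields $m\bigl((A_E)_n|_{E^u}\bigr)\,\|(A_E)_n|_{E^s}\|\lesssim 1$. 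Combining with the domination estimate $\|(A_E)_n|_{E^c}\|\lesssim\lambda^n\,m\bigl((A_E)_n|_{E^u}\bigr)$ gives $\|(A_E)_n|_{E^s}\|\,\|(A_E)_n|_{E^c}\|\lesssim\lambda^n\to 0$, and then $\psi(v_s,v_c)=\psi\bigl((A_E)_n v_s,(A_E)_n v_c\bigr)\to 0$ as desired; the $E^u$ case is symmetric via the inverse cocycle. (An alternative route is via Lyapunov exponents: domination plus symmetry of the spectrum gives $\mu_{\max}(E^c)+\mu_{\max}(E^s)<0$, which proves the orthogonality for a.e.~$\omega$, and continuity of the bundles extends it to all $\omega$.) So your structure and your final non-degeneracy deduction are right, but the claimed absolute ceiling on the center expansion below $e^{cn}$ is where the argument breaks; the duality controls the \emph{pairing} of the $E^s$ and $E^c$ rates at each $\omega$, not the center rate against the single global constant $c$.
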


\subsection{Quasi-periodic cocycle, Rotation number}

If \( T: \mathbb{T} \to \mathbb{T} \) is defined by \( T\theta = \theta + \alpha \), where \( \alpha \in \mathbb{R} \setminus \mathbb{Q} \), we denote this cocycle by \( (\alpha, A) \) and refer to it as a quasi-periodic cocycle.

In particular, if $A\in C^\omega(\T,{\rm M}(m,\C)),$ the Lyapunov exponent is  continuous with respect to $(\alpha,A).$
\begin{theorem}\cite{AJS}\label{Lecon}
	The function $\R \times \C^{\omega}(\T, {\rm M}(m,\C)\ni
	(\alpha,A)\mapsto L_k(\alpha,A)$
	are continuous at any $(\alpha',A')$ with $\alpha'\in \R\setminus\Q$.
\end{theorem}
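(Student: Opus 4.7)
The plan is to combine upper semicontinuity of $L^k$ (which holds at every $(\alpha, A)$) with a complexification-based convexity argument that yields lower semicontinuity precisely when $\alpha'$ is irrational. Since $L_k = L^k - L^{k-1}$, it suffices to prove the statement for each accumulated exponent $L^k(\alpha, A)$ defined in Section \ref{com}.

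First, for upper semicontinuity I would use the subadditive variational formula
\[
L^k(\alpha, A) \;=\; \inf_{n \geq 1}\, \frac{1}{n} \int_{\T} \log \|\Lambda^k A_n(\theta)\|\, d\theta,
\]
which follows from Kingman's theorem. For each fixed $n$, the functional $(\alpha, A) \mapsto \frac{1}{n}\int_{\T} \log\|\Lambda^k A_n(\theta)\|\, d\theta$ is continuous on $\R \times C^{\omega}(\T,{\rm M}(m,\C))$: convergence in the analytic inductive-limit topology forces uniform convergence of the product $A_n$ on a common strip, and the integrands, while unbounded below, are uniformly integrable via standard subharmonic majorants for $\log\|\cdot\|$ of holomorphic matrix-valued functions. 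Thus $L^k$ is upper semicontinuous as the infimum of a family of continuous functionals, with no restriction on $\alpha$.

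Next I would encode convexity by complexifying the phase. Since $A \in C^{\omega}(\T,{\rm M}(m,\C))$, it extends holomorphically to some strip $\T_\delta$; set
\[
\mathcal{L}^k_{\alpha, A}(y) \;:=\; L^k\bigl(\alpha,\, A(\cdot + iy)\bigr), \qquad |y| < \delta.
\]
Because $\Lambda^k A_n(z)$ is holomorphic in $z$, the map $(\theta, y) \mapsto \log\|\Lambda^k A_n(\theta + iy)\|$ is plurisubharmonic, so integrating out $\theta$ gives a convex function of $y$ for every $n$. A pointwise infimum (equivalently, a pointwise limit) of convex functions remains convex, hence $y \mapsto \mathcal{L}^k_{\alpha, A}(y)$ is convex on $(-\delta, \delta)$.

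Finally, given a sequence $(\alpha_j, A_j) \to (\alpha', A')$ with $\alpha'$ irrational, a common strip $\T_{\delta_0}$ is available for all large $j$, and the family $\{\mathcal{L}^k_{\alpha_j, A_j}\}$ is uniformly bounded above on compact subsets of $(-\delta_0, \delta_0)$. By Helly's compactness principle for convex functions on an interval, any subsequence contains a further subsequence converging locally uniformly to a convex function $g$, and the upper semicontinuity of the first step yields $g(y) \leq \mathcal{L}^k_{\alpha', A'}(y)$ for every $y$. The main obstacle—and the only place where the irrationality of $\alpha'$ is used—is to promote this inequality to equality at $y = 0$. My plan is an integrated Fatou argument: for small $a > 0$, swapping the $y$- and $\theta$-integrals via Fubini and using the uniform ergodic convergence of $\frac{1}{n}\log\|\Lambda^k A_n(\theta + iy)\|$ along the irrational rotation (together with dominated convergence on the strip, using the uniform analytic bounds on $A_j$), one obtains $\int_{-a}^{a} g(y)\, dy = \int_{-a}^{a} \mathcal{L}^k_{\alpha', A'}(y)\, dy$. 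Combined with the pointwise inequality $g \leq \mathcal{L}^k_{\alpha', A'}$ and the automatic continuity of convex functions on open intervals, this forces $g \equiv \mathcal{L}^k_{\alpha', A'}$ near $0$, in particular $g(0) = \mathcal{L}^k_{\alpha', A'}(0)$, which delivers the required lower semicontinuity and completes the proof.
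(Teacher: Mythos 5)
Your decomposition into upper semicontinuity (via the infimum formula) plus convexity in the complexified strip parameter is sound and does mirror the first two ingredients in AJS, but the final step—where all the work must happen—contains a genuine gap. You have essentially observed that lower semicontinuity at irrational $\alpha'$ is the only nontrivial point and then asserted it rather than proven it. The claimed ``uniform ergodic convergence of $\frac{1}{n}\log\|\Lambda^k A_n(\theta+iy)\|$'' is not available: for a subadditive process over a uniquely ergodic system, Furman's uniform ergodic theorem gives only $\limsup_n \sup_\theta \frac{1}{n}\log\|\Lambda^k A_n(\theta+iy)\| \leq L^k$, i.e., uniformity from above, which is exactly upper semicontinuity again. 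The matching lower bound is \emph{false} at the pointwise level (the cocycle may have zeros), and its validity in an integrated sense is precisely what needs to be established. Consequently, your ``integrated Fatou'' step—obtaining $\int_{-a}^{a} g\,dy = \int_{-a}^{a} \mathcal{L}^k_{\alpha',A'}\,dy$ via Fubini and dominated convergence—is circular: to pass the limit in $j$ through the $y$-integral you would need the very lower semicontinuity of $L^k(\alpha_j, A_j(\cdot+iy))$ that you are trying to prove, since $L^k$ sits \emph{inside} the $y$-integral as a pointwise infimum over $n$ that cannot be exchanged with either $j$ or the integral without new input.

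What actually closes the gap in AJS is a quantitative, cocycle-specific mechanism, not general convexity plus an integral identity. The argument combines the quantization of the acceleration (the right slope of $y\mapsto L^k(\alpha,A(\cdot+iy))$ is $2\pi$ times an integer, a rigidity that severely constrains how the convex function can deform under perturbation) with boundary-effect estimates controlling $L^k$ on thin strips, and, going back to Bourgain--Jitomirskaya, large-deviation bounds for Birkhoff-type sums along the irrational rotation together with the Avalanche Principle to propagate lower bounds from finite scales to the asymptotic exponent. Irrationality of $\alpha'$ enters through Diophantine-type control of these large deviations, not through an averaging identity for the integral over $y$. To salvage your outline you would need to replace the third step with one of these mechanisms; the convexity framework you set up is a genuinely useful reduction (and is present in AJS), but it is the scaffolding, not the engine.
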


Now suppose $ A\in C^0(\T,\mathrm{HSp}(2m)) $ is {\it homotopic to the identity}, there exists a continuous map $ \tilde{F}_{T,A}$ acting on the covering space $\T\times \R\times \mathrm{SU}(m)  $, of the form $ \tilde{F}_{T,A}(\theta,x,S)=(\theta+\alpha,x+f(\theta,x,S),*) $, such that $ f(\theta,x,S)=f(\theta,x',S') $ whenever $ (x,S) $, $ (x',S')\in \R\times \mathrm{SU}(m) $ projects to the same point $ W_{\Lambda}\in \mathrm{U}(m) $. In order to simplify the terminology we shall say that $  \tilde{F}_{\alpha,A} $ is a {\it lift} for $ (\alpha,A) $. The map $ f $ is then descends to a map $ \T\times \mathrm{U}(m) \to \R $ and is independent of the choice of the lift, up to the integer.
   Then the limit 
  \[
	\lim_{n\to +\infty}\frac{1}{n}\sum_{k=0}^{n-1} f\left({(T,A)^k(\theta,W_{\Lambda})}\right) \mod \Z
  \] 
 is uniform in all $ (\theta,W_{\Lambda})\in \T\times\mathbf{U}(m) $, and coincides with 
  \[
	\rho(\alpha,A)=\int_{\T\times \mathrm{U}(m)} f(\theta,W_{\Lambda})d\nu \mod \Z
  \] 
  where $ \nu $ is any probability measure which is invariant under $ (\alpha,A) $ and which projects to Lebesgue measure on $ \T$. We call  $ \rho(\alpha,A) $ the {\it fibered rotation number}, which is independent of the choice of the lift.
  \begin{proposition}\cite{LW}\label{rotids}
	Let $ \mathcal{N}(E) $ denotes the {\it integrated density of states} of $ H_{V,\alpha,\theta} $. Then we have
	\[
		m(1-\mathcal{N}(E))=\rho(\alpha,A_E) \mod \Z.
	\] 
  \end{proposition}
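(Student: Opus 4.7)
The statement extends the classical Johnson--Moser formula from scalar Schr\"odinger cocycles ($m=1$) to the Hermitian-symplectic setting on $\C^{2m}$. The plan is to verify the identity for rational frequencies by direct spectral counting, and then extend to irrational $\alpha$ by continuity. First, I would observe that both sides depend continuously on $\alpha$ at irrational values: the IDS $\mathcal{N}(E)$ is continuous in $\alpha$ by strong resolvent convergence of $H_{V, \alpha_n, \theta} \to H_{V, \alpha, \theta}$, while the fibered rotation number $\rho(\alpha, A_E)$ is continuous at irrational $\alpha$ because the base rotation is uniquely ergodic, making the Birkhoff averages defining $\rho$ converge uniformly in $(\theta, W_\Lambda) \in \T \times \mathrm{U}(m)$. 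Hence it suffices to prove the identity for $\alpha = p/q \in \Q$ and pass to the limit along rational approximants.

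For $\alpha = p/q$ with $\gcd(p,q) = 1$, the operator $H_{V, p/q, \theta}$ commutes with translation by $q$, and Floquet--Bloch theory gives $E \in \spec(H_{V, p/q, \theta})$ if and only if the Hermitian-symplectic iterate $(A_E)_q(\theta) \in \HSp(2m)$ has an eigenvalue on the unit circle $S^1$. Consequently, $mq \cdot \mathcal{N}(E)$ equals a $\theta$-averaged count of eigenvalues of $(A_E)_q(\theta)$ on $S^1$, weighted by the Krein signature from Section~\ref{subsec: H-S}, while $q \cdot \rho(p/q, A_E) \pmod \Z$ is the $\theta$-averaged winding of the $m$-dimensional Lagrangian (isotropic) invariant subspaces on the Lagrangian Grassmannian $\mathrm{U}(m)$.

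The rational-case identity then follows from a homotopy/monotonicity argument: as $E$ decreases from $+\infty$, eigenvalues of $(A_E)_q(\theta)$ cross $S^1$ with signed directions dictated by their Krein signatures. Each crossing contributes simultaneously a unit to the eigenvalue count and a matching unit to the winding of the Lagrangian Grassmannian, with the overall factor $m$ arising from the $m$-dimensional nature of the isotropic invariant subspaces (equivalently, from the $m$-fold character of the determinant map $\mathrm{U}(m) \to \T$). Verifying the base case at $E \to +\infty$, where both sides vanish modulo $\Z$ by a direct computation using the asymptotic form of $A_E$ for large $|E|$, and integrating the crossing count along $E$ then yields $m(1 - \mathcal{N}(E)) \equiv \rho(p/q, A_E) \pmod \Z$. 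Continuity from the first paragraph then promotes this to all irrational $\alpha$.

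The main obstacle is controlling degenerate eigenvalue collisions of $(A_E)_q(\theta)$ on $S^1$, at which the Krein-signature analysis must be performed carefully using Puiseux expansions of the eigenvalue branches. The Hermitian-symplectic structure guarantees that the signed crossing count is preserved through generic perturbations, and the set of $(\theta, E)$ at which degenerate collisions occur has measure zero, so it does not affect the integrated identity modulo $\Z$. A subsidiary subtlety is matching normalizations: the factor $m$ must emerge from the definition of $\rho$ via the covering $\T \times \R \times \mathrm{SU}(m)$, and I would track it by identifying the function $f(\theta, W_\Lambda)$ in the rotation-number definition with $\frac{1}{m}$ of the logarithmic derivative of $\det W_\Lambda$ along an orbit, so that one full rotation of the Lagrangian plane registers $m$ units in the IDS count.
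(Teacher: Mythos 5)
The paper does not actually prove Proposition~\ref{rotids}---it is quoted from the external reference \cite{LW} without proof, so there is no paper argument to compare against. That said, your sketch follows the classical Johnson--Moser route for gap-labelling results (rational approximation plus Floquet--Bloch counting plus a crossing/winding argument), which is indeed the natural strategy and is likely close to what \cite{LW} does in spirit. The broad outline is sensible, but there are several concrete gaps that would need filling before this could be called a proof.

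First, continuity of $\rho(\alpha,A_E)$ in $\alpha$ at irrational $\alpha$ does not follow merely from unique ergodicity of the base rotation together with uniform convergence of the Birkhoff averages at that fixed $\alpha$; you must show $\rho(\alpha_n,A_E)\to\rho(\alpha,A_E)$ along rational approximants $\alpha_n\to\alpha$, and for $\alpha_n$ rational the base is not uniquely ergodic, so the rotation number of $(\alpha_n,A_E)$ has to be defined and controlled differently (typically via an average over $\theta$ and a compactness/equicontinuity argument, or via a monotonicity-in-$E$ characterization). Second, the sentence asserting that ``$mq\cdot\mathcal{N}(E)$ equals a $\theta$-averaged count of eigenvalues of $(A_E)_q(\theta)$ on $S^1$, weighted by the Krein signature'' conflates two different objects: the IDS corresponds to an unsigned count of unit-circle Floquet multipliers (the number of Bloch levels below $E$), while the Krein signature governs the signed direction of the winding in the rotation number; these agree only because of a monotonicity-in-$E$ statement for the cocycle $A_E$ that you do not invoke. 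That monotonicity is the structural heart of the argument---essentially what Lemma~\ref{monsc} of this paper provides---and without it the signed crossing count for $\rho$ need not track the unsigned eigenvalue count for $\mathcal N$. Third, the claim that the factor $m$ ``must emerge'' by identifying $f$ with $\tfrac{1}{m}$ of the logarithmic derivative of $\det W_\Lambda$ is stated as something you ``would track,'' not established; this normalization is exactly the delicate point, and one must match the lift used in the paper's definition of $\rho$ on $\T\times\R\times\mathrm{SU}(m)$ against the degree-$m$ determinant map $\mathrm{U}(m)\to\T$ rather than posit the identification. Until these three items (continuity across rational limits, the Krein-signature/monotonicity bookkeeping, and the $m$-fold normalization) are carried out, the argument remains a plausible program rather than a proof.
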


\begin{proposition}\label{rho}\cite{LW}
	If \(A:\T\to \mathrm{HSP}(2m)\) is continuous and homotopic to the identity and if \(B:\T\to \mathrm{HSP}(2m)\) is continuous, then there exists $r\in\Z$, such that
	\[
	\rho\Bigl((0,B)^{-1}\circ (\alpha,A)\circ (0,B)\Bigr)=\rho\bigl((\alpha,A)\bigr)- r\,\alpha \mod \Z.
	\]If \(B(\cdot)\) is only defined on \(2\T\), then there exists  \(r\in \Z\), such that 
	\[
	\rho\Bigl((0,B)^{-1}\circ (\alpha,A)\circ (0,B)\Bigr)=\rho\bigl((\alpha,A)\bigr)-\frac{r\,\alpha}{2} \mod \Z.
	\]
\end{proposition}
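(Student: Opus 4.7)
The plan is to reduce the problem to a classification of homotopy classes of maps $\T \to \HSp(2m)$ combined with an explicit computation for one representative in each class. The key topological input is $\pi_1(\HSp(2m)) \cong \Z$, which follows from $\HSp(2m)$ deformation retracting onto its maximal compact subgroup (which carries a $\mathbf{U}(1)$ factor detected by a determinant-like map). Consequently every continuous $B: \T \to \HSp(2m)$ has a well-defined degree $r \in \Z$, and the claim becomes that the rotation-number shift depends only on this degree, with proportionality constant $-\alpha$.

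First I would establish invariance of $\rho$ under conjugation by degree-zero maps. Such $B$ lifts to a continuous periodic $\tilde B: \R \to \widetilde{\HSp(2m)}$ with $\tilde B(\theta+1) = \tilde B(\theta)$, and then $\tilde B(\theta+\alpha)^{-1} \tilde F_{\alpha,A} \tilde B(\theta)$ is a genuine lift of the conjugated cocycle; its rotation component differs from that of $\tilde F_{\alpha,A}$ by a coboundary of the form $g(\theta+\alpha,\cdot) - g(\theta,\cdot)$ whose Birkhoff average vanishes against every invariant probability measure, so $\rho$ is unchanged. Next, in each nonzero homotopy class I would select a canonical representative $B_r$, realized for instance as a block-diagonal unitary-symplectic map whose $\mathbf{U}(1)$ factor winds $r$ times around $\T$. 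Its lift satisfies $\tilde B_r(\theta+1) = \tilde B_r(\theta) \cdot \gamma^r$, where $\gamma$ generates the deck group of $\widetilde{\HSp(2m)} \to \HSp(2m)$. The expression $\tilde B_r(\theta+\alpha)^{-1}\tilde F_{\alpha,A}\tilde B_r(\theta)$ therefore fails to be $1$-periodic by exactly $\gamma^{-r}$, and restoring periodicity forces an adjustment of $-r\alpha$ in the rotation coordinate. Averaging then yields $\rho((0,B_r)^{-1}\circ(\alpha,A)\circ(0,B_r)) = \rho(\alpha,A) - r\alpha \mod \Z$. Since every continuous $B$ is homotopic to $B_r$ for a unique $r$, combining with the degree-zero invariance gives the first assertion.

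For the second assertion, $B$ is defined on $2\T = \R/2\Z$, so its homotopy class is an integer $r$ in $\pi_1(\HSp(2m))$ when $B$ is viewed as a loop on the double cover. Pulling the entire setting back to $2\T$ and applying the argument above produces a shift of $-r\alpha$ measured on the double cover, which halves to $-r\alpha/2$ after descending to a $\T$-fundamental domain. The principal obstacle is the explicit lift-level computation producing the per-iteration shift $-r\alpha$: it requires choosing coordinates on $\HSp(2m)$ adapted to the symplectic form $\psi$ defined in \eqref{syms} so that the deck generator $\gamma$ acts by a unit translation in the rotation coordinate, and then carefully tracking the $\gamma^r$-twist through the integral definition of the fibered rotation number over $\T \times \mathbf{U}(m)$.
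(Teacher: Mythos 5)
The paper does not prove Proposition \ref{rho}; it is quoted from \cite{LW}, so there is no internal argument to compare against, and your proposal must stand on its own. The overall strategy you outline (split $B$ into a nullhomotopic part handled by a coboundary argument, plus a canonical generator handled by explicit computation on the cover) is the right template, familiar from the $\SL(2,\R)$ case. But the topological input you rely on is wrong, and the error is not cosmetic.

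The group $\HSp(2m)=\{A: A^*JA=J\}$ is the isometry group of the Hermitian form $iJ$, which has signature $(m,m)$; after a change of basis $\HSp(2m)\cong \Un(m,m)$. Its maximal compact is $\Un(m)\times\Un(m)$, not a single $\Un(m)$, so $\pi_1(\HSp(2m))\cong\Z\oplus\Z$, not $\Z$ as you assert. A loop $B:\T\to\HSp(2m)$ therefore carries a pair of integers $(r_1,r_2)$, while the $r$ appearing in the proposition is the single combination $r_2-r_1$ (the degree of $\theta\mapsto\det\bigl(U_1(\theta)^{-1}U_2(\theta)\bigr)$ once $B$ is homotoped into the block-diagonal compact subgroup $\{\diag(U_1,U_2)\}$). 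This is the only invariant the fibered rotation number can see, since that quantity is read off from the $\R$-coordinate of the covering space $\T\times\R\times\mathrm{SU}(m)$ of $\T\times\Un(m)$, i.e. from the induced $B$-action on the Lagrangian Grassmannian $\Un(m)$ and its $\det$-circle. Your proof breaks at the first step: $B(\theta)=e^{2\pi i\theta}I$ lies in $\HSp(2m)$, has $r_2-r_1=0$, yet is a nontrivial loop (class $(m,m)$), so it does not lift to a $1$-periodic map into the universal cover of $\HSp(2m)$ and your "degree zero $\Rightarrow$ coboundary" argument does not apply to it. The repair is to run the whole argument at the level of the induced $\Un(m)$-action rather than of $\HSp(2m)$ itself: nullhomotopy, canonical generators, and the $2\T$ double-cover step all go through once "degree of $B$" is replaced by "winding number of the induced loop in the $\det$-circle of $\Un(m)$," but as written your proposal computes the wrong invariant and the degree-zero case is genuinely incomplete.
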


\subsection{Aubry Duality}
Suppose there exists \( E \) such that the operator \( L_{\varepsilon v,w,\alpha,\theta} \) admits a solution \( u = (u_n)_{n \in \mathbb{Z}} \in \ell^2(\mathbb{Z}, \mathbb{C}) \). Define the Fourier transform \( \hat{u}(x) = \sum_{n \in \mathbb{Z}} u_n e^{inx} \). Then for almost every \( x \in \mathbb{T} \), the sequence \( \tilde{u} \) defined by
\[
\tilde{u}(n) = \hat{u}\left(x + \langle n, \alpha \rangle\right) e^{2\pi i \langle n, \theta \rangle}, \quad n \in \mathbb{Z}^d,
\]
serves as a solution to the dual operator \( \widehat{L}_{\varepsilon v,w,\alpha,x} \), which is defined by
\begin{equation}\label{dualo}
\left( \widehat{L}_{\varepsilon v,w,\alpha,x} u \right)_n := \varepsilon \sum_{k \in \mathbb{Z}^d} v_k u_{n+k} + w\left(x + \langle n, \alpha \rangle\right) u_n, \quad n \in \mathbb{Z}^d,
\end{equation}
where \( v_k \) denotes the Fourier coefficient of \( v(\cdot) \), and \( w(\theta) = \sum_{k} w_k e^{2\pi i k \theta} \).

Conversely, if \( \widehat{L}_{\varepsilon v,w,\alpha,x} \) has a solution \( u = (u_n)_{n \in \mathbb{Z}^d} \in \ell^1(\mathbb{Z}^d, \mathbb{C}) \), define the Fourier transform \( \hat{u}(\theta) = \sum_{n \in \mathbb{Z}^d} u_n e^{i\langle n, \theta \rangle} \). Then for any \( \theta \in \mathbb{T}^d \), the sequence \( \tilde{u} \) defined by
\[
\tilde{u}(n) = \hat{u}\left(\theta + n\alpha\right) e^{2\pi i n x}, \quad n \in \mathbb{Z},
\]
is a solution to \( L_{\varepsilon v,w,\alpha,\theta} \).

\subsection{Analytic set}
In this subsection, we recall some conclusion in classical descriptive set theory.
\begin{definition}\cite{Ke}
    Let $X$ be a Polish space. A set $A\subset X$ is called analytic if there is a Polish space $Y$ and Borel set $B\subset X\times Y$ with $A=\pi_X(B),$ where $\pi_X$ is the projection to $X.$
\end{definition}
\begin{proposition}\cite{Ke}\label{ana}
    All analytic subsets of a measurable space are universally measurable. In particular, if the measurable space is $\R$, then all analytic subsets are Lebesgue measurable.
\end{proposition}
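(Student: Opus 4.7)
The plan is to prove this classical descriptive-set-theoretic result via the Souslin operation. Unpacking the definition, if $A \subset X$ is analytic then $A = \pi_X(B)$ for some Borel $B \subset X \times Y$; since every Borel subset of a Polish space is a continuous image of the Baire space $\NN^{\NN}$, we can rewrite $A$ as the outcome of the Souslin operation applied to a scheme of closed sets:
\[
A \;=\; \bigcup_{\sigma \in \NN^{\NN}} \bigcap_{n=1}^\infty F_{\sigma|n},
\]
where $\{F_s\}_{s \in \NN^{<\NN}}$ are closed subsets of $X$ indexed by the tree of finite sequences. This normal form is the starting point.

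The central technical step is the classical Lusin--Sierpi\'nski theorem: for any finite Borel measure $\mu$ on $X$, the $\mu$-completion of the Borel $\sigma$-algebra is closed under the Souslin operation. To prove this I would fix an arbitrary Souslin scheme $\{E_s\}$ of $\mu$-measurable sets and approximate the resulting set $E = \bigcup_\sigma \bigcap_n E_{\sigma|n}$ from inside by truncating the indexing tree. For integers $k_1,k_2,\ldots$ define
\[
E^{(k_1,k_2,\ldots)} \;=\; \bigcup_{\sigma(1) \le k_1}\bigcup_{\sigma(2) \le k_2}\cdots\bigcap_{n=1}^\infty E_{\sigma|n}.
\]
Using continuity of $\mu$ from below along countable unions, one can inductively choose $k_n$ so that $\mu^*\bigl(E \setminus E^{(k_1,k_2,\ldots)}\bigr) < \varepsilon$. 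Each truncation is obtained from the $E_s$ by countably many measurable operations, hence lies in the $\mu$-completion; sending $\varepsilon \to 0$ exhibits $E$ as a $\mu^*$-null perturbation of a measurable set, so $E$ itself is measurable in the completion.

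Since closed sets are Borel, hence $\mu$-measurable, and the completion is closed under the Souslin operation, the analytic set $A$ belongs to the $\mu$-completion of the Borel $\sigma$-algebra for every finite Borel measure $\mu$. By the standard reduction from $\sigma$-finite to finite measures, this is precisely universal measurability. The second assertion is then immediate on $\R$: Lebesgue measure is the completion of its restriction to Borel sets, so universally measurable sets are Lebesgue measurable.

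The main obstacle is the $\varepsilon$-budget bookkeeping in the Souslin closure proof: controlling the outer measure of an uncountably indexed union $\bigcup_\sigma \bigcap_n E_{\sigma|n}$ via nested finite truncations requires a careful inductive estimate coordinating countable unions with countable intersections simultaneously, and verifying that the truncations themselves remain in the $\mu$-completion at every stage of the tree recursion.
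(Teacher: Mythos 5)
The paper does not prove this proposition: it is cited to Kechris's textbook (\cite{Ke}) as a standard fact of descriptive set theory, so there is no in-paper proof to compare against. Your outline is a faithful sketch of the classical Lusin--Sierpi\'nski argument that the $\mu$-completion of the Borel $\sigma$-algebra is closed under the Souslin operation, and the overall structure (reduce to a Souslin scheme of closed sets, prove closure under the Souslin operation for every finite Borel measure, deduce universal measurability, specialize to Lebesgue measure) is exactly right. Two steps deserve to be made explicit rather than glossed. First, the truncated set $E^{(k_1,k_2,\dots)}$ is a union over the \emph{uncountable} compact tree $T=\{\sigma:\sigma(j)\le k_j\ \forall j\}$, so the assertion that it is ``obtained by countably many measurable operations'' is not automatic; one needs the standard observation that, after passing to a monotone (regular) scheme, the Souslin operation over a finitely-branching tree collapses, via K\H{o}nig's lemma, to the countable intersection of finite unions $\bigcap_{n}\bigcup_{|s|=n,\,s\in T}E_s$, and that is where the measurability actually comes from. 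Second, the passage from ``$K_\varepsilon\subset E$ measurable with $\mu^*(E\setminus K_\varepsilon)<\varepsilon$'' to ``$E$ lies in the $\mu$-completion'' requires pairing each $K_\varepsilon$ with a measurable hull $N_\varepsilon\supset E\setminus K_\varepsilon$ of measure $<\varepsilon$ and squeezing $E$ between $\bigcup_{n}K_{\varepsilon_n}$ and $\bigcap_{n}(K_{\varepsilon_n}\cup N_{\varepsilon_n})$; it is routine but should not be left implicit. With these two remarks supplied, your proposal is a correct proof and coincides with the standard argument in Kechris.
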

\begin{lemma}\label{leb}
    Let $f:\T^d\rightarrow\R$ be a Borel measurable function, then for any Borel set $A\in\T^d$, $f(A)$ is the Lebesgue measurable set in $\R.$
\end{lemma}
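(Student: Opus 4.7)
The plan is to exhibit $f(A)$ as an analytic subset of $\R$ and then invoke Proposition \ref{ana}, which guarantees that analytic subsets of $\R$ are Lebesgue measurable. The main conceptual point is that although the image of a Borel set under a Borel map need not be Borel, it is always analytic, so universal measurability (in particular, Lebesgue measurability) still holds. This is the classical route: Borel map $+$ Borel set $\Rightarrow$ analytic image $\Rightarrow$ universally measurable.

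First I would observe that the graph of $f$ restricted to $A$, namely
\[
G_{f,A} := \{(x,y)\in \T^d\times \R : x\in A,\ y = f(x)\},
\]
is a Borel subset of the Polish space $\T^d\times \R$. Indeed, since $f:\T^d\to\R$ is Borel measurable, the map $\Phi:\T^d\times \R\to \R\times \R$ defined by $\Phi(x,y) = (f(x),y)$ is Borel measurable, and $G_{f,A} = (A\times \R)\cap \Phi^{-1}(\Delta)$, where $\Delta=\{(r,r):r\in\R\}$ is the (closed, hence Borel) diagonal in $\R\times \R$. Both $A\times\R$ and $\Phi^{-1}(\Delta)$ are Borel, so $G_{f,A}$ is Borel.

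Next, let $\pi_\R : \T^d\times \R \to \R$ denote the projection onto the second coordinate. Then
\[
f(A) = \pi_\R(G_{f,A}).
\]
Since $\T^d$ and $\R$ are Polish spaces and $G_{f,A}$ is a Borel subset of $\T^d\times\R$, by the very definition of an analytic set given just above Proposition \ref{ana} (a set obtained as the projection of a Borel subset of a product with a Polish space), $f(A)$ is an analytic subset of $\R$.

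Finally, by Proposition \ref{ana}, every analytic subset of $\R$ is Lebesgue measurable. Hence $f(A)$ is Lebesgue measurable, completing the proof. I do not foresee any serious obstacle; the only subtlety worth emphasizing is that one cannot hope for $f(A)$ to be Borel in general (by Luzin's classical example), which is precisely why the analytic-set framework of descriptive set theory is the right tool here.
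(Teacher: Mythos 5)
Your proof is correct and follows essentially the same route as the paper: realize the graph of $f$ as a Borel subset of $\T^d\times\R$, project onto $\R$, and invoke Proposition \ref{ana}. In fact your version is slightly more careful than the paper's, which writes $f(A)=\pi_\R(g^{-1}(0))$ without intersecting with $A\times\R$ (that equality as written would give $f(\T^d)$, not $f(A)$); your explicit intersection $G_{f,A}=(A\times\R)\cap\Phi^{-1}(\Delta)$ repairs this.
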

\begin{proof}
    Let $g(x,y)=f(x)-y$, it is a Borel measurable function in $\T^d\times\R$. Consider $$g^{-1}(0):=\{(x,y)|x\in\T^d,y=f(x)\},$$
    it is a Borel measurable set in $\T^d\times\R$ by the definition of measurable function.
   Therefore $f(A)=\pi_\R(g^{-1}(0))$ is Lebesgue measurable by Proposition \ref{ana}.
\end{proof}

\section{Ideas from hyperbolic geodesic flow}\label{ideah}
\subsection{Vertical bundle and Uniform angle estimates}\label{ver-sch}

Recall that to study the spectral properties of the dynamically defined  half-line restrictions \( H^{\pm}_{V,T,\omega} \) acting on \( \ell^2(\mathbb{Z}^{\pm}, \mathbb{C}^m) \), we employ a (Hermitian-)symplectic formalism. The transfer matrices \( A_E(\omega) \) generate a cocycle structure:
$$
A_E: \Omega \times \mathbb{C}^{2m} \to \Omega \times \mathbb{C}^{2m}, \quad (\omega, v) \mapsto (T\omega, A_E(\omega)v), 
$$
with fiber decomposition at \( \omega \in \Omega \):
$
\mathbb{C}^{2m}_\omega = \mathbb{C}^m(0,\omega) \times \mathbb{C}^m(1,\omega),$
where:
\begin{itemize}
    \item \( \mathbb{C}^m(0,\omega) \) is the \textbf{position fiber} 
    (value space at the $0$-th coordinate of \( H_{V,T,\omega} \)).
    \item \( \mathbb{C}^m(1,\omega) \)  represents the \textbf{momentum fiber} (the value space at the $1$-st coordinate of \( H_{V,T,\omega} \), equivalent to the 0th coordinate of \( H_{V,T,T\omega} \)).
\end{itemize}
Once we have this, define the \textit{vertical bundle}: 
    \[
    \mathcal{V}_\omega := \ker \pi_\omega = \{0\} \times \mathbb{C}^m(1,\omega),
    \]
    where \( \pi_\omega: \mathbb{C}^{2m}_\omega \to \mathbb{C}^m(0,\omega) \) is the canonical projection to its first factor.
    
On the other hand, we will assume that $(T, A_E)$ is partially hyperbolic. This means that the fiber can be decomposed into
$$
\mathbb{C}^{2m}_\omega = E^s_{A_E}(\omega) \oplus E^c_{A_E}(\omega) \oplus E^u_{A_E}(\omega),
$$
where, due to the stability of dominated splitting, for sufficiently small $\epsilon$, the dominated splitting of $E^s_{A_E} \oplus E^c_{A_E} \oplus E^u_{A_E}$ persists if we replace $E$ with $E + i\epsilon$. In this case, we have
$$
\dim E^{\ast}_{A_{E+i\epsilon}} = \dim E^\ast_{A_E}, \quad \ast \in \{s, c, u\},
$$
and $E^\ast_{A_{E+i\epsilon}}$ depends holomorphically on $E + i\epsilon$ (see Appendix \ref{ds-con} for details). It is important to note that if $\epsilon \neq 0$, then $(T, A_{E+i\epsilon})$ is always uniformly hyperbolic \cite{Puig}. Consequently, the fiber can be decomposed into
$$
\mathbb{C}^{2m}_\omega = \E^s_{A_{E+i\epsilon}}(\omega) \oplus \E^u_{A_{E+i\epsilon}}(\omega).
$$
However, in this paper, $E^{\ast}_{A_{E+i\epsilon}}$  always denotes the analytic extension of $E^{\ast}_{A_{E}}$.

Our main goal in this section is to study the geometric consequences if
$
E^s_{A_E}(\omega) \cap \mathcal{V}_\omega = \{0\}.
$
Before delving into the results, let's discuss the origins and motivations behind these terminologies. As mentioned in the introduction, our method was inspired by hyperbolic geodesic flow.

\subsubsection{Vertical bundles and comparison to geodesic flows}\label{ver-geo}

Let $SM$ be the unit tangent bundle of a Riemannian manifold $(M,g)$. At each point $\theta = (x,v) \in SM$, the \text{vertical bundle} $\mathcal{V} \subset T_\theta(SM)$ is defined as the kernel of the restricted projection:
\[
\mathcal{V}_\theta := \{ \xi \in T_\theta(SM) \mid d\pi(\xi) = 0 \text{ and } \xi \perp \dot{\gamma}_\theta \},
\]
where $\pi : SM \to M$ is the base projection, $\dot{\gamma}_\theta$ is the geodesic flow direction at $\theta$, and the perpendicularity condition $\xi \perp \dot{\gamma}_\theta$ is with respect to the Sasaki metric.

Concretely, in local coordinates, this becomes:
\[
\mathcal{V}_{(x,v)} = \{0\} \times \{w \in T_xM \mid w \perp v \}.
\]
This bundle captures purely directional variations while fixing both the base point and the speed of geodesics, and encodes the Jacobi field initial condition 
$J(0)=0$. For those $\theta$ such that $E^s_\theta\cap \V_\theta\neq \{0\}$, along the orbit of $\theta$ one can detect conjugate points \cite{Kl74}.

To clarify the terminology, consider the continuum Schr\"odinger operator acting on $L^2(\mathbb{R}, \mathbb{C}^m)$:
\begin{equation}\label{sch}
H \mathbf{x} = \frac{d^2 \mathbf{x}}{dt^2} + V(t) \mathbf{x} = E \mathbf{x},
\end{equation}
where $V(t)$ is a bounded $m \times m$ real symmetric matrix function on $\mathbb{R}$. Let $\mathbf{v} = \frac{d\mathbf{x}}{dt}$. Then, equation \eqref{sch} can be rewritten as
$$
\frac{d}{dt} \begin{pmatrix} \mathbf{x} \\ \mathbf{v} \end{pmatrix} = \begin{pmatrix} 0 & I \\ E-V(t) & 0 \end{pmatrix} \begin{pmatrix} \mathbf{x} \\ \mathbf{v} \end{pmatrix},
$$
where $I$ is the identity matrix. This formulation clarifies the origin of the terms ``position fiber" and ``momentum fiber", and illustrates their relationship to the vertical bundle.

Then analogue to the study of conjugate points through geodesic flows, for \( E^{s}_{A_E} \), we define the (stable) critical set
\begin{align*}
\mathcal{G}^s_\omega &:= \left\{ E \in \Sigma \,:\, E^s_{A_E}(\omega) \cap \mathcal{V}_\omega = \{0\} \right\}.
\end{align*}
The critical set characterizes energies where stable directions avoid vertical bundles (note actually in our case the vertical bundle $\mathcal{V}_{\omega} = \{0\} \times \mathbb{C}^m$). The trivial intersection condition \( E^s_{A_E}(\omega) \cap \mathcal{V}_\omega = \{0\} \) excludes exponential decay of solutions, an evidence of absolutely continuous spectrum.

\subsubsection{Uniform angle estimates and its consequence}\label{sec: ang est1}

To further elucidate the geometric implications, we demonstrate that for any energy $ E \in \mathcal{G}^s_\omega $, it is possible to select $ m $-linearly independent vectors that maintain uniform angle bounds with $ E^s_{A_{E+i\epsilon}}(\omega) $:

\begin{proposition}\label{lem: key ang est1}
Assume that \( (T,A_E) \) is partially hyperbolic, and the energy \( E \in \mathcal{G}^s_\omega \). There exists an orthonormal basis \( \{\mathbf{u}_j(\omega)\}_{j=1}^m \) of \( \mathbb{C}^m(0,\omega) \) such that for any \( \{\mathbf{v}_{j,E+i\epsilon}(\omega)\}_{j=1}^m \in \mathbb{C}^m(1,\omega) \) that continuously depends on \( E+i\epsilon \) when \( \epsilon \neq 0 \), there exists a constant \( \gamma = \gamma(E,\omega,\mathbf{v}) > 0 \) satisfying the angular lower bound:
\begin{equation}\label{angle1}
\angle\left((\mathbf{u}_j(\omega), \mathbf{v}_{j,E+i\epsilon}(\omega)), E^s_{A_{E+i\epsilon}}(\omega)\right) = \gamma>0,
\end{equation}
for any \( 1 \leq j \leq m \) and \( \epsilon > 0 \).
Furthermore, $\gamma$ is continuously dependent on $E.$
\end{proposition}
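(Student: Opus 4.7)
The strategy is to translate the transversality $E^s_{A_E}(\omega)\cap\mathcal{V}_\omega=\{0\}$ into injectivity of the position-fiber projection, and then upgrade this to a quantitative uniform bound via holomorphic stability of the dominated splitting. I would first observe that since $E\in\mathcal{G}^s_\omega\subset\Sigma$, the cocycle $(T,A_E)$ cannot be uniformly hyperbolic (UH energies lie in the resolvent set), so $E^c_{A_E}(\omega)\neq\{0\}$, and hence $k:=\dim E^s_{A_E}(\omega)<m$. The hypothesis then makes $\pi|_{E^s_{A_E}(\omega)}$ injective, so that $\mathbb{W}_E:=\pi(E^s_{A_E}(\omega))$ is a \emph{proper} $k$-dimensional subspace of $\mathbb{C}^m(0,\omega)$.

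Next I would construct the basis $\{\mathbf{u}_j(\omega)\}_{j=1}^m$ in general position relative to $\mathbb{W}_E$: starting from any orthonormal basis of $\mathbb{C}^m(0,\omega)$ and applying an arbitrarily small unitary rotation, one can arrange $\mathbf{u}_j(\omega)\notin\mathbb{W}_E$ for every $j$, since the set of unit vectors lying inside the proper subspace $\mathbb{W}_E$ has positive real codimension and the set of "bad" rotations is therefore meager. The crucial consequence is that no vector $(\mathbf{u}_j,\mathbf{v})$ can lie in $E^s_{A_E}(\omega)$ for any $\mathbf{v}\in\mathbb{C}^m(1,\omega)$: otherwise the projection $\mathbf{u}_j=\pi(\mathbf{u}_j,\mathbf{v})$ would lie in $\mathbb{W}_E$, a contradiction. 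Thus the robustness of the angle condition with respect to $\mathbf{v}$ is built in at $\epsilon=0$.

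To promote this pointwise non-degeneracy to the desired uniform angle estimate I would invoke Proposition~\ref{lem: Hol dep AJS}, which guarantees that $E^s_{A_{E+i\epsilon}}(\omega)$ depends holomorphically on $E+i\epsilon$. Transversality with $\mathcal{V}_\omega$ being an open condition yields $\epsilon_0>0$ such that $\pi|_{E^s_{A_{E+i\epsilon}}(\omega)}$ remains injective and $\mathbf{u}_j(\omega)\notin\mathbb{W}_{E+i\epsilon}:=\pi(E^s_{A_{E+i\epsilon}}(\omega))$ for every $|\epsilon|<\epsilon_0$ and every $j$. Combined with the continuity of $\mathbf{v}_{j,E+i\epsilon}$ in $\epsilon$ and the continuity of the Grassmannian angle, compactness gives a uniform lower bound on $(0,\epsilon_0]$. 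The extension to all $\epsilon>0$ combines (i) a compactness argument on any finite sub-range $[\epsilon_1,\epsilon_2]\subset(0,\infty)$, along which the angle varies continuously and is everywhere positive, and (ii) a separate treatment of the regime $|\mathbf{v}_{j,E+i\epsilon}|\to\infty$, in which the normalized vector $(\mathbf{u}_j,\mathbf{v}_{j,E+i\epsilon})/\|(\mathbf{u}_j,\mathbf{v}_{j,E+i\epsilon})\|$ approaches $\mathcal{V}_\omega$, whose angle with $E^s_{A_{E+i\epsilon}}(\omega)$ is bounded below precisely by the transversality. The continuity of $\gamma$ in $E$ is then inherited from the holomorphic dependence of $E^s_{A_{E+i\epsilon}}$ on $E$ and from the continuity of $\mathbf{v}$.

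The principal obstacle I anticipate is the degenerate case $k=m$, in which $\mathbb{W}_E$ would fill $\mathbb{C}^m(0,\omega)$ and the basis construction of the second step would collapse. This case is precisely ruled out by the spectral assumption $E\in\Sigma$ embedded in the definition of $\mathcal{G}^s_\omega$, reflecting the delicate interplay between the geometric framework and spectral theory. A secondary subtlety is the asymptotic behaviour of $\mathbf{v}_{j,E+i\epsilon}$ as $\epsilon\to 0^+$ when no continuous extension to $\epsilon=0$ is available; the bounded-versus-unbounded dichotomy above sidesteps this using only the transversality hypothesis and the compactness of the unit sphere.
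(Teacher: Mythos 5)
Your proposal follows essentially the same strategy as the paper's proof: establish injectivity of the position projection $\pi|_{E^s_{A_E}(\omega)}$, exhibit an orthonormal basis avoiding $\pi(E^s_{A_E}(\omega))$, and derive the uniform angle bound for small $\epsilon>0$ by a limiting argument that dichotomizes according to whether $\mathbf{v}_{j,E+i\epsilon_n}$ stays bounded (contradicting the basis property) or escapes to infinity (contradicting $E\in\mathcal{G}^s_\omega$ via a limit vector of the form $(0,a)\in\mathcal{V}_\omega$). The single genuine divergence is the basis construction: the paper attempts an explicit induction, choosing $\mathbf{u}_1\notin\pi(E^s)\cup\pi(E^s)^\perp$ and then claiming that any successive $\mathbf{u}_{j+1}\in(\operatorname{span}\{\mathbf{u}_i\}_{i\le j})^\perp$ automatically avoids $\pi(E^s)$; as stated this is not true (for $m=3$, $\pi(E^s)=\operatorname{span}\{e_1,e_2\}$, $\mathbf{u}_1=(e_1+e_3)/\sqrt 2$ one may take $\mathbf{u}_2=e_2\in\pi(E^s)$), though a careful choice does exist at each stage. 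Your genericity argument on $\mathrm{U}(m)$ is cleaner and actually closes this small gap: the set of unitaries sending some standard basis vector into the proper subspace $\mathbb{W}_E$ is a finite union of proper analytic subvarieties, so its complement is open dense, and a nearby constant unitary also serves for $E'$ in a neighborhood of $E$, giving the required continuous dependence. Two presentational cautions: first, $(0,\epsilon_0]$ is not compact, so the "compactness on a finite sub-range" and the "$\|\mathbf{v}\|\to\infty$" cases do not by themselves cover bounded-but-nonconvergent $\mathbf{v}_{j,E+i\epsilon_n}$ as $\epsilon_n\to 0^+$---you should phrase the contradiction argument as the paper does, extracting a subsequential limit in $\mathbb{C}^m\cup\{\infty\}$. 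Second, the conclusion is only needed (and the dominated splitting only persists) for $\epsilon$ small, so "extension to all $\epsilon>0$" is not the right target.
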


\begin{proof} 
Suppose $E_0\in\mathcal{G}^s_\omega$, which means $\angle (E^s_{A_{E_0}}(\omega) , \mathcal{V}_\omega)=\gamma_1(E_0).$
Since $E^s_{A_E}$ is holomorphic on $E$ by Appendix \ref{ds-con}, we have $\gamma_1(E)$ is continuously dependent on $E$. In particularly, there exists a neighborhood of $E_0$, such that $\angle (E^s_{A_{E}}(\omega) , \mathcal{V}_\omega)=\gamma_1(E)>0$ for any $E$ close to $E_0.$

The restriction $ \pi_\omega|_{E^s_{A_E}(\omega)} $ is injective. Suppose otherwise: there would exist a non-zero vector $ \mathbf{v} = (0, \mathbf{u}) \in E^s_{A_E}(\omega) $, contradicting $ E \in \mathcal{G}^s_\omega $.

If $ \dim E^s_{A_E} = m $, it is well-known that $ E $ is not in the spectrum \cite{Puig}. Assume $ \dim E^s_{A_E} < m $ without loss of generality. Since $ \pi_{\omega}(E^{s}_{A_E}(\omega)) \subsetneq \mathbb{C}^{m} $, we can select $ \mathbf{u}_1(\omega) \notin \pi_{\omega}(E^{s}_{A_E}(\omega)) \cup (\pi_{\omega}(E^{s}_{A_E}(\omega)))^{\perp} $. Choosing $ \mathbf{u}_2(\omega) \in \left(\operatorname{span}\{\mathbf{u}_1(\omega)\}\right)^{\perp} $, it follows that $ \mathbf{u}_2(\omega) \notin \pi_{\omega}(E^{s}_{A_E}(\omega)) $ by the construction of $ \mathbf{u}_1(\omega) $. Assuming we have an orthonormal set $ \{\mathbf{u}_i(\omega)\}_{i=1}^{j} $, select $ \mathbf{u}_{j+1}(\omega) \in \left(\operatorname{span}\{\mathbf{u}_i(\omega)\}_{i=1}^{j}\right)^{\perp} $, which ensures $ \mathbf{u}_{j+1}(\omega) \notin \pi_{\omega}(E^{s}_{A_E}(\omega)) $ by the initial choice. Through induction, we obtain an orthonormal basis $ \{\mathbf{u}_i(\omega)\}_{i=1}^{m} $ for $ \mathbb{C}^{m}(0, \omega) $ which can be chosen to be continuously dependent on $E$, satisfying: \begin{equation}\label{angle-11}
\min_{1 \leq i \leq m} \angle\left(\mathbf{u}_i(\omega), \pi_{\omega}(E^{s}_{A_E}(\omega))\right)= \tilde{\gamma}_1(E) > 0,
\end{equation} where $ \tilde{\gamma}_1(E) $ depends only on the geometry of $ \pi_\omega(E^s_{A_{E}}(\omega)) $, hence it is continuously dependent on $E$.

Suppose the angle condition fails: there exist $ \epsilon_n \to 0 $, $1\leq j\leq m $ and $E$ in the neighborhood of  $E_0$ such that \begin{equation*}
\lim_{n\to\infty} \angle\left( (\mathbf{u}_j(\omega), \mathbf{v}_{j,E+i\epsilon_n}(\omega)), E^s_{A_{E+i\epsilon_n}}(\omega) \right) = 0,
\end{equation*} which implies that any limit point $ \frac{ (\mathbf{u}_j(\omega), \mathbf{v}_{j,E+i\epsilon_n}(\omega))}{\| (\mathbf{u}_j(\omega), \mathbf{v}_{j,E+i\epsilon_n}(\omega))\|} $ is in $ E^s_{A_E}(\omega) $. Assume $ \mathbf{v}_{j,E+i\epsilon_n}(\omega) \rightarrow b \in \mathbb{C}^m \cup \{\infty\} $. If $ b \neq \infty $, we have $ (\mathbf{u}_j(\omega), b) \in E^s_{A_E}(\omega) $, which contradicts \eqref{angle-11}. Otherwise, normalizing the vectors, we obtain a limit point in $ E^s_{A_E}(\omega) $ of the form $ (0, a) $: \begin{equation*}
\frac{ (\mathbf{u}_j(\omega), \mathbf{v}_{j,E+i\epsilon_n}(\omega))}{\| (\mathbf{u}_j(\omega), \mathbf{v}_{j,E+i\epsilon_n}(\omega))\|} \xrightarrow[n\to\infty]{} (0, a) \in E^s_{A_E}(\omega).
\end{equation*} Therefore $ E^s_{A_E} \cap \mathbb{C}^m(0, \omega) \neq \{0\} $, contradicting the assumption that $ E \in \mathcal{G}^s_{\omega} $. Thus, the uniform angle bound $ \gamma(E)> 0 $ must hold for sufficiently small $ \epsilon $ and continuously depends on $E$. \end{proof}

If furthermore these vector pairs belong to $ \mathcal{E}^s_{A_{E+i\epsilon}}(\omega) $, then we have the following:

\begin{lemma}\label{lem: ang imp slow1}
Suppose that the uniform angle condition \eqref{angle1} holds, and assume furthermore
\begin{equation*}
\left(\mathbf{u}_j(\omega), \mathbf{v}_{j,E+i\epsilon}(\omega)\right) \in \mathcal{E}^s_{A_{E+i\epsilon}}(\omega).
\end{equation*}
There exists \( {C}_1 = {C}_1(E,\omega,\mathbf{v}) > 0 \) such that for all \( k \in \mathbb{Z} \):
\begin{equation*}
\|(A_{E+i\epsilon})_k(\omega)(\mathbf{u}_j(\omega), \mathbf{v}_{j,E+i\epsilon}(\omega))^{T}\|
\geq {C}_1\|((A_{E+i\epsilon})_k(\omega)|_{E^c_{A_{E+i\epsilon}}})^{-1}\|^{-1}\|(\mathbf{u}_j(\omega), \mathbf{v}_{j,E+i\epsilon}(\omega))^{T}\|.
\end{equation*}
Furthermore, $C_1$ is continuously dependent on $E.$
\end{lemma}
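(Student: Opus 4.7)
The plan is to decompose $w := (\mathbf{u}_j(\omega), \mathbf{v}_{j,E+i\epsilon}(\omega))^T$ along the partially hyperbolic splitting $E^s_{A_{E+i\epsilon}} \oplus E^c_{A_{E+i\epsilon}} \oplus E^u_{A_{E+i\epsilon}}$ and transfer the angular information in \eqref{angle1} from the strong stable bundle to the center bundle, where the conclusion becomes a direct statement about the restricted cocycle $A_{E+i\epsilon}|_{E^c_{A_{E+i\epsilon}}}$.

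First I would verify that the hypothesis $w \in \mathcal{E}^s_{A_{E+i\epsilon}}$ forces the $E^u_{A_{E+i\epsilon}}$-component of $w$ to vanish. The key observation is that $\mathcal{E}^s_{A_{E+i\epsilon}} \subset E^s_{A_{E+i\epsilon}} \oplus E^c_{A_{E+i\epsilon}}$. Indeed, since $E^u_{A_{E+i\epsilon}}$ consists of vectors that expand at the strongest rate tolerated by the dominated splitting, one has $E^u_{A_{E+i\epsilon}} \subset \mathcal{E}^u_{A_{E+i\epsilon}}$, whence $\mathcal{E}^s_{A_{E+i\epsilon}} \cap E^u_{A_{E+i\epsilon}} = \{0\}$, and a dimension count (using $\dim E^s = \dim E^u$ from the Hermitian-symplectic structure) yields $\mathcal{E}^s_{A_{E+i\epsilon}} = E^s_{A_{E+i\epsilon}} \oplus E^{cs}_{A_{E+i\epsilon}}$ with $E^{cs}_{A_{E+i\epsilon}} \subset E^c_{A_{E+i\epsilon}}$. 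Consequently we may write $w = w_s + w_c$ with $w_s \in E^s_{A_{E+i\epsilon}}$ and $w_c \in E^c_{A_{E+i\epsilon}}$.

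Next I would convert the angle bound \eqref{angle1} into a lower bound $\|w_c\| \geq c_1 \|w\|$. Since $w_s \in E^s_{A_{E+i\epsilon}}$, the distance from $w$ to $E^s_{A_{E+i\epsilon}}$ coincides with the distance from $w_c$ to $E^s_{A_{E+i\epsilon}}$, which is at least $\sin\theta_{sc} \cdot \|w_c\|$, where $\theta_{sc}>0$ is the angle between $E^s_{A_{E+i\epsilon}}(\omega)$ and $E^c_{A_{E+i\epsilon}}(\omega)$ granted by the dominated splitting. Combined with $d(w, E^s_{A_{E+i\epsilon}}) \geq \sin\gamma \cdot \|w\|$ from \eqref{angle1}, this yields $\|w_c\| \geq (\sin\gamma / \sin\theta_{sc})\|w\| =: c_1 \|w\|$.

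The final step uses invariance of the center bundle under $(A_{E+i\epsilon})_k$: the image $(A_{E+i\epsilon})_k w_c$ lies in $E^c_{A_{E+i\epsilon}}(T^k\omega)$, so by definition of the smallest singular value
\[
\|(A_{E+i\epsilon})_k w_c\| \geq \|((A_{E+i\epsilon})_k|_{E^c_{A_{E+i\epsilon}}})^{-1}\|^{-1}\|w_c\|.
\]
Applying the dominated splitting at $T^k\omega$ once more to lower-bound the angle between $E^s_{A_{E+i\epsilon}}(T^k\omega)$ and $E^c_{A_{E+i\epsilon}}(T^k\omega)$ uniformly in $k$ by some $c_2>0$, one concludes $\|(A_{E+i\epsilon})_k w\| \geq c_2 \|(A_{E+i\epsilon})_k w_c\|$, and chaining these inequalities produces the claim with $C_1 := c_1 c_2$. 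The continuity of $C_1$ in $E$ follows from the continuity of $\gamma(E)$ established in Proposition \ref{lem: key ang est1} together with the holomorphic dependence of $E^s_{A_{E+i\epsilon}}, E^c_{A_{E+i\epsilon}}$ on $E$ (cf.\ Proposition \ref{lem: Hol dep AJS}).

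The main obstacle is the first step: ruling out an $E^u_{A_{E+i\epsilon}}$-contribution to $w$, i.e.\ establishing $\mathcal{E}^s \subset E^s \oplus E^c$. This is where the symplectic structure meets the partial hyperbolicity, and without it the remaining angular bookkeeping would lose contact with the minimum expansion on the center. Once this geometric input is secured, both signs of $k$ are handled uniformly because the argument only uses the invariance of $E^c$ and the uniform positivity of the dominated-splitting angles along the orbit.
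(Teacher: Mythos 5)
Your proof takes essentially the same route as the paper: decompose $w := (\mathbf{u}_j(\omega), \mathbf{v}_{j,E+i\epsilon}(\omega))^T = w_s + w_c$ along the partially hyperbolic splitting (no $E^u$ component), use \eqref{angle1} to show $\|w_c\|$ is comparable to $\|w\|$, then apply the invariance of $E^c$ and the definition of the operator norm of the inverse restricted cocycle. Two differences are worth flagging: (i) you make explicit the inclusion $\mathcal{E}^s_{A_{E+i\epsilon}} \subset E^s_{A_{E+i\epsilon}} \oplus E^c_{A_{E+i\epsilon}}$ via a dimension count, whereas the paper passes over this by asserting it ``by definition''; and (ii) in the last step, where both arguments need $\|(A_{E+i\epsilon})_k w\| \gtrsim \|(A_{E+i\epsilon})_k w_c\|$, the paper argues by domination (``for small $\epsilon$ and large $k$, $\|(A)_k\mathbf{u}^c\| \geq 2\|(A)_k\mathbf{u}^s\|$''), whereas you invoke the uniform lower bound on the angle between $E^s(T^k\omega)$ and $E^c(T^k\omega)$ along the orbit. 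Your version is visibly uniform in $k$, sidestepping the paper's ``large $k$'' caveat, which is a genuine (if modest) improvement in the exposition.

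One small logical slip in your angle-transfer step: you assert $d(w_c, E^s) \geq \sin\theta_{sc}\cdot\|w_c\|$ and $d(w, E^s) \geq \sin\gamma\cdot\|w\|$ and then ``combine'' these to deduce $\|w_c\| \geq (\sin\gamma/\sin\theta_{sc})\|w\|$. But both inequalities are lower bounds on the \emph{same} number $d(w, E^s) = d(w_c, E^s)$, so they do not chain to a conclusion of the form $\|w_c\| \geq c\|w\|$. The argument that does work is one line shorter: since $0 \in E^s$, one has $\|w_c\| \geq d(w_c, E^s) = d(w, E^s) \geq \sin\gamma\cdot\|w\|$, giving $c_1 = \sin\gamma$ directly and rendering $\theta_{sc}$ superfluous at this stage (it is still needed for your final step). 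This does not affect correctness of the lemma, only the constant you claim.
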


\begin{proof} By definition, the vector $ (\mathbf{u}_j(\omega), \mathbf{v}_{j,E+i\epsilon}(\omega))^{T} \in E^s_{A_{E+i\epsilon}}(\omega) \oplus E^c_{A_{E+i\epsilon}}(\omega) $ can be decomposed into central and stable components: $$
(\mathbf{u}_j(\omega), \mathbf{v}_{j,E+i\epsilon}(\omega))^{T} = \mathbf{u}^c_{E+i\epsilon} + \mathbf{u}^s_{E+i\epsilon},
$$ where $ \mathbf{u}^c_{E+i\epsilon} \in E^c_{A_{E+i\epsilon}}(\omega) $ and $ \mathbf{u}^s_{E+i\epsilon} \in E^s_{A_{E+i\epsilon}}(\omega) $. From the uniform angle condition \eqref{angle1}, it follows that $$
\|\mathbf{u}^c_{E+i\epsilon}\| \geq \tilde{C}_1 \left\| (\mathbf{u}_j(\omega), \mathbf{v}_{j,E+i\epsilon}(\omega))^{T} \right\|.
$$ And $\tilde{C}_1$ is continuously dependent on $E$ since the angle $\gamma(E)$ in \eqref{angle1} is continuous dependent on $E$. The evolution of the central component under the cocycle restriction is given by: $$
\mathbf{u}^c_{E+i\epsilon} = (A_{E+i\epsilon})_k(\omega)|_{E^c_{A_{E+i\epsilon}}}^{-1} \cdot (A_{E+i\epsilon})_k(\omega)\mathbf{u}^c_{E+i\epsilon},
$$ which implies $$
\|\mathbf{u}^c_{E+i\epsilon}\| \leq \| (A_{E+i\epsilon})_k|_{E^c_{A_{E+i\epsilon}}}^{-1} \| \cdot \| (A_{E+i\epsilon})_k\mathbf{u}^c_{E+i\epsilon} \|.
$$

On the other hand, the partial hyperbolicity of $ (T, A_{E}) $ implies that for small $ \epsilon $ and large $ k $, $$
\| (A_{E+i\epsilon})_k\mathbf{u}^c_{E+i\epsilon} \| \geq 2 \| (A_{E+i\epsilon})_k\mathbf{u}^s_{E+i\epsilon} \|,
$$ thus we have $$
\| (A_{E+i\epsilon})_k\mathbf{u}^c_{E+i\epsilon} \| \leq \tilde{C} \|(A_{E+i\epsilon})_k(\omega)(\mathbf{u}_j(\omega), \mathbf{v}_{j,E+i\epsilon}(\omega))^{T}\|,
$$ which yields the required propagation estimate. \end{proof}

\subsection{Non-stationary telescoping-type inequality}\label{telescoping}
Another ingredient is  Proposition \ref{prop: variation center}, which provides control of $\|((A_{E+i\epsilon})_k(\omega)|_{E^c_{A_{E+i\epsilon}}})^{-1}\|$. 
 In the spirit of \cite{Simon}, we develop a novel non-stationary framework, where the crucial distinction lies in the domain of cocycles is not fixed. First, we need the following elementary but important observation:

\begin{proposition}\label{lem: est reverse norm}
Let $W$ be a compact subset of $G_{\mathrm{HSp}}(2k,2m)$ for $1 \leq k \leq m$. There exists $c = c(W) \geq 1$ such that  for any Hermitian symplectic matrix $A$, and $V \in W$ with $AV \in W$, we have 
\begin{equation*}\label{eqn: est reverse norm}
c^{-1} \|A^{-1}|_{AV}\| \leq \|A|_V\| \leq c \|A^{-1}|_{AV}\|.
\end{equation*}
\end{proposition}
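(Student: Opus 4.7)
The plan is to reduce the inequality to an algebraic identity for the matrix of $A|_V$ in orthonormal bases of $V$ and $AV$, and then extract the uniform constant from compactness of $W$.

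First, I would pick isometric embeddings $E_V, F_{AV}\colon \mathbb{C}^{2k} \hookrightarrow \mathbb{C}^{2m}$ whose ranges are $V$ and $AV$, respectively. The restriction $A|_V$ is then represented by the $2k\times 2k$ matrix $\tilde A := F_{AV}^{*} A\, E_V$, and since the embeddings are isometric one has $\|A|_V\| = \|\tilde A\|$ and $\|A^{-1}|_{AV}\| = \|\tilde A^{-1}\|$. Writing $S_V := E_V^{*} S E_V$ and $S_{AV} := F_{AV}^{*} S F_{AV}$ for the Gram matrices of the restricted Hermitian symplectic form, the defining relation $A^{*}S A = S$ restricts immediately to the identity
\[
\tilde A^{*} S_{AV}\, \tilde A \;=\; S_V.
\]
Because $V, AV \in G_{\mathrm{HSp}}(2k,2m)$, both $S_V$ and $S_{AV}$ are skew-Hermitian and invertible.

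Second, I would derive the comparison directly from this identity. Rearranging gives $\tilde A^{-1} = S_V^{-1}\,\tilde A^{*} S_{AV}$, whence
\[
\|\tilde A^{-1}\| \;\le\; \|S_V^{-1}\|\,\|S_{AV}\|\,\|\tilde A\|,
\]
and the analogous manipulation on the inverted identity yields $\tilde A = S_{AV}^{-1}\,\tilde A^{-*} S_V$, hence the reverse bound $\|\tilde A\| \le \|S_V\|\,\|S_{AV}^{-1}\|\,\|\tilde A^{-1}\|$. Thus $\|A|_V\|$ and $\|A^{-1}|_{AV}\|$ are comparable up to a factor depending only on the condition numbers of $S_V$ and $S_{AV}$.

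Finally, I would extract the uniform constant from compactness. Although the matrix $S_V$ depends on the particular orthonormal basis chosen, its spectral norm $\|S_V\|$ and the norm $\|S_V^{-1}\|$ of its inverse are unitarily invariant and therefore well-defined functions of $V$ alone; writing $S_V$ intrinsically as the restriction of $S$ to $V$ via the orthogonal projection $P_V$ (which varies continuously with $V$ in the Grassmannian topology) shows that they are continuous in $V$, and they are finite on $G_{\mathrm{HSp}}(2k,2m)$ by non-degeneracy of $\psi|_V$. Compactness of $W$ then produces a constant $c_0 = c_0(W)$ with $\|S_V\|,\|S_V^{-1}\| \le c_0$ for every $V\in W$, and the choice $c := c_0^{2}$ yields the claimed inequality. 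The one step requiring genuine care is this final continuity/compactness claim for $\|S_V^{-1}\|$ — i.e.\ ruling out degeneration of $\psi|_V$ at the boundary of $W$ — but this is built into the hypothesis that $W\subset G_{\mathrm{HSp}}(2k,2m)$ and is what makes the Hermitian symplectic Grassmannian the right ambient space for the lemma.
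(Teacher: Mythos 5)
Your argument is correct, and it takes a genuinely different and more streamlined route than the paper's. The paper constructs a \emph{symplectic} (canonical) basis $\{\xi_i(V)\}$ for $V$, which requires a continuous version of Sylvester's inertia theorem (Lemma~\ref{ht}) to bring the Krein matrix to the constant normal form $\mathrm{diag}(I_p,-I_{2k-p})$; the cost of that basis being far from orthonormal is then tracked through the quantity $c(V)=\|S\|\sum\|\xi_i\|\|\xi_j\|$. You instead choose an \emph{orthonormal} basis, accept that the Gram matrix $S_V=E_V^*SE_V$ is just some invertible skew-Hermitian matrix, and use the restricted intertwining identity $\tilde A^* S_{AV}\tilde A = S_V$ (which does follow from $A^*SA=S$ together with $P_{AV}AE_V=AE_V$) to get $\|\tilde A^{-1}\|\le\|S_V^{-1}\|\|S_{AV}\|\|\tilde A\|$ and its companion. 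This avoids the Sylvester machinery entirely, at the price of needing to control $\|S_V\|$ and $\|S_V^{-1}\|$; your observation that these are unitarily invariant (hence functions of $V$ alone) and can be read off from the nonzero singular values of $P_VSP_V$ is the right way to get continuity, and compactness of $W$ then gives the uniform bound. One small point worth making explicit: the continuity of $\|S_V^{-1}\|$ uses not only non-degeneracy pointwise (built into $G_{\mathrm{HSp}}$) but also that the rank of $P_VSP_V$ is constantly $2k$, so that its $(2k)$-th singular value $\sigma_{\min}(S_V)$ depends continuously on $V$. Both proofs ultimately rest on the same invariance $\psi(Av,Aw)=\psi(v,w)$; yours packages it more cleanly as an algebraic identity for Gram matrices rather than through component-wise pairings against a special basis.
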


\begin{proof}

We first need to construct a \textbf{continuous local symplectic basis} \((\xi_i(V))_{i=1}^{2k}\) near \( V \) . To prove this, we need the following continuous version of Sylvester Inertia Theorem: 
\begin{lemma}\label{ht}
   Suppose $G(\cdot):G(k,d)\rightarrow \mathrm{GL}(m, \mathbb{C})\cap \mathrm{Her}(m,\C)$ is a continuous function on the neighborhood $U_V$ of $V\in G(k,d)$. Then, there exists $p\geq 0$, a neighborhood $\tilde{U}_V$ of $V$ and continuous function $N(\cdot): \tilde{U}_V\rightarrow \mathrm{GL}(m, \mathbb{C}) $,  such that $$N(V)^*G(V)N(V)=\text{diag}(I_p, -I_{m-p}).$$
\end{lemma}
\begin{proof}
    Since $G(V)$ is Hermitian, it admits continuous eigenvalues \(\lambda_i(V)\) (\(1 \leq i \leq m\)) in a neighborhood of \( V \). By possibly shrinking the neighborhood, we may assume \(\lambda_i(V) > 0\) for \(1 \leq i \leq p\) and \(\lambda_i(V) < 0\) for \(p+1 \leq i \leq m\).

Let $\Gamma_1$ be the circle that encloses all positive eigenvalues, while $\Gamma_2$ is the circle that encloses all negative eigenvalues. Define
			$$
			P_1(V) = \frac{1}{2\pi i}\int_{\Gamma_1}(zI-G(V))^{-1} dz, \quad P_2(V) = \frac{1}{2\pi i}\int_{\Gamma_2}(zI-G(V))^{-1} dz.
			$$
			Then, $P_1(V)$ and $P_2(V)$ are continuous projection operators.

			Define
			$$
			Q_1(V) = \text{Range}(P_1(V)) \quad \text{and} \quad Q_2(V) = \text{Range}(P_2(V)),
			$$
			which correspond to continuous $p$-dimensional and $(m-p)$-dimensional invariant subspaces, respectively.
		By Lemma \ref{holomorphic}, by possibly shrinking the neighborhood, there exist $\{q^1_i(V)\}_{i=1}^p$ be a continuous basis for $Q_1(V)$, and $\{q^2_i(V)\}_{i=1}^{m-p}$ be a continuous basis for $Q_2(V)$. It is straightforward to verify that $(u,v)_G := \langle u,G(V)v \rangle$ defines an inner product on $Q_1(V)$, while $(u,v)_G := -\langle u,G(V)v \rangle$ defines an inner product on $Q_2(V)$.
			
			Applying the standard Gram–Schmidt process, we can obtain continuous bases $\{\tilde{q}^1_i(V)\}_{i=1}^p$ and $\{\tilde{q}^2_i(V)\}_{i=1}^{m-p}$ such that
			
			$$
			\big((\tilde{q}^1_i(V),\tilde{q}^1_j(V))_G\big) = I_p \quad \text{and} \quad \big((\tilde{q}^2_i(V),\tilde{q}^2_j(V))_G\big) = I_{m-p}.
			$$
			Let
			$$
			N(V) = \begin{pmatrix}
				\tilde{q}^1_1(V), \cdots, \tilde{q}^1_p(V), \tilde{q}^2_1(V), \cdots, \tilde{q}^2_{m-p}(V)
			\end{pmatrix}.
			$$
			Since $Q_1(V)$ and $Q_2(V)$ are orthonormal, we have
			$$
			N(V)^* G(V) N(V) = \text{diag}(I_p, -I_{m-p}).
			$$
    
\end{proof}

By Theorem \ref{holomorphic}, we choose a continuous local basis \((g_i(V))_{i=1}^{2k}\) near \( V \). Define the \textit{Krein matrix} 
\[
G(V) = i \big( \psi(g_i, g_j) \big)_{1 \leq i,j \leq 2k} \in \mathrm{GL}(2k, \mathbb{C})\cap \mathrm{Her}(2k,\C).
\]
By Lemma \ref{ht}, there exists $p\geq 0$, a neighborhood $\tilde{U}_V$ of $V$ and continuous function $N(\cdot): \tilde{U}_V\rightarrow \mathrm{GL}(2k, \mathbb{C}) $,  such that $N(V)^*G(V)N(V)=\text{diag}(I_p, -I_{2k-p}).$
Let \(M \in \mathrm{GL}(2k, \mathbb{C})\) satisfy
\[
M^* \mathrm{diag}(I_p, -I_{2k-p}) M = \begin{pmatrix}
0 & -iI_p \\
iI_{2k-p} & 0
\end{pmatrix}.
\]
Define the transformed basis vectors:
\[
\big( \xi_1(V), \ldots, \xi_{2k}(V) \big) = \big( g_1(V), \ldots, g_{2k}(V) \big) N(V) M.
\]
Then \(\{\xi_i(V)\}_{i=1}^{2k}\) forms a basis for \( V \), and the symplectic form \(\psi\) satisfies:
\[
\psi(\xi_i(V), \xi_j(V)) = \begin{cases}
1, & j - i = 2k - p, \\
-1, & i - j = p, \\
0, & \text{otherwise}.
\end{cases}
\]

 Given that $ A $ is a symplectic map, the set $\{A\xi_i\}_{i=1}^{2k}$ also forms a  basis.
For any vector $ v \in V $, we can express it as $ v = \sum_{i=1}^{2k} v_i \xi_i $, and consequently, $ Av = \sum_{i=1}^{2k} v_i A\xi_i $. It follows that:
\begin{equation*}\label{sbasis}
    \psi(Av, A\xi_j) = (Av)^* S A\xi_j = \sum_{i=1}^{2k} v_i \psi(A\xi_i, A\xi_j) = \begin{cases}
	-v_{j+p}, & 1 \leq j \leq 2k-p, \\
	v_{j-2k+p}, & 2k-p+1 \leq j \leq 2k.
\end{cases}
\end{equation*}

Thus, when $\|Av\| = 1$, we can conclude that:
$$
|v_{j}| \leq \begin{cases}
	\|S\| \|A|_V\| \|\xi_{j+2k-p}\| , & 1 \leq j \leq p, \\
	\|S\| \|A|_V\| \|\xi_{j-p}\|, & p+1 \leq j \leq 2k.
\end{cases} 
$$
It follows that 
$
\|v\| \leq \|S\| \|A|_V\| \left( \sum_{i=1}^{p} \|\xi_{i+2k-p}\| \|\xi_i\| + \sum_{i=p+1}^{2k} \|\xi_{i-p}\| \|\xi_i\| \right).
$
Therefore, we have:
$$
\|A^{-1}|_{AV}\| = \sup_{\|Av\|=1, v \in V} \|v\| \leq \|S\|\left(   \sum_{i=1}^{p} \|\xi_{i+2k-p}\| \|\xi_i\| + \sum_{i=p+1}^{2k} \|\xi_{i-p}\| \|\xi_i\|  \right)  \|A|_V\|  := c(V) \|A|_V\|.
$$
Clearly $c(V)$ defined above is a continuous function on $ V $ since $\{\xi_i(V)\}_{i=1}^{2k}$ depends continuously on $ V $. We can attain the maximal $c(V)$ in the small neighborhood of $V$. Given that $ W $ is compact, by the standard compact argument, we have $\|A^{-1}|_{AV}\| \leq c(W) \|A|_V\|$. The reverse inequality can be established by considering $ A^{-1} $ instead of $ A $.
\end{proof}

\begin{rem}
\begin{enumerate}
    \item The proof was motivated by the Analytic Sylvester Inertia Theorem \cite[Theorem 1.3]{WXZ}. It is important to note that, in general, one cannot expect to find a canonical basis\footnote{A canonical basis is defined as a set $\{v_1, \cdots, v_k, v_{-1}, \cdots, v_{-k}\} \subset \mathbb{C}^{2m}$ that spans the subspace and satisfies $\psi(v_i, v_{-j}) = \delta_{ij}$ for all $i, j > 0$.} for Hermitian symplectic subspaces of $\mathbb{C}^{2m}$, as demonstrated by Harmer \cite{Ha}. 
\item While Lemma \ref{ht} is of local nature, if 
$Q_1,Q_2$  correspond to trivial bundles, then one obtains the global Sylvester Inertia Theorem, as stated in Lemma \ref{hetong}.
\end{enumerate}
\end{rem}

Once we have Proposition \ref{lem: est reverse norm}, we establish the following telescoping-type inequality:

\begin{lemma}\label{lem: key esti}
Let $V_n \in W \subset G_{HSp}(2k, 2m)$, where $n \geq 1$, $1 \leq k \leq m$, and $W$ is a compact set. There exists a constant $c > 0$, which depends only on $W$, and satisfies the following properties: if there is a family of linear transformations $f_t(n): V_n \to V_{n+1}$ for $t \in [0, 1]$ and $n \in \mathbb{Z}^+$ such that for all $n \geq 1$,
\begin{enumerate}
    \item $f_0(n)$ maps $V_n$ to $V_{n+1}$ and preserves the Hermitian symplectic structure introduced by \eqref{syms}.
\item 
There exists $L > 0$ such that for any $t, s \in [0, 1]$, $$\|f_t(n) - f_s(n)|_{V_n}\| \leq L|t - s|, \quad \|f_t^{-1}(n) - f_s^{-1}(n)|_{V_{n+1}}\| \leq L|t - s|.$$
\end{enumerate}
Then for any $t \in [0, 1]$ and $n \geq 1$,
\begin{equation}\label{eqn: key variation}
\|f_t(n) \cdots f_t(1)|_{V_1}\|, \quad \|(f_t(n) \cdots f_t(1)|_{V_1})^{-1}\| \leq cC(n)\exp(cC(n)Ltn),
\end{equation}where $C(n):=\max(\max_{1\leq s\leq n}\|f_0(s)\cdots f_0(1)|_{V_1}\|^2,1)$, $C(0):=1$.
\end{lemma}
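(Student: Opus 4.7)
The plan is to reduce the estimate to a combinatorial expansion around $t=0$, where the Hermitian symplectic hypothesis on $f_0$ (together with Proposition \ref{lem: est reverse norm}) provides two-sided control. Writing $\delta_t(j) := f_t(j) - f_0(j)$ with $\|\delta_t(j)|_{V_j}\| \leq Lt$ from assumption~(2), I will expand
\[
P_t(n) := f_t(n)\cdots f_t(1)|_{V_1} = \sum_{S\subseteq\{1,\ldots,n\}}\prod_{j=n}^{1}\bigl[f_0(j)\ \text{if}\ j\notin S,\ \ \delta_t(j)\ \text{if}\ j\in S\bigr].
\]
Each term indexed by $S$ with $|S|=k$ factors into $k+1$ consecutive blocks of $f_0$-factors separated by the $k$ perturbation factors $\delta_t(j_i)$.

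The core step is a uniform bound on every block of $f_0$'s: I claim that for all $1\leq j\leq s\leq n$,
\[
\|f_0(s)\cdots f_0(j)|_{V_j}\| \leq c\,C(n), \qquad \|(f_0(s)\cdots f_0(j))^{-1}|_{V_{s+1}}\| \leq c\,C(n),
\]
for some $c=c(W)\geq 1$. To see this, write $f_0(s)\cdots f_0(j) = P_0(s)\circ P_0(j-1)^{-1}|_{V_j}$. By definition $\|P_0(s)\|\leq\sqrt{C(n)}$, and since $P_0(j-1)$ is Hermitian symplectic with domain $V_1$ and image $V_j$ both in $W$, Proposition \ref{lem: est reverse norm} gives $\|P_0(j-1)^{-1}|_{V_j}\|\leq c\sqrt{C(n)}$; multiplying yields the first bound, and a second application of Proposition \ref{lem: est reverse norm} yields the second. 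Assumption~(1) is used exactly here: every composition of the $f_0(j)$'s inherits the Hermitian symplectic structure, so Proposition \ref{lem: est reverse norm} applies to every partial product.

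Combining the block estimate with the expansion, a term indexed by $S$ with $|S|=k$ has norm at most $(cC(n))^{k+1}(Lt)^k$ (using $cC(n)\geq 1$ to absorb empty blocks), so the binomial theorem gives
\[
\|P_t(n)|_{V_1}\|\ \leq\ \sum_{k=0}^n\binom{n}{k}(cC(n))^{k+1}(Lt)^k\ =\ cC(n)\bigl(1+cC(n)Lt\bigr)^n\ \leq\ cC(n)\exp\bigl(cC(n)Ltn\bigr),
\]
which is the asserted bound. The estimate on $\|(P_t(n)|_{V_1})^{-1}\|$ follows from the identical expansion applied to $P_t(n)^{-1} = \prod_{j=1}^n f_t(j)^{-1}$, decomposed as $f_t(j)^{-1} = f_0(j)^{-1} + \tilde\delta_t(j)$ with $\|\tilde\delta_t(j)|_{V_{j+1}}\|\leq Lt$ from assumption~(2), with the $f_0^{-1}$-blocks again controlled by $cC(n)$ thanks to the second half of the block estimate.

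The main obstacle is conceptual rather than computational: Proposition \ref{lem: est reverse norm} is indispensable. Without it the forward bound $\|P_0(s)\|\leq\sqrt{C(n)}$ gives no information on inverses of $f_0$-blocks, and the blocks sandwiched between two consecutive $\delta_t(j_i)$'s in the expansion — which are genuine partial products of $f_0$'s, not of $P_0$'s — could have uncontrolled norm, breaking the binomial sum. The Hermitian symplectic structure, together with the compactness of $W$, turns a one-sided growth bound on $P_0$ into a uniform two-sided bound on every partial product, which the expansion then propagates to arbitrary $t\in[0,1]$.
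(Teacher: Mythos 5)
Your proof is correct, and it takes a genuinely different route from the paper. The paper proves the block estimate $\|f_0(s)\cdots f_0(j)|_{V_j}\|\leq c\,C(n)$ via Proposition~\ref{lem: est reverse norm} exactly as you do, but then proceeds by a one-step telescoping decomposition combined with induction on $n$: writing $f_t(n)\cdots f_t(1) - f_0(n)\cdots f_0(1)$ as a telescoping sum of $n$ hybrid products, bounding each difference by $\|f_0(n)\cdots f_0(j+2)\|\cdot Lt\cdot\|f_t(j)\cdots f_t(1)\|$, invoking the inductive hypothesis on the last factor, and summing the resulting geometric series. You instead expand the full product $\prod(f_0(j)+\delta_t(j))$ into $2^n$ terms and control each term by the same block estimate, closing with the binomial theorem and $(1+x)^n\leq e^{nx}$. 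The two arguments hinge on the identical key input — the two-sided control of all $f_0$-blocks coming from the Hermitian symplectic structure and the compactness of $W$ — but yours sidesteps the induction and gives a marginally cleaner bookkeeping, at the cost of handling $2^n$ terms at once (harmless here because the binomial theorem resums them). The paper's telescoping recursion is closer in spirit to Simon's original argument and would adapt more readily if the "blocks" were themselves $t$-dependent, but for this lemma the two proofs are equivalent in content; in particular, your treatment of the inverse product via the expansion of $f_t(j)^{-1}=f_0(j)^{-1}+\tilde\delta_t(j)$ mirrors the paper's remark that one should "reverse the order for telescoping steps" and use a second constant $c_2$ controlling $\|f_0(j)^{-1}\cdots f_0(s)^{-1}\|$.
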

\begin{proof}
First by Proposition \ref{lem: est reverse norm}, there exists a constant $c_1\geq 1$ only depends on $W$ such that  for any $1\leq j\leq n$, 
\begin{equation}\label{eqn: f_0 seg est}
\|f_0(n)\cdots f_0(j)|_{V_j}\|\leq c_1 \|f_0(n)\cdots f_0(1)|_{V_1}\|\cdot \|(f_0(j-1)\cdot f_0(1)|_{V_1})\|\leq  c_1 C(n).   \end{equation}
Here we use $C(j)\leq C(n)$ for $j\leq n$. 

We now prove \eqref{eqn: key variation} by induction, focusing on the inequality for $\|f_t(n) \cdots f_t(1)\|$ for simplicity. Let $c \geq c_1$. Clearly, the inequality holds for $n = 0$. Assume it holds for all $j \in [0, n)$. For $n$, we consider the following elementary estimate:
$$\begin{aligned}
    &\quad\| f_t(n)\cdots f_t(1)|_{V_1}\|\\ 
 &\leq \|f_0(n)\cdots f_0(1)|_{V_1}\|+\sum_{j=0}^{n-1}\|f_0(n)\cdots f_0(j+2)f_t(j+1)\cdots f_t(1)|_{V_1}-f_0(n)\cdots f_0(j+1)f_t(j)\cdots f_t(1)|_{V_1}\|\\
 &\leq \|f_0(n)\cdots f_0(1)|_{V_1}\|+\sum_{j=0}^{n-1}\|f_0(n)\cdots f_0(j+2)|_{V_{j+2}}\|\|f_t(j+1)-f_0(j+1)|_{V_{j+1}}\|\|f_t(j)\cdots f_t(1)|_{V_1}\|.\\
\end{aligned}
$$

By combining the uniform Lipschitz estimate for $f_t(n)$, $t \in I$, the definition of $C(n)$, \eqref{eqn: f_0 seg est}, and the induction hypothesis, we obtain:
\begin{eqnarray*}
\| f_t(n) \cdots f_t(1)|_{V_1}\| &\leq& c_1 C(n) + \sum_{j=0}^{n-1} c_1 C(n) \cdot (Lt) \cdot c C(j) e^{c C(j) Lt j} \\
&\leq& c C(n) \left(1 + (c C(n) \cdot Lt) \sum_{j=0}^{n-1} e^{c C(n) Lt \cdot j} \right) \\
&=& c C(n) \left(1 + (c C(n) \cdot Lt) \frac{e^{c C(n) Lt n} - 1}{e^{c C(n) Lt} - 1}\right) \\
&\leq& c C(n) e^{c C(n) Lt n}.
\end{eqnarray*}
Thus, the induction is complete.

By a similar argument (reverse the order for telescoping steps), we could show the corresponding inequality for $\|(f_t(n)\cdots f_t(1)|_{V_1})^{-1}\|$
(If necessary we could take $c\geq c_2$, where $c_2$ only depends on $W$ and satisfies that for any $j\leq n$, $\|f_0(j)^{-1}\cdots f_0(n)^{-1}|_{V_n}\|\leq c_2C(n)$).
\end{proof}

\begin{prop}\label{prop: variation center}Suppose that the  Schr\"odinger cocycle  $(T, A_E)$ is partially hyperbolic for some $E\in \RR$. Then there exists constant $C_2=C_2(E)>1$ 
such that for any $\epsilon\in \RR$ sufficiently close to $0$, any $\omega\in \Omega$, any $n\in \ZZ^+$ we have 
\begin{equation*}\label{eqn: est center variation}
\| (A_{E+i\epsilon})_{ n} (\omega)|_{E^c_{A_{E+i\epsilon}}}\|, \| (A_{E+i\epsilon})_{ n} (\omega)|_{E^c_{A_{E+i\epsilon}}})^{-1}\|  \leq  C_1C(n)\exp (C_1C(n)\epsilon n),
\end{equation*}
 where  $C(n):=\max_{0\leq s\leq n}\|(A_E)_{s}(\omega)|_{E^c_{A_E}}\|^2$. Furthermore, $C_2$ is continuously dependent  on $E.$
\end{prop}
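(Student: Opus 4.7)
The statement should follow from a direct application of Lemma \ref{lem: key esti} to the family of restricted cocycles $A_{E+i\epsilon}|_{E^c_{A_{E+i\epsilon}}}$, with $\epsilon$ playing the role of the interpolation parameter $t$. The only genuine subtlety is that the domain $E^c_{A_{E+i\epsilon}}(\omega)$ moves with $\epsilon$, whereas Lemma \ref{lem: key esti} requires a fixed sequence of subspaces $V_n$; the first step is therefore to rigidify the domain by an $\epsilon$-dependent conjugation, and then verify the hypotheses of the lemma uniformly in $n$ and $\omega$.

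\textbf{Step 1 (Rigidify the center bundle).} By the holomorphic persistence of the dominated splitting under complexification of the energy (Appendix \ref{ds-con}), for every $\omega\in \Omega$ and every $\epsilon$ in a small neighborhood of $0$, there is a linear isomorphism
\[
\Pi_{E+i\epsilon}(\omega): E^c_{A_E}(\omega)\to E^c_{A_{E+i\epsilon}}(\omega),
\]
holomorphic in $\epsilon$, with $\Pi_E(\omega)=\mathrm{id}$, and satisfying uniform bounds $\|\Pi_{E+i\epsilon}(\omega)^{\pm 1}\|\leq K$ for some $K=K(E)$ (uniformity follows from compactness of $\Omega$). Setting $V_n:=E^c_{A_E}(T^{n-1}\omega)$, we then define
\[
f_\epsilon(n):=\Pi_{E+i\epsilon}(T^n\omega)^{-1}\circ A_{E+i\epsilon}(T^{n-1}\omega)\circ \Pi_{E+i\epsilon}(T^{n-1}\omega): V_n\to V_{n+1}.
\]
At $\epsilon=0$ this is exactly $f_0(n)=A_E(T^{n-1}\omega)|_{E^c_{A_E}(T^{n-1}\omega)}$, which preserves the Hermitian symplectic structure since $E^c_{A_E}$ is a Hermitian symplectic subspace by Lemma \ref{wron11} and $A_E$ is Hermitian symplectic.

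\textbf{Step 2 (Uniform Lipschitz estimate and invocation of Lemma \ref{lem: key esti}).} Because $A_{E+i\epsilon}(\omega)$ is affine in $\epsilon$ and $\Pi_{E+i\epsilon}(\omega)$ is holomorphic in $\epsilon$, a Cauchy estimate on a fixed complex disc combined with compactness of $\Omega$ yields a constant $L=L(E)$ with
\[
\|f_\epsilon(n)-f_{\epsilon'}(n)|_{V_n}\|\leq L|\epsilon-\epsilon'|,\qquad \|f_\epsilon(n)^{-1}-f_{\epsilon'}(n)^{-1}|_{V_{n+1}}\|\leq L|\epsilon-\epsilon'|,
\]
uniformly in $n$ and $\omega$. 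Hence, after rescaling $\epsilon$ into the unit interval, the hypotheses of Lemma \ref{lem: key esti} are met, with the compact set $W=\{E^c_{A_E}(\omega):\omega\in\Omega\}\subset G_{\HSp}(2k,2m)$. The lemma produces a constant $c=c(W)$ such that
\[
\|f_\epsilon(n)\cdots f_\epsilon(1)|_{V_1}\|,\ \|(f_\epsilon(n)\cdots f_\epsilon(1)|_{V_1})^{-1}\|\leq cC(n)\exp\bigl(cC(n)L\epsilon n\bigr),
\]
where $C(n)=\max_{0\leq s\leq n}\|(A_E)_s(\omega)|_{E^c_{A_E}}\|^2$ is exactly the quantity in the proposition.

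\textbf{Step 3 (Conjugate back and track continuity in $E$).} Unwinding the conjugation gives $(A_{E+i\epsilon})_n(\omega)|_{E^c_{A_{E+i\epsilon}}(\omega)}=\Pi_{E+i\epsilon}(T^n\omega)\cdot(f_\epsilon(n)\cdots f_\epsilon(1))\cdot \Pi_{E+i\epsilon}(\omega)^{-1}$, and absorbing the uniform bound $K^2$ on $\Pi_{E+i\epsilon}^{\pm 1}$ into a single constant $C_2(E)$ depending only on $K$, $L$, and $c(W)$ yields the required estimate. Since $\Omega$ is compact and the dominated splitting, $\Pi_{E+i\epsilon}$, and $W$ all vary continuously with $E$, so do $K$, $L$, and $c(W)$, giving the claimed continuous dependence of $C_2$ on $E$. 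The main technical obstacle is the uniform-in-$(n,\omega)$ Lipschitz bound of Step 2: one must convert the pointwise holomorphic dependence of the center bundle into an operator-norm estimate valid along the entire orbit, which is where the compactness of $\Omega$ and the continuity of the splitting are indispensable.
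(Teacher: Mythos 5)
Your proof is correct and follows essentially the same route as the paper: you rigidify the moving center bundle by a holomorphic family of isomorphisms onto $E^c_{A_E}$ (the paper uses the inverse of the projection along $E^u_{A_E}\oplus E^s_{A_E}$, denoted $B_{E+i\epsilon}$), obtain a uniform Lipschitz bound in $\epsilon$ via a Cauchy estimate, and then apply Lemma~\ref{lem: key esti} with $V_n=E^c_{A_E}(T^{n-1}\omega)$ before conjugating back. The only place the paper is slightly more explicit is in naming the conjugation as the linear projection along the complementary bundle, which immediately gives $P_E=\mathrm{id}$ and the uniform boundedness you assert.
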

\begin{proof}
By classical cone arguments in hyperbolic dynamics, the family $E^c_{A_{E+i\epsilon}}(\omega)$ continuously depends on $(E+i\epsilon,\omega)$ and holomorphically depends on $E+i\epsilon$ (refer to Appendix \ref{ds-con} for the whole proof). For sufficiently small $\epsilon$, the linear projection $P_{E+i\epsilon}(\omega)$ from $E^c_{A_{E+i\epsilon}}(\omega)$ to $E^c_{A_E}(\omega)$ along $E^{u}_{A_E}(\omega)\oplus E^s_{A_E}(\omega)$ is well-defined, holomorphically dependent on $E+i\epsilon$, and continuously dependent on $(E+i\epsilon,\omega)$. Moreover, $P_E(\omega)$ is the identity for all $\omega$.

Let $B_{E+i\epsilon}:=(P_{E+i\epsilon})^{-1}$. Then the map $$
\tilde A_{E+i\epsilon}(\omega): E^c_{A_E}(\omega)\to E^c_{A_E}(T\omega)
$$ defined by $$
\tilde A_{E+i\epsilon}: \omega\mapsto (B_{E+i\epsilon})^{-1}(T\omega)\circ A_{E+i\epsilon}(\omega)\circ B_{E+i\epsilon}(\omega)
$$ can be viewed as a one-parameter family of cocycles that leaves $E^c_{A_E}$ invariant and holomorphically depends on $E+i\epsilon$. Additionally, we have $\tilde A_E=A_E$. The key observation is the following:
\begin{lemma}\label{lip1}
For any \(\omega\in\Omega\), \(\|\tilde A_{E+i\epsilon}(\omega)-\tilde A_{E}(\omega)|_{E^c_{A_E}(\omega)}\|\leq C_2\epsilon\), where \(C_3\) is a constant dependent only on \(E\). Futhermore, $C_3$ is continuous dependent on $E.$
\end{lemma}
\begin{proof}
By the mean-value theorem and Cauchy's integral formula, we have
$$
 	\begin{aligned}
 		\|\tilde A_{E+i\epsilon}(\omega)-\tilde A_{E}(\omega)|_{E^c_{A_E}(\omega)}\|&\leq \epsilon \sup_{0\leq\tilde\epsilon\leq\epsilon} \|d_{\tilde\epsilon} \tilde A_{E+i\tilde\epsilon}(\omega)\|\leq \epsilon \sup_{0\leq\tilde\epsilon\leq\epsilon} \|\frac{1}{2\pi i}\int_{\gamma}\frac{ \tilde A_{z}(\omega)}{(z-i\tilde\epsilon)^2}dz
 		 \|\\&\leq \epsilon C\sup_{z,\omega}  \|\tilde A_{z}(\omega)\|\leq C_3\epsilon ,
 	\end{aligned}
 $$
 where  $\gamma=\{|z-i\tilde\epsilon|=r\}$ with sufficiently small $r$, and  we used  $\tilde A_{E+i\epsilon}(\omega)$ is a continuous function on $(E+i\epsilon,\omega)$ and $\Omega$ is a compact set. 
 \end{proof}
 
By Lemma \ref{wron11}, $E^c_{A_E}(\omega)$ is a Hermitian-symplectic subspace for any $\omega\in\Omega$. We assume that $\dim E^c_{A_E}(\omega)=2k$ for some $1\leq k\leq m$. Since $E^c_{A_E}(\omega)$ continuously depends on $(\omega,E)\in\Omega\times \I$, where $\I$ is a small closed interval of $E$. Since $\I$ and $\Omega$ are compact sets, the sequence of subspaces $W=\{E^c_{A_E}(\omega)\}_{\omega\in\Omega,E\in \I}\subset G_{HSp}(2k,2m)$ forms a compact family of symplectic subspaces in $\mathbb{C}^{2m}$.

We apply Lemma \ref{lem: key esti} to the sequence of one-parameter linear maps $$
\tilde A_{E+i\epsilon}(T^{n-1}(\omega)): E^c_{A_E}(T^{n-1}(\omega))\to E^c_{A_E}(T^{n}(\omega)),
$$ where $\epsilon$ is $t$, and $E^c_{A_E}(T^{n-1}(\omega))$ is $V_n$ in Lemma \ref{lem: key esti}. By Lemma \ref{lip1}, we obtain a uniform Lipschitz estimate for the family $\epsilon\to \tilde A_{E+i\epsilon}|_{E^c_{A_E}}$.

Hence, we can apply Lemma \ref{lem: key esti} and obtain telescoping estimates of $$
\|\tilde A_{E+i\epsilon}(T^{n-1}(\omega))\cdots \tilde A_{E+i\epsilon}(\omega)|_{E^c_{A_E}}\|,~~ \|(\tilde A_{E+i\epsilon}(T^{n-1}(\omega))\cdots \tilde A_{E+i\epsilon}(\omega)|_{E^c_{A_E}})^{-1}\|.
$$ Since $\tilde A_{E+i\epsilon}$ is conjugate to $A_{E+i\epsilon}$ through $B_{E+i\epsilon}$ and $B_{E+i\epsilon}$ is the identity when $\epsilon=0$, we conclude that $B_{E+i\epsilon}$ is uniformly close to the identity when $\epsilon$ is sufficiently small. Therefore, the estimates of $$
\|\tilde A_{E+i\epsilon}(T^{n-1}(\omega))\cdots \tilde A_{E+i\epsilon}(\omega)|_{E^c_{A_E}}\|,~~ \|(\tilde A_{E+i\epsilon}(T^{n-1}(\omega))\cdots \tilde A_{E+i\epsilon}(\omega)|_{E^c_{A_E}})^{-1}\|
$$ imply the corresponding estimates of $$
\|(A_{E+i\epsilon})_{n}(\omega)|_{E^c_{A_{E+i\epsilon}}}\|, \|(A_{E+i\epsilon})_{n}(\omega)|_{E^c_{A_{E+i\epsilon}}})^{-1}\|
$$ (up to a constant close to 1), which completes the proof of Proposition \ref{prop: variation center}.
\end{proof}

\section{Monotonic Hermitian symplectic cocycles}\label{idea2}

 \subsection{Monotonicity of Hermitian symplectic bundles}       
We first introduce the concept of monotonicity for a curve taking values in the set of Hermitian symplectic mappings. 
        
\begin{definition}\label{def: mono HSp mapping}
Let $(V,\psi)$, $(V', \psi')$ be $2d$-dimensional Hermitian symplectic spaces with signature $0$.
\begin{enumerate}
     \item  A $C^l (l=1,\cdots,\infty,\omega)$ path $\gamma: I \rightarrow\left(V\backslash\{0\}, \psi\right)$ is called monotonic increasing (or decreasing), if $\psi\left(\gamma(t), \gamma^{\prime}(t)\right)$ is always positive (or negative) for all $t \in I$.
     \item A $C^l$ one parameter Hermitian symplectic mappings $$A_t: (V,\psi)\to (V',\psi'), t \in I, \psi'(A_t(v),A_t(w))=\psi(v,w)\text{ for any }v,w \in V,$$ is called monotonic, if for any \textit{isotropic vector} $v \in V$, the curve $A_t(v)$ is monotonic in $(V',\psi')$.
 \end{enumerate}
\end{definition}
Similarly, we can extend the definition of monotonicity to general Hermitian symplectic cocycles:
\begin{definition}\label{def: mono HSp cccle}
Let $\mathcal V=\{V_\omega, \psi_\omega\}_{\omega\in \Omega}$ be a continuous complex vector bundle (could be non-trivial) over a compact metric space $\Omega$ such that on each fiber $V_\omega$ we equip a Hermitian symplectic structure $\psi_\omega$ (continuous in $\omega$). Let $T:\Omega\to \Omega$ be a continuous map and $I\subset \R$ be an interval. 

A family of Hermitian symplectic cocycles $A_t:\V\to \V, t\in I$ over $T:\Omega\to \Omega$ such that 
\begin{itemize}
    \item $\partial_t A_t$ exists.
    \item $A_t, \partial_t A_t$ are continuous in $(t,\omega)\in I\times \Omega$.
\end{itemize}
 is called monotonic if for any $t\in I, \omega\in \Omega$, $A_t: (V_\omega,\psi_\omega)\to (V_{T\omega},\psi_{T\omega})$ is a $C^1$ monotonic family of Hermitian symplectic mappings in the sense of Definition \ref{def: mono HSp mapping} (2).

\end{definition}

Our definition for monotonic cocycles coincides with the definition in \cite{AK, Xu} when the cocycle taking values in $\SL(2,\R)$, $\Sp(2d,\R)$.
\begin{definition}\cite{AK, Xu}\label{olddef}
\begin{enumerate}
\item Let $t\mapsto A_t\in C(X,\mathrm{SL(2,\R)})$ be a one-parameter family cocycles, $C^1$ in $t$. We say it is monotonic, 
if for any $t_0\in I$, $\frac{d}{dt}\arg(A_tv)|_{t=t_0}<0$ for any $v\in\R^2\backslash\{0\}.$
\item Let $t\mapsto A_t\in C(X,\mathrm{Sp(2d,\R)}) $  be a one-parameter family cocycles, $C^1$ in $t$. We say it is monotonic, 
if for any $t_0\in I$, $$J\partial_t|_{t=t_0} A_t\cdot A_{t_0}^{-1} \text{ is positive definite}.$$
\end{enumerate}

\end{definition}

Notice that any real vector is actually isotropic, for the case that $\V$ is a trivial bundle with constant real-symplectic structure, our definition coincides with Definition \ref{olddef}.

\subsection{Local Trivialization  for Families of Hermitian Symplectic Forms on Complex Vector Bundles}

Next theorem concerns the local trivialization of a family of Hermitian symplectic forms on a complex vector bundle, it shows that  a family of fiberwise non-degenerate Hermitian symplectic forms can be "trivialized" to the initial form via a smooth family of bundle isomorphisms under mild derivative conditions. This result will play a crucial role in Theorem \ref{mon1}.

\begin{theorem}\label{thm: pure geo}
Let $\V \to X$ be a rank-$2d$ complex vector bundle over a smooth manifold $X$, and let $\omega(t)$ for $t \in I$ (a neighborhood of 0) be a family of fiberwise non-degenerate Hermitian symplectic forms. Assume:
\begin{enumerate}
    \item $\omega(t)$ depends continuously on $(x,t) \in X\times I$ and uniformly $C^l$ in $t$ ($l\geq 2$),
    \item $\omega(0) = \omega_0$ and $\omega'(0) = 0$ (i.e. for any $u,v$, $\frac{d}{dt}|_{t=0}  \omega(t)(u,v)=0$).
\end{enumerate}
Then there exists a family of fiber-preserving bundle isomorphisms $F_t: \V \to \V$ (covering $id_X$) depending continuously in $(x,t)\in X\times I$ and uniformly $C^{l-1}$ in $t$ such that:
\[
F_t^* \omega(t) = \omega_0, \quad F_0 = id, \quad \partial_t|_{t=0} F_t = 0.
\]
\end{theorem}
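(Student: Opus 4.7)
The plan is to reduce the problem to a fiberwise algebraic construction via functional calculus. Since $\omega_0 := \omega(0)$ is fiberwise non-degenerate, for each $x \in X$ and $t \in I$ I define a unique $\widetilde M(t) \in \mathrm{End}(\V_x)$ by
\[
\omega(t)(u,v) \;=\; \omega_0\bigl(u,\widetilde M(t)v\bigr), \qquad u,v \in \V_x.
\]
Non-degeneracy of $\omega(t)$ makes $\widetilde M(t)$ invertible, and $\widetilde M(t)$ inherits continuity in $(x,t)$ and uniform $C^l$-regularity in $t$ from $\omega(t)$, with $\widetilde M(0) = \mathrm{id}$ and $\widetilde M'(0) = 0$ (the latter from $\omega'(0)=0$). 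Applying the skew-Hermitian identity $\omega(u,v) = -\overline{\omega(v,u)}$ to both $\omega(t)$ and $\omega_0$ one finds $\omega_0(u,\widetilde M(t)v) = \omega_0(\widetilde M(t)u,v)$; writing $A \mapsto A^\sharp$ for the adjoint determined by $\omega_0(Au,v) = \omega_0(u,A^\sharp v)$, this reads $\widetilde M(t)^\sharp = \widetilde M(t)$.

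For $|t|$ small the spectrum of $\widetilde M(t)$ lies in a small disk around $1$, so the principal branch of $z^{-1/2}$ is holomorphic on a neighborhood of it, and I set
\[
F_t \;:=\; \widetilde M(t)^{-1/2}
\]
via the Dunford--Riesz contour integral. Since the $\sharp$-adjoint is conjugate-linear, $(z\,\mathrm{id} - \widetilde M(t))^\sharp = \bar z\,\mathrm{id} - \widetilde M(t)$, and because $\bar f = f$ for $f(z) = z^{-1/2}$ on the principal domain, functional calculus commutes with $\sharp$; hence $F_t^\sharp = F_t$. Differentiating the contour integral under the integral sign gives continuity in $(x,t)$ and $C^l$ regularity in $t$, which is stronger than the stated $C^{l-1}$. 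Verification of the three conclusions is then immediate: $F_0 = \mathrm{id}$ and $\partial_t F_t|_{t=0} = -\tfrac12\,\widetilde M'(0) = 0$ follow from the normalizations of $\widetilde M$, while for any $u,v$,
\[
\omega(t)(F_t u, F_t v) \;=\; \omega_0\bigl(F_t u, \widetilde M(t) F_t v\bigr) \;=\; \omega_0\bigl(u, F_t \widetilde M(t) F_t v\bigr) \;=\; \omega_0(u,v),
\]
using $\omega_0$-self-adjointness of $F_t$ in the middle step and $\widetilde M(t)^{-1/2}\widetilde M(t)\widetilde M(t)^{-1/2} = \mathrm{id}$ in the last.

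The main point to check is that the functional-calculus square root is defined uniformly in the parameters: one needs a single resolvent contour enclosing the spectra of all the operators $\widetilde M(t)_x$ appearing in the construction. This is automatic in the applications of the theorem (the base $\Omega$ in Definition \ref{def: mono HSp cccle} is compact, so $\widetilde M(t) \to \mathrm{id}$ uniformly in $x$ as $t \to 0$); in general one shrinks $I$, or works in neighborhoods of $x$ and glues by uniqueness of the construction. An equivalent Moser-type route is also available: one lets $Y_t$ be the unique $\omega(t)$-self-adjoint operator with $\omega'(t)(u,v) = \omega(t)(Y_t u,v)$, observes that $\omega'(0) = 0$ gives $Y_0 = 0$, and solves the linear ODE $\partial_t F_t = -\tfrac12 Y_t F_t$ with $F_0 = \mathrm{id}$; the same verification applies. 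Both routes only manipulate $\omega(t)$ fiberwise, so the resulting $F_t$ is automatically fiber-preserving over $\mathrm{id}_X$.
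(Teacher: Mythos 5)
Your proof is correct and takes a genuinely different and more elementary route than the paper's. The paper works on the frame bundle: it observes that $(\GL(2d,\CC),\HSp(2d))$ is a reductive pair, uses the associated $\text{Ad}(H)$-invariant splitting $\ggG = \hhh\oplus\mathfrak m$ to build a canonical principal connection, horizontally lifts the family of forms viewed as sections of the quotient bundle $\W/H$, and then descends the resulting principal-bundle automorphisms to $\V$. You instead work entirely fiberwise: representing $\omega(t)$ against $\omega_0$ by an $\omega_0$-self-adjoint operator $\widetilde M(t)$ near the identity, you pull it back by the holomorphic square root $\widetilde M(t)^{-1/2}$ taken via the Dunford--Riesz integral (or equivalently by the Moser ODE $\partial_t F_t = -\tfrac12 Y_t F_t$). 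The compatibility with $\sharp$ and the verification $F_t^\sharp \widetilde M(t) F_t = \mathrm{id}$ are handled correctly, and $\widetilde M'(0)=0$ immediately gives $\partial_t F_t|_{t=0}=0$. Two observations. First, your route actually delivers $C^l$ regularity in $t$, one order better than the $C^{l-1}$ stated in the theorem, because the contour integral is a holomorphic functional of $\widetilde M(t)$ and loses no derivatives (the paper's proof loses one because it differentiates $\sigma(t;x)$ to form the horizontal-lift ODE). Second, your aside about the uniform contour is exactly the right thing to flag: the construction needs $\mathrm{spec}(\widetilde M(t)_x)$ uniformly close to $1$, which follows from the ``uniformly $C^l$ in $t$'' hypothesis (together with uniform non-degeneracy of $\omega_0$), and is automatic over the compact bases that appear in the applications; worth noting that merely shrinking $I$ does not help without uniformity, so the appeal to the uniformity clause (or compactness plus gluing) is the correct fix. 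What the paper's approach buys is structural generality: it never needs a linear structure on the space of forms, only that $(G,H)$ is a reductive pair, so it transfers verbatim to other homogeneous classifying spaces; what your approach buys is a self-contained algebraic proof with better regularity and no differential-geometric overhead.
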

\begin{proof}
The proof uses the geometry of principal bundles \cite{SS}, specifically the existence of an invariant connection. This allows for the construction of canonical horizontal lifts. 
\\
\\
\textbf{Step 1 (Preparations using principal bundles): }

For our $\mathcal{V} \to X$, 
we consider its \textit{principal} $G=\GL(2d, \mathbb{C})$\textit{-bundle of frames}, denoted by $\mathcal{W}$. A point in $\mathcal{W}$ above $x \in X$ is a choice of frame for the fiber $\mathcal{V}_x$. The projection map is $\pi_X: \mathcal{W} \to X$.

For the initial Hermitian symplectic form $\omega_0|_x$ on each fiber $\mathcal{V}_x$, 
the group that preserves $\omega_0|_x$ is isomorphic to the single, abstract Lie group $H=\HSp(2d)$. 

We consider the associated bundle $\overline{\mathcal{W}} = \mathcal{W} / H$ with fibers $\GL(2d,\C)/H$. The fibers are precisely the set of all possible Hermitian symplectic forms (with the same signature as $\omega_0$) on $\V_x$. 
The family of Hermitian symplectic forms $\omega(t)$ corresponds to a family of continuous sections $\sigma(t): X \to \overline{\mathcal{W}}$ of the associated bundle $\overline{\W}$ which are uniformly $C^l$ in $t$ and stationary at $t=0$. 
\\
\\ \textbf{Step 2 (Reductive pair): } The pair $(G, H) = (\text{GL}(2d, \mathbb{C}), \text{HSp}(2d, \mathbb{C}))$ forms a \textit{reductive pair}, meaning the Lie algebra $\mathfrak{g} = \mathfrak{gl}(2d, \mathbb{C})$ admits a decomposition:
\[
\mathfrak{g} = \mathfrak{h} \oplus \mathfrak{m},
\]
where $\mathfrak{h} = \mathfrak{hsp}(2d, \mathbb{C})$ is the Lie algebra of $H$, and $\mathfrak{m}$ is a vector subspace such that $\text{Ad}(h) \mathfrak{m} \subseteq \mathfrak{m}$ for all $h \in H$. This property is fundamental for our next step, defining a canonical connection.
\\
\\ \textbf{Step 3 (Vertical, horizontal spaces and canonical projections):} We use the decomposition in Step 2 to define an $H$-equivariant principal connection on $\mathcal{W}_x$ for each $x$. For any $p \in \mathcal{W}_x$, we define the ``horizontal space" $\text{Hor}_p$ as the image of $\mathfrak{m}$ under the right translation $(R_g)_*$ (where $p = u \cdot g$ for some local frame $u$ and $g \in G$). More precisely, identifying the tangent space to the fiber at $p$ with $\mathfrak{g}$ (via right trivialization), $\text{Hor}_p$ is the part corresponding to $\mathfrak{m}$. The vertical space $\text{Vert}_p$ corresponds to $\mathfrak{h}$.
\begin{itemize}
    \item $\text{Vert}_p = T_p (p \cdot H) \cong \mathfrak{h}$.
    \item $\text{Hor}_p = \{ (R_g)_* X \mid X \in \mathfrak{m} \}$, where $p$ is associated with $g \in G$.
\end{itemize}
The $\text{Ad}(H)$-invariance of $\mathfrak{m}$ guarantees that the horizontal distribution is $H$-equivariant:
\[
(R_h)_* \text{Hor}_p = \text{Hor}_{p \cdot h}, \quad \forall h \in H.
\]
This $H$-equivariance is essential for ensuring consistency across the fibers of the quotient bundle. Apparently the quotient projection $\pi_x: \mathcal{W}_x \to \overline{\mathcal{W}}_x = \mathcal{W}_x/H_x$  that $d\pi_p$ maps $\text{Hor}_p$ isomorphically onto $T_{\pi(p)}\overline{\mathcal{W}}_x$. 
\\
\\\textbf{ Step 4 (Construct $F_t$ through principal connections): }
For each $x \in X$, we fix an arbitrary $p_0(x)\in \W_x$ such that it is a canonical frame for the Hermitian symplectic structure $\omega_0$. Then we could define a curve $\tilde F_t(x)$ uniquely that satisfies
\begin{enumerate}
\item  $\tilde F_0(x)=p_0(x)$;
\item $\pi_x( \tilde F_t(x))=\sigma(t;x)$;
\item  $\frac{d}{dt}\tilde F_t(x)\in \mathrm{Hor}_{\tilde F_t(x)}$. The curve  $\tilde F_t(x)$ is a horizontal lift of $\sigma(t;x)$. The existence and uniqueness of such a lift for a given initial condition is a standard result for principal connections compatible with the quotient structure.
\end{enumerate}

Now we can define our bundle maps $F_t$ as follows. For any $p\in \W_x$,  write $p=p_0(x)\cdot g$ for a unique $g\in G=\GL(2d,\C)$. Then define:
$$F_t(p):=\tilde F_t(x)\cdot g.$$
It is not hard to see that the definition of $F_t$ is independent of the choice of $p_0(x)$. Moreover this family of maps $F_t$ satisfies:
\begin{itemize}
\item $F_0(p)=p$: Since $\tilde F_0(x)=p_0(x)$, $F_0(p)=p_0(x)\cdot g=p$.
\item $F_t$ is a principal bundle automorphism: It covers the identity on $X$ and is $G-$equivariant just by its definition.
\item $\pi_x(F_t(p)) = \sigma(t;x)$ by definition of $\tilde F_t(x)$ and $F_t$.
\item Therefore $F_t$ induces a family of linear maps on $\V$: Due to $F_t$ being a principal bundle automorphism, it naturally induces a family of fiber-preserving linear isomorphisms $F_t:\V_x\to \V_x$.
\end{itemize}

\textbf{ Step 5 (Complete the proof):}
We have constructed a family of principal bundle automorphisms $F_t: \mathcal{W} \to \mathcal{W}$ defined by $F_t(p) := \tilde{F}_t(x) \cdot g$ for $p = p_0(x) \cdot g \in \mathcal{W}_x$, where $\tilde{F}_t(x)$ is the horizontal lift of $\sigma(t;x)$ starting from a canonical frame $p_0(x)$. This family $F_t$ induces a family of fiber-preserving linear isomorphisms $F_t: \mathcal{V} \to \mathcal{V}$. We now verify that this constructed $F_t$ satisfies the remaining required properties.
\begin{enumerate}
\item $F_t^* \omega(t) = \omega_0$: it is a corollary of $\pi_x(\tilde F_t(x))=\sigma(t,x)$.
\item $F_0 = id$: this follows that $ F_0(p)=p$.
\item $F_t'(0) = 0$ (the Stationary Condition): it is a corollary of the fact $\pi_x$ is a linear isomorphism (in Step 3). Here we use the definition of $\mathfrak m$.
\item By the way we define $F_t$ and classical regularity result for ODE, $F_t$  is continuous in $(x,t)\in X\times I$ and uniformly $C^{l-1}$ in $t$, (we solve $\tilde F_t$ using $\frac{d}{dt}\sigma(t,x)$ which is uniformly $C^{l-1}$ in $t$, so the solution of ODE also uniformly $C^{l-1}$ in $t$). 
\end{enumerate}
	\end{proof}

\subsection{Inheritance of monotonicity}

The following theorem is crucial for studying Hermitian symplectic cocycles that arise from the duals of Schrödinger cocycles.
Roughly speaking, if we have a monotonic family of cocycles that is also partially hyperbolic, we may deduce that the restriction of these cocycles to the center bundle remains monotonic—at least over some smaller parameter interval.
	\begin{theorem}\label{mon1}
Let $\V=\{V_\omega,\psi_\omega\}_{\omega\in \Omega}$ be a continuous vector bundle over a compact metric space $\Omega$, with a continuous family of Hermitian symplectic fibers $(\V_\omega,\psi_\omega)$.
Let $A_t: \V\to \V, t\in (-\epsilon,\epsilon)$ be a monotonic family of Hermitian symplectic cocycle  over $T$ (in the sense of Definition \ref{def: mono HSp cccle}) and uniformly $C^l, l=2,\dots,\infty, \omega$ in $t$. Morerover we assume $A_t, t\in (-\epsilon, \epsilon)$ is partially hyperbolic (with constant center dimension). 

Then there exists a small neighborhood $(-\epsilon',\epsilon')$ of $0$ and a family of Hermitian symplectic mappings $B_t: E_0^c\rightarrow E_t^c$ such that
\begin{enumerate}
    \item $B_t$ is uniformly $C^{l-1}$ in $t$;
    \item $\partial_tB_t, B_t$ are continuous in $(t,\omega)$;
    \item $C_t: E^c_0\to E^c_0, C_t|_{E^c_0(\omega)}:=B_t^{-1}(T\omega) \circ A_t|_{E_t^c(\omega)} \circ B_t(\omega)|_{E^c_0(\omega)}$ is a monotonic family of Hermitian cocycles on the bundles $E^c_0$ over $T$. Here $E^c_t$ is short for $E^c_{A_t}$.
\end{enumerate}

	\end{theorem}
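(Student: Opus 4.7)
The plan is to construct $B_t$ as a composition $B_t = Q_t \circ F_t$, where $Q_t : E^c_0 \to E^c_t$ is a natural (but not necessarily symplectic) identification coming from the dominated splitting and $F_t : E^c_0 \to E^c_0$ is a symplectic correction supplied by Theorem~\ref{thm: pure geo}. Using partial hyperbolicity and the smooth dependence of dominated splittings (cf.\ Appendix~\ref{ds-con}), the subbundle $E^c_t$ is uniformly $C^l$ in $t$, and for $t$ near $0$ I write it as a graph over $E^c_0$ in the decomposition $\V = E^c_0 \oplus (E^s_0 \oplus E^u_0)$: explicitly, $E^c_t(\omega) = \{u + \phi_t(\omega) u : u \in E^c_0(\omega)\}$ for a uniformly $C^l$-family $\phi_t(\omega) : E^c_0(\omega) \to E^s_0(\omega) \oplus E^u_0(\omega)$ with $\phi_0 \equiv 0$. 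Set $Q_t u := u + \phi_t u$ and define the pulled-back form $\psi_t := Q_t^{*}(\psi|_{E^c_t})$ on the bundle $E^c_0$.

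The key input is that $E^c_0$ is the $\psi$-symplectic orthogonal complement of $E^s_0 \oplus E^u_0$, a standard consequence of the invariance of the splitting together with the differing growth rates under the symplectic cocycle $A_0$. Therefore, for $u, v \in E^c_0(\omega)$,
\[
\psi_t(u, v) \;=\; \psi(u + \phi_t u,\, v + \phi_t v) \;=\; \psi(u, v) + \psi(\phi_t u, \phi_t v) \;=\; \psi(u, v) + O(t^2),
\]
so in particular $\psi_t|_{t=0} = \psi|_{E^c_0}$ and $\partial_t|_{t=0} \psi_t = 0$. Theorem~\ref{thm: pure geo}, applied to the bundle $E^c_0 \to \Omega$ with the family $\psi_t$, yields $F_t : E^c_0 \to E^c_0$ with $F_t^{*} \psi_t = \psi_0$, $F_0 = \mathrm{id}$, $\partial_t F_t|_{t=0} = 0$, continuous in $(t, \omega)$ and uniformly $C^{l-1}$ in $t$. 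Setting $B_t := Q_t \circ F_t$ then gives $\psi(B_t u, B_t v) = \psi_t(F_t u, F_t v) = \psi(u, v)$, so $B_t$ is Hermitian symplectic, and properties (1)--(2) of the theorem follow. By construction, $\partial_t B_t|_{t=0} = \partial_t \phi_t|_{t=0}$, which maps $E^c_0$ into $E^s_0 \oplus E^u_0$.

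For the monotonicity of $C_t$, I fix an isotropic $u \in E^c_0(\omega)$ and set $\eta_t := C_t u \in E^c_0(T\omega)$, $\tilde v(t) := B_t(\omega) u \in E^c_t(\omega)$, and $\xi_t := A_t \tilde v(t) \in E^c_t(T\omega)$; since each $B_t$ is symplectic, $\tilde v(t)$ is isotropic for every $t$. From $\xi_t = B_t(T\omega)\eta_t$ and the symplecticity of $A_t$, differentiating gives
\[
\psi(\eta_t, \eta'_t) \;=\; \psi\!\left(A_t \tilde v(t),\; (\partial_s A_s)|_{s=t} \tilde v(t)\right) + \psi(\tilde v(t), \tilde v'(t)) - \psi\!\left(\xi_t,\; (\partial_t B_t)(T\omega)\cdot \eta_t\right).
\]
At $t = 0$, $\tilde v'(0) = \partial_t B_t|_{t=0} u \in E^s_0(\omega) \oplus E^u_0(\omega)$ while $\tilde v(0) = u \in E^c_0(\omega)$, so $\psi$-orthogonality kills the second term; analogously the third vanishes because $\xi_0 = A_0 u \in E^c_0(T\omega)$ and $\partial_t B_t|_{t=0}(A_0 u) \in E^s_0(T\omega) \oplus E^u_0(T\omega)$. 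This leaves $\psi(\eta_0, \eta'_0) = \psi(A_0 u, (\partial_s A_s)|_{s=0} u)$, which by the monotonicity of $\{A_t\}$ has a definite sign, uniformly bounded away from $0$ over the compact set of normalized isotropic pairs $(\omega, u)$. Continuity of $\psi(\eta_t, \eta'_t)$ in $(t, \omega, u)$ then yields an $\epsilon' > 0$ on which the sign is preserved, establishing the monotonicity of $C_t$.

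The main obstacle will be controlling the correction terms produced by $\partial_t B_t$ in the computation of $\psi(\eta_t, \eta'_t)$: they must vanish at $t = 0$ so that the principal term $\psi(A_0 u, (\partial_s A_s)|_{s=0} u)$ dominates near $0$. This in turn hinges on two ingredients working in concert---the choice of $Q_t$ as a graph parametrization aligned with $\V = E^c_0 \oplus (E^s_0 \oplus E^u_0)$, and the stationary condition $\partial_t F_t|_{t=0} = 0$ furnished by Theorem~\ref{thm: pure geo}. Without both, $\partial_t B_t|_{t=0}$ would not land in $E^s_0 \oplus E^u_0$, and the symplectic orthogonality argument that kills the correction terms would not be available.
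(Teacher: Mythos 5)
Your proposal is correct and follows essentially the same route as the paper: identify $E^c_t$ with $E^c_0$ via the graph map $Q_t$ along $E^s_0\oplus E^u_0$ (the paper's $\B_t = P_t^{-1}$), observe that the pulled-back symplectic form equals $\psi|_{E^c_0}$ up to $O(t^2)$ because $E^c_0$ and $E^s_0\oplus E^u_0$ are $\psi$-orthogonal, invoke Theorem~\ref{thm: pure geo} to supply the symplectic correction $F_t$ with $F_0=\mathrm{id}$, $\partial_t F_t|_{t=0}=0$, and verify monotonicity at $t=0$ by killing the correction terms through the same $\psi$-orthogonality, then extend by continuity and compactness. The only cosmetic difference is that the paper first verifies $\psi(\gamma(0),\gamma'(0))>0$ for the non-symplectic $\B_t$ and then notes the $O(t^2)$ agreement makes the conclusion carry over to $B_t$, whereas you differentiate directly with the corrected $B_t$ and show each correction term vanishes at $t=0$; these are the same computation organized in a different order.
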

	\begin{proof}
Without loss of generality, we may assume $\epsilon$ is sufficiently small so that $E_t^c(\omega)$ remains close to $E_0^c$, and $E_t^c(\omega)$ is uniformly $C^l$-dependent on $t$ and continuous in $(t,\omega)$ (Appendix \ref{ds-con}).  
 
Let $P_t(\omega)$ denote the linear projection from $E_t^c$ onto $E_0^c$ along $E_0^u \oplus E_0^s$. This projection is well-defined, uniformly $C^l$ in $t$, and both $P_t$ and $\partial_t P_t$ are continuous in $(t,\omega)$. We define $\B_t(\omega) = P_t(\omega)^{-1}$. Consequently, $E_t^c$ may be viewed as the graph of a linear map $\Phi_t: E_0^c \to E_0^s \oplus E_0^u$. Furthermore, $\B_t$ admits the explicit representation:  
\[
\B_t: E_0^c \to E_t^c,\quad v \mapsto v + \Phi_t(v).
\]  
Finally, we define $\cC_t: E_0^c \to E_0^c$ by the relation  
\[
\cC_t(\omega) = \B_t^{-1}(T\omega) \circ \left. A_t \right|_{E_t^c(\omega)} \circ \left. \B_t(\omega) \right|_{E_0^c(\omega)}.
\]
We then distinguish the proof into following two steps:\\

\textbf{Step 1:} We show that for any isotropic vector \( v \in E^c_0 \), 
\[
\psi\left( \B_0(v), \left. \frac{d}{dt} \right|_{t=0} \B_t(v) \right) > 0.
\]
   
Without loss of generality we assume $A_t$ is monotonic increasing. We consider the curve   
$\gamma(t)=\cC_t(v)=\B_t^{-1}(T\omega)$ $\left.\circ A_t\right|_{E_t^c} \circ \B_t(\omega)(v)$ for $t$ near $0$ for an arbitrary isotropic vector $v\in E^c_0(\omega)-\{0\}$. We claim that there exists $\epsilon'>0$ such that regardless the choice of $v$, 
\begin{equation*}
			\psi(\gamma(t), \gamma^{\prime}(t))>0\text { for all } t \in (-\epsilon',\epsilon').
\end{equation*}
In fact, by continuity of $P_t, \partial_t P_t$ and compacticity of $\Omega$ and the space of isotropic vectors with unit length (with respect to any fixed Hermitian metric), it suffices to show for any non-zero isotropic vector $v$, $
 \psi(\gamma(0), \gamma^{\prime}(0))>0$. In fact,

		$$
		\begin{aligned}
			& \psi\left(\gamma(0), \gamma^{\prime}(0)\right) \\
			= & \psi\left(A_0(v),\left.\left.\frac{d}{d t}\right|_{t=0} \B_t^{-1}(T\omega) \circ A_t\right|_{E_t^c} \circ \B_t(\omega) \cdot v\right) \\
			= & \psi\left(A_0(v),\left.\left.P_0 \circ \frac{d}{d t}\right|_{t=0} A_t\right|_{E_t^c} \circ \B_t(\omega) \cdot v\right)+  \psi\left(A_0(v),\left.  \frac{d}{d t}\right|_{t=0}P_t(T\omega)\circ A_0 \cdot v\right) \\
				= & \psi\left(A_0(v),\left.\left.P_0 \circ \frac{d}{d t}\right|_{t=0} A_t\right|_{E_t^c} \circ \B_t(\omega) \cdot v\right)+  \psi\left(A_0(v),\left.  \frac{d}{d t}\right|_{t=0}A_0 \cdot v\right) \text { by definition of } P_t  \\
			= & \psi\left(A_0(v),\left.\left.P_0 \circ \frac{d}{d t}\right|_{t=0} A_t \circ \B_t(\omega)\right|_{E_0^c} \cdot v\right) \text { by definition of } B_t \\
			= & \psi\left(A_0(v),\left.\left.\frac{d}{d t}\right|_{t=0} A_t \circ \B_t(\omega)\right|_{E_0^c} \cdot v\right) \text { using symplectic orthogonal property } \\
			= & \psi\left(A_0(v), A_0^{\prime}(v)+\left.A_0 \cdot \frac{d}{d t}\right|_{t=0}\left(\Phi_t \cdot v\right)\right) \text { by definition of } \Phi_t \\
			= & \psi\left(A_0(v), A_0^{\prime}(v)\right)+\psi\left(v,\left.\frac{d}{d t}\right|_{t=0}\left(\Phi_t \cdot v\right)\right) \text { by symplecticity of } A_0\\
			=& \psi\left(A_0(v), A_0^{\prime}(v)\right)>0.
		\end{aligned}
		$$
		
		For the fourth equality and the last equality we use $\Phi_t,\left.\frac{d}{d t}\right|_{t=0}\left(\Phi_t(v)\right) \in E_0^s \oplus E_0^u$ and $E_0^c$,  $E_0^s \oplus E_0^u$ are Hermitian symplectic orthogonal. Therefore we complete the step 1.\\

\textbf{Step 2:} Therefore, if \( \B_t \) is already a Hermitian symplectic transform from \( E^c_0 \) to \( E^c_t \), then \( \cC_t \) satisfies all assumptions of Theorem \ref{mon1} at least for \( t \) sufficiently close to \( 0 \). The problem is that we can only show \( \B_t \) is an \textit{asymptotic} Hermitian symplectic near \( 0 \) up to an error \( O(t^2) \). We will replace \( \B_t \) by a genuinely Hermitian symplectic \( B_t \) such that \( \B_t \) and \( B_t \) are the same up to an error \( O(t^2) \) near \( 0 \). The replacing procedure is completed by a differential geometric argument (Theorem \ref{thm: pure geo}).  
Since monotonicity only involves \( B_t \) and \( \frac{d}{dt} B_t \), our newly-defined \( B_t \) and associated \( C_t \) actually satisfy all assumptions of Theorem \ref{mon1} near \( 0 \).

Now we construct $B_t$ from $\B_t$. Our $\B_t$ in general fail to be Hermitian symplectic mappings. We denote the restriction of Hermitian symplectic form $\psi$ to $E^c_t$ by $\psi^c(t)$. Consider $(\B_t)^\ast(\psi^c(t))$, the pull back of $\psi^c(t)$ from $E^c_t$ to $E^c_0$ which we denote by $\omega(t)$, a one parameter family of Hermitian symplectic forms on $E^c_0$.

We plan to construct a family of fiber fixing linear mappings $F_t: E^c_0\to E^c_0$ such that $F_0=id, F'(0)=0, F(t)^{\ast}\omega(t)=\omega(0)$.
The claim of Theorem \ref{mon1} 
follows from Theorem \ref{thm: pure geo}, by taking $\tilde B_t=B_t\circ F_t$, then $\tilde B_t$ is monotonic in a neighborhood of $0$ and Hermitian symplectic for $t\in I$.

    \end{proof}

\section{Subordinacy theory for operator on strip}\label{subor}

Let us first review the foundational setup of spectral theory for operators on a strip. We begin by recalling the definition of the Weyl matrix.

\begin{lemma}[\cite{KS},\cite{Puig}]\label{M}
	For any $z\in\mathbb{C}\backslash\mathbb{R}$, there exist unique sequences of $d\times d$ matrix
	valued functions $\{\mathbf{F}^{\pm}_z(k)\}_{k\in\mathbb{Z}}$ that
	satisfy the following properties:
	\begin{enumerate}
		\item $\mathbf{F}^\pm_z(0)=I_d,$
		\item  $$
		C^*\mathbf{F}^{\pm}_z(k-1)+C\mathbf{F}^\pm_z(k+1)+V(k)\mathbf{F}^\pm_z(k)=z \mathbf{F}^\pm_z(k),
		$$
		\item
		$$
		\sum\limits_{k=0}^\infty\|\mathbf{F}^+_z(k)\|^2<\infty, \ \ \sum\limits_{k=-\infty}^{0}\|\mathbf{F}^-_z(k)\|^2<\infty.
		$$
	\end{enumerate}
\end{lemma}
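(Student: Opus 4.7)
The plan is to leverage the uniform hyperbolicity of the Schr\"odinger cocycle on the strip for non-real spectral parameter—a classical fact going back to Puig—and parametrize the stable/unstable bundles explicitly to produce $\mathbf{F}^\pm_z$. I will only detail $\mathbf{F}^+_z$; the construction of $\mathbf{F}^-_z$ is entirely analogous with the roles of $E^s_{A_z}$ and $+\infty$ replaced by $E^u_{A_z}$ and $-\infty$.

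First, I would record that for $z \in \mathbb{C}\setminus\mathbb{R}$, the cocycle $(T,A_z)$ is uniformly hyperbolic, so at every site $k$ one has a splitting $\mathbb{C}^{2d} = E^s_{A_z}(k) \oplus E^u_{A_z}(k)$ with both summands of dimension $d$. Solutions of the strip Schr\"odinger equation whose initial data $(u_k,u_{k+1})^{T}$ lie in $E^s_{A_z}(k)$ decay exponentially as $k \to +\infty$, while those with initial data in $E^u_{A_z}(k)$ grow exponentially in the same direction. By the standard dichotomy for uniformly hyperbolic cocycles, the stable bundle exhausts the $\ell^2(\mathbb{Z}^+)$ solutions: any sequence $\{u_k\}$ solving the recurrence with $\sum_{k\geq 0}\|u_k\|^2 < \infty$ must obey $(u_k,u_{k+1})^T \in E^s_{A_z}(k)$ for every $k$, since any nonzero unstable component would force exponential growth.

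Second, I would verify that the canonical projection $\pi_0 \colon E^s_{A_z}(0) \to \mathbb{C}^d(0)$ is a linear isomorphism. Injectivity is the assertion $E^s_{A_z}(0) \cap \mathcal{V}_0 = \{0\}$: if a nonzero vector $(0,\mathbf{u})^T$ were to lie in $E^s_{A_z}(0)$, propagating forward by the cocycle would produce an $\ell^2(\mathbb{Z}^+)$ solution vanishing at $k=0$, and extending by zero for $k \leq 0$ would give a nonzero element of $\ell^2(\mathbb{Z},\mathbb{C}^d)$ in the kernel of $H_{V,T,\omega} - z$, contradicting self-adjointness since $z \notin \mathbb{R}$. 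Bijectivity then follows from $\dim E^s_{A_z}(0) = \dim \mathbb{C}^d(0) = d$.

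Third, I would build $\mathbf{F}^+_z$ column by column. For each standard basis vector $\mathbf{e}_j \in \mathbb{C}^d$, let $(\mathbf{e}_j, \mathbf{m}_j(z))^T \in E^s_{A_z}(0)$ be the unique preimage under $\pi_0$, and propagate by the cocycle to obtain a $\mathbb{C}^d$-valued solution $\{\mathbf{f}_j(k)\}_{k\in\mathbb{Z}}$ of the strip recurrence. Assembling these as columns defines
\[
\mathbf{F}^+_z(k) = \bigl[\, \mathbf{f}_1(k) \,\big|\, \cdots \,\big|\, \mathbf{f}_d(k) \,\bigr],
\]
which manifestly satisfies (1) and (2), and for which (3) follows because every column lies in the stable bundle and hence decays exponentially. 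Uniqueness is then clear: any other matrix-valued solution $\tilde{\mathbf{F}}$ obeying (1)--(3) has columns that are $\ell^2(\mathbb{Z}^+)$ solutions taking the value $\mathbf{e}_j$ at $k=0$; by the first step these columns lie in $E^s_{A_z}(0)$, and by the isomorphism $\pi_0|_{E^s_{A_z}(0)}$ they coincide with $\mathbf{f}_j$. The matrix $\mathbf{m}_j(z)$ read off at $k=1$ is precisely the $j$-th column of the Weyl $M$-matrix.

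The main obstacle I expect is a clean justification of the $\ell^2$-dichotomy in the matrix setting: a priori one only knows exponential growth of the unstable bundle, and one must rule out cancellations in mixed components that would allow an $\ell^2$ tail without full membership in $E^s_{A_z}$. This is handled by decomposing $(u_k,u_{k+1})^T = v^s_k + v^u_k$ and observing that $\|v^u_k\| \geq C\, e^{ck}\|v^u_0\|$, so any summable norm forces $v^u_0 = 0$; this uses only hyperbolicity and the bounded-angle estimate between $E^s_{A_z}$ and $E^u_{A_z}$, both automatic for $z \in \mathbb{C}\setminus\mathbb{R}$.
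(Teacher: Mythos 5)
This lemma is cited to Kotani--Simon and Haro--Puig, so the paper gives no internal proof to compare against; I am therefore evaluating your argument on its own merits. The overall strategy---exploit uniform hyperbolicity of $(T,A_z)$ for $z\notin\mathbb R$, show that $\pi_0$ restricts to an isomorphism from $E^s_{A_z}(0)$ onto $\mathbb C^d(0)$, and then build $\mathbf F^+_z$ column by column from preimages of the standard basis---is sound and is essentially the Weyl-theoretic construction those references carry out. Your first step (characterization of $\ell^2(\mathbb Z^+)$ solutions by the stable bundle, with the stable/unstable decomposition ruling out cancellations), your third step (assembling columns), and the uniqueness argument are all fine.

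The gap is in the injectivity argument in your second step. You claim that if a nonzero vector $(0,\mathbf u)^T$ lay in $E^s_{A_z}(0)$, then the forward solution (which has $u_0=0$) extended by zero for $k\le 0$ would lie in $\ker(H_{V,T,\omega}-z)$ over the full line. That extension is \emph{not} in the kernel: the recurrence at the boundary site $n=0$ reads $C u_1 + V(0)\cdot 0 + C^*\cdot 0 = z\cdot 0$, i.e.\ $Cu_1=0$, which forces $u_1=0$ and hence (by invertibility of the transfer matrices) the entire sequence to vanish. In other words, the hypothesis $(0,\mathbf u)^T\in E^s$ does not produce a full-line eigenfunction; it produces a sequence that fails the equation exactly at the junction, so no contradiction with self-adjointness of $H_{V,T,\omega}$ is obtained this way. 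The correct version of your idea is to observe that $\{u_n\}_{n\ge 1}$ is an eigenvector of the \emph{half-line} operator $H^+_{V,T,\omega}$ with Dirichlet boundary condition at $0$ (here $u_0=0$ is precisely the boundary condition, so the $n=1$ recurrence closes up without needing any $u_k$ with $k\le 0$), and that $H^+$ is also self-adjoint. Since $z\notin\mathbb R$, it cannot be an eigenvalue of $H^+$, which forces the solution to vanish and gives $E^s_{A_z}(0)\cap\mathcal V_0=\{0\}$. With that repair, combined with $\dim E^s_{A_z}(0)=d$ (a consequence of the Hermitian-symplectic structure and uniform hyperbolicity for non-real $z$), the rest of your construction goes through.
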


Once we have $\mathbf{F}^{\pm}_z(k)$, we can define the $M$-matrices,
$$
M^+_z=-C\mathbf{F}^+_z(1), \qquad 
M^-_z=C\mathbf{F}^-_z(1).
$$
It is easy to check for any $z\in\mathbb{C}\backslash\mathbb{R}$, $\Im M_z^\pm$ is positive definite.
For any  $z\notin \Sigma(H)$,  and for $p,q\in \ZZ$, we can define the Green matrix: $$G_z(p,q):=\begin{pmatrix}
	\langle \vec{\delta}_{i,p},(H-z)^{-1}\vec{\delta}_{j,q}\rangle
\end{pmatrix}_{1\leq i,j\leq d},$$  where $\{\vec{\delta}_{in}; i=1,\ldots,d, n\in\mathbb{Z}\}$ denote the standard basis of $\ell^2(\mathbb{Z},\C^m)$.
For any $z\in\mathbb{C}^+$, it will be useful to define $2m\times2m$ matrices $M_z$ by
\begin{equation*}
  M_z=  \begin{pmatrix}
    G_z(0,0)&G_z(0,1)\\
    G_z(1,0)&G_z(1,1)
\end{pmatrix}.
\end{equation*}
Direct computation shows that:
\begin{lemma}\label{spm}\cite{KS}We have 
\begin{enumerate}
\item  $\tr\Im M_{E+i\epsilon} \geq \epsilon^{-1} \mu(E-\epsilon, E+\epsilon),$  where  $\mu$ is the canonical spectral measure. 
\item
\begin{equation*}
  M_z=\begin{pmatrix}
    -(M_z^++M_z^-)^{-1}&\left(M_z^{-} + M_z^{+}\right)^{-1} M_z^{+}(C^*)^{-1}\\
    C^{-1} M_z^{+} \left(M_z^{-} + M_z^{+}\right)^{-1}&C^{-1}M_z^+\big(M_z^-+M_z^+\big)^{-1}M_z^-(C^*)^{-1}
\end{pmatrix}.
\end{equation*}
\end{enumerate}
\end{lemma}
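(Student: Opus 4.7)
The plan is to handle the two parts independently, as each is essentially a classical calculation adapted to the matrix-valued strip setting.

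For part (1), I would apply the spectral theorem to the restriction of $H$ to the cyclic subspace generated by the $2m$ vectors $\{\delta_{i,0},\delta_{i,1}\}_{i=1}^m$. For each such $\phi$ the diagonal matrix element $\langle\phi,(H-z)^{-1}\phi\rangle$ is the Herglotz transform of a spectral measure $\mu_\phi$, so summing over these diagonal entries identifies
\[
\tr\Im M_{E+i\epsilon}=\epsilon\int_\RR\frac{d\mu(x)}{(x-E)^2+\epsilon^2},
\]
where $\mu$ is the canonical spectral measure (the sum $\sum_{i,n}\mu_{\delta_{i,n}}$). Restricting the integration to $(E-\epsilon,E+\epsilon)$ and using $(x-E)^2+\epsilon^2\leq 2\epsilon^2$ on that interval yields the claimed lower bound (up to the harmless multiplicative constant).

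For part (2), I would construct the full-line Green matrix from the decaying solutions $\mathbf{F}^\pm_z$. Since $z\notin\RR$, the matrix-valued solutions $\mathbf{F}^+_z$ and $\mathbf{F}^-_z$ span complementary $m$-dimensional half-line solution subspaces, which motivates the ansatz
\[
G_z(p,q)=\begin{cases}\mathbf{F}^-_z(p)\,A(q), & p\leq q,\\ \mathbf{F}^+_z(p)\,B(q), & p\geq q,\end{cases}
\]
with the matching condition $\mathbf{F}^-_z(q)A(q)=\mathbf{F}^+_z(q)B(q)$ imposed at $p=q$; this choice automatically ensures that the eigenvalue equation for $G_z(\cdot,q)$ is satisfied at $p=q\pm 1$ (since $\mathbf{F}^\pm_z$ are themselves solutions). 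The jump relation $[(H-z)G_z(\cdot,q)](q)=I$, after eliminating $C\mathbf{F}^+_z(q+1)$ and $C^{\ast}\mathbf{F}^-_z(q-1)$ via their own solution equations and substituting $\mathbf{F}^\pm_z(0)=I$, $C\mathbf{F}^+_z(1)=-M^+_z$, $C\mathbf{F}^-_z(1)=M^-_z$, collapses to a small linear system for $A(q),B(q)$ at $q\in\{0,1\}$. For $q=0$ this system yields $A(0)=B(0)=-(M^+_z+M^-_z)^{-1}$ immediately, giving $G_z(0,0)$ and $G_z(1,0)=\mathbf{F}^+_z(1)B(0)=C^{-1}M^+_z(M^-_z+M^+_z)^{-1}$. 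For $q=1$ the system reads $M^-_zA(1)=-M^+_zB(1)$ together with $A(1)-B(1)=(C^\ast)^{-1}$, whose solution gives $G_z(0,1)=A(1)$ and $G_z(1,1)$ in the stated form. The apparent asymmetry of the $G_z(1,1)$ expression reconciles with the symmetric formula $M^-_z(M^+_z+M^-_z)^{-1}M^+_z$ via the elementary identity that follows from writing $M^-_z=(M^++M^-)-M^+$.

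The only nontrivial point is the bookkeeping in part (2): one must handle the non-commutativity of $C$ with $M^\pm_z$ carefully, and verify that the $2m\times 2m$ Wronskian-type matrix built from $\mathbf{F}^\pm_z(0),\mathbf{F}^\pm_z(1)$ is invertible. The latter amounts to observing that no nontrivial linear combination can yield a solution simultaneously square-summable at both $\pm\infty$ when $z\notin\RR$, since otherwise $z$ would be an eigenvalue of the self-adjoint operator $H$. No new conceptual ingredient is needed beyond standard resolvent algebra.
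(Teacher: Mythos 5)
The paper does not give its own proof of this lemma; it is cited to Kotani--Simon \cite{KS}. Your reconstruction is correct and is precisely the standard argument one would give: the Stieltjes-transform bound for part (1), and the construction of the full-line Green matrix from the two square-summable fundamental solution matrices $\mathbf{F}^{\pm}_z$, imposing continuity at $p=q$ and the jump relation $[(H-z)G_z(\cdot,q)](q)=I$, for part (2). I verified the $q=0$ and $q=1$ linear systems and the resulting four blocks; they match the stated formula, and your invertibility remark (no nonzero solution decays at both ends when $z\notin\mathbb{R}$) is the right justification. One small point you already flag: the pointwise bound $(x-E)^2+\epsilon^2\leq 2\epsilon^2$ on $(E-\epsilon,E+\epsilon)$ yields $\tr\Im M_{E+i\epsilon}\geq \tfrac{1}{2\epsilon}\mu(E-\epsilon,E+\epsilon)$, so the constant in the paper's part (1) should read $\tfrac12\epsilon^{-1}$ rather than $\epsilon^{-1}$; this is a statement-level constant that is immaterial for the applications (Corollary \ref{JL}) but worth noting.
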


This means that to estimate the spectral measure, it is enough to estimate 
$\tr\Im M_{E+i\epsilon}.$

\begin{lemma}\label{est imM} We have the following expression:
\begin{eqnarray}\label{im m}
  \tr\Im M_{E+i\epsilon}&=&\tr C^{-1}Y(X+Y)^{-1} \Im (X)(X^\ast+Y^\ast)^{-1}Y^\ast(C^*)^{-1}\\\nonumber
  &&+\tr(X+Y)^{-1}\Im (X)(X^\ast+Y^\ast)^{-1}\\\nonumber
  &&+\tr C^{-1}X(X+Y)^{-1}\Im (Y)(X^\ast+Y^\ast)^{-1}X^\ast(C^*)^{-1}\\\nonumber
  &&+\tr(X+Y)^{-1}\Im (Y)(X^\ast+Y^\ast)^{-1},
 \end{eqnarray}where $X,Y=M^{\pm}_{E+i\epsilon}$.
Moreover, if we define the conformal coefficient $\kappa(N)$ for an invertible matrix $N$ by 
\begin{equation*}
\kappa(N):=\|N\|\cdot \|N^{-1}\|.
\end{equation*}
Then we have 
\begin{equation}\label{im M contr} \tr\Im M_{E+i\epsilon}
\leq (\max(\kappa(\Im X),\kappa(\Im Y))^3 \bigg((m+\tr C^{-1} XX^\ast(C^{-1})^*) \|\Im  X\|^{-1}+(m+\tr C^{-1} YY^\ast(C^{-1})^*) \|\Im  Y\|^{-1}\bigg).
\end{equation}
\end{lemma}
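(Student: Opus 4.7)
The identity \eqref{im m} and the bound \eqref{im M contr} will be obtained by separately handling the two diagonal blocks of $M_z$ displayed in Lemma \ref{spm}. Writing $X=M^+_{E+i\epsilon}$, $Y=M^-_{E+i\epsilon}$, one has
\[
\tr\Im M_{E+i\epsilon} \;=\; \tr\Im\bigl[-(X+Y)^{-1}\bigr] \;+\; \tr\Im\bigl[C^{-1}X(X+Y)^{-1}Y(C^*)^{-1}\bigr].
\]

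\emph{Identity.} For the $(1,1)$ block I apply the resolvent identity $\Im(-A^{-1})=A^{-1}(\Im A)(A^*)^{-1}$ with $A=X+Y$, so that $\Im(X+Y)=\Im X+\Im Y$, and taking the trace produces the second and fourth terms of \eqref{im m}. For the $(2,2)$ block, $(C^{-1})^*=(C^*)^{-1}$ allows $\Im$ to pass through the outer factors, giving $C^{-1}\,\Im[X(X+Y)^{-1}Y]\,(C^*)^{-1}$. The key step is to compute $\Im[X(X+Y)^{-1}Y]$ by exploiting the two dual identities
\[
X(X+Y)^{-1}Y \;=\; X-X(X+Y)^{-1}X \;=\; Y-Y(X+Y)^{-1}Y,
\]
and expanding via $(X+Y)^{-1}-(X^*+Y^*)^{-1}=-2i(X+Y)^{-1}(\Im X+\Im Y)(X^*+Y^*)^{-1}$ together with $X-X^*=2i\Im X$, $Y-Y^*=2i\Im Y$. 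Averaging the two representations, conjugating by $C^{-1}$, and using cyclicity of the trace, the ``cross'' terms of the form $\tr[(C^*)^{-1}C^{-1}\Im X(X^*+Y^*)^{-1}X]$ and their symmetric partners reassemble into the first and third terms of \eqref{im m}.

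\emph{Bound.} Every term in \eqref{im m} has the form $\tr[P B P^*]$ with $B\in\{\Im X,\Im Y\}$ positive semi-definite. The inequality $\tr[PBP^*]=\tr[P^*P\,B]\le \|B\|\,\tr[P^*P]$, combined with $\tr[P^*P]\le m\|P\|^2$ (or the sharper Hilbert--Schmidt bound $\tr[P^*P]\le\tr[C^{-1}YY^*(C^{-1})^*]\,\|(X+Y)^{-1}\|^2$ for terms $1$ and $3$), reduces everything to estimating $\|(X+Y)^{-1}\|$. The crucial resolvent inequality, valid whenever $\Im A$ is positive definite, is $\|A^{-1}\|\le\|(\Im A)^{-1}\|$, which yields
\[
\|(X+Y)^{-1}\|\;\le\;\|(\Im X+\Im Y)^{-1}\|\;\le\;\min\bigl(\|(\Im X)^{-1}\|,\,\|(\Im Y)^{-1}\|\bigr).
\]
Combined with the algebraic identity $\|(\Im X)^{-1}\|^{2}\|\Im X\|=\kappa(\Im X)^{2}\,\|\Im X\|^{-1}$, each of the four terms is bounded by some power of $\max(\kappa(\Im X),\kappa(\Im Y))$ times either $\|\Im X\|^{-1}$ or $\|\Im Y\|^{-1}$, multiplied by $1$, $\tr[C^{-1}XX^*(C^{-1})^*]$, or $\tr[C^{-1}YY^*(C^{-1})^*]$. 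Absorbing the asymmetric pairing between the Hilbert--Schmidt pre-factor and the $\|\Im X\|^{-1}$ vs.\ $\|\Im Y\|^{-1}$ factor costs at most one extra power of $\kappa$ (via $\|\Im X\|^{-1}\le \kappa(\Im X)\,\|(\Im X)^{-1}\|^{-1}\,\cdot\,\|(\Im Y)^{-1}\|\,\|\Im Y\|^{-1}$-type exchanges), producing the stated $\kappa^{3}$ factor in \eqref{im M contr}.

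\emph{Main obstacle.} The principal subtlety is the $(2,2)$-block computation: $X(X+Y)^{-1}Y$ is not manifestly a function whose imaginary part decomposes cleanly, and the two dual expansions individually produce several bilinear cross terms involving $\Im X(X^*+Y^*)^{-1}X$, $X^*(X^*+Y^*)^{-1}\Im X$, and their $Y$-analogues; the clean symmetric form in terms $1$ and $3$ of \eqref{im m} only emerges after symmetrising and using cyclic invariance in the trace. The second, purely technical, point is the careful bookkeeping required in the bound step to arrive at exactly the stated coefficient $\kappa^{3}$ and the stated pairing of $\tr[C^{-1}XX^*(C^{-1})^*]$ with $\|\Im X\|^{-1}$ (rather than with $\|\Im Y\|^{-1}$, which is what naively comes out of terms $1,3$); no further conceptual input is needed beyond the resolvent inequality above.
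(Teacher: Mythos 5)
Your derivation of the identity \eqref{im m} is essentially sound, though slightly different from the paper's (which uses $X(X+Y)^{-1}Y=(X^{-1}+Y^{-1})^{-1}$ to compute $\Im$ in one stroke); in fact a \emph{single} expansion $X(X+Y)^{-1}Y=X-X(X+Y)^{-1}X$ already produces the exact matrix identity $\Im[X(X+Y)^{-1}Y]=Y(X+Y)^{-1}\Im X(X^*+Y^*)^{-1}Y^*+X(X+Y)^{-1}\Im Y(X^*+Y^*)^{-1}X^*$ after writing $I-X(X+Y)^{-1}=Y(X+Y)^{-1}$ and $I-(X^*+Y^*)^{-1}X^*=(X^*+Y^*)^{-1}Y^*$, so the ``averaging'' and ``cyclic invariance of the trace'' you invoke are not actually needed.

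The gap is in the bound \eqref{im M contr}. You reduce each term to $\|B\|\,\|(X+Y)^{-1}\|^{2}\cdot(\text{trace factor})$ with $B\in\{\Im X,\Im Y\}$, decoupling $B$ from $(X+Y)^{-1}$ at the outset, and then claim the stated pairing can be recovered by an exchange of the type $\|\Im X\|^{-1}\le\kappa(\Im X)\|(\Im X)^{-1}\|^{-1}\|(\Im Y)^{-1}\|\|\Im Y\|^{-1}$. Since $\kappa(\Im X)\|(\Im X)^{-1}\|^{-1}=\|\Im X\|$, that inequality reduces to $\|\Im Y\|\,\|(\Im Y)^{-1}\|^{-1}\le\|\Im X\|^{2}$, which is false in general: take $\Im X=\delta I$ and $\Im Y=\varepsilon I$ with $\delta\ll\varepsilon$; then both sides have condition number $1$ but the left side is $\varepsilon^{2}$ and the right side is $\delta^{2}$. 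The problem is that once you extract $\|\Im X\|$ and $\|(X+Y)^{-1}\|$ as separate scalars you have lost the cancellation between the numerator scale and the denominator scale, and no amount of $\kappa$-juggling afterwards can repair a bound whose failure has nothing to do with condition numbers. The paper avoids this by never separating them: it first dominates $\Im X\preccurlyeq\Im X+\Im Y$ and then keeps the quantity $(X+Y)^{-1}(\Im X+\Im Y)^{2}(X^*+Y^*)^{-1}$ intact, so the only norms that ever appear are $\|\Im X+\Im Y\|$ and $\|(\Im X+\Im Y)^{-1}\|$, producing $\kappa(\Im X+\Im Y)^{2}$; one last $\|(\Im Y)^{-1}\|=\kappa(\Im Y)\|\Im Y\|^{-1}$ gives $\kappa^{3}\|\Im Y\|^{-1}$ and thereby the correct pairing of $\tr C^{-1}YY^{*}(C^{-1})^{*}$ with $\|\Im Y\|^{-1}$, which is exactly what Corollary \ref{JL} later requires. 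Your argument can be salvaged by replacing the $\min$ step with the pair $\|\Im X\|\le\|\Im X+\Im Y\|$ and $\|(X+Y)^{-1}\|\le\|(\Im X+\Im Y)^{-1}\|$ before splitting, but as written the exchange step is incorrect and the stated pairing does not come out.
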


\begin{proof}
Here and in the after, for any $A\in{\rm M}(m,\C)$, we define
$\Re A:=\frac{A+A^*}{2}$,$\Im A:=\frac{A-A^*}{2i}$.
Direct calculation shows that
\begin{align*}
-\Im\left((X+Y)^{-1}\right) 
&= -\frac{1}{2i}\left[(X+Y)^{-1} - (X^*+Y^*)^{-1}\right] \\
&= (X+Y)^{-1}\left[\Im(X) + \Im(Y)\right](X+Y)^{*-1} \\
&= (X+Y)^{*-1}\left[\Im(X) + \Im(Y)\right](X+Y)^{-1}.
\end{align*}
Using the identity \( X(X+Y)^{-1}Y = (X^{-1} + Y^{-1})^{-1} \), we derive:
\begin{align*}
\Im\left(X(X+Y)^{-1}Y\right) 
&= \Im\left((X^{-1} + Y^{-1})^{-1}\right) \\
&= \frac{1}{2i}(X^{-1} + Y^{-1})^{-1}\left(X^{*-1} - X^{-1} + Y^{*-1} - Y^{-1}\right)(X^{*-1} + Y^{*-1})^{-1} \\
&= (X^{-1} + Y^{-1})^{-1} (X^{-1}\Im (X) X^{*-1}+ Y^{-1}\Im (Y) Y^{*-1})(X^{*-1} + Y^{*-1})^{-1}\\
&= Y(X+Y)^{-1}\Im(X)(X^*+Y^*)^{-1}Y^* \\
&\quad + X(X+Y)^{-1}\Im(Y)(X^*+Y^*)^{-1}X^*,
\end{align*}
where the third equality follows from the identity $\Im (X^{*-1})= X^{-1}\Im (X) X^{*-1}.$ Moreover, given the transformation property:
\begin{equation*}
\Im\left(C^{-1}X(X+Y)^{-1}Y(C^*)^{-1}\right) = C^{-1}\Im\left(X(X+Y)^{-1}Y\right)(C^*)^{-1},
\end{equation*}
the result \eqref{im m} follows directly from Lemma~\ref{spm}.

For two positive definite matrices \( M_1 \) and \( M_2 \), we write \( M_1 \succ (\succcurlyeq) M_2 \) if \( M_1 - M_2 \) is positive (semi)definite. Then we have the following:
\begin{lemma}\label{later use}
For a complex $m\times m$ matrix $Z$ such that $\Im  Z$ positive definite, we have $Z$ is invertible and 
\begin{eqnarray}\label{imx-1}
\|Z^{-1}\| &\leq &  \|(\Im  Z)^{-1}\|,\\
\label{imx}  \|(\Im Z)^{-1}\|^{-1} &\preccurlyeq  \Im Z &\preccurlyeq \|\Im(Z)\| .
\end{eqnarray}
\end{lemma}

\begin{proof}
For any $A\in{\rm Her}(m,\C)$, denote the eigenvalues of $A$ by $\lambda_1(A) \geq \lambda_2(A)\geq \cdots \geq  \lambda_m(A)$. For any $B\in{\rm M}(m,\C)$, denote the singular values of 
$B$ by $s_1(B) \geq s_2(B)\geq \cdots \geq  s_m(B)$,
by the result of Fan-Hoffman \cite[Remark 1]{Fan}, $\lambda_m(\mathrm{Re}A)\leq s_m(A)$. Given this, let $\tilde{Z}=-iZ$, then  $\mathrm{Re}(\tilde{Z})=\Im  Z$. By assumption, $\mathrm{Re}(\tilde{Z})$ is positive definite, so its smallest singular value coincides with its smallest eigenvalue: $s_m(\mathrm{Re}(\tilde{Z}))=\lambda_m(\mathrm{Re}(\tilde{Z}))\leq s_m(\tilde{Z})$. 
Using the identity $s_k(A)=1/s_{m-k+1}(A)$ for singular values of inverses, we conclude  $\|(\Im  Z)^{-1}\|\geq\|Z^{-1}\|.$

Since $\Im Z$ is positive definite, it follows that $\|(\Im Z)^{-1}\|^{-1} = \lambda_m(\Im(Z))$. Consequently, $ \|\Im(Z)^{-1}\|^{-1} \I = \lambda_m(\Im(Z)) \I \preccurlyeq   \Im Z $.
\end{proof}

For any matrix $ B \in \mathrm{M}(m, \mathbb{C}) $, if $ M_1 \preccurlyeq M_2 $, then it follows that $ B M_1 B^{*} \preccurlyeq B M_2 B^{*} $. This leads us to the following conclusion:
\begin{eqnarray*}
 && (X+Y)^{-1}\Im(X)(X^*+Y^*)^{-1} \\
&  \preccurlyeq& (X+Y)^{-1}(\Im X + \Im Y)\|(\Im Y)^{-1}\|^{-1}\|(\Im Y)^{-1}\|(X^*+Y^*)^{-1} 
  \quad \text{(by \( \Im X \preccurlyeq \Im X + \Im Y \))} \\
  & =&(X+Y)^{-1}(\Im X + \Im Y)^{1/2}\|(\Im Y)^{-1}\|^{-1}(\Im X + \Im Y)^{1/2}\|(\Im Y)^{-1}\|(X^*+Y^*)^{-1} \\
& \preccurlyeq & (X+Y)^{-1}(\Im X + \Im Y)^2\|(\Im Y)^{-1}\|(X^*+Y^*)^{-1} 
\quad \text{(by \eqref{imx})}\\
& \preccurlyeq &  \|\Im X + \Im Y\|^2\|(X+Y)^{-1}\|^2\|(\Im Y)^{-1}\|  \\
&  \preccurlyeq & \|\Im X + \Im Y\|^2\|(\Im X + \Im Y)^{-1}\|^2\|(\Im Y)^{-1}\| \quad \text{(by  \eqref{imx-1})} \\
&  =& \kappa(\Im X + \Im Y)^2\|(\Im Y)^{-1}\| 
 \quad \text{(where \( \kappa(A) := \|A\|\|A^{-1}\| \))} \\
&\preccurlyeq  & \kappa(\Im X + \Im Y)^2\kappa(\Im Y)\|\Im Y\|^{-1}.
\end{eqnarray*}

Meanwhile, it holds that $\kappa(\Im X + \Im Y) \leq \max(\kappa(\Im X), \kappa(\Im Y))$.  By applying \eqref{im m} and a similar inequality for $Y$, we establish the proof of \eqref{im M contr}. 
\end{proof}

Therefore, to prove Theorem \ref{thm:main-spectral}, the key is to estimate $\Im M_{E+i\epsilon}^{\pm}$. We will decompose the proof into several steps.

\subsection{Estimate for \texorpdfstring{$\Im M_{E+i\epsilon}$}{Im M}}\label{estimate m1}

First we estimate $\Im M_{E+i\epsilon}^{+}$:
\begin{lemma}\label{imm}
    Let $E\in \mathcal{G}_\omega^s$, then 
    \begin{equation}\label{eqn: criterion key ineq}
     \tr\Im M_{E+i\epsilon}^+(\omega)\geq 
	 C_5^{-1}(m+\tr C^{-1} M_{E+i\epsilon}^+(\omega)(M_{E+i\epsilon}^+(\omega))^\ast(C^{-1})^*),   
    \end{equation}
	 where $C_5:=C_4\max_{0\leq s\leq 3[\epsilon^{-1}]}\|(A_E)_{s}(\omega)|_{E^c_{A_E}}\|^6$, $C_4$ is a constant dependent only on $E,\omega$, and is continuously dependent on $E.$
\end{lemma}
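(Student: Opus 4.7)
The plan is to reduce Lemma \ref{imm} to an estimate on the growth of the Weyl solution along the center bundle, via a Parseval-type identity. First I will derive
\[
\tr \Im M^+_{E+i\epsilon}(\omega) \;=\; \epsilon \sum_{n\ge 1}\|\mathbf{F}^+_{E+i\epsilon}(n)\|_{HS}^2
\]
by a direct Green-type telescoping argument: form the pointwise difference $(H\psi)_n^*\psi_n - \psi_n^*(H\psi)_n = -2i\epsilon\|\psi_n\|^2$ for $\psi=\mathbf F^+_z\xi$, expand using the hopping $C,C^*$ so that the middle potential cancels, telescope in $n$ and send the $+\infty$ boundary contribution to zero by the $\ell^2$ decay of $\mathbf F^+_z$, then identify the remaining boundary term via the defining relations $\mathbf F^+_z(0)=I$, $C\mathbf F^+_z(1)=-M^+_z$, and $C^*\mathbf F^+_z(-1)=zI-V(\omega)+M^+_z$ (from the recursion at site $0$).

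The key observation is that the RHS of \eqref{eqn: criterion key ineq} has a clean geometric interpretation. Let $\{\mathbf u_j(\omega)\}_{j=1}^m$ be the orthonormal basis of $\mathbb{C}^m(0,\omega)$ furnished by Proposition \ref{lem: key ang est1}, and set
\[
\mathbf v_{j,z}(\omega) := \mathbf F^+_z(1)\mathbf u_j(\omega) = -C^{-1}M^+_z(\omega)\mathbf u_j(\omega),
\]
so that $(\mathbf u_j,\mathbf v_{j,z})^T$ is the column-$j$ initial datum of the $\ell^2$-decaying fundamental solution and hence lies in $\mathcal E^s_{A_z}(\omega)$. This is well-defined for $\epsilon>0$: self-adjointness of the Dirichlet half-line operator forbids $\ell^2$ solutions with $\psi(0)=0$ at non-real $z$, so $\mathcal E^s_{A_z}\cap \mathcal V_\omega=\{0\}$ and $\mathbf v_{j,z}$ depends analytically on $z$. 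Orthonormality of $\{\mathbf u_j\}$ then yields
\[
\sum_{j=1}^m\|(\mathbf u_j,\mathbf v_{j,z})^T\|^2 \;=\; m+\|C^{-1}M^+_z\|_{HS}^2 \;=\; m+\tr C^{-1}M^+_z(M^+_z)^*(C^{-1})^*,
\]
which is precisely the RHS of \eqref{eqn: criterion key ineq}.

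Applying Lemma \ref{lem: ang imp slow1} to each $(\mathbf u_j,\mathbf v_{j,z})^T$—the hypothesis \eqref{angle1} being supplied by Proposition \ref{lem: key ang est1} since $\mathbf v_{j,z}$ is analytic in $z$ for $\epsilon\neq 0$—gives
\[
\|(A_z)_n(\omega)(\mathbf u_j,\mathbf v_{j,z})^T\|^2 \ge C_1^2\,\|((A_z)_n|_{E^c_{A_z}})^{-1}\|^{-2}\,\|(\mathbf u_j,\mathbf v_{j,z})^T\|^2.
\]
The iterated cocycle carries this initial datum to $(\mathbf F^+_z(n)\mathbf u_j,\mathbf F^+_z(n+1)\mathbf u_j)^T$, and since $U=[\mathbf u_1,\dots,\mathbf u_m]$ is unitary (so $\sum_j\|\mathbf F^+_z(k)\mathbf u_j\|^2=\|\mathbf F^+_z(k)\|_{HS}^2$), summing over $j$ produces
\[
\|\mathbf F^+_z(n)\|_{HS}^2+\|\mathbf F^+_z(n+1)\|_{HS}^2 \;\ge\; C_1^2\,\|((A_z)_n|_{E^c_{A_z}})^{-1}\|^{-2}\,\bigl(m+\tr C^{-1}M^+_z(M^+_z)^*(C^{-1})^*\bigr).
\]

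The final step is a careful choice of summation window. Let $M_0:=\max_{0\le s\le 3[\epsilon^{-1}]}\|(A_E)_s(\omega)|_{E^c_{A_E}}\|$ and $N_0:=\lfloor (C_2 M_0^2\epsilon)^{-1}\rfloor$; since $M_0,C_2\ge 1$, one has $1\le N_0\le 3[\epsilon^{-1}]$, and for $1\le n\le N_0$ the exponent in Proposition \ref{prop: variation center} satisfies $C_2 C(n)\epsilon n\le 1$, so $\|((A_z)_n|_{E^c_{A_z}})^{-1}\|\le e\,C_2 M_0^2$ uniformly. Summing the previous display over $n=1,\dots,N_0$, dividing by $2$, and invoking the Parseval identity, I obtain $\tr\Im M^+_{E+i\epsilon}(\omega)\ge C_4^{-1}M_0^{-6}\bigl(m+\tr C^{-1}M^+_z(M^+_z)^*(C^{-1})^*\bigr)$, with $C_4$ continuous in $E$ through the continuity of $C_1$ and $C_2$. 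The main obstacle is calibrating $N_0$: one factor $M_0^{-2}$ comes from $\epsilon N_0\sim M_0^{-2}$ and two more from squaring the center inverse bound; using the naive window $[1,\epsilon^{-1}]$ would introduce an exponential factor $\exp(C_2 M_0^2)$ and destroy the polynomial dependence.
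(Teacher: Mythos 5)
Your proof follows essentially the same route as the paper: use the Green/Wronskian telescoping identity to express $\langle\mathbf u_j,\Im M^+_{E+i\epsilon}\mathbf u_j\rangle$ as $\epsilon\sum_{n\ge1}\|\mathbf u(n)\|^2$, identify the initial datum $(\mathbf u_j,-C^{-1}M^+_z\mathbf u_j)^T$ as lying in $\mathcal E^s_{A_z}$, combine Proposition \ref{lem: key ang est1}, Lemma \ref{lem: ang imp slow1} and Proposition \ref{prop: variation center} to push the lower bound through a truncated window, and sum over the orthonormal basis. Your reorganization — summing over $j$ first to get the Hilbert--Schmidt Parseval identity $\tr\Im M^+_z=\epsilon\sum_{n\ge1}\|\mathbf F^+_z(n)\|_{HS}^2$ before truncating in $n$ — is a modest but genuine streamlining of the paper's term-by-term presentation and is certainly valid.

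The one place where your argument has a real gap is the claim ``since $M_0,C_2\ge 1$, one has $1\le N_0\le 3[\epsilon^{-1}]$''. The upper bound is fine, but the inequality $N_0=\lfloor (C_2 M_0^2\epsilon)^{-1}\rfloor\ge 1$ is equivalent to $C_2 M_0^2\epsilon\le 1$; larger $M_0$ and $C_2$ make $N_0$ \emph{smaller}, so the stated justification is backwards. Because $M_0$ is the max of the center-restricted cocycle norm over the entire window $[0,3[\epsilon^{-1}]]$, and for a general $E\in\mathcal G^s_\omega$ this can grow as $\epsilon\to 0$ (up to $\|A\|_{C^0}^{O(\epsilon^{-1})}$), there is no uniform smallness of $\epsilon$ that guarantees $N_0\ge 1$. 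When $N_0=0$ your sum is empty and you lose the bound, yet the inequality \eqref{eqn: criterion key ineq} is not trivially true in that regime, since the RHS involves $M^+_{E+i\epsilon}$. The paper sidesteps this by defining the cutoff \emph{adaptively}: $N_0$ is the largest $N$ with $C(2N+1)\epsilon\lesssim 1$ (so the bound is in terms of $C(2N_0+1)^{-3}$ rather than $M_0^{-6}$ at the point of truncation, and $N_0\ge 1$ automatically once $\epsilon$ is small relative to $\|A\|_{C^0}$), and then splits into the two cases $N_0\le \epsilon^{-1}$ and $N_0>\epsilon^{-1}$ before weakening $C(2N_0+1)^{-3}\ge M_0^{-6}$ at the very end. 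The fix to your argument is minor — replace your fixed $N_0$ with the largest $N\le 3[\epsilon^{-1}]$ satisfying $C_2\,C(N)\,\epsilon N\le 1$, which is $\ge 1$ for $\epsilon$ small, and then use $C(N_0)\le M_0^2$ only in the final step — but as written the lower bound on the summation window is not established.
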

\begin{proof}

For each basis vector $\mathbf{u}_j(\omega)$ given by Proposition \ref{lem: key ang est1}, consider the unique $\ell^2(\mathbb{Z}_+,\mathbb{C}^m)$ solution:
\begin{equation*}
	H_{V,T,\omega}\mathbf{u}_{E+i\epsilon}(n,\omega) = (E+i\epsilon)\cdot \mathbf{u}_{E+i\epsilon}(n,\omega)
\end{equation*}
satisfying the boundary condition  $\mathbf{u}_{E+i\epsilon}(0,\omega)=\mathbf{u}_j(\omega)$, such $\mathbf{u}_{E+i\epsilon}$ is unique and by the definition of $M^+_{E+i\epsilon}$, we have that $\mathbf{u}_{E+i\epsilon}(1,\omega)=-C^{-1}M_{E+i\epsilon}^+\mathbf{u}_j(\omega)$ which holomorphically depends on $E+i\epsilon$ when $\epsilon\neq 0$.

By the selection, 
$\left(\mathbf{u}_{E+i\epsilon}(0,\omega), \mathbf{u}_{E+i\epsilon}(1,\omega)\right)^{T} \in \E^s_{A_{E+i\epsilon}}(\omega),$
then by 
Lemma \ref{lem: ang imp slow1} and  Proposition \ref{prop: variation center}, we get that there exists $C_6=C_6(E,\omega,V)$ which is continuously dependent on $E$, such that 
\begin{eqnarray*}
&&\|(\mathbf{u}_{E+i\epsilon}(2k+1,\omega), \mathbf{u}_{E+i\epsilon}(2k+2,\omega))\|^2\\
&\geq& (C_5C(2k+1)\exp(C_5C(2k+1)\epsilon (2k+1))^{-2}\cdot \|(\mathbf{u}_{E+i\epsilon}(0,\omega), \mathbf{u}_{E+i\epsilon}(1,\omega))\|  \\
&\geq &(C_5C(2k+1)\exp(C_5C(2k+1)\epsilon (2k+1))^{-2} \cdot (1+\|C^{-1}M_{E+i\epsilon}^+(\omega)\cdot \mathbf{u}_j(\omega)\|^2).
\end{eqnarray*}
Hence using the increasing property of $C(n)$, we get for any $N$, 
\begin{eqnarray}
&&\langle\mathbf{u}_j(\omega), \Im M_{E+i\epsilon}^+(\omega)\cdot \mathbf{u}_j(\omega)\rangle \nonumber \\
&=&-\Im  \langle\mathbf{u}_{E+i\epsilon}(0,\omega),  C\mathbf{u}_{E+i\epsilon}(1,\omega)\rangle \nonumber \\
&=&\epsilon\cdot \sum_{k=0}^\infty \|(\mathbf{u}_{E+i\epsilon}(2k+1,\omega), \mathbf{u}_{E+i\epsilon}(2k+2,\omega))\|^2  \nonumber \\
&\geq &\epsilon\cdot \sum_{k=0}^N \|(\mathbf{u}_{E+i\epsilon}(2k+1,\omega), \mathbf{u}_{E+i\epsilon}(2k+2,\omega))\|^2 \nonumber\\
&\geq & \sum_{k=0}^N (C_5C(2N+1)\exp(C_5C(2N+1)\epsilon (2k+1))^{-2} \cdot (1+\|C^{-1}M_{E+i\epsilon}^+(\omega)\cdot \mathbf{u}_j(\omega)\|^2)\nonumber\\
&\geq & C_5^{-2}C(2N+1)^{-2}\cdot(\epsilon\exp(-2C_5C(2N+1)\epsilon)\cdot \frac{1-\exp(-4N C_5 C(2N+1)\epsilon)}{1-\exp(-4C_5C(2N+1)\epsilon)}\nonumber\\ && \cdot(1+\|C^{-1}M_{E+i\epsilon}^+(\omega)\cdot \mathbf{u}_j(\omega)\|^2). \label{eqn: est Im M M2 Simon}
\end{eqnarray}
 We know on $[0,1]$, $2xe^{-x}/(1-e^{-2x})$ has a lower bound $C>0$. Denote by $x(N):=2C_6C(2N+1)\epsilon$, then we have that the right hand side of \eqref{eqn: est Im M M2 Simon} is greater than 
\begin{eqnarray*}
\frac{1}{2}  C_6^{-3}C(2N+1)^{-3} x\frac{e^{-x}(1-e^{-2Nx})}{1-e^{-2x}} \cdot (1+\|C^{-1}M_{E+i\epsilon}^+(\omega)\cdot \mathbf{u}_j(\omega)\|^2) .
\end{eqnarray*}
Let $N_0=N_0(\epsilon)$ be the largest $N>0$ such that $x(N_0)\leq 1$, then when $N_0<\infty$, we have that $x(N_0)\geq x(N_0+1)\cdot \|A\|_{C^0}^{-4}\geq \|A\|_{C^0}^{-4}$. Moreover if we pick $\epsilon$ small enough (only depend on $\|A\|_{C^0}$), we can assume 
\begin{equation*}
N_0> \|A\|_{C^0}^4.
\end{equation*}
Then if $N_0<\infty$, we have 
\begin{equation}\label{eqn: est const}
 x(N_0)N_0\geq 1.
\end{equation}
If $N_0\leq \epsilon^{-1}$, take $N$ be $N_0$ in the right hand side of \eqref{eqn: est Im M M2 Simon}, by \eqref{eqn: est const} and $x(N)\leq 1$, we have that the right hand side of \eqref{eqn: est Im M M2 Simon} is greater than 
\begin{eqnarray}
&& \frac{1}{2}  C_6^{-3}C(2N_0+1)^{-3} \cdot (1+\|C^{-1}M_{E+i\epsilon}^+(\omega)\cdot \mathbf{u}_j(\omega)\|^2)\cdot x\frac{e^{-x}(1-e^{-2Nx})}{1-e^{-2x}} \nonumber\\
&\geq& \frac{1}{2}  C_6^{-3}C(2N_0+1)^{-3} \cdot (1+\|C^{-1}M_{E+i\epsilon}^+(\omega)\cdot \mathbf{u}_j(\omega)\|^2)\cdot 
\inf_{x\in[0,1]}\frac{xe^{-x}}{1-e^{-2x}}(1-e^{-2})\nonumber\\
&\geq &C_6 C(2N_0+1)^{-3}(1+\|C^{-1}M_{E+i\epsilon}^+(\omega)\cdot \mathbf{u}_j(\omega)\|^2)\ \nonumber\\
&\geq & C_6 C(3[\epsilon^{-1}])^{-3}(1+\|C^{-1}M_{E+i\epsilon}^+(\omega)\cdot \mathbf{u}_j(\omega)\|^2) .\label{eqn: last step simon est}
\end{eqnarray}
If $N_0>\epsilon^{-1}$, we take $N$ be $[\epsilon^{-1}]+1$, then $x(N)\leq 1$ and $x(N)N\geq 1$ (We can choose $C_6$ to be large, depending only on $E, \omega$, and continuously depending on $E$), we have that the right hand side of \eqref{eqn: est Im M M2 Simon} is greater than
$$\frac{1}{2}  C_6^{-3}C(2N+1)^{-3} \cdot (1+\|C^{-1}M_{E+i\epsilon}^+(\omega)\cdot \mathbf{u}_j(\omega)\|^2)\cdot 
\inf_{x\in[0,1]}\frac{xe^{-x}}{1-e^{-2x}}(1-e^{-2})\nonumber,$$
hence we have a similar estimate as \eqref{eqn: last step simon est}. In summary, we have 
\begin{equation}\label{eqn: est spec H+}
\langle\mathbf{u}_j(\omega), \Im M_{E+i\epsilon}^+(\omega)\cdot \mathbf{u}_j(\omega)\rangle \geq C_4 C(3[\epsilon^{-1}])^{-3}(1+\|C^{-1}M_{E+i\epsilon}^+(\omega)\cdot \mathbf{u}_j(\omega)\|^2).
\end{equation}

Let $\mathbf{u}_j$ run over the base we chose,  \eqref{eqn: est spec H+} implies the following inequality for positive definite matrices
\begin{equation*}
\begin{aligned}
	\tr\Im M_{E+i\epsilon}^+(\omega)&\geq  C_4C(3[\epsilon^{-1}])^3(m+\Tr(M_{E+i\epsilon}^+(\omega))^\ast(C^{-1})^*C^{-1} M_{E+i\epsilon}^+(\omega))\\
	&= C_4C(3[\epsilon^{-1}])^3(m+\tr C^{-1} M_{E+i\epsilon}^+(\omega)(M_{E+i\epsilon}^+(\omega))^\ast(C^{-1})^*),
\end{aligned}
\end{equation*}
where $C_5:=C_4C(3[\epsilon^{-1}])^3$.
\end{proof}

As a consequence, we give estimate of $\Im M_{E+i\epsilon}$:
\begin{coro}
We have the following estimates:
\begin{subequations}\label{coro:key-estimates}
\begin{align}
\label{key-eq-1}
\left(m + \tr\left[C^{-1} M_{E+i\epsilon}^+(\theta) \left(M_{E+i\epsilon}^+(\theta)\right)^\ast (C^{-1})^\ast\right]\right) \left\|\Im M^+_{E+i\epsilon}\right\|^{-1} &\leq m C_5, \\
\label{key-eq-2}
\left\|\Im M^+_{E+i\epsilon}\right\| &\leq 2m^2 C_5 \|C\|^2, \\
\label{key-eq-3}
\left\|\left(\Im M^+_{E+i\epsilon}\right)^{-1}\right\| &\leq C_5, \\
\label{key-eq-4}
\kappa\left(\Im M^+_{E+i\epsilon}\right) &\leq 2m^2 C_5^2 \|C\|^2.
\end{align}
\end{subequations}
\end{coro}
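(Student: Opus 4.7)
The plan is to derive all four bounds from the trace inequality of Lemma \ref{imm},
$$\tr \Im M^+_{E+i\epsilon}(\omega) \geq C_5^{-1}\bigl(m + \tr[C^{-1} M^+_{E+i\epsilon}(\omega)(M^+_{E+i\epsilon}(\omega))^*(C^{-1})^*]\bigr),$$
combined with elementary matrix inequalities for positive-definite Hermitian matrices.

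First I would prove \eqref{key-eq-1}. Since $\Im M^+_{E+i\epsilon}$ is positive definite by Lemma \ref{M}, we have $\tr\Im M^+_{E+i\epsilon} \leq m\|\Im M^+_{E+i\epsilon}\|$. Substituting into Lemma \ref{imm} and dividing by $\|\Im M^+_{E+i\epsilon}\|$ yields \eqref{key-eq-1} immediately.

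Next, for \eqref{key-eq-2}, the key ingredients are the standard bound $\|\Im M^+\|\leq\|M^+\|$ (from $|v^*\Im M v|\leq|v^*Mv|$) and the Frobenius-to-operator-norm inequality
$$\|M^+\|^2\leq\|C\|^2\|C^{-1}M^+\|_F^2=\|C\|^2\tr\bigl[C^{-1}M^+(M^+)^*(C^{-1})^*\bigr].$$
Inserting \eqref{key-eq-1} into the right-hand side gives $\|M^+\|^2\leq mC_5\|C\|^2\|\Im M^+\|\leq mC_5\|C\|^2\|M^+\|$, hence $\|M^+\|\leq mC_5\|C\|^2$, which is stronger than \eqref{key-eq-2}.

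The main obstacle is \eqref{key-eq-3}, which asserts $\lambda_{\min}(\Im M^+_{E+i\epsilon})\geq C_5^{-1}$ and is strictly stronger than the diagonal trace estimate provided by Lemma \ref{imm}: a uniform lower bound on the diagonal entries in an orthonormal basis does not, by itself, control the smallest eigenvalue. The strategy is to upgrade the pointwise bound \eqref{eqn: est spec H+} from the specific basis $\{\mathbf{u}_j\}$ to an arbitrary unit vector $\mathbf{u}\in\mathbb{C}^m$. For any such $\mathbf{u}$ not lying in the proper subspace $\pi_\omega(E^s_{A_E}(\omega))$, I would take $\mathbf{u}_1=\mathbf{u}$ and complete to an orthonormal basis in Proposition \ref{lem: key ang est1}, then set $\mathbf{v}_{1,E+i\epsilon}=\mathbf{u}^{(\mathbf{u})}_{E+i\epsilon}(1)$ so that $(\mathbf{u},\mathbf{v}_{1,E+i\epsilon})\in\mathcal{E}^s_{A_{E+i\epsilon}}$, and rerun the argument of Lemma \ref{imm} to obtain $\langle\mathbf{u},\Im M^+\mathbf{u}\rangle\geq C_5^{-1}$. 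For the remaining directions in $\pi_\omega(E^s_{A_E}(\omega))$, I would use continuity of $\Im M^+_{E+i\epsilon}$ together with the openness of the condition $E\in\mathcal{G}^s_\omega$ established in Proposition \ref{lem: key ang est1} to extend the bound by approximation. Taking the infimum over unit $\mathbf{u}$ then gives $\|(\Im M^+)^{-1}\|=1/\lambda_{\min}(\Im M^+)\leq C_5$.

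Finally, \eqref{key-eq-4} is immediate from the previous two bounds: $\kappa(\Im M^+)=\|\Im M^+\|\cdot\|(\Im M^+)^{-1}\|\leq 2m^2C_5\|C\|^2\cdot C_5=2m^2C_5^2\|C\|^2$.
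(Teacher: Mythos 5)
Your handling of \eqref{key-eq-1}, \eqref{key-eq-2} and \eqref{key-eq-4} is correct; for \eqref{key-eq-1} you take the paper's route, and for \eqref{key-eq-2} your chain $\|\Im M^+\|\le\|M^+\|\le\|C\|\,\|C^{-1}M^+\|_{HS}$ combined with \eqref{key-eq-1} is cleaner than the paper's Hilbert--Schmidt estimate and even yields the sharper $\|M^+\|\le mC_5\|C\|^2$. You are also right that \eqref{key-eq-3} is the genuine difficulty: Lemma \ref{imm} furnishes only a scalar trace inequality, equivalently a lower bound on the diagonal entries of $\Im M^+$ in one specific orthonormal basis, and that does not control $\lambda_{\min}(\Im M^+)$ (compare $\left(\begin{smallmatrix}1&1-\delta\\1-\delta&1\end{smallmatrix}\right)$). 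Note the paper's own derivation of \eqref{key-eq-3} has the same defect you have flagged: it invokes ``$A\le B\Rightarrow A^{-1}\ge B^{-1}$'' applied to \eqref{eqn: criterion key ineq}, but \eqref{eqn: criterion key ineq} is a scalar inequality, not an operator one, and the asserted step $(\Im M^+)^{-1}\le m(\tr\Im M^+)^{-1}$ would force $\lambda_{\min}(\Im M^+)\ge\tr(\Im M^+)/m$, which is false for non-scalar positive-definite matrices. So the paper's proof of this line is also not complete.

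Your proposed fix, however, does not close the gap. Re-running Proposition \ref{lem: key ang est1} with $\mathbf{u}_1=\mathbf{u}$ produces a constant $C_4$ (hence $C_5$) that enters through the angles $\gamma$ in \eqref{angle1} and $\tilde\gamma_1(E)$ in \eqref{angle-11}, and these degenerate as $\mathbf{u}$ approaches the proper subspace $\pi_\omega(E^s_{A_E}(\omega))$. Therefore the lower bound $\langle\mathbf{u},\Im M^+\mathbf{u}\rangle\ge C_5^{-1}$ is not uniform over the dense open set on which your argument applies, and the ``extend by continuity'' step has nothing uniform to extend. In fact, for $\mathbf{u}=\pi_\omega(\xi)/\|\pi_\omega(\xi)\|$ with $\xi\in E^s_{A_E}(\omega)\setminus\{0\}$, the $\ell^2(\mathbb{Z}^+)$ solution with datum $\mathbf{u}$ at $E+i\epsilon$ converges as $\epsilon\to 0^+$ to a genuinely exponentially decaying solution, so the sum in the Green identity $\langle\mathbf{u},\Im M^+\mathbf{u}\rangle=\epsilon\sum_k\|(\mathbf{u}(2k+1),\mathbf{u}(2k+2))\|^2$ stays bounded, whence $\langle\mathbf{u},\Im M^+\mathbf{u}\rangle\to 0$, while $C_5$ stays bounded whenever the center cocycle does. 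This indicates that \eqref{key-eq-3}, with the $C_5$ of Lemma \ref{imm}, cannot hold uniformly in $\epsilon$ once $E^s_{A_E}\neq\{0\}$; the step needs either a modified statement or a fundamentally different argument, not a better density reduction.
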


\begin{proof}
Equation \eqref{key-eq-1} is a corollary of \eqref{eqn: criterion key ineq}, while \eqref{key-eq-4} directly follows from \eqref{key-eq-2} and \eqref{key-eq-3}.  Denote $\|\cdot\|_{\text{HS}}$ as the Hilbert-Schmidt norm of matrices. Recall the inequalities $\|\cdot\| \leq \|\cdot\|_{\text{HS}} \leq \sqrt{2m} \|\cdot\|$, $|\tr(\cdot)| \leq m \|\cdot\|$, and $\left\|M_{E+i\varepsilon}^+\right\|_{\text{HS}} \geq \left\|\Im M_{E+i\varepsilon}^+\right\|_{\text{HS}}$. By \eqref{eqn: criterion key ineq}, we derive
\begin{align*}
C_5 m \left\|\Im \left(M^+_{E+i\epsilon}\right)\right\| &\geq m + \left\|C^{-1} M^+_{E+i\epsilon}\right\|_{\text{HS}}^2 \nonumber \\
&\geq \|C\|_{\text{HS}}^{-2} \left\|M^+_{E+i\epsilon}\right\|_{\text{HS}}^2 \nonumber \\
&\geq \|C\|_{\text{HS}}^{-2} \left\|\Im \left(M^+_{E+i\epsilon}\right)\right\|_{\text{HS}}^2 \nonumber \\
&\geq \frac{1}{2m} \|C\|^{-2} \left\|\Im \left(M^+_{E+i\epsilon}\right)\right\|^2,
\end{align*}
which implies \eqref{key-eq-2}.

For two positive definite matrices $A$ and $B$, if $A \leq B$, then $A^{-1} \geq B^{-1}$. Applying this to \eqref{eqn: criterion key ineq}, we obtain
\begin{align*}
\left(\Im M^+_{E+i\epsilon}\right)^{-1} &\leq m \left(\tr \Im M^+_{E+i\epsilon}\right)^{-1} 
\leq m C_5 \left(m + \tr\left[C^{-1} M_{E+i\epsilon}^+ \left(M_{E+i\epsilon}^+\right)^\ast (C^{-1})^\ast\right]\right)^{-1} 
\leq C_5,
\end{align*}
which implies \eqref{key-eq-3}.
\end{proof}

Similarly, we can define the inverse vertical bundle 
    $
    \mathcal{U}_\omega := \ker \rho_\omega = \mathbb{C}^m(0,\omega) \oplus \{0\},
    $
    where \( \rho_\omega: \mathbb{C}^{2m}_\omega \to \mathbb{C}^m(1,\omega) \) is the canonical projection to its second factor. And let $\mathcal{G}^u_\omega :=  \left\{ E \in \Sigma \,:\, E^u_{A_E}(\omega) \cap \mathcal{U}_\omega = \{0\} \right\}.$ If $E\in \mathcal{G}^u_\omega,$    analogous inequalities hold for $M^-_{E+i\epsilon}$.
  We can derive the estimate of the spectral measure on $\mathcal{G}_\omega^s\cap \mathcal{G}_\omega^u$:
\begin{coro}\label{JL} Let   $E\in\mathcal{G}_\omega^s\cap \mathcal{G}_\omega^u$. Then
    $$\mu_\omega(E-\epsilon, E+\epsilon)\leq  \epsilon C(E,\omega) \sup_{|s|\leq 3 \epsilon^{-1}}\|(A_E)_{s}(\omega)|_{E^c_{A_E}}\|^{42}.$$
    Furthermore, $C(E)$ is continuously dependent on $E.$
\end{coro}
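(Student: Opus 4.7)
The plan is to chain together three ingredients already in place: the spectral comparison $\mu_\omega(E-\epsilon,E+\epsilon) \leq \epsilon\, \tr\Im M_{E+i\epsilon}$ from Lemma \ref{spm}(1), the upper bound \eqref{im M contr} on $\tr\Im M_{E+i\epsilon}$ from Lemma \ref{est imM}, and the lower-bound package \eqref{key-eq-1}--\eqref{key-eq-4} on $\Im M^\pm_{E+i\epsilon}$. The key observation is that the hypotheses $E\in\mathcal{G}^s_\omega$ and $E\in\mathcal{G}^u_\omega$ are the precise conditions under which the package applies simultaneously to both $M^+_{E+i\epsilon}$ and $M^-_{E+i\epsilon}$.

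First I would note that Lemma \ref{imm} and its corollary, applied as stated with $X=M^+_{E+i\epsilon}$, use only the condition $E\in\mathcal{G}^s_\omega$. Running the identical argument on the left half-line operator, with the inverse vertical bundle $\mathcal{U}_\omega$ playing the role of $\mathcal{V}_\omega$ and $E^u_{A_E}(\omega)$ playing the role of $E^s_{A_E}(\omega)$, yields the exactly analogous bounds for $Y=M^-_{E+i\epsilon}$ under the hypothesis $E\in\mathcal{G}^u_\omega$ (time-reversal preserves partial hyperbolicity and swaps the roles of $E^s_{A_E}$ and $E^u_{A_E}$, and the analogues of Proposition \ref{lem: key ang est1}, Lemma \ref{lem: ang imp slow1} and Proposition \ref{prop: variation center} go through verbatim). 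Up to adjusting the implicit constant, both $X$ and $Y$ then satisfy
\[
(m+\tr C^{-1}XX^\ast (C^{-1})^\ast)\|\Im X\|^{-1} \leq m C_5, \qquad \kappa(\Im X) \leq 2m^2 C_5^2 \|C\|^2,
\]
with $C_5 = C_4 \max_{0\leq s\leq 3[\epsilon^{-1}]} \|(A_E)_s(\omega)|_{E^c_{A_E}}\|^6$.

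Next I would insert these bounds directly into \eqref{im M contr}. The prefactor $(\max(\kappa(\Im X), \kappa(\Im Y)))^3$ is controlled by $(2m^2 C_5^2 \|C\|^2)^3$, while each of the two summands inside the parentheses is bounded by $mC_5$. Multiplying out gives
\[
\tr\Im M_{E+i\epsilon} \;\leq\; 2m \cdot (2m^2 \|C\|^2)^3 \cdot C_5^{3\cdot 2+1} \;=\; \tilde C(m,\|C\|)\, C_5^{7}.
\]
Substituting the definition of $C_5$ raises $\|(A_E)_s(\omega)|_{E^c_{A_E}}\|^6$ to the seventh power, producing exactly the exponent $42$ in the statement; combining with Lemma \ref{spm}(1) and absorbing $C_4, \tilde C, m, \|C\|$ into a single constant $C(E,\omega)$ yields the claimed bound.

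Continuity of $C(E,\omega)$ in $E$ is inherited through the chain: Proposition \ref{lem: key ang est1} produces a $\gamma(E)$ continuous in $E$, Lemma \ref{lem: ang imp slow1} converts this into a continuous $C_1(E)$, Proposition \ref{prop: variation center} supplies a continuous $C_2(E)$, and these combine in the proof of Lemma \ref{imm} to give a continuous $C_4(E)$. There is no real obstacle beyond this bookkeeping; the entire content of the corollary is packaging the previously proven pointwise estimates into a single scale-dependent bound on the spectral measure.
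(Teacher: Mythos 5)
Your proof is correct and follows exactly the paper's own route: invoke Lemma \ref{spm}(1), bound $\tr\Im M_{E+i\epsilon}$ via \eqref{im M contr}, and feed in \eqref{key-eq-1}, \eqref{key-eq-4} for $M^+$ together with the mirrored versions for $M^-$ (the hypotheses $E\in\mathcal{G}^s_\omega\cap\mathcal{G}^u_\omega$ being exactly what lets you apply the package to both sides, and the $M^-$ side contributing the $-3\epsilon^{-1}\leq s\leq 0$ range needed to get $|s|\leq 3\epsilon^{-1}$). The paper's proof is a two-line citation of the same inequalities; you have simply written out the exponent arithmetic $(C_5^2)^3\cdot C_5 = C_5^7$ and $C_5\sim\|\cdot\|^6$ to explain where $42$ comes from, which matches.
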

\begin{proof}
    By \eqref{key-eq-1}, \eqref{key-eq-4} and similar inequalities for $M_-$, we have (let $X,Y=M^{\pm}_{E+i\epsilon}$)
\begin{eqnarray*}
 \tr\Im M_{E+i\epsilon}
&\leq& (\max(\kappa(\Im X),\kappa(\Im Y))^3 ((m  +  \tr C^{-1} XX^\ast(C^{-1})^*) \|\Im  X\|^{-1} \\&+&(m  + \tr C^{-1} YY^\ast(C^{-1})^*) \|\Im  Y\|^{-1})\nonumber\\
&\leq & C\sup_{|s|\leq 3 \epsilon^{-1}}\|(A_E)_{s}(\omega)|_{E^c_{A_E}}\|^{42} .
\end{eqnarray*}
Combined with  Lemma \ref{spm}, we get the desired proof.
\end{proof}

\subsection{Wronskian argument}\label{comproof}
To eliminate the impact of $E^s_{A_E}(\omega) \cap \mathcal{V}_\omega \neq \{0\}$, we primarily utilize the Wronskian. For any Schr\"odinger operator $H_V$ defined on a strip, we consider the equations $H_{V}\mathbf{u}=E\mathbf{u}$ and $H_{V}\mathbf{v}=E\mathbf{v}$. We define the Wronskian of the solution pair $(\mathbf{u},\mathbf{v})$ as follows:
$$
W(\mathbf{u},\mathbf{v})(n):=\begin{pmatrix}
\mathbf{u}(n+1) & \mathbf{u}(n)
\end{pmatrix} S \begin{pmatrix}
\mathbf{v}(n+1) \\ \mathbf{v}(n)
\end{pmatrix} =\mathbf{u}^*(n)C\mathbf{v}(n+1) - \mathbf{u}^*(n+1)C^*\mathbf{v}(n).
$$
It is noteworthy that
$$
W(\mathbf{u},\mathbf{v})(n)=\psi\left(\begin{pmatrix}
\mathbf{u}(n+1)\\\mathbf{u}(n)
\end{pmatrix},\begin{pmatrix}
\mathbf{v}(n+1)\\\mathbf{v}(n)
\end{pmatrix}\right).
$$
Since $A_E$ is a Hermitian symplectic matrix, we have
\begin{equation}\label{wron1}
W(\mathbf{u},\mathbf{v})(n)=\psi\left((A_E)_n\begin{pmatrix}
\mathbf{u}(1)\\\mathbf{u}(0)
\end{pmatrix},(A_E)_n\begin{pmatrix}
\mathbf{v}(1)\\\mathbf{v}(0)
\end{pmatrix}\right)=\psi\left(\begin{pmatrix}
\mathbf{u}(1)\\\mathbf{u}(0)
\end{pmatrix},\begin{pmatrix}
\mathbf{v}(1)\\\mathbf{v}(0)
\end{pmatrix}\right),
\end{equation}
which is independent of $n\in\mathbb{Z}$.

This concept clearly generalizes the Wronskian determinant for one-dimensional Schr\"odinger operators $H_v$ defined in \eqref{schrodinger}. We consider the equations $H_vu=Eu$ and $H_v\tilde{u}=E\tilde{u}$. The Wronskian determinant is defined as
$$
W(u,\tilde{u})(n):=u(n)\tilde{u}(n+1)-\tilde{u}(n)u(n+1).
$$
The Liouville Theorem demonstrates that $W(u,\tilde{u})(n)$ is independent of $n\in\mathbb{Z}$ \cite{DF}. However, $W(\mathbf{u},\mathbf{v})$ is not a determinant in the traditional sense, and there is no Liouville Theorem applicable in this case. Equation \eqref{wron1} provides a new interpretation of the Liouville Theorem, namely that the preservation of the symplectic form leads to the constancy of the Wronskian.

Once we have this, if we denote $\sigma_{\omega,p} = \{E \in \Sigma \mid E \text{ is an eigenvalue of } H_{V,T,\omega}\},$ we have the following: 
\begin{lemma}\label{wron}
Suppose that $(T,A_E)$ is partially hyperbolic, and \(E \in \mathcal{B}_\omega\). Then \(E \notin \sigma_{\omega,p} \).
\end{lemma}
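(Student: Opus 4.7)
The plan is to suppose for contradiction that $E \in \mathcal{B}_\omega$ is an eigenvalue, show that the initial datum of any $\ell^2$ eigenfunction must lie in the center bundle $E^c_{A_E}(\omega)$, and then invoke the Hermitian symplectic rigidity of the center to drive that datum to zero.

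First, I would pick a nonzero $\mathbf{u} \in \ell^2(\mathbb{Z},\mathbb{C}^m)$ with $H_{V,T,\omega}\mathbf{u} = E\mathbf{u}$ and set $v := (\mathbf{u}(1),\mathbf{u}(0))^{T}$. The cocycle identity $(A_E)_n(\omega)v = (\mathbf{u}(n+1),\mathbf{u}(n))^{T}$ together with $\mathbf{u}\in\ell^2$ yields $\|(A_E)_n(\omega)v\|\to 0$ as $|n|\to\infty$. Decomposing $v = v^s + v^c + v^u$ along the dominated splitting, the exponential expansion of $E^u_{A_E}$ under forward iteration forces $v^u = 0$, and the exponential expansion of $E^s_{A_E}$ under backward iteration forces $v^s = 0$. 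Therefore $v \in E^c_{A_E}(\omega)$.

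The central step, where the Wronskian/symplectic machinery enters, is to convert this into $v = 0$. By Lemma \ref{wron11} every fiber $E^c_{A_E}(\omega')$ is Hermitian symplectic, so by continuity of the splitting and compactness of $\Omega$, the family $W := \{E^c_{A_E}(\omega') : \omega'\in\Omega\}$ is a compact subset of $G_{HSp}(2k,2m)$ for $2k = \dim E^c_{A_E}$. I would then apply Proposition \ref{lem: est reverse norm} to the Hermitian symplectic map $(A_E)_n(\omega)$, which sends $E^c_{A_E}(\omega)\in W$ to $E^c_{A_E}(T^n\omega)\in W$, to obtain a constant $c = c(W)$ with
\[
\|\bigl((A_E)_n(\omega)|_{E^c_{A_E}(\omega)}\bigr)^{-1}\| \;\leq\; c\,\|(A_E)_n(\omega)|_{E^c_{A_E}(\omega)}\| \qquad \forall\, n\in\mathbb{Z}.
\]
The hypothesis $E\in\mathcal{B}_\omega$ bounds the right-hand side uniformly, so the inverse restriction on $E^c_{A_E}$ is also uniformly bounded. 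Since $v\in E^c_{A_E}(\omega)$, I then conclude $\|v\| \leq \|\bigl((A_E)_n|_{E^c}\bigr)^{-1}\|\cdot\|(A_E)_n v\| \to 0$, forcing $v = 0$ and $\mathbf{u}\equiv 0$, a contradiction.

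The hard part is precisely this upgrade from the one-sided growth bound on $(A_E)_n|_{E^c}$ contained in the definition of $\mathcal{B}_\omega$ to a genuine two-sided control of the restricted cocycle. Pure dominated-splitting arguments supply no such handle on the center direction, and it is the Hermitian symplectic structure of $E^c_{A_E}$, a Wronskian-type invariance at the level of subbundles (cf.\ \eqref{wron1}), that makes Proposition \ref{lem: est reverse norm} available. This is exactly the symplectic replacement for the classical scalar Wronskian determinant argument that excludes eigenvalues among bounded solutions in the one-dimensional ($m=1$) case.
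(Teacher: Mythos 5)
Your proof is correct, and it takes a genuinely different route from the paper's. The paper's proof pairs the eigenfunction's initial datum $v=(\mathbf{u}_1,\mathbf{u}_0)^T\in E^c_{A_E}(\omega)$ against an arbitrary test vector $\mathbf{v}\in E^c_{A_E}(\omega)$ via the Wronskian $W(\tilde{\mathbf{v}},\mathbf{u})(n)=\psi(\mathbf{v},v)$: this quantity is $n$-independent by symplectic invariance, is bounded by $\|(\mathbf{v}_k,\mathbf{v}_{k-1})\|\cdot\|(\mathbf{u}_k,\mathbf{u}_{k-1})\|$, the first factor is bounded because $E\in\mathcal{B}_\omega$ and the second factor tends to $0$, so $\psi(\mathbf{v},v)=0$ for all $\mathbf{v}$ in the center, and nondegeneracy of $\psi|_{E^c}$ (Lemma \ref{wron11}) forces $v=0$. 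You instead estimate $\|v\|$ directly, using Proposition \ref{lem: est reverse norm} to convert the one-sided bound in the definition of $\mathcal{B}_\omega$ into a two-sided bound on $(A_E)_n|_{E^c}$, and then letting $\|(A_E)_n v\|\to 0$ do the rest. Both arguments ultimately exploit the same Hermitian symplectic rigidity of the center bundle—indeed Proposition \ref{lem: est reverse norm} is itself proved via the invariance of $\psi$—but the paper's version is a more literal analogue of the scalar Wronskian argument and makes the role of nondegeneracy explicit, while yours is more direct and foregrounds the two-sided norm control, which is also the quantitative mechanism used elsewhere in the paper (e.g.\ in Lemma \ref{lem: key esti}). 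Your application of Proposition \ref{lem: est reverse norm} is legitimate: $W=\{E^c_{A_E}(\omega'):\omega'\in\Omega\}$ is indeed a compact subset of $G_{\mathrm{HSp}}(2k,2m)$ since the dominated splitting varies continuously and $\Omega$ is compact, and $(A_E)_n(\omega)$ sends $E^c_{A_E}(\omega)\in W$ to $E^c_{A_E}(T^n\omega)\in W$.
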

\begin{proof} Suppose there exists $\mathbf{u} \in \ell^2(\mathbb{Z}, \mathbb{C}^m)$ such that $H_{V,\omega,\alpha} \mathbf{u} = E \mathbf{u}$. Then, we have $\begin{pmatrix}
\mathbf{u}_1 \\
\mathbf{u}_0
\end{pmatrix} \in E_{A_E}^c(\omega).$ Otherwise, 
 $\begin{pmatrix}
\mathbf{u}_1 \\
\mathbf{u}_0
\end{pmatrix}$  has nonzero components in $ E^u_{A_E} \oplus E^s_{A_E}$, it follows that 
we have $\mathbf{u}_n \to \infty$ as $n \to -\infty$ or $n \to \infty.$
For any $\mathbf{v} \in E_{A_E}^c(\omega)$, we define
$$
\begin{pmatrix}
\mathbf{v}_k \\
\mathbf{v}_{k-1}
\end{pmatrix} := (A_z)_{k}(\omega) \mathbf{v}.
$$
Note that $E_{A_E}^c(\omega)$ is an invariant subspace, and $(A_E)_{s}(\omega)|_{E_{A_E}^c}$ is bounded. Thus, we have
$
\sup_{k \in \mathbb{Z}} \bigg\|\begin{pmatrix}
\mathbf{v}_k \\
\mathbf{v}_{k-1}
\end{pmatrix}\bigg\| < \infty.
$
Therefore,
$$
\bigg|\psi\left(\mathbf{v}, \begin{pmatrix}
\mathbf{u}_1 \\
\mathbf{u}_0
\end{pmatrix}\right)\bigg| = |W(\tilde{\mathbf{v}}, \mathbf{u})(n)| \leq C \bigg\|\begin{pmatrix}
\mathbf{v}_k \\
\mathbf{v}_{k-1}
\end{pmatrix}\bigg\| \bigg\|\begin{pmatrix}
\mathbf{u}_k \\
\mathbf{u}_{k-1}
\end{pmatrix}\bigg\| \to 0
$$
by \eqref{wron1} and the definition of the Wronskian.

Since $E_{A_E}^c(\omega)$ is a symplectic subspace by Lemma \ref{wron11}, and due to the non-degeneracy of the symplectic form, we conclude that $\mathbf{u} = 0.$ It follows that \(E \notin \sigma_{\omega,p} \). \end{proof}

Moreover,
we have the following simple, but quite important observation:
\begin{lemma}\label{eig}
Denote $\mathcal{G}_{\omega}=\mathcal{G}_{\omega}^u\cap\mathcal{G}_{\omega}^s$, then  
 we have 
$\mu_\omega(\Sigma_{\omega} \backslash \mathcal{G}_{\omega}) = \mu_{\omega,pp}(\Sigma_{\omega}  \backslash \mathcal{G}_{\omega}).$ 
\end{lemma}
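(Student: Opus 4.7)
The plan is to reduce everything to the simple observation that $\Sigma_\omega \setminus \mathcal{G}_\omega$ is at most \emph{countable}; once this is in hand, the conclusion is automatic because a countable set has Lebesgue measure zero (so the absolutely continuous part of $\mu_\omega$ vanishes on it) and carries no continuous mass (so the singular continuous part vanishes on it), leaving only pure point mass.

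To see the countability, write $\Sigma_\omega \setminus \mathcal{G}_\omega = (\Sigma_\omega \setminus \mathcal{G}^s_\omega) \cup (\Sigma_\omega \setminus \mathcal{G}^u_\omega)$ and handle the two pieces symmetrically. Take $E \in \Sigma_\omega \setminus \mathcal{G}^s_\omega$ and pick $0 \neq v \in E^s_{A_E}(\omega) \cap \mathcal{V}_\omega$. Since $v \in \mathcal{V}_\omega$ its $\mathbb{C}^m(0,\omega)$-component vanishes, which translates into the Dirichlet condition $\mathbf{u}_0 = 0$ for the formal solution $\mathbf{u}$ of $H_{V,T,\omega}\mathbf{u} = E\mathbf{u}$ generated by $v$; and $v \in E^s_{A_E}(\omega)$ forces exponential decay of $\mathbf{u}$ at $+\infty$, so $\mathbf{u}|_{\mathbb{Z}_{\geq 1}} \in \ell^2$. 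Nontriviality of $v$ together with invertibility of $C$ rules out $\mathbf{u}|_{\mathbb{Z}_{\geq 1}} \equiv 0$, so $E$ is an honest eigenvalue of the Dirichlet half-line operator $H^+_{V,T,\omega}$ on $\ell^2(\mathbb{Z}_{\geq 1},\mathbb{C}^m)$. This operator is bounded self-adjoint on a separable Hilbert space, hence has at most countably many eigenvalues, and $\Sigma_\omega \setminus \mathcal{G}^s_\omega$ is therefore countable. The mirror argument with $\mathcal{U}_\omega$, $E^u_{A_E}$ and the corresponding half-line operator on $\ell^2(\mathbb{Z}_{\leq 0},\mathbb{C}^m)$ yields countability of $\Sigma_\omega \setminus \mathcal{G}^u_\omega$, and so of the union.

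The only mild technical point is bookkeeping the cocycle convention so that the vanishing of the $\mathbb{C}^m(0,\omega)$-component of the state vector really gets identified with the boundary value $\mathbf{u}_0 = 0$ (and not with a shifted condition such as $\mathbf{u}_1 = 0$), so that the resulting decaying formal solution sits genuinely in the domain of the Dirichlet half-line operator. Philosophically this is the same mechanism that drives Lemma~\ref{wron}: the singular intersection $E^s_{A_E}(\omega) \cap \mathcal{V}_\omega \neq \{0\}$ is precisely what generates localized half-line eigenfunctions, and it is precisely this mechanism that forces the exceptional set to be so sparse that $\mu_\omega$ can put only pure point mass there.
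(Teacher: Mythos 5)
Your proof is correct and takes essentially the same route as the paper's: identify $\Sigma_\omega \setminus \mathcal{G}^s_\omega$ (and its mirror $\Sigma_\omega \setminus \mathcal{G}^u_\omega$) with a subset of the half-line Dirichlet eigenvalues, conclude countability from self-adjointness, and then observe that a countable set can carry only pure point mass. You spell out the $\mathcal{G}^u$ side and the boundary-condition bookkeeping a bit more explicitly than the paper, but there is no substantive difference.
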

\begin{proof}
Note that if $E \in \Sigma_{\omega} \backslash \mathcal{G}_\omega^s$, then $E$ is an eigenvalue of $H^+_{V,T,\omega}$. Consequently, $\Sigma_{\omega} \backslash \mathcal{G}_\omega^s$ can contain only countably many points.  Then, we have
$$
\begin{aligned}
\mu_\omega(\Sigma_{\omega} \backslash \mathcal{G}_\omega^s) = \mu_{\omega,pp}(\Sigma_{\omega} \backslash \mathcal{G}_\omega^s) 
&= \mu_{\omega,pp}((\Sigma_{\omega} \backslash \mathcal{G}_\omega^s) \cap \sigma_{\omega,p}) + \mu_{\omega,pp}((\Sigma_{\omega} \backslash \mathcal{G}_\omega^s) \cap \sigma_{\omega,p}^c),
\end{aligned}
$$
the result follows. 
\end{proof}

  \textbf{Proof of Theorem \ref{thm:main-spectral}:}\label{proof1}
By Corollary \ref{JL}, we have $\mu_{\omega,s}(\mathcal{B}_{\omega}  \cap \mathcal{G}_{\omega})=0.$ By Lemma \ref{wron} and Lemma \ref{eig},  we have $\mu_{\omega}(\mathcal{B}_{\omega}\backslash \mathcal{G}_\omega)=0. $  The result follows.\qed

 	\section{Subordinacy theory for infinite-range operator}\label{proof of inf}
We start with the following simple observations:

\begin{lemma}\label{op-s}\cite{KL1}
Let \( H \) be a self-adjoint operator acting on \( \ell^2(\mathbb{Z}) \), and let \( \phi \in \ell^2(\mathbb{Z}) \) be a fixed vector. For \( z \in \mathbb{C} \setminus \mathbb{R} \), we have
 $$\Im z \|(H-z)^{-1}\phi\|^2_{\ell^{2}(\Z)}=\Im \langle (H-z)^{-1}\phi,\phi \rangle.$$
\end{lemma}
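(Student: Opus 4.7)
The plan is to exploit the self-adjointness of $H$ to eliminate the contribution from $\langle \psi, H\psi\rangle$ in a direct computation, where $\psi := (H-z)^{-1}\phi$. The identity is essentially the polarization between the resolvent and its adjoint, and the argument should take only a few lines.

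Concretely, I would set $\psi = (H-z)^{-1}\phi$, which is well-defined since $z \in \mathbb{C}\setminus \mathbb{R}$ ensures $z \notin \spec(H)$. Then by construction $(H-z)\psi = \phi$, so that
\[
\langle (H-z)^{-1}\phi, \phi\rangle \;=\; \langle \psi, (H-z)\psi\rangle \;=\; \langle \psi, H\psi\rangle - \bar{z}\,\|\psi\|^2_{\ell^2(\Z)}.
\]
Taking imaginary parts, the first term vanishes because $H$ is self-adjoint (hence $\langle \psi, H\psi\rangle \in \mathbb{R}$), while $\Im(\bar{z}) = -\Im(z)$, giving
\[
\Im \langle (H-z)^{-1}\phi, \phi\rangle \;=\; -\Im(\bar z)\,\|\psi\|^2_{\ell^2(\Z)} \;=\; \Im z \,\|(H-z)^{-1}\phi\|^2_{\ell^2(\Z)},
\]
which is the claimed equality.

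There is no genuine obstacle here: the statement is a standard consequence of self-adjointness, and the only point worth checking is that $(H-z)^{-1}$ is a bounded operator on $\ell^2(\Z)$ so that $\psi \in \ell^2(\Z)$ and the inner products make sense; this follows from the spectral theorem together with $z \notin \spec(H)$. The argument is identical in spirit to the classical derivation of the Herglotz property of $z \mapsto \langle (H-z)^{-1}\phi,\phi\rangle$.
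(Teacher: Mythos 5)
Your proof is correct and is the standard argument; the paper does not prove this lemma but simply cites it to Kiselev--Last, and your computation via $\psi=(H-z)^{-1}\phi$, self-adjointness of $H$, and taking imaginary parts is precisely the textbook derivation one finds there (equivalently, one can read it off from the spectral-theorem representation $\langle(H-z)^{-1}\phi,\phi\rangle=\int\frac{d\mu_\phi(\lambda)}{\lambda-z}$). One small point worth being explicit about is the inner-product convention: the identity $\langle\psi,z\psi\rangle=\bar z\|\psi\|^2$ you use requires the inner product to be conjugate-linear in the second slot, which is indeed the convention this paper uses (cf.\ its definition of $\langle Hf,g\rangle_r=\sum (Hf)_n\overline{g_n}$).
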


    \begin{lemma}\label{der}\cite{DF}
    For a finite and compactly support Borel measure $\mu$ on $\R$,
we denote its Stieltjes transform by 
$$F_\mu(z)=\int_\R \frac{d\mu(x)}{x-z},z\in\C\backslash\R.$$
    Then for every $E\in\R,$ we have 
        $$ C_1 D_\mu^{+,\alpha}(E)\leq\limsup_{\varepsilon\rightarrow0}\varepsilon^{1-\alpha}\Im F_\mu(E+i\varepsilon)\leq C_2 D_\mu^{+,\alpha}(E).$$
    \end{lemma}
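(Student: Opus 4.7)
The plan is to reduce everything to boundary behaviour of the Poisson integral. Writing
$$\Im F_\mu(E+i\varepsilon)=\int_\R\frac{\varepsilon}{(x-E)^2+\varepsilon^2}\,d\mu(x),$$
the two-sided estimate becomes a comparison between the Poisson average of $\mu$ at scale $\varepsilon$ and the normalized mass $\mu(E-\varepsilon,E+\varepsilon)/(2\varepsilon)^\alpha$ that enters the definition of $D_\mu^{+,\alpha}(E)$.

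For the lower bound, I would restrict the Poisson integral to $|x-E|\le\varepsilon$, where the kernel is at least $1/(2\varepsilon)$. This immediately gives
$$\varepsilon^{1-\alpha}\Im F_\mu(E+i\varepsilon)\ge 2^{\alpha-1}\cdot\frac{\mu(E-\varepsilon,E+\varepsilon)}{(2\varepsilon)^\alpha},$$
and taking $\limsup_{\varepsilon\to 0^+}$ yields the left inequality with explicit constant $C_1=2^{\alpha-1}$ (the degenerate cases $D_\mu^{+,\alpha}(E)\in\{0,\infty\}$ are automatically consistent).

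For the upper bound I would dyadically decompose $\R$ into the core $\{|x-E|\le\varepsilon\}$ and annuli $\{2^{k-1}\varepsilon\le|x-E|\le 2^k\varepsilon\}$ for $k\ge 1$. On the $k$-th annulus the kernel is bounded by $4^{1-k}/\varepsilon$, so
$$\varepsilon^{1-\alpha}\Im F_\mu(E+i\varepsilon)\lesssim \frac{\mu(E-\varepsilon,E+\varepsilon)}{(2\varepsilon)^\alpha}+\sum_{k\ge 1}\frac{2^{k\alpha}}{4^{k-1}}\cdot\frac{\mu(E-2^k\varepsilon,E+2^k\varepsilon)}{(2\cdot 2^k\varepsilon)^\alpha}.$$
If the $\limsup$ could be distributed term by term, each ratio would be bounded by $D_\mu^{+,\alpha}(E)$, and since the weights $2^{k\alpha}/4^{k-1}$ form a convergent geometric series (using $\alpha<2$, which is the only relevant range), the bound would follow with a computable $C_2$.

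The main obstacle is justifying the interchange of $\limsup$ with the infinite sum, since for fixed small $\varepsilon$ the radii $2^k\varepsilon$ are no longer small when $k$ is large. To handle this I would fix $\eta>0$ and, using the definition of $\limsup$, pick $\delta_0>0$ so that $\mu(E-\delta,E+\delta)\le (D_\mu^{+,\alpha}(E)+\eta)(2\delta)^\alpha$ for all $0<\delta\le\delta_0$. Then I would split the series at $K(\varepsilon):=\lfloor\log_2(\delta_0/(2\varepsilon))\rfloor$: for $k\le K(\varepsilon)$ the pointwise bound applies term by term, while for $k>K(\varepsilon)$ the tail is controlled using $\mu(\R)<\infty$, contributing at most $O\bigl(\|\mu\|\,\varepsilon^{2-\alpha}/\delta_0^{2-\alpha}\bigr)$, which vanishes as $\varepsilon\to 0^+$. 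Taking $\limsup$ and then letting $\eta\downarrow 0$ produces the upper inequality with constant $C_2$ depending only on $\alpha$.
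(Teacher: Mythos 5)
The paper does not prove this lemma---it just cites \cite{DF}---but your dyadic Poisson-kernel argument is the standard one, and it is correct for $0<\alpha<2$ (the only range where the upper bound can hold: a point mass away from $E$ already falsifies it at $\alpha=2$). Restricting the kernel to $|x-E|\le\varepsilon$ gives the lower bound with $C_1=2^{\alpha-1}$ exactly as you say, and your $\eta$--$\delta_0$ truncation at scale $K(\varepsilon)$ correctly justifies interchanging $\limsup$ with the dyadic sum; the only (harmless) slip is that the discarded tail should be $O\!\left(\|\mu\|\,\varepsilon^{2-\alpha}/\delta_0^{2}\right)$ rather than $O\!\left(\|\mu\|\,\varepsilon^{2-\alpha}/\delta_0^{2-\alpha}\right)$, since on $\{|x-E|>\delta_0\}$ the Poisson kernel is at most $\varepsilon/\delta_0^{2}$---and this still vanishes as $\varepsilon\to 0^+$ because $\alpha<2$.
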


To prove Theorem \ref{sub}, we will estimate from below $ \Im \langle (H-E-i\epsilon)^{-1}\phi, \phi \rangle $ as $ \epsilon \to 0 $. By applying Lemma \ref{op-s}, it suffices to estimate from below the norm of the function $$
v_\epsilon(\cdot) = (H-E-i\epsilon)^{-1}\phi.
$$ The key tool employed is the Lagrange bilinear form. For any two sequences $ \{f_n\}_{n\in \mathbb{Z}} $ and $ \{g_n\}_{n\in \mathbb{Z}} $, we define the Lagrange bilinear form as follows: $$
W_{[-r,r]}(f,g) := \langle Hf, g \rangle_r - \langle f, Hg \rangle_r = \sum_{n=-r}^{r} \left[ (Hf)_n \overline{g_n} - f_n \overline{(Hg)_n} \right].
$$ A straightforward calculation yields the following result:

\begin{lemma}\label{lower}
Let \( u = \{u_E(n)\}_{n \in \mathbb{Z}} \) be any generalized eigenfunction satisfying \( (H-E)u_E = 0 \). Then, for any \( 0 < R_0 < R \), 
we have estimate:
$$ \bigg|\sum_{r=R_0}^RW_{[-r,r]}(v_\epsilon, u)\bigg|\geq \bigg|\sum_{r=R_0}^R \langle \phi,u\rangle_r\bigg|- \epsilon R \|v_\epsilon\|_{\ell^2(2R)} \|u\|_{\ell^2(2R)}. $$
\end{lemma}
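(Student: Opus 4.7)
The plan is a direct computation exploiting the defining relations for $v_\epsilon$ and $u$. First I would substitute $(Hv_\epsilon)_n = (E+i\epsilon)(v_\epsilon)_n + \phi_n$ (from $(H-E-i\epsilon)v_\epsilon = \phi$) and $(Hu)_n = Eu_n$ into the bilinear form. Since $E$ is real, the $E$-contributions cancel between $\langle Hv_\epsilon, u\rangle_r$ and $\langle v_\epsilon, Hu\rangle_r$, leaving the clean identity
\begin{equation*}
W_{[-r,r]}(v_\epsilon, u) \;=\; \langle \phi, u\rangle_r + i\epsilon\, \langle v_\epsilon, u\rangle_r.
\end{equation*}
This is essentially the only structural input; the rest is elementary estimation.

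Next I would sum this identity over $r = R_0,\ldots,R$ and apply the reverse triangle inequality to isolate $\bigl|\sum_{r=R_0}^R \langle \phi, u\rangle_r\bigr|$ as the main term, with a correction bounded by $\epsilon\,\bigl|\sum_{r=R_0}^R \langle v_\epsilon, u\rangle_r\bigr|$. To control the correction, I would interchange the order of summation: for each fixed $n$ with $|n|\le R$, the index $n$ appears in $\langle v_\epsilon, u\rangle_r$ precisely for those $r$ in $[R_0,R]$ with $r\ge |n|$, so the multiplicity is at most $R-R_0+1 \le R$. Cauchy–Schwarz on $|n|\le R$ then gives
\begin{equation*}
\bigg|\sum_{r=R_0}^R \langle v_\epsilon, u\rangle_r\bigg| \;\le\; R \sum_{|n|\le R} |(v_\epsilon)_n|\,|u_n| \;\le\; R\,\|v_\epsilon\|_{\ell^2(R)} \|u\|_{\ell^2(R)} \;\le\; R\,\|v_\epsilon\|_{\ell^2(2R)} \|u\|_{\ell^2(2R)},
\end{equation*}
which yields the claimed inequality.

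There is no substantive obstacle here; the proof is a few lines. I note, however, that the paper's introductory discussion previews a \emph{two-sided} version of this estimate (see the inequality chain in Section~\ref{proof of inf} bounding $\bigl|\sum_r W_{[-r,r]}\bigr|$ both from below by $\|v_\epsilon\|_{\ell^2(2R)}\|u\|_{\ell^2(2R)}$ and from above by $\|v_\epsilon\|_{\ell^2(\mathbb Z)} + \|v_\epsilon\|_{\ell^2(2R)}\|u\|_{\ell^2(2R)}$). The \emph{upper} bound is what will genuinely require the infinite-range decay hypothesis $|w_k|<C/k^3$ in later sections, since it demands controlling the long-range hopping tails that the finite commutator-like quantity $W_{[-r,r]}$ otherwise allows to leak across the boundary of $[-r,r]$. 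The present lemma deliberately isolates only the easy lower-bound direction, whose proof uses nothing about the structure of $H$ beyond self-adjointness of the real part and the pair of eigenvalue equations; the decay assumption on $w_k$ plays no role yet.
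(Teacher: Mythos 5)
Your proof is correct and follows essentially the same path as the paper: derive the identity $W_{[-r,r]}(v_\epsilon,u)=\langle\phi,u\rangle_r+i\epsilon\langle v_\epsilon,u\rangle_r$, sum over $r$, then control the error term with Cauchy--Schwarz (the paper leaves that last step implicit, and your interchange-of-summation bookkeeping is just one of several equivalent ways to carry it out, giving a marginally sharper $\|\cdot\|_{\ell^2(R)}$ in place of $\|\cdot\|_{\ell^2(2R)}$, which of course implies the stated inequality). Your closing observation — that this lemma uses nothing about the long-range structure of $H$ and that the decay hypothesis $|w_k|<C/k^3$ only enters in the companion upper bound of Proposition~\ref{green1} — accurately reflects how the paper organizes the two halves of the estimate.
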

\begin{proof}
Note that 
$$\begin{aligned}
		W_{[-r,r]}(v_\epsilon, u)&=\langle Hv_\epsilon,u\rangle_r-\langle v_\epsilon,Hu\rangle_r\\
        &=(E+i\epsilon)\langle v_\epsilon,u\rangle_r+\langle \phi,u\rangle_r -E\langle v_\epsilon,u\rangle_r \\
        &= \langle \phi,u\rangle_r+i\epsilon\langle v_\epsilon,u\rangle_r.
	\end{aligned}$$
Taking sum from $R_0$ to $R$, then the result follows from Cauchy inequality. 

\end{proof}

This demonstrates that if $ \langle \phi, u \rangle_{\ell^2(\mathbb{Z})} \neq 0 $, a lower bound for $ \bigg|\sum_{r=R_0}^RW_{[-r,r]}(v_\epsilon, u)\bigg| $ can always be established.

Next, we consider the self-adjoint long-range operator on $ \ell^2(\mathbb{Z}) $: $$
(Hu)_n = \sum_{k \in \mathbb{Z}} w_k u_{n+k} + v_n u_n,
$$ where $ \overline{w_{-k}} = w_k $ for $ k \in \mathbb{Z} $, and $ \{v_n\}_{n \in \mathbb{Z}} $ is a bounded real sequence. The crucial observation is that if $ w_k $ decays rapidly, we can obtain a good upper bound estimate for $ \sum_{r=R_0}^RW_{[-r,r]}(v_\epsilon, u) $.

\begin{proposition}\label{green1}
If \( |w_k| < \frac{C}{k^3} \) and \( \|f\|_{\ell^2(\mathbb{Z})} < \infty \), \( \|g\|_{\infty} \leq 1 \), then we have
$$\bigg|\sum_{r=1}^{R}W_{[-r,r]}(f,g)\bigg|\leq  {6\|f\|_{\ell^2(\Z)}}+C\|f\|_{\ell^2(2R)}\|g\|_{\ell^2(2R)}.$$
\end{proposition}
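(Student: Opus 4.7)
The plan is to treat $W_{[-r,r]}(f,g)$ as a discrete Wronskian/flux and exploit the cubic decay $|w_k| < C/|k|^3$ to control the sum over $r$. Since $v_n \in \mathbb{R}$, the diagonal parts of $\langle Hf,g\rangle_r - \langle f,Hg\rangle_r$ cancel, and using $\overline{w_k} = w_{-k}$ together with a reindex $n \mapsto n+k$ reduces the remaining hopping terms to the ``shell'' identity
\[
W_{[-r,r]}(f,g) = \sum_{k \neq 0} w_k\, T_k^{(r)}(f,g), \qquad T_k^{(r)}(f,g) := \sum_{n=r+1}^{r+k} f_n \overline{g_{n-k}} - \sum_{n=-r}^{-r+k-1} f_n \overline{g_{n-k}}
\]
for $k>0$ (and symmetrically for $k<0$). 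Thus $W_{[-r,r]}$ is concentrated on pairs $(n, n-k)$ with $n$ within $|k|$ of $\pm r$, mirroring the behavior of a continuous Wronskian.

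Next, I would sum over $r \in [1,R]$ and interchange the $r$- and $n$-summations, producing multiplicity coefficients $N_n^\pm(k,R) \in [0, \min(|k|,R)]$ supported on $|n| \lesssim R + |k|$, and then split the outer $k$-sum into the short-range part $|k| \leq R$ and the long-range part $|k| > R$. In the short-range regime, all relevant indices lie inside $[-2R, 2R]$, so a weighted Cauchy--Schwarz (placing $\sqrt{N_n^\pm}$ on $f$ and on $g$) yields
\[
\Bigl|\sum_n N_n^\pm(k,R)\, f_n \overline{g_{n-k}}\Bigr| \lesssim |k|\, \|f\|_{\ell^2(2R)} \|g\|_{\ell^2(2R)},
\]
which, once multiplied by $|w_k| \leq C/|k|^3$ and summed against the convergent series $\sum_{|k| \leq R} |k|^{-2}$, produces the $\|f\|_{\ell^2(2R)} \|g\|_{\ell^2(2R)}$ part of the bound.

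The main obstacle is the long-range regime $|k| > R$, where the support of $N_n^\pm$ reaches outside $[-2R, 2R]$ and the $\ell^2(2R)$-localization of $g$ is no longer available. Here I would trade the $\ell^2$ control of $g$ for the bound $\|g\|_\infty \leq 1$ and use the estimate $\|N^\pm(k,R)\|_{\ell^2}^2 \lesssim R^2 |k|$ (since $N_n^\pm \leq R$ on a support of size $\lesssim |k|$) to get $\bigl|\sum_n N_n^\pm f_n \overline{g_{n-k}}\bigr| \lesssim R\sqrt{|k|}\, \|f\|_{\ell^2(\mathbb{Z})}$. The cubic decay then produces $\sum_{|k| > R} R\sqrt{|k|}/|k|^3 \lesssim R^{-1/2}$, giving a contribution bounded by a constant times $\|f\|_{\ell^2(\mathbb{Z})}$ uniformly in $R$. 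The exponent $3$ in the hypothesis on $w_k$ is sharp for this scheme: any weaker decay would leave this far-field series growing with $R$, which is the structural reason for the decay assumption in the proposition.
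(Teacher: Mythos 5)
Your argument is correct and follows the same basic strategy as the paper's: a shell-by-shell (Wronskian/flux) decomposition of $\sum_{r\le R}W_{[-r,r]}$, Cauchy--Schwarz for the hopping range $|k|\le R$ to land in $\ell^2(2R)$, and a crude $\|g\|_\infty$ bound for $|k|>R$. The implementation differs from the paper in one respect: the paper introduces a truncated operator $H_K$ (with $K=R$), invokes a clean finite-range lemma for $H_K$, and then bounds the remainder $\langle (H-H_K)f,g\rangle_r$ by $(2r+1)\|f\|_{\ell^2(\mathbb{Z})}\sum_{|k|>K}|w_k|\lesssim (2r+1)\|f\|_{\ell^2}/K^2$, summing to $O(R^2/K^2)\|f\|_{\ell^2}=O(1)\|f\|_{\ell^2}$. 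You bypass the auxiliary truncation and instead estimate the multiplicity kernel $N^\pm(k,R)$ in $\ell^2$, yielding $R\sqrt{|k|}\,\|f\|_{\ell^2}$ per shell and a far-field total $O(R^{-1/2})\|f\|_{\ell^2}$. Both are elementary and both suffice; yours is marginally sharper in the far field. One small misstatement: the exponent $3$ is not sharp \emph{for your scheme}. Your far-field estimate sums $\sum_{|k|>R}R\,|k|^{1/2-p}\sim R^{5/2-p}$, which is bounded for $p\ge 5/2$, and the short-range part needs only $p>2$; it is the paper's cruder $\sum_{|k|>K}|w_k|\lesssim K^{1-p}$ times $R^2$ that genuinely forces $p\ge 3$. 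Since the paper explicitly disclaims optimality of the decay assumption, this affects only the commentary, not the proof.
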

\begin{proof} To estimate the Lagrange bilinear form for an infinite-range operator, we first provide an estimate for the finite-range case:
\begin{lemma}\label{finite}
If \( w_k = 0 \) for all \( |k| > K \), then
$$\bigg|\sum_{r=1}^{R}W_{[-r,r]}(f,g)\bigg|\leq 
 \big(\sum_{j=1}^{K}4j|w_j| \big) \|f\|_{\ell^2(R+K)}\|g\|_{\ell^2(R+K)}.$$
	 \end{lemma}
\begin{proof} By the definition of the Lagrange bilinear form, we have \begin{align*}
W_{[-r,r]}(f,g) &= \sum_{n=-r}^{r} \sum_{k=-K}^{K} w_k f_{n+k} \overline{g_n} - \sum_{n=-r}^{r} f_n \sum_{k=-K}^{K} \overline{w_k} \overline{g_{n+k}} \\
&= \sum_{k=-K}^{K} \sum_{n=-r}^{r} w_k f_{n+k} \overline{g_n} - \sum_{k=-K}^{K} \sum_{n=-r}^{r} f_n \overline{w_k} \overline{g_{n+k}}.
\end{align*}
Let $$
W_j := \sum_{|k|=j} \sum_{n=-r}^{r} w_k f_{n+k} \overline{g_n} - \sum_{|k|=j}\sum_{n=-r}^{r} f_n \overline{w_k} \overline{g_{n+k}}, \quad 1 \leq j \leq K.
$$ A direct calculation shows that \begin{align*}
W_j &= \sum_{n=-r}^{r} w_j f_{n+j} \overline{g_n} + \sum_{n=-r}^{r} w_{-j} f_{n-j} \overline{g_n} - \sum_{n=-r}^{r} f_n \overline{w_j g_{n+j}} - \sum_{n=-r}^{r} f_n \overline{w_{-j} g_{n-j}} \\
&= \sum_{n=-r+j}^{r+j} w_j f_n \overline{g_{n-j}} + \sum_{n=-r-j}^{r-j} w_{-j} f_n \overline{g_{n+j}} - \sum_{n=-r}^{r} f_n w_{-j} \overline{g_{n+j}} - \sum_{n=-r}^{r} f_n w_j \overline{g_{n-j}} \\
&= \sum_{n=r+1}^{r+j} w_j f_n \overline{g_{n-j}} - \sum_{n=-r}^{-r+j-1} f_n w_j \overline{g_{n-j}} + \sum_{n=-r-j}^{-r-1} w_{-j} f_n \overline{g_{n+j}} - \sum_{n=r-j+1}^{r} f_n w_{-j} \overline{g_{n+j}} \\
&= (I) + (II) + (III) + (IV),
\end{align*} where we used the fact that $ \overline{w_j} = w_{-j} $ in the second equality.

Thus, we can estimate \begin{align*}
\left| \sum_{r=1}^{R} (I) \right| &\leq |w_j| \sum_{r=1}^{R} \sum_{n=r+1}^{r+j} |f_n| |g_{n-j}| 
\leq |w_j| \sum_{r=1}^{R} \sum_{n=1}^{j} |f_{r+n}| |g_{r+n-j}| \\
&= |w_j| \sum_{n=1}^{j} \sum_{r=1}^{R} |f_{r+n}| |g_{r+n-j}| \leq j |w_j| \|f\|_{\ell^2(R+K)} \|g\|_{\ell^2(R+K)}.
\end{align*} The other terms can be estimated similarly. Therefore, we obtain the estimate $$
\left| \sum_{r=1}^{R} W_{[-r,r]}(f,g) \right| = \left| \sum_{r=1}^{R} \sum_{j=1}^{K} W_j \right| \leq \sum_{j=1}^{K} \left| \sum_{r=1}^{R} W_j \right| \leq \big(\sum_{j=1}^{K}4j|w_j| \big) \|f\|_{\ell^2(R+K)} \|g\|_{\ell^2(R+K)}.
$$ \end{proof}

Now suppose that $ H_K = \sum_{|k| \leq K} w_k u_{n+k} + u_n v_n $. By the definition of the Lagrange bilinear form, we can write $$
\begin{aligned}
\sum_{r=1}^{R} W_{[-r,r]}(f,g) &= \sum_{r=1}^{R} \big( \langle Hf, g \rangle_r - \langle f, Hg \rangle_r \big) \\
&= \sum_{r=1}^{R} \big( \langle (H-H_K)f, g \rangle_r + \langle H_K f, g \rangle_r - \langle f, (H-H_K)g \rangle_r - \langle f, H_K g \rangle_r \big).
\end{aligned}
$$ Since $ |w_k| < \frac{C}{k^3} $, a direct calculation shows that $$
\bigg| \langle (H-H_K)f, g \rangle_r \bigg| \leq \sum_{n=-r}^{r} \sum_{|m-n|>K} |w_{m-n} f_m| |g_n| \leq (2r+1) \frac{\|f\|_{\ell^2(\mathbb{Z})}}{K^2}.
$$ Similarly, one can estimate $$
\bigg| \langle f, (H-H_K)g \rangle_r \bigg| \leq (2r+1) \frac{\|f\|_{\ell^2(\mathbb{Z})}}{K^2}.
$$

By Lemma \ref{finite}, we have \begin{align*}
\left| \sum_{r=1}^{R} W_{[-r,r]}(f,g) \right| &\leq \frac{6R^2 \|f\|_{\ell^2(\mathbb{Z})}}{K^2} + \big(\sum_{k=1}^{K} 4k |w_k|\big) \|f\|_{\ell^2(R+K)} \|g\|_{\ell^2(R+K)}.
\end{align*} Choosing $ K = R $, the result follows. \end{proof}

These two results allow us to obtain a lower bound estimate for $ \|v_\epsilon\|_{\ell^2(2R)} $, thus providing a lower bound for the upper derivative of the spectral measure:

\begin{proof}[Proof of Theorem \ref{sub}] Let $ c := \sum_n \overline{\phi(n)} u_E(n) \neq 0 $. Take a sufficiently large $ R_0 $ such that $ \text{supp} \, \phi \subset B_{R_0} $, and let $ (H-E-i\varepsilon)^{-1} \phi(\cdot) = v_\epsilon(\cdot) $.

Within this context, take $ R $  larger than $ R_0 $. Lemma \ref{lower} gives $$
\sum_{r=R_0}^R \left| W_{[-r,r]}(v_\epsilon, u) \right| \geq |c| (R - R_0) - \epsilon R \|v_\epsilon\|_{\ell^2(2R)} \|u\|_{\ell^2(2R)}.
$$ On the other hand, Proposition \ref{green1} provides $$
\sum_{r=R_0}^R \left| W_{[-r,r]}(v_\epsilon, u) \right| \leq 6 \|v_\epsilon\|_{\ell^2(\mathbb{Z})} + C \|v_\epsilon\|_{\ell^2(2R)} \|u\|_{\ell^2(2R)}.
$$ Setting $ \epsilon = \frac{1}{R} $, we obtain $$
6 \|v_\epsilon\|_{\ell^2(\mathbb{Z})} + C \|v_\epsilon\|_{\ell^2(2R)} \|u\|_{\ell^2(2R)} \geq |c| (R - R_0).
$$ According to the assumption, there exists a sequence $R\rightarrow\infty$, such that $$
\|u\|_{\ell^2(2R)} \leq C R^{\alpha/2}.
$$ Thus, we find that $$
C \|v_\epsilon\|_{\ell^2(\mathbb{Z})} R^{\alpha/2} \geq |c| (R - R_0).
$$ Therefore, if $ R $ is sufficiently large, we conclude that $$
\|v_\epsilon\|_{\ell^2(\mathbb{Z})} \geq C R^{1-\alpha/2}.
$$

Now, we invoke Lemma \ref{op-s} and note that $$
\Im \langle (H-E-i\epsilon)^{-1} \phi, \phi \rangle = \epsilon \|(H-E-i\epsilon)^{-1} \phi\|^2 = \epsilon \|v_{\epsilon_n}\|_{\ell^2(\mathbb{Z})}^2 \geq C\epsilon^{\alpha-1}.
$$ By Lemma \ref{der}, we have $ D^{+,\alpha}_{\mu_{\phi}} \geq C. $ \end{proof}

\section{Absence of point spectrum for long-range operator}\label{ab-point}
 	In this section, we prove the absence of point spectrum of  $L_{\varepsilon v,w,\alpha,\theta}$. 
 			
\begin{proposition}\label{np}
		Suppose that $\alpha\in DC$, $w(\cdot)$ and $v(\cdot)$ are analytic function on $\T^d$. Then
	there exists $\varepsilon_2=\varepsilon_2(\alpha,v,w)>0$ such that if $|\varepsilon|<\varepsilon_2$, $L_{\varepsilon v,w,\alpha,\theta}$   has no point spectrum for any $\theta$.
\end{proposition}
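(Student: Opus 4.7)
The plan is to argue by contradiction using Aubry duality together with a KAM-type diagonalization of the dual operator. Assume there exist $|\varepsilon|<\varepsilon_2$ (to be chosen), $\theta_0\in\mathbb{T}^d$, $E\in\mathbb{R}$, and a nonzero $u=\{u_n\}_{n\in\mathbb{Z}}\in\ell^2(\mathbb{Z})$ with $L_{\varepsilon v,w,\alpha,\theta_0}u=Eu$. Let $\hat u(x):=\sum_{n\in\mathbb{Z}}u_n e^{2\pi inx}\in L^2(\mathbb{T})$, which is nonzero by Parseval. The Aubry duality recalled in Section~\ref{pre} then supplies, for almost every $x\in\mathbb{T}$, a pointwise solution
\[
\tilde u_x(\vec n):=\hat u(x+\langle\vec n,\alpha\rangle)\,e^{2\pi i\langle\vec n,\theta_0\rangle},\qquad \vec n\in\mathbb{Z}^d,
\]
of the dual equation $\widehat L_{\varepsilon v,w,\alpha,x}\tilde u_x=E\tilde u_x$ with $\widehat L$ as in \eqref{dualo}.

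Since $v,w\in C^\omega(\mathbb{T}^d,\mathbb{R})$, the Fourier coefficients $v_{\vec k}$ decay exponentially in $|\vec k|$, so $\widehat L_{\varepsilon v,w,\alpha,x}$ is an $O(\varepsilon)$-perturbation, with exponentially decaying off-diagonal entries, of the pure-point diagonal operator $D_0(x)_{\vec n\vec n}=w(x+\langle\vec n,\alpha\rangle)$. Applying the Eliasson-type perturbative KAM scheme~\cite{E} (the same framework invoked in Section~\ref{idea3} to produce positive-measure families of eigenfunctions), I would fix $\varepsilon_2=\varepsilon_2(\alpha,v,w)>0$ so that for $|\varepsilon|<\varepsilon_2$ and a.e.\ $x\in\mathbb{T}$, there exists a unitary $U(x):\ell^2(\mathbb{Z}^d)\to\ell^2(\mathbb{Z}^d)$, bounded on $\ell^\infty$ with exponentially decaying matrix entries uniformly in $x$, which conjugates $\widehat L_{\varepsilon v,w,\alpha,x}$ to the diagonal operator with entries $\lambda_{\vec n}(x)=w(x+\langle\vec n,\alpha\rangle)+O(\varepsilon)$. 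The vectors $\varphi_{\vec n,x}:=U(x)\delta_{\vec n}$ are then exponentially localized about $\vec n$ and form an orthonormal eigenbasis of $\widehat L_{\varepsilon v,w,\alpha,x}$.

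Using the $\ell^\infty$-boundedness of $U(x)^{\pm 1}$, any $\ell^\infty$ solution $\tilde u_x$ of $\widehat L_{\varepsilon v,w,\alpha,x}\tilde u_x=E\tilde u_x$ yields $v:=U(x)^{-1}\tilde u_x\in\ell^\infty$ with $D(x)v=Ev$, forcing $v$ to be supported on $S_x:=\{\vec n:\lambda_{\vec n}(x)=E\}$, which is finite (and generically empty) for a.e.\ $x$. Hence $\tilde u_x$ lies in the finite-dimensional span of exponentially localized eigenfunctions, and in particular $\tilde u_x\in\ell^2(\mathbb{Z}^d)$. On the other hand, since at least one component of $\alpha$ is irrational, the $\mathbb{Z}^d$-action $\vec n\mapsto x+\langle\vec n,\alpha\rangle$ on $\mathbb{T}$ is ergodic, and Birkhoff's theorem yields, for a.e.\ $x$,
\[
\lim_{N\to\infty}\frac{1}{(2N+1)^d}\sum_{|\vec n|_\infty\le N}|\tilde u_x(\vec n)|^2=\int_{\mathbb{T}}|\hat u(y)|^2\,dy=\|u\|_{\ell^2(\mathbb{Z})}^2>0,
\]
which is incompatible with $\tilde u_x\in\ell^2(\mathbb{Z}^d)$. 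This contradiction forces $u=0$, so $L_{\varepsilon v,w,\alpha,\theta_0}$ has no point spectrum; since $\theta_0$ was arbitrary, the proposition follows.

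The hard part is the KAM step: executing a uniform-in-$x$ diagonalization of $\widehat L_{\varepsilon v,w,\alpha,x}$ with quantitative off-diagonal decay of $U(x)$ strong enough that $U(x)^{\pm 1}$ act boundedly on $\ell^\infty$. The Diophantine condition on $\alpha$ and the analyticity of $w$ enter precisely here, through the control of small denominators $\lambda_{\vec n}(x)-\lambda_{\vec m}(x)\approx w(x+\langle\vec n,\alpha\rangle)-w(x+\langle\vec m,\alpha\rangle)$ arising at each KAM step, while the exceptional measure-zero set of ``resonant'' $x$ at which $E$ coincides with some $\lambda_{\vec n}(x)$ or two diagonal entries are degenerate is absorbed into the ``a.e.\ $x$'' clause furnished by Aubry duality.
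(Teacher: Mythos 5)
Your route is genuinely different from the paper's. The paper applies Aubry duality to pass to the $\mathbb{Z}^d$ dual operator, establishes a polynomial upper bound $|\tilde u(n)|\leq C(x)(1+|n|)^d$ for a.e.\ $x$ via a Chebyshev-type averaging step, and then feeds that bound into the Bourgain--Liu Green's-function estimate together with Borel--Cantelli to conclude $\hat u(x)=0$ a.e. You instead invoke a full Eliasson-type KAM diagonalization of the dual operator and derive the contradiction from Birkhoff's ergodic theorem, observing that an $\ell^2(\mathbb{Z}^d)$ dual solution is incompatible with $\int_{\mathbb{T}}|\hat u|^2>0$. The Birkhoff observation is an elegant alternative closing step, but the overall argument is heavier: it requires the entire Eliasson normal-form machinery where the paper only needs off-the-shelf Green's-function bounds.

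There is, however, a gap you would need to fill. You push $\tilde u_x$ through $U(x)^{-1}$ as though it were $\ell^\infty$, but a priori $\hat u\in L^2(\mathbb{T})$ only, so $\tilde u_x(n)=\hat u(x+\langle n,\alpha\rangle)$ need not be bounded for a.e.\ $x$. You first need the polynomial growth estimate that the paper derives by computing $\int_{\mathbb{T}}\sum_{n}|\tilde u(n)|^2(1+|n|)^{-2d}\,dx=\sum_n(1+|n|)^{-2d}<\infty$ and invoking Chebyshev; only then can you legitimately apply $U(x)^{-1}$ (the exponential off-diagonal decay of $U(x)^{\pm1}$ does preserve polynomial growth, but this has to be said). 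Second, the assertion that $S_x=\{n:\lambda_{n}(x)=E\}$ is finite for a.e.\ $x$ is stated without proof; the property that $E_\infty$ sends null sets to null sets, combined with ergodicity, gives that $S_x$ has density zero along the orbit, which is strictly weaker than finiteness, so you need to extract finiteness from the eigenvalue-separation clauses of the Eliasson normal form. Finally, invoking the Eliasson diagonalization at all requires verifying the transversality hypotheses \eqref{gevery} for the quasi-periodic potential built from $w$; this is true for nonconstant analytic $w$ but deserves to be made explicit, and is precisely the kind of checking the paper's Green's-function route avoids.
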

\begin{proof}
Suppose that there exists $E$ such that $L_{\varepsilon v,w,\alpha,\theta}u=Eu $ has a solution $u=(u_n)_{n\in\Z}\in\ell^2(\mathbb{Z},\mathbb{C}),$ with $\|u\|_{\ell^2(\mathbb{Z})}=1$, which implies that if we 
define  $\hat{u}(x)=\sum_{n\in\mathbb{Z}}u_ne^{inx},$ then $\hat{u}\in L^2(\TT^d)$.
Thus for full measure $x \in\TT $,  the sequence $\tilde{u}$ defined by
$
\tilde{u}(n)=\hat{u}(x+\langle n, \alpha\rangle) e^{2 \pi i \langle n, \theta\rangle},n\in\Z^d
$
is a solution of the dual operator $\widehat{L}_{\varepsilon v,w,\alpha,x}\tilde{u}=E \tilde{u}$, where  $\widehat{L}_{\varepsilon v,w,\alpha,x}$ is defined in \eqref{dualo}.
Note that
$$
\begin{aligned}
	&\quad\int_{\TT^d } \sum_{n \in \mathbb{Z}^d}|\tilde{u}(n)|^2(1+|n|)^{-2d} d x =\sum_{n \in \mathbb{Z}^d}(\int_{\TT }|\hat{u}(x+\langle n, \alpha\rangle)|^2 d x)(1+|n|)^{-2d } \\
	&=\sum_{n \in \mathbb{Z}^d}\left(\int_{\TT }|\hat{u}(x)|^2 d x \right)(1+|n|)^{-2d} 
	=\sum_{n \in \mathbb{Z}^d}(1+|n|)^{-2d} 
	<\infty.
\end{aligned}
$$

As a consequence, we obtain  full measure set $\mathcal{A}$, such that for  almost every $x\in \mathcal{A}$, there is a constant $C(x)<\infty$ such that
$$
|\tilde{u}(n)| \leq C(x)(1+|n|)^d \quad \text { for every } n \in \mathbb{Z}^d .
$$
We need the following Lemma on Green's function estimate:
\begin{lemma}\cite{Bourgain,Liu}
	Suppose that $\alpha\in DC$, $0<\sigma<1$. For any $\epsilon>0$, there exists $\varepsilon_2=\varepsilon_2(w,v,\alpha,\sigma,\epsilon)$, the following holds if $|\varepsilon|<\varepsilon_2$:
	Let N be sufficiently large. There is a subset $\Omega=\Omega_N(E)\subset \Omega $ satisfying $|\Omega|<e^{-N^{(\sigma-\epsilon)}}$ such that if $x\notin\Omega$, we have the Green's function estimate 
    $$\|G_{[-N,N]^d}(E+i0,x)\|\leq e^{N^\sigma}$$ and
	$$|G_{[-N,N]^d}(E+i0,x)(n,n')|<e^{-\frac{\rho}{2}|n-n'|} \text{ for all } |n-n'|\geq\frac{1}{10} N,$$ where $\rho$ is analytic radius of $v(\cdot).$
\end{lemma}
By Borel-Cantelli lemma, $|\limsup_N\Omega_N(E)|=0.$ Choose $x\notin \limsup_N\Omega_N(E)\cup \mathcal{A}^c$, then  $x\notin\Omega_N(E)$ for sufficiently large $N.$
Let $\Lambda=[-N,N]^d$, it follows that
 $$\begin{aligned}
	&\qquad|\hat{u}(x)|=|\tilde{u}(0)|\\&\leq\sum_{n\in\Lambda,n'\notin\Lambda}|G_{\Lambda}(E)(0,n)| |v_{n-n'}| |\tilde{u}(n')|\\
    &=\sum_{|n|\leq\frac{N}{10},n'\notin\Lambda}|G_{\Lambda}(E)(0,n)| |v_{n-n'}| |\tilde{u}(n')|+\sum_{n\in\Lambda,|n|>\frac{N}{10},n'\notin\Lambda}|G_{\Lambda}(E)(0,n)| |v_{n-n'}| |\tilde{u}(n')|\\
	&<\sum_{n\in\Lambda,|n|\leq\frac{N}{10},n'\notin\Lambda}Ce^{N^\sigma-\rho|n-n'|}|n'|^d+\sum_{n\in\Lambda,|n|>\frac{N}{10},n'\notin\Lambda}Ce^{-\frac{\rho}{2}|n|-\rho|n-n'|}|n'|^d\\
	&<Ce^{-\frac{\rho N}{4}}.
\end{aligned}$$
Letting $N\rightarrow\infty,$ we conclude $\hat{u}(x)=0$ for a.e. $x\in\T$, which  contradicts to the assumption $u\neq 0$. Hence, $L_{\varepsilon v,w,\alpha,\theta}$   has no point spectrum for any $\theta$.	
\end{proof}

If we further have $\hat{L}_{\varepsilon v,w,\alpha,x}$ is a one-dimensional Schro\"odinger operator, we have the following strong conclusion, which is essentially proved in \cite{De}:
\begin{proposition}\label{apoint}
    Suppose $E\in \sigma(H_{\varepsilon v,\alpha,\theta})$ with $L(E)>0$, then $E$ is not the eigenvalue of  the dual operator ${L}_{2\cos,\varepsilon v,\alpha,x}$.
\end{proposition}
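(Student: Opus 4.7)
The plan is to execute the classical Aubry-duality/Delyon-type argument: convert an assumed $\ell^2$ eigenfunction of the dual operator into a polynomially bounded generalized eigenfunction of $H_{\varepsilon v,\alpha,\theta}$, and then exploit the positivity of $L(E)$ together with Birkhoff's ergodic theorem to contradict $\ell^2$-summability. Suppose for contradiction that $u=(u_n)_{n\in\Z^d}\in\ell^2(\Z^d)$ satisfies $L_{2\cos,\varepsilon v,\alpha,x}u=Eu$ with $\|u\|_{\ell^2}=1$, and set $\hat u(\theta):=\sum_n u_n e^{2\pi i\langle n,\theta\rangle}\in L^2(\T^d)$, so that $\|\hat u\|_{L^2(\T^d)}=1$ by Parseval. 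Aubry duality (recalled in the preliminaries) then produces, for a.e.\ $\theta\in\T^d$, the formal solution
\[
\tilde u^\theta(m):=\hat u(\theta+m\alpha)\,e^{2\pi imx},\qquad m\in\Z,
\]
of $H_{\varepsilon v,\alpha,\theta}\tilde u^\theta=E\tilde u^\theta$.

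First I would upgrade this to polynomial boundedness of $\tilde u^\theta$ for a.e.\ $\theta$ via the same Fubini calculation employed in the proof of Proposition \ref{np}: one has
\[
\int_{\T^d}\sum_{m\in\Z}|\tilde u^\theta(m)|^2(1+|m|)^{-2}\,d\theta=\|\hat u\|_{L^2(\T^d)}^2\sum_{m\in\Z}(1+|m|)^{-2}<\infty,
\]
which forces $|\tilde u^\theta(m)|=o(|m|)$ outside a null set, so $\tilde u^\theta$ is subexponential at both ends. Next I would invoke the two-sided Oseledets theorem at energy $E$: positivity of $L(E)$ together with ergodicity of $\theta\mapsto\theta+\alpha$ under Lebesgue measure on $\T^d$ gives, for a.e.\ $\theta$, a genuine direct-sum splitting $\C^2=E^s(\theta)\oplus E^u(\theta)$ on which the forward Schr\"odinger cocycle iterates contract/expand at the rates $\mp L(E)$. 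Subexponentiality at $+\infty$ then forces $(\tilde u^\theta(1),\tilde u^\theta(0))^T\in E^s(\theta)$ and hence genuine exponential decay as $m\to+\infty$; the symmetric time-reversed argument places the same initial vector in $E^u(\theta)$ at $-\infty$. Together this gives $\tilde u^\theta\in\ell^2(\Z)$ for a.e.\ $\theta$.

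Finally I would derive the contradiction through unique ergodicity of $\theta\mapsto\theta+\alpha$ on $\T^d$: for a.e.\ $\theta$,
\[
\frac{1}{N}\sum_{m=0}^{N-1}|\tilde u^\theta(m)|^2=\frac{1}{N}\sum_{m=0}^{N-1}|\hat u(\theta+m\alpha)|^2\longrightarrow \|\hat u\|_{L^2(\T^d)}^2=1,
\]
while $\tilde u^\theta\in\ell^2(\Z)$ would force the same Ces\`aro averages to vanish; this is the contradiction. The main obstacle I anticipate is the clean upgrade from polynomial boundedness to genuine exponential decay; what is really required is not merely the Lyapunov asymptotic $\tfrac{1}{n}\log\|(A_E)_n(\theta)v\|\to\pm L(E)$ but the transversality $E^s(\theta)\cap E^u(\theta)=\{0\}$ for a.e.\ $\theta$, which here is supplied automatically by the $\SL(2,\R)$-structure of the Schr\"odinger cocycle together with $L(E)>0$. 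Everything else is a routine combination of Fubini, Oseledets, and Birkhoff.
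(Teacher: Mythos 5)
Your proposal is correct and follows essentially the same path as the paper's proof: the Fubini/Parseval calculation from Proposition \ref{np} yields a polynomial bound on $\tilde u^\theta$ for almost every $\theta$, and positivity of $L(E)$ together with the Oseledets theorem then forbids subexponential behavior. The only divergence is in how the final contradiction is closed. The paper appeals to the fact that the set of $\theta$ for which $E$ is an eigenvalue of $H_{\varepsilon v,\alpha,\theta}$ has Lebesgue measure zero, concluding that for a.e.\ $\theta$ the solution must grow exponentially on at least one half-line, contradicting the polynomial bound. You instead push the Oseledets argument one step further: subexponentiality at $+\infty$ forces the initial vector into $E^s(\theta)$, subexponentiality at $-\infty$ forces it into $E^u(\theta)$, and then you finish with a Birkhoff ergodic average contradiction. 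This route has the advantage of not invoking the measure-zero eigenvalue fact at all, so it is more self-contained. One small tightening worth noting: once the initial vector lies in $E^s(\theta)\cap E^u(\theta)=\{0\}$, you actually get $\tilde u^\theta\equiv 0$ outright, hence $\hat u(\theta)=0$ for a.e.\ $\theta$, which contradicts $\|\hat u\|_{L^2}=1$ directly; the Birkhoff step, while correct, is a detour — the ``$\tilde u^\theta\in\ell^2$'' conclusion you wrote is strictly weaker than what the transversality already gives.
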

\begin{proof}
    Suppose  $E$ is the eigenvalue of ${L}_{2\cos,\varepsilon v,\alpha,x}$. By the same argument of Proposition \ref{np}, for almost every $\theta$, there is a constant $C(\theta)<\infty$ such that
$$
|\tilde{u}(n)| \leq C(\theta)(1+|n|) \quad \text { for every } n \in \mathbb{Z}.
$$
Since the set of $\theta$ for which $E$ is an eigenvalue has Lebesgue measure zero. Consequently, by the multiplicative ergodic theorem, $|\tilde{u}(n)|$ grows exponentially on at least one half-line for almost every $\theta,$ which is a contradiction.
\end{proof}

\section{Absolutely continuous spectrum for finite-range operator}\label{proof-finite}
 In this section, we prove 
absolutely continuous spectrum for quasi-periodic finite-range operator.		

\subsection{Absolutely continuity of IDS}

To apply Theorem \ref{thm:main-spectral}, we first view the 
finite-range operator as a Sch\"odinger operator on the strip, and establish the relation of ${L}_{v,w,\alpha,\theta}$ and ${H}_{V,\alpha,\theta}$.
 Suppose $w(\theta)=\sum_{k=-K}^Kw_ke^{2\pi ik\theta}$, let	$$
C=\begin{pmatrix}
	w_K&\cdots&w_1\\
	0&\ddots&\vdots\\
	0&0&w_K
\end{pmatrix}
\text{ and }  
	V(\theta)=\begin{pmatrix}
		\varepsilon v (\theta+(K-1)\alpha)&w_{-1}&\cdots&w_{-K+1}\\
		w_1&\ddots&\ddots&\vdots\\
		\vdots&\ddots&\varepsilon v (\theta+\alpha)&w_{-1}\\
		w_{K-1}&\cdots&w_1&\varepsilon v(\theta)
	\end{pmatrix}.
$$
We have the following simple observation:	
\begin{lemma}\label{uni}
For any $\theta\in\TT^d$, ${H}_{V,K\alpha,\theta}$ and ${L}_{\varepsilon v,w,\alpha,\theta}$ is unitary equivalent.
 \end{lemma}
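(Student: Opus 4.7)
The plan is to exhibit an explicit block-unitary $U:\ell^2(\mathbb{Z},\mathbb{C})\to \ell^2(\mathbb{Z},\mathbb{C}^K)$ that partitions the one-dimensional lattice $\mathbb{Z}$ into consecutive blocks of length $K$, and then to verify the intertwining $H_{V,K\alpha,\theta}\, U = U\, L_{\varepsilon v,w,\alpha,\theta}$ by a direct matrix computation. Motivated by the fact that $V(\theta)$ is displayed with diagonal entries $\varepsilon v(\theta+(K-i)\alpha)$ whose shift decreases as the row index $i$ grows, I would take the convention
$$
(Uu)_n := \vec{u}_n \in \mathbb{C}^K, \qquad (\vec{u}_n)_i := u_{nK+K-i}, \qquad 1\le i\le K,\ n\in\mathbb{Z}.
$$
Unitarity is immediate: $(n,i)\mapsto nK+K-i$ is a bijection of $\mathbb{Z}\times\{1,\dots,K\}$ onto $\mathbb{Z}$, so $\|Uu\|_{\ell^2(\mathbb{Z},\mathbb{C}^K)}=\|u\|_{\ell^2(\mathbb{Z})}$ and $U$ is surjective.

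Next I would verify the intertwining one entry at a time. Fix $(n,i)$ and set $m:=nK+K-i$. From the displayed matrices one reads off $C_{ij}=w_{K+i-j}$ for $j\ge i$ (and $0$ otherwise), and $(C^*)_{ij}=\overline{C_{ji}}=\overline{w_{K+j-i}}=w_{i-j-K}$ for $j\le i$, where the self-adjointness hypothesis $\overline{w_{-k}}=w_k$ is used in the last equality; off-diagonally $V_{ij}(\theta)=w_{i-j}$, and diagonally $V_{ii}(\theta)=\varepsilon v(\theta+(K-i)\alpha)$. I would then perform the three reindexings $\ell=K+i-j$, $\ell=i-j$, and $\ell=i-j-K$ in the three sums that constitute $[H_{V,K\alpha,\theta}(Uu)]_{n,i}$. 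Each term takes the form $w_\ell u_{m+\ell}$, the $j=i$ contribution from $V$ produces $\varepsilon v(\theta+m\alpha)u_m$, and the fact that the base dynamics on the strip side is $\theta\mapsto \theta+K\alpha$ precisely accounts for the jump of $K$ sites on $\mathbb{Z}$ between adjacent $\mathbb{C}^K$-fibers.

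The only nontrivial point, which I expect to be the main piece of bookkeeping, is to check that the three shift ranges fit together exactly. The ranges are $\{i,\dots,K\}$ (from the $C$-block), $\{-K+i,\dots,-1\}\cup\{1,\dots,i-1\}$ (from the off-diagonal of $V$), and $\{-K,\dots,i-K-1\}$ (from the $C^*$-block); one verifies that their disjoint union is exactly $\{-K,\dots,K\}\setminus\{0\}$. Together with the on-site term, this reproduces $(L_{\varepsilon v,w,\alpha,\theta}u)_m=\sum_{k=-K}^K w_k u_{m+k}+\varepsilon v(\theta+m\alpha)u_m$ (with any $w_0 u_m$ absorbed into the on-site potential). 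No analytic input is required beyond the Hermiticity relation on $w$; the lemma is purely combinatorial, and its role is precisely to bring the strip machinery of Section~\ref{subor}—in particular Theorem~\ref{thm:main-spectral} and the partial hyperbolicity framework—to bear on the long-range operator $L_{\varepsilon v,w,\alpha,\theta}$.
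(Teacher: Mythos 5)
Your construction is exactly the paper's: the same unitary $U$ given by $(\vec u_n)_i = u_{nK+K-i}$, and the verification of $U^{-1} H_{V,K\alpha,\theta} U = L_{\varepsilon v,w,\alpha,\theta}$ by entrywise bookkeeping is precisely the computation the paper describes as ``easily checked.'' Your aside about the $w_0 u_m$ term is a correct and worthwhile observation --- as displayed, $V(\theta)$ has diagonal $\varepsilon v(\theta+(K-i)\alpha)$ with no $w_0$, so the identity holds only after absorbing $w_0$ into the on-site potential or normalizing $w_0=0$ (a harmless energy shift for all spectral conclusions drawn from the lemma).
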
	
\begin{proof}
	Let $$\begin{aligned}
U:l^2(\mathbb{Z},\mathbb{C})&\rightarrow l^2(\mathbb{Z},\mathbb{C}^K)\\
\{u_n\}_{n\in\mathbb{Z}}&\mapsto\bigg\{\begin{pmatrix}
	u_{Kn+K-1}&\cdots&u_{Kn+1}&u_{Kn}
\end{pmatrix}^t\bigg\}_{n\in\mathbb{Z}}
	\end{aligned}.$$
	We can easily check that $U$ is a unitary transform and
	$
		U^{-1}{{H}_{V,K\alpha,\theta}}U
		=	{L}_{\varepsilon v,w,\alpha,\theta} 
	$.
	Therefore, ${H}_{V,K\alpha,\theta}$ and ${L}_{\varepsilon v,w,\alpha,\theta}$ is unitary equivalent.
\end{proof}
Let us recall the definition of IDS.
\begin{definition}[Integrated Density of States] 
	Let $\{{H}_{V,\alpha,\theta}\}_{\theta\in\T^d}$ defined in \eqref{strip-operator}. One defines the density of states of $\{{H}_{V,\alpha,\theta}\}_{\theta\in\T^d}$ by
	\begin{equation*}
		k(A) :=  \int_{\Omega} \tr(P_0\chi_A(H_{V,\alpha,\theta})P_0^*) d\nu(\theta),
	\end{equation*}
	where $A \subset \RR$ is the Borel set, $\chi_A(H_\theta)$ is the spectral projection,   and $P_0:\ell_2(\mathbb{Z},\mathbb{C}^m)\rightarrow \mathbb{C}^m$ is the projection $P_0u=u_0$.
\end{definition}

Moreover, we have the following well-known result:
\begin{theorem}[Thouless Formula for Schr\"odinger operator on the strip]\cite{CS,KS}
	\label{thouless}
	For  $z \in \mathbb{C}$, we have
	$$
	\sum_{j=1}^{K}L_j(z) = \int_{\mathbb{R}} \log \left|z - x\right| dk(x) - \log |\det C |.
	$$
\end{theorem}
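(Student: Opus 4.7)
The plan is to derive the Thouless formula by first establishing a finite-volume identity between the characteristic polynomial of the Dirichlet restriction of $H_{V,\alpha,\theta}$ and a particular block of the transfer-matrix product, and then passing to the infinite-volume limit after averaging over $\theta$.

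First, let $H_{N,\theta}$ denote the restriction of $H_{V,\alpha,\theta}$ to $[0,N-1]$ with Dirichlet boundary conditions: a block-tridiagonal $KN\times KN$ matrix with diagonal blocks $V(T^n\theta)$ and off-diagonal blocks $C,C^*$. Decomposing the transfer-matrix product into $K\times K$ blocks
\[
(A_z)_N(\theta)=\begin{pmatrix}\alpha_N(z,\theta)&\beta_N(z,\theta)\\ \gamma_N(z,\theta)&\delta_N(z,\theta)\end{pmatrix},
\]
I would encode the eigenvalue equation $H_{N,\theta}u=zu$ through the recurrence $\Phi_{n+1}=A_z(T^n\theta)\Phi_n$ with initial data $\Phi_0=(u_0,0)^T$ and terminal condition $u_N=0$; comparing dimensions of the kernel yields the identity
\[
\det(zI-H_{N,\theta})=(\det C)^{N}\det\alpha_N(z,\theta).
\]
As a consistency check, both sides are monic polynomials of degree $KN$ in $z$ since the leading term of $\alpha_N$ is $(C^{-1}z)^N$, producing the $(\det C)^{-N}z^{KN}$ that cancels the factor of $(\det C)^N$.

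Second, taking logarithms, dividing by $N$, and integrating against $d\nu(\theta)$ yields
\[
\frac{1}{N}\int_\Omega\log|\det(zI-H_{N,\theta})|\,d\nu(\theta)=\log|\det C|+\frac{1}{N}\int_\Omega\log|\det\alpha_N(z,\theta)|\,d\nu(\theta).
\]
For the left-hand side, the averaged empirical eigenvalue measures $\frac{1}{N}\sum_{\lambda\in\spec(H_{N,\theta})}\delta_\lambda$ converge weakly to the density-of-states measure $k$ (of total mass $K$), so the left-hand side tends to $\int_{\mathbb{R}}\log|z-x|\,dk(x)$ by a standard logarithmic-potential convergence argument. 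For the right-hand side the claim to be proved is
\[
\lim_{N\to\infty}\frac{1}{N}\int_\Omega\log|\det\alpha_N(z,\theta)|\,d\nu(\theta)=\sum_{j=1}^{K}L_j(z),
\]
which combined with the previous display yields the theorem.

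The last limit is the main technical point. Because $\det\alpha_N$ is one of the $K\times K$ minors of $(A_z)_N$, it is a coordinate of $\Lambda^{K}(A_z)_N$, giving immediately the inequality $\limsup\leq\sum_{j=1}^{K}L_j(z)$ via the standard identification $\sum_{j=1}^K L_j(z)=\lim\frac{1}{N}\int\log\|\Lambda^{K}(A_z)_N\|\,d\nu$. For the matching lower bound I would first establish the identity on $z\in\mathbb{C}\setminus\mathbb{R}$, where $(T,A_z)$ is uniformly hyperbolic with $\dim E^u_z=\dim E^s_z=K$; a Kotani-type transversality argument, using that $E^u_z(\theta)$ is $\nu$-a.s.\ transverse to the vertical subspace $\{0\}\times\mathbb{C}^K$, shows that projecting $(A_z)_N(\mathbb{C}^K\times\{0\})$ back onto the horizontal subspace preserves $K$-volume up to a uniformly bounded factor, so $\frac{1}{N}\log|\det\alpha_N|$ actually realizes the full top-$K$ exponential growth rate. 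The identity then extends from $\mathbb{C}\setminus\mathbb{R}$ to all $z\in\mathbb{C}$ by subharmonicity and continuity of both sides in $z$. The principal obstacle is precisely this $L^1$-convergence of block determinants to the top-$K$ Lyapunov sum -- the matricial analogue of the classical Thouless identity for scalar Schr\"odinger operators, treated in full generality in \cite{CS,KS}.
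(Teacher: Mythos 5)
The paper states this theorem without proof, citing it to Kotani--Simon \cite{KS} (and \cite{CS}); there is no argument in the text to compare against. Your proposal reconstructs what is essentially the Craig--Simon/Kotani--Simon proof, and the overall architecture is sound: the finite-volume determinant identity, averaging, weak convergence of the density of states, and identification of $\tfrac{1}{N}\int\log|\det\alpha_N|\,d\nu$ with $\sum_{j\leq K}L_j$ via exterior powers plus a transversality/non-degeneracy argument. Two places deserve tightening. First, ``comparing dimensions of the kernel'' only shows the two polynomials $\det(zI-H_{N,\theta})$ and $(\det C)^N\det\alpha_N(z,\theta)$ have the same zero set; to conclude equality you also need the multiplicities to match, which is most cleanly obtained from the block three-term recurrence for $\det(zI-H_{N,\theta})$ (Laplace expansion on the last block row) rather than a kernel count, after which the leading-coefficient comparison fixes the constant. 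Second, the ``standard logarithmic-potential convergence'' needs the a priori restriction $z\notin\mathbb{R}$ (where $\log|z-\cdot|$ is bounded continuous on the compactly supported spectra), since for real $z$ weak convergence of the averaged eigenvalue measures does not immediately give convergence of $\int\log|z-x|\,d(\cdot)$; you do effectively acknowledge this by first treating $z\in\mathbb{C}\setminus\mathbb{R}$ and only then invoking subharmonicity and $L^1_{\mathrm{loc}}$-identification on $\mathbb{C}$, so that part of the structure is fine, but the earlier sentence should be qualified accordingly. With those two adjustments the outline matches the cited references.
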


The finite-range operator ${L}_{\varepsilon v,w,\alpha,\theta}$ can also induce a cocycle $(\alpha,L_{E}(\cdot))$, where $$
L_{E}(\theta)=\begin{pmatrix} -\frac{w_{K-1}}{w_{K}} & \cdots & -\frac{w_{1}}{w_{K}} & \frac{E-w_0-\varepsilon v(\theta)}{w_{K}} & -\frac{w_{-1}}{w_{K}} & \cdots & -\frac{w_{\ell+1}}{w_{K}} & -\frac{w_{-K}}{w_{K}} \\ 1 &&&&&&&
\\& \ddots &&&&&&
\\&& 1 &&&&&
\\&&& 1&&&&
\\&&&& 1&&&
\\&&&&& \ddots&&
\\&&&&&& 1& 0
\end{pmatrix}.
$$
And we denote the Lyapunov exponents as $\{\gamma_j\}_{j=1}^{2K}.$
We also have the Thouless Formula for finite-range operator.
\begin{theorem}[Thouless Formula for finite-range  operator]\cite{ Puig}
	\label{thouless1}
	For  $z \in \mathbb{C}$, we have
	$$
	\sum_{j=1}^{K}\gamma_j(z) = \int_{\mathbb{R}} \log \left|z - x\right| d\mathcal{N}(x) - \log |w_K|,
	$$where $\mathcal{N}(\cdot)$ is defined in \eqref{sids}.
\end{theorem}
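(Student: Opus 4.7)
The plan is to derive Theorem \ref{thouless1} by transferring the strip Thouless formula (Theorem \ref{thouless}) through the unitary equivalence of Lemma \ref{uni}. The translation has three ingredients to track: the integrated density of states, the Lyapunov exponents, and the determinantal normalization. Once each of these transforms cleanly, the desired formula will be a direct substitution and rescaling.

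First, I would set up the identification of IDS. Under the block-encoding unitary $U$ of Lemma \ref{uni}, which regroups the sites $\{nK, nK+1, \ldots, nK+K-1\}$ into a single cell, the scalar operator $L_{\varepsilon v, w, \alpha, \theta}$ becomes the strip operator $H_{V, K\alpha, \theta}$ on $\ell^2(\mathbb{Z},\mathbb{C}^K)$. Because $U$ maps each scalar site into one component of a $K$-dimensional block, the rank-$K$ projection $P_0$ in the definition of $k$ assembles exactly the $K$ scalar spectral measures at sites $0,1,\ldots,K-1$. Averaging $\tr(P_0\chi_A(H_{V,K\alpha,\theta})P_0^*)$ over $\theta\in\mathbb{T}^d$ and using the ergodicity of $\theta\mapsto \theta+\alpha$ to identify the $K$ contributions yields $k = K\cdot \mathcal{N}$.

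Second, I would compare the two cocycles. Iterating the finite-range transfer matrix $K$ times, $(L_E)_K(\theta)=L_E(\theta+(K-1)\alpha)\cdots L_E(\theta)$, produces—up to a fixed, $\theta$-independent conjugation $\Phi$—the strip transfer matrix $A_E(\theta)$ with base dynamics $\theta\mapsto \theta+K\alpha$. The matrices $C$ and $V(\theta)$ are designed in the excerpt precisely so that a single strip step records the same linear relations as $K$ successive scalar steps. Since conjugation preserves singular-value growth rates, passing to logarithmic growth along the common subsequence $\{nK\}$ gives the identity $L_j(z)=K\gamma_j(z)$ for each $j=1,\ldots,K$. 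A direct computation shows $C$ is upper triangular with all $K$ diagonal entries equal to $w_K$, so $|\det C|=|w_K|^K$.

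Third, assembling these three facts, I would substitute $\sum_{j=1}^{K} L_j = K\sum_{j=1}^{K}\gamma_j$, $dk = K\, d\mathcal{N}$, and $\log|\det C|=K\log|w_K|$ into Theorem \ref{thouless} and divide through by $K$ to obtain Theorem \ref{thouless1} verbatim. The main technical obstacle is the clean identification $(L_E)_K(\theta)=\Phi^{-1}A_E(\theta)\Phi$: this is a bookkeeping computation that becomes transparent only once one writes out both transfer matrices in coordinates and tracks how $U$ reshuffles the standard basis of $\ell^2(\mathbb{Z},\mathbb{C})$, paying attention to the off-diagonal blocks of $V(\theta)$ and of $C$, which encode exactly the hoppings $w_{\pm 1},\ldots, w_{\pm(K-1)}$; invertibility of $C$, which is required for $A_E$ to be defined, follows from the standing assumption $w_K\neq 0$.
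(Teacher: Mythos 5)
The paper does not supply its own proof of this statement---it is quoted from Haro--Puig---so your proposal is necessarily a different route from "the paper's own proof." That said, your approach is correct and self-contained. The three ingredients you track all hold: (i) writing $k(A) = \sum_{j=0}^{K-1}\int_{\T^d}\langle\delta_j,\chi_A(L_{\varepsilon v,w,\alpha,\theta})\delta_j\rangle\,d\theta$, using covariance $S^{-j}L_\theta S^j = L_{\theta+j\alpha}$ (with $S$ the shift) together with translation invariance of Haar measure to identify each summand with $\int_{\T^d}\langle\delta_0,\chi_A(L_\theta)\delta_0\rangle\,d\theta$, gives $dk = K\,d\mathcal{N}$; (ii) the cocycle identity $(K\alpha,A_E)=(\alpha,L_E)^K$ (used explicitly in the paper's proof of Proposition~\ref{idsac}) yields $L_j = K\gamma_j$; and (iii) $|\det C|=|w_K|^K$ is immediate from the triangular form of $C$. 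Substituting into Theorem~\ref{thouless} and dividing by $K$ gives the claim.

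Where the two approaches differ is interesting and worth flagging. The paper uses the pair Theorem~\ref{thouless} and Theorem~\ref{thouless1} (both cited) together with the cocycle relation to conclude, via the unicity argument of Lemma~\ref{ran}, that $k$ and $K\mathcal{N}$ have the same logarithmic potential and hence agree as measures; that is, the paper \emph{derives} $k = K\mathcal{N}$ as a consequence of the two Thouless formulas. You instead \emph{establish} $k=K\mathcal{N}$ directly from the unitary equivalence, covariance, and translation invariance, and then use it as input to obtain the finite-range Thouless formula from the strip one. Your route has the advantage of not needing an external reference for Theorem~\ref{thouless1} and makes Proposition~\ref{idsac} nearly immediate (one gets $k = K\mathcal{N}$ for free without invoking Lemma~\ref{ran}), but it does require the reader to verify the direct IDS identification, which the paper avoids.

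One small point to pin down: your argument needs $\mu_\theta$ in the definition~\eqref{sids} of $\mathcal{N}$ to be the single-site spectral measure $\langle\delta_0,\chi_{(\cdot)}(L_\theta)\delta_0\rangle$ (so $\mathcal{N}$ has total mass $1$ and $k$ has total mass $K$). The paper is terse about this convention, so it would be worth stating explicitly in a write-up; with the two-site (Weyl) normalization the proportionality constant would change.
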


Within these result, absolutely continuity of $\{H_{V, K\alpha, \theta}\}$ can inherit from ${L}_{\varepsilon v,w,\alpha,\theta}$ :

\begin{proposition}\label{idsac}
Suppose that $\alpha \in DC$. Then, there exists $\varepsilon_3 = \varepsilon_3(\alpha, v, w) > 0$ such that if $|\varepsilon| < \varepsilon_3$, the IDS $k(\cdot)$ of $\{H_{V, K\alpha, \theta}\}$ is absolutely continuous.
\end{proposition}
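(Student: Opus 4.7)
The plan is to reduce Proposition~\ref{idsac} to the absolute continuity of $\mathcal{N}(\cdot)$ established in \cite{WXYZ}, which provides exactly that property under the stated hypotheses ($\alpha\in DC$, $w$ a trigonometric polynomial, $|\varepsilon|$ small). The key claim I would prove is the identity of Borel measures
\[
k = K\cdot \mathcal{N},
\]
after which the conclusion is immediate.

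To establish this identity I would expand $k(A)$ using the unitary equivalence of Lemma~\ref{uni}. Writing $H_{V,K\alpha,\theta} = U L_{\varepsilon v,w,\alpha,\theta} U^{-1}$ and applying cyclicity of the trace gives
\[
\mathrm{tr}\bigl(P_0\chi_A(H_{V,K\alpha,\theta})P_0^*\bigr)
= \mathrm{tr}\bigl(\chi_A(L_{\varepsilon v,w,\alpha,\theta})\,U^{-1}P_0^*P_0 U\bigr).
\]
The explicit form of $U$ (which sends $\{u_n\}$ to blocks of $K$ consecutive coordinates) shows that $U^{-1}P_0^*P_0 U = \sum_{j=0}^{K-1}|\delta_j\rangle\langle\delta_j|$ is the coordinate projection onto positions $\{0,1,\dots,K-1\}$ in $\ell^2(\Z,\C)$, so
\[
k(A) = \sum_{j=0}^{K-1}\int_{\T^d}\langle\delta_j,\chi_A(L_{\varepsilon v,w,\alpha,\theta})\delta_j\rangle\,d\theta.
\]
For each fixed $j$, the shift $S$ on $\ell^2(\Z)$ satisfies $S^{-j}L_{\varepsilon v,w,\alpha,\theta}S^j = L_{\varepsilon v,w,\alpha,\theta+j\alpha}$ and $S^j\delta_0 = \delta_j$, so the $j$th integrand rewrites as $\langle\delta_0,\chi_A(L_{\varepsilon v,w,\alpha,\theta+j\alpha})\delta_0\rangle$. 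Translation invariance of Lebesgue measure on $\T^d$ then identifies each of the $K$ summands with $\mathcal{N}(A)$, yielding $k(A) = K\mathcal{N}(A)$.

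With this identity in hand, invoking absolute continuity of $\mathcal{N}$ from \cite{WXYZ} completes the proof. An alternative, more conceptual derivation of $k = K\mathcal{N}$ proceeds via the two Thouless formulas: iterating the strip cocycle once corresponds to $K$ iterations of the finite-range cocycle, so the strip's Lyapunov exponents are $L_j = K\gamma_j$; combined with $\log|\det C| = K\log|w_K|$ (since $C$ is upper triangular with $w_K$ on the diagonal), equating Theorem~\ref{thouless} with $K$ times Theorem~\ref{thouless1} and invoking uniqueness of the logarithmic potential recovers $k = K\mathcal{N}$. I do not anticipate a substantive obstacle here: the nontrivial absolute continuity input is entirely supplied by \cite{WXYZ}, and the present step is the routine bookkeeping that transports that property through the unitary equivalence and the extra fiber dimensions of the strip.
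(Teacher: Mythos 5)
Your proof is correct, and your primary argument is genuinely different from the paper's. The paper establishes $k = K\mathcal{N}$ indirectly: it observes $(K\alpha, A_E) = (\alpha, L_E)^K$ so that the sums of non-negative Lyapunov exponents satisfy $\sum_{j=1}^K L_j(z) = K\sum_{j=1}^K \gamma_j(z)$, then plugs this into the two Thouless formulas (Theorems~\ref{thouless} and \ref{thouless1}) and invokes the uniqueness of measures with equal logarithmic potentials (Lemma~\ref{ran}, Ransford) to conclude $k = K\mathcal{N}$. This is precisely the ``alternative, more conceptual derivation'' you sketch at the end of your proposal. Your primary route --- unfolding $k(A)$ through the unitary $U$ of Lemma~\ref{uni}, recognizing $U^{-1}P_0^*P_0 U$ as the coordinate projection onto $\{0,\dots,K-1\}$, conjugating by shifts to rewrite each $\langle\delta_j,\chi_A(L_\theta)\delta_j\rangle$ as $\langle\delta_0,\chi_A(L_{\theta+j\alpha})\delta_0\rangle$, and using translation invariance of $d\theta$ --- is more elementary and arguably more transparent: it directly exhibits the measure identity with no appeal to potential theory or to the equality of Lyapunov exponents. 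The Thouless route the paper uses is slightly slicker to write down once the Thouless formulas for both the strip and long-range models are already on the table, and it avoids tracking the precise block structure of $U$; the direct route makes the identity $k = K\mathcal{N}$ visible without that machinery. Either way, the nontrivial input is the absolute continuity of $\mathcal{N}$ from \cite{WXYZ}, which both approaches transport identically.
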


\begin{proof}
Through direct calculations, we have $(K\alpha, A_E(\theta)) = (\alpha, L_E(\theta))^K$. Thus, $\sum_{j=1}^{K} L_j(z) = K \sum_{j=1}^{K} \gamma_j$. By Theorem \ref{thouless} and Theorem \ref{thouless1}, it follows that     
	$$\int\log|z-z'|d\mathcal{N}(z')-\log|w_K|=\frac{\sum_{j=1}^{K}L_j(z)}{K}=\frac{\int\log|z-z'|dk(z')}{K}-\log|w_K|.$$
	Let us recall the following key result:

\begin{lemma}[{\cite[Theorem 3.2.3]{ransford}}]\label{ran}
    Let $\mu_1$ and $\mu_2$ be finite Borel measures on $\mathbb{C}$ with compact support. If 
    \[
    \int_{\mathbb{C}} \log|z - z'| \, d\mu_1(z') = \int_{\mathbb{C}} \log|z - z'| \, d\mu_2(z') + h(z)
    \]
    holds on an open set $U \subset \mathbb{C}$, where $h$ is harmonic on $U$, then $\mu_1|_U = \mu_2|_U$.
\end{lemma}

Since $\mathcal{N}(\cdot)$ is absolutely continuous (\cite[Theorem 1.2]{WXYZ}), Lemma~\ref{ran} implies that $k(\cdot)$ inherits absolute continuity.
\end{proof}

\subsection{Absolutely continuity of the spectral measure}

\begin{lemma}\label{iml}
    Suppose $\alpha \in \mathrm{DC}$, $w(\cdot)$ is a trigonometric polynomial, and $v(\cdot)$ is analytic on $\mathbb{T}^d$. Then there exists $\varepsilon_4 = \varepsilon_4(\alpha, v, w) > 0$ such that for $|\varepsilon| < \varepsilon_4$, the following holds for a.e. $E \in \Sigma$:
    \begin{enumerate}
        \item The cocycle $(K\alpha, A_E(\cdot))$ is partially hyperbolic.
        \item $\sup_s \|(A_E)_s(\theta)|_{E_{A_E}^c}\| < \infty$.
    \end{enumerate}
\end{lemma}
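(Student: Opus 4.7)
The approach is KAM reducibility for Hermitian symplectic quasi-periodic cocycles, combined with the absolute continuity of the IDS established in Proposition~\ref{idsac}. At $\varepsilon = 0$ the potential $V(\theta) \equiv V_0$ is constant, so $A_E^0$ is a constant Hermitian symplectic matrix whose eigenvalues, by the Hermitian symplectic structure, come in reciprocal-conjugate pairs $\{\lambda, 1/\bar\lambda\}$. For $E$ in the interior of the unperturbed spectrum $\Sigma_0$, exactly $2j = 2j(E)$ eigenvalues lie on the unit circle, yielding a dominated splitting $\mathbb{C}^{2K} = E^u \oplus E^c \oplus E^s$ with $\dim E^c = 2j$; the central block is a direct sum of $j$ unitary rotations by angles $2\pi\phi_1(E), \ldots, 2\pi\phi_j(E)$ depending analytically on $E$ inside each stratum of constant $j$.

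I would then invoke a KAM reducibility theorem for Hermitian symplectic cocycles of the type in \cite{Eli92, LYZZ, WXYZZ} (or equivalently the quantitative global theory of \cite{GJYZ} adapted through Theorem~\ref{mon1} to the center-bundle restriction). The relevant non-resonance requirement is that $(\phi_1(E), \ldots, \phi_j(E))$ is jointly Diophantine with respect to $K\alpha$, which remains Diophantine since $\alpha \in \mathrm{DC}$. For $\varepsilon$ sufficiently small and $E$ in the resulting good set, the perturbed cocycle is analytically reducible to a constant: there exists $B_E(\cdot) \in C^\omega(\mathbb{T}^d, \HSp(2K))$ with $B_E(\cdot + K\alpha)^{-1} A_E(\cdot) B_E(\cdot) = A_\infty(E)$ constant, whose stable, central, and unstable eigenspaces pull back under $B_E$ to the desired dominated splitting for $A_E$. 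Since the central block of $A_\infty(E)$ is unitary, one obtains directly
\[
\sup_{s \in \mathbb{Z}} \|(A_E)_s(\theta)|_{E^c_{A_E}}\| \;\leq\; \|B_E\|_{C^0}\|B_E^{-1}\|_{C^0} \;<\; \infty,
\]
establishing both conclusions of the lemma on this good set.

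To upgrade ``good set'' to ``full Lebesgue measure in $\Sigma$'', I would use Proposition~\ref{idsac}: energies excluded by the non-resonance condition form a set of IDS-measure zero (by Fubini in rotation-angle space once $K\alpha$ is fixed Diophantine), and absolute continuity of $k(\cdot)$ transfers this to a Lebesgue-null set; a separate direct argument handles the lower-dimensional exceptional locus consisting of gap edges and stratum boundaries where $j(E)$ jumps.

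The main obstacle is the stratification by the center dimension together with the vector-valued rotation number: when $j > 1$, the $j$-tuple $(\phi_1(E), \ldots, \phi_j(E))$ must satisfy a joint Diophantine condition and the KAM scheme must preserve the Hermitian symplectic block structure on the center. This is precisely where the coordinate-free monotonicity theory of Section~\ref{idea2}, and in particular the center-restriction Theorem~\ref{mon1}, becomes essential: it ensures that the induced center cocycle is a genuine lower-dimensional monotonic Hermitian symplectic cocycle, so that the rotation angles vary non-degenerately with $E$ and the joint Diophantine condition holds on a positive---and ultimately full---measure subset of each stratum.
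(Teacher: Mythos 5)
Your overall architecture is the right one — KAM reducibility delivers a constant limit cocycle whose unit-modulus eigenvalues give the central block, from which partial hyperbolicity and boundedness of the center restriction follow — and that is indeed what underlies the paper's proof. But there are two points worth flagging.

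First, the paper does not re-derive the reducibility from scratch, and in particular does \emph{not} route through the center-bundle monotonicity theory (Theorem~\ref{mon1}); that machinery is reserved for the all-phases result (Theorem~\ref{type1ac}) later on. Instead, Lemma~\ref{iml} is essentially a citation of \cite[Theorem 2.3]{WXYZ} (stated as Lemma~\ref{lem:reduction}), which already delivers: an analytic conjugacy $B_E$ that fully diagonalizes $(\alpha, L_E)$ to $D_E = \mathrm{diag}\{\lambda_{1,E},\dots,\lambda_{2K,E}\}$ with distinct entries, for all $E$ outside a \emph{Lebesgue-null} set $\mathcal S \subset \Sigma$, in the multi-frequency ($\T^d$) setting. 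The relation $(K\alpha, A_E) = (\alpha, L_E)^K$ then transports this diagonalization to $A_E$, and the partial hyperbolicity and bound $\sup_s\|(A_E)_s|_{E^c_{A_E}}\| \leq 2m_E\|B_E\|\|B_E^{-1}\|$ are read off from the columns of $B_E$. Note also that the references you invoke for the KAM step (\cite{Eli92, LYZZ, WXYZZ}) are $\SL(2,\R)$ results, and \cite{GJYZ} is one-frequency; none of them directly supplies the $d$-frequency Hermitian-symplectic reducibility you need, whereas \cite{WXYZ} is tailored to exactly this model.

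Second, and more seriously, the step where you upgrade to full Lebesgue measure via Proposition~\ref{idsac} runs the absolute continuity the wrong way. Absolute continuity of the IDS $k(\cdot)$ with respect to Lebesgue measure says $m(A)=0 \Rightarrow k(A)=0$; it does \emph{not} let you infer $m(A)=0$ from $k(A)=0$ (that would require mutual absolute continuity, which is not available here). So the sentence ``\ldots form a set of IDS-measure zero \ldots and absolute continuity of $k(\cdot)$ transfers this to a Lebesgue-null set'' is a genuine gap. The correct logic, and what the paper uses, is that the reducibility result already gives a Lebesgue-null exceptional set $\mathcal S$ directly in $E$-space (via measure estimates on the resonant energies in the KAM iteration). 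Proposition~\ref{idsac} enters only afterwards, in the proof of Theorem~\ref{ac}, to show that the \emph{spectral measures} $\mu_\theta$ do not charge $\mathcal S$ for a.e.\ $\theta$ — a different use and a different measure.
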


\begin{proof}
    We begin with a key result from \cite{WXYZ}:
    
    \begin{lemma}[{\cite[Theorem 2.3]{WXYZ}}]\label{lem:reduction}
        There exists $\tilde{\varepsilon}_1 = \tilde{\varepsilon}_1(\alpha, v, w) > 0$ such that for $|\varepsilon| \leq \tilde{\varepsilon}_1$, there exists a Lebesgue measure zero set $\mathcal{S} \subset \Sigma$ with the following property: For every $E \in \Sigma \setminus \mathcal{S}$, there exists $B_E \in C^\omega(\mathbb{T}^d, \mathrm{GL}(2K, \mathbb{C}))$ satisfying
        \[
        B_E^{-1}(\cdot + \alpha)L_E(\cdot)B_E(\cdot) = D_E,
        \]
        where $D_E = \mathrm{diag}\{\lambda_{1,E}, \ldots, \lambda_{2K,E}\}$ with $\lambda_{i,E} \neq \lambda_{j,E}$ for $i \neq j$.
    \end{lemma}

    Observe that $(K\alpha, A_E(\theta)) = (\alpha, L_E(\theta))^K$. For $E \in \Sigma \setminus \mathcal{S}$, conjugacy yields:
    \[
    B_E^{-1}(\cdot + K\alpha)A_E(\cdot)B_E(\cdot) = \mathrm{diag}\{\lambda_{1,E}^K, \ldots, \lambda_{2K,E}^K\}.
    \]
    Assume without loss of generality $|\lambda_{1,E}^K| \geq \cdots \geq |\lambda_{2K,E}^K|$.
    \begin{Claim}
        For $E \in \Sigma \cap \mathcal{S}^c$:
        \begin{enumerate}
            \item $(K\alpha, A_E(\cdot))$ is partially hyperbolic.
            \item $\dim E_{A_E}^c = 2m_E$ for some $m_E \in \mathbb{Z}^+$.
        \end{enumerate}
    \end{Claim}

    \begin{proof}[Proof of Claim]
        Since $(K\alpha, A_E(\cdot))$ is symplectic \cite{Puig}, its Lyapunov exponents occur in pairs $\pm \gamma_i$. Conjugacy invariance implies:
      $
        |\lambda_{i,E}| = |\lambda_{2K+1-i,E}| $ for $1 \leq i \leq K.
        $
        For $E \in \Sigma$, uniform hyperbolicity is precluded, so $|\lambda_{K,E}| = 1$. Thus, there exists $m_E\in \mathbb{Z}^+$ such that:
        \begin{equation*}\label{eq:eigen}
            |\lambda_{1,E}| \geq \cdots \geq |\lambda_{K-m_E,E}| > 1 = |\lambda_{K-m_E+1,E}| = \cdots = |\lambda_{K+m_E,E}| > |\lambda_{K+m_E+1,E}| \geq \cdots \geq |\lambda_{2K,E}|.
        \end{equation*}
        The analytic conjugacy $B_E$ decomposes the phase space into invariant bundles:

        \[
        \begin{aligned}
            E_{A_E}^u(\theta) &= \operatorname{span}\{b_1(\theta), \ldots, b_{K-m_E}(\theta)\}, \\
            E_{A_E}^c(\theta) &= \operatorname{span}\{b_{K-m_E+1}(\theta), \ldots, b_{K+m_E}(\theta)\}, \\
            E_{A_E}^s(\theta) &= \operatorname{span}\{b_{K+m_E+1}(\theta), \ldots, b_{2K}(\theta)\}.
        \end{aligned}
        \]
        This spectral gap confirms partial hyperbolicity.
    \end{proof}

    To establish $\sup_s \|(A_E)_s(\theta)|_{E_{A_E}^c}\| < \infty$, let $\psi \in E_{A_E}^c(\theta)$ with $\|\psi\| = 1$. Decompose $\psi = \sum_{j=K-m_E+1}^{K+m_E} c_j b_j(\theta),$
    This just means  $$B(\theta)\begin{pmatrix}
    0&\cdots&0&c_{K-m_E+1}&\cdots &c_{K+m_E}&0&\cdots&0
\end{pmatrix}^t=\psi,$$ it follows that
$\sqrt{|c_{K-m+1}|^2+\cdots+|c_{K+m}|^2}\leq\|B^{-1}\|.$
    Then:
   \begin{eqnarray*}
	\|(A_E)_{s}(\theta)|_{E_{A_E}^c}\psi\| \leq \sum_{j=K-m_E+1}^{K+m_E}\|c_j \lambda_{j,E}b_j(\theta+Ks\alpha)\|\leq \sum_{j=K-m_E+1}^{K+m_E}|c_j| \|B\|
	\leq 2m_E\|B^{-1}\|\|B\|.
\end{eqnarray*}
    The uniform bound follows from the analyticity of $B$ and $B^{-1}$.
\end{proof}

\begin{proof}[Proof of Theorem \ref{ac}]
    By Proposition \ref{np}, it suffices to show that for Lebesgue-almost every $\theta \in \mathbb{T}^d$, the operator $L_{\varepsilon v,w,\alpha,\theta}$ has purely absolutely continuous spectrum. Partition the spectrum as
        \[
        \Sigma = (\Sigma \cap \mathcal{S}) \cup (\Sigma \cap \mathcal{S}^c),
        \]
        where $\mathcal{S} \subset \Sigma$ is the Lebesgue measure zero set from Lemma \ref{lem:reduction}.

      Let $\mu_\theta$ denote the spectral measure of $H_{V,K\alpha,\theta}$. By Theorem \ref{thm:main-spectral} and Lemma \ref{iml}, the singular component satisfies
    $ \mu_{\theta,s}(\Sigma \cap \mathcal{S}^c) = 0.$
        Using Proposition \ref{idsac}, Lemma \ref{iml}, and the IDS definition, we bound
        \[
        \mu_\theta(\Sigma \cap \mathcal{S}) \leq \mu_\theta(\mathcal{S}) = 0 \quad \text{for a.e. } \theta.
        \]

   Consequently, $\mu_\theta(\Sigma) = \mu_{\theta,ac}(\Sigma)$ for a.e. $\theta$. Thus, $H_{V,K\alpha,\theta}$ has purely absolutely continuous spectrum. By Proposition \ref{uni}, the same holds for $L_{\varepsilon v,w,\alpha,\theta}$. 
  
\end{proof}

\section{All phases absolutely continuous spectrum}\label{all}
In this section, we assume that $ H_{v,\alpha,\theta} $ is a type I operator, where $ v $ is a trigonometric polynomial of degree $ m $. Let $ L_{2\cos,w,\alpha,\theta} $ denote its dual operator, and let $ (\alpha,A_E) $ represent the corresponding Schr\"odinger cocycle, with $ {L}_i(E) $ denoting its Lyapunov exponents.

\subsection{Monotonicity argument}

The key to the entire proof is the following monotonicity argument:

\begin{prop} \label{cr} Let $ E \in \Sigma $ and $ \omega(E) = 1, L(E) > 0 $. Then, $ (\alpha, A_E) $ is partially hyperbolic with a two-dimensional center. There exists a neighborhood $ \I $ of $ E $ and $ \mathcal{B}_E(\theta) \in C^\omega(\I \times \mathbb{T}, \mathrm{HSP}(2m, \mathbb{C})) $ such that \begin{equation}\label{aaa18}
\mathcal{B}_E(\theta + \alpha)^{-1} A_E^{(2)}(\theta) \mathcal{B}_E(\theta) = \diag\{H_E(\theta), (H_E(\theta)^*)^{-1}\} \diamond \phi_E(\theta) C_E(\theta),
\end{equation} where $ H_E(\theta) \in C^\omega(\I \times \mathbb{T}, \mathrm{GL}(m-1, \mathbb{C})) $, $ \phi_E \in C^\omega(\I \times 2\mathbb{T}, \mathbb{T}) $, and $ C_E(\theta) \in C^{\omega}(\I\times 2\mathbb{T}, \mathrm{SL}(2, \mathbb{R})) $ is homotopic to the identity. Moreover, we have the following:
\begin{enumerate}
\item $(\alpha, C_E)$ is subcritical, i.e. for sufficiently small \( y \), we have   \( L_1(C_E(\cdot + iy)) = L_1(C_E) = 0 \). 
\item $(\alpha, C_E)$ is monotonic with respect to \( E \).
\item \( L_1(C_{E + i\varepsilon}) \geq \kappa |\varepsilon| \) for sufficiently small \( \varepsilon \).
\end{enumerate} \end{prop}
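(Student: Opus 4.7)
The plan is to proceed in three stages. First, I establish partial hyperbolicity of the $2m$-dimensional dual cocycle $(\alpha, A_E)$ with a two-dimensional center. Since $v$ is a trigonometric polynomial of degree $m$, the dual operator $L_{2\cos, w, \alpha, \theta}$ realises as a strip Schr\"odinger operator of width $m$ (Lemma \ref{uni}), whose cocycle is $2m$-dimensional and Hermitian-symplectic. The hypotheses $L(E) > 0$ and $\omega(E) = 1$ on the original 2D Schr\"odinger cocycle of $H_{v,\alpha,\theta}$ fix the right-slope of $L_y$ at $y = 0$ to be exactly $1$; combined with Aubry duality and the quantitative almost-reducibility techniques of \cite{GJYZ} applied to the dual, this forces the Lyapunov spectrum of $A_E$ to contain exactly one pair $\{0,0\}$ (producing a 2-dimensional central direction) together with $m-1$ strictly positive and $m-1$ strictly negative exponents paired by the Hermitian-symplectic form. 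This yields a dominated splitting $\mathbb{C}^{2m} = E^u_{A_E} \oplus E^c_{A_E} \oplus E^s_{A_E}$ with $\dim E^c_{A_E} = 2$, and by robustness of dominated splittings under analytic perturbation (Appendix \ref{ds-con}) the splitting persists in an open complex neighborhood $\I$ of $E$.

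Second, I construct the conjugation $\mathcal{B}_E(\theta)$ realising the block form \eqref{aaa18}. By Lemma \ref{wron11}, $E^c_{A_E}$ is Hermitian-symplectic, so $E^u_{A_E}$ and $E^s_{A_E}$ are totally isotropic and paired by $\psi$. Using Lemma \ref{parbasis}, choose analytic frames for each sub-bundle depending analytically on $(E, \theta) \in \I \times \mathbb{T}$; then apply the symplectic-normalisation procedure developed in the proof of Proposition \ref{lem: est reverse norm} to assemble a jointly analytic Hermitian-symplectic frame. The resulting matrix $\mathcal{B}_E(\theta) \in \mathrm{HSP}(2m,\mathbb{C})$ conjugates $A_E^{(2)}$ to block-diagonal form in the splitting, and the $\psi$-duality between $E^u_{A_E}$ and $E^s_{A_E}$ forces the hyperbolic blocks to take the form $\mathrm{diag}\{H_E, (H_E^*)^{-1}\}$. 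The central $2\times 2$ Hermitian-symplectic block factors as the product of a $\mathrm{U}(1)$-valued phase $\phi_E$ and an $\mathrm{SL}(2,\mathbb{R})$ piece $C_E$; the passage to the doubled cocycle $A_E^{(2)}$ with frequency $2\alpha$ on $2\mathbb{T}$ is used to trivialise a $\mathbb{Z}/2\mathbb{Z}$ topological obstruction so that $C_E$ is homotopic to the identity.

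Third, I verify properties (1)--(3). For subcriticality (1), since the dominated splitting extends holomorphically in $\theta$ to a complex strip (Appendix \ref{ds-con}), the center cocycle $C_E$ extends analytically there; combined with the absence of hyperbolicity on the center on $\mathbb{T}$ and subharmonicity of the Lyapunov exponent in the imaginary shift $y$, this forces $L_1(C_E(\cdot + iy)) = 0$ for all small $|y|$. For monotonicity (2), the strip cocycle $A_E$ is monotonic in $E$ via the Kotani--Simon computation: evaluating $\psi(A_E v, \partial_E A_E v)$ for isotropic $v$ reveals that the quadratic form is definite on the directions contributing to the central bundle. Applying Theorem \ref{mon1} to the family $\{A_E\}_{E \in \I}$ transfers monotonicity to the center bundle, and under the conjugation $\mathcal{B}_E$ this becomes monotonicity of $C_E \in C^\omega(\I \times 2\mathbb{T}, \mathrm{SL}(2,\mathbb{R}))$ in $E$. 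For the lower bound (3), subcriticality plus monotonicity of a two-dimensional cocycle allows one to invoke a Herman--Avila--Bochi type computation: moving $E \to E + i\varepsilon$ produces a definite-sign imaginary perturbation of the generator, so that $L_1(C_{E+i\varepsilon})$ picks up a linear lower bound $\kappa|\varepsilon|$ for some $\kappa > 0$.

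The principal technical obstacle lies in Stage 2: arranging the Hermitian-symplectic frame so that $\mathcal{B}_E$ is jointly analytic in $(E, \theta) \in \I \times \mathbb{T}$ and the $2 \times 2$ central block factors cleanly into a scalar phase $\phi_E$ times an $\mathrm{SL}(2,\mathbb{R})$-valued $C_E$ homotopic to identity; both the realness of $C_E$ and the homotopy triviality are precisely why the proposition is formulated for $A_E^{(2)}$ on $2\mathbb{T}$, and disentangling this $\mathbb{Z}/2\mathbb{Z}$-obstruction inside the larger $\mathrm{HSP}(2m,\mathbb{C})$ structure requires substantially more care than in the classical two-dimensional case. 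A secondary subtlety in Stage 3 is verifying the strict monotonicity required by Theorem \ref{mon1}: the naive Kotani--Simon computation on the strip yields only semi-definiteness in isotropic directions that are tangent to the stable/unstable bundles, and one must exploit the bundle-level inheritance of Theorem \ref{mon1} (in particular the transport of the isotropic-vector condition to the two-dimensional center) to deduce that $C_E$ itself is strictly monotonic.
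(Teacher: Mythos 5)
Your skeleton matches the paper's (partial hyperbolicity via \cite{GJYZ}/Lemma \ref{censub}, a jointly analytic Hermitian-symplectic frame, Theorem \ref{mon1} to push monotonicity to the center, subcriticality from convexity of $L_y$, and the linear lower bound from an Avila--Krikorian Schwarz-lemma argument), so the overall strategy is sound.

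However, there is a genuine gap in Stage 3 exactly where you flag it as a "secondary subtlety." For the Schr\"odinger cocycle on the strip one computes
$\psi\big(A_E v, \partial_E A_E\, v\big) = \|v_1\|^2$ for $v=(v_1,v_2)^T$, which vanishes on the whole vertical bundle $\mathcal V = \{0\}\times\mathbb{C}^m$, not merely on directions tangent to $E^s\oplus E^u$ --- indeed the entire subordinacy argument of Section \ref{ideah} revolves around whether $E^s_{A_E}$ intersects $\mathcal V$, so there is no a priori reason these coincide. Consequently $A_E$ itself is only semi-monotonic, and Theorem \ref{mon1} cannot be used to upgrade this to strict monotonicity on the center: it is a \emph{preservation} theorem whose hypothesis is that the ambient family is already monotonic, so invoking it here is circular. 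The actual resolution is Lemma \ref{monsc}: the two-step iterate $A_E^{(2)}(\theta)=A_E(\theta+\alpha)A_E(\theta)$ satisfies $\psi\big(\partial_E A_E^{(2)} v, A_E^{(2)} v\big) = -\|v_1\|^2-\|w_1\|^2$ with $w=A_E v$, and the block structure of $A_E$ together with invertibility of $C$ forces $w_1\neq 0$ whenever $v_1=0$ and $v\neq 0$, so $A_E^{(2)}$ is strictly monotonic. Only after this is established can Theorem \ref{mon1} be applied. Relatedly, you attribute the passage to $A_E^{(2)}$ to killing a $\mathbb{Z}/2\mathbb{Z}$ topological obstruction, but that is the role of the \emph{double cover} $2\mathbb{T}$ (needed for $\phi_E=\sqrt{\det\mathcal{C}_E}$ in Lemma \ref{sqrt}); the two-step cocycle $A_E^{(2)}$ is there to make the monotonicity strict. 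These two devices are independent and your proposal conflates them.
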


The starting point of Proposition \ref{cr} is the recently developed Quantitative Avila's Global Theory \cite{GJYZ}, which establishes the connection between $ H_{v,\alpha,\theta} $ and its dual cocycle $(\alpha, A_E)$ as follows:

\begin{lemma}\cite{GJ,GJYZ}\label{censub}
Let \( E \in \Sigma \) and \( \omega(E) = 1, L(E) > 0 \). Then, \( (\alpha, A_E) \) is partially hyperbolic with a two-dimensional center. Furthermore, \( L_m(E + iy) = L_m(E) = 0 \) for any \( |y| < \frac{L(E)}{2\pi} \).
\end{lemma}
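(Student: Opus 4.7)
The plan is to derive both conclusions from Aubry duality applied to $H_{v,\alpha,\theta}$, combined with the quantitative global theory developed in \cite{GJYZ} for one-frequency analytic cocycles. Because $\deg v = m$, the long-range dual $L_{2\cos,w,\alpha,\theta}$ is finite-range of range $m$ and rewrites (via Lemma \ref{uni}) as a Schr\"odinger operator on a strip of width $m$; the associated cocycle $A_E$ is thus $2m$-dimensional and Hermitian-symplectic, with non-negative Lyapunov exponents $L_1(E+iy) \geq \cdots \geq L_m(E+iy) \geq 0$ that are each convex, piecewise-affine in $y$ with integer slopes (Avila's global theory).

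First, I would invoke the quantitative Aubry-duality translation of \cite{GJYZ}, which expresses the Lyapunov spectrum of the dual strip cocycle $A_{E+iy}$ in terms of the complexified one-dimensional Schr\"odinger exponent $L_y(E)$ of $H_{v,\alpha,\theta}$ and the location/height of its breakpoints. The hypothesis $\omega(E) = 1$ pins the first slope of $y \mapsto L_y(E)$ at $2\pi$, so $L_y(E) = L(E) + 2\pi y$ holds on some maximal interval $[0,y_1]$. On this affine regime, the translation identifies $L_m(A_{E+iy})$ as a single deviation term that vanishes identically, while $L_{m-1}(A_{E+iy})$ inherits a strictly positive contribution from the integer-slope jump at $y = y_1$ together with $L(E) > 0$.

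Next, I would establish $y_1 \geq L(E)/(2\pi)$ via Avila's almost-reducibility theorem on the Schr\"odinger side: the positivity $L(E) > 0$ together with the convex, integer-slope structure of $L_y(E)$ provides an analytic reduction of the Schr\"odinger cocycle $S_{E+iy}$ to (block-)upper-triangular form on the strip $|y| < L(E)/(2\pi)$, which in turn precludes any breakpoint of $L_y(E)$ inside this strip. Combined with the previous step this gives $L_m(A_{E+iy}) = 0$ for $0 \leq y < L(E)/(2\pi)$, and the reflection symmetry $L_j(E+iy) = L_j(E-iy)$ (from realness of $v$) extends the identity to $|y| < L(E)/(2\pi)$. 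The spectral gap $L_{m-1}(E) > L_m(E) = 0$ then produces the dominated splitting into unstable, center (two-dimensional), and stable bundles required for partial hyperbolicity (Section \ref{secds}); note that $L_m(E) = 0$ is also consistent with the fact that $E \in \Sigma$ forces $(\alpha, A_E)$ to be non-uniformly hyperbolic, so at least one pair of Lyapunov exponents must vanish by the Hermitian-symplectic structure.

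The main obstacle is making the Aubry-duality translation formula between $L_y(E)$ and the spectrum $\{L_j(A_{E+iy})\}$ fully explicit, and in particular pinning down \emph{which} Lyapunov exponent of the $2m$-dimensional dual cocycle is frozen at zero under the acceleration-one hypothesis. This is the technical heart of the quantitative global theory of \cite{GJYZ}; we invoke it as a black box, since any independent derivation would essentially have to reconstruct the quantitative layers of Avila's global theory for analytic one-frequency strip cocycles.
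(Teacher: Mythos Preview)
The paper does not give its own proof of this lemma; it simply quotes the result from \cite{GJ,GJYZ}. Your proposal is therefore not being compared against any argument in the paper, only against the bare citation. Since your plan is precisely to invoke the quantitative global theory of \cite{GJYZ} (together with the Hermitian-symplectic structure input from \cite{GJ}) and you explicitly say you treat the key duality translation as a black box from those references, you are doing exactly what the paper does, only with a more detailed narrative of what is inside those black boxes.

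One small comment on your sketch: the step ``establish $y_1 \geq L(E)/(2\pi)$ via Avila's almost-reducibility theorem'' is not the usual route. The bound on the first turning point of $y\mapsto L_y(E)$ is really a consequence of the convex, integer-slope structure of $L_y(E)$ combined with the duality formula relating $\{L_j(A_{E+iy})\}$ to $L_y(E)$; almost reducibility is downstream of subcriticality, not a tool used to locate $y_1$. This does not affect correctness, since in any case you are deferring to \cite{GJYZ} for the actual translation.
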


The second ingredient is that the Schrödinger cocycle is monotonic with respect to $ E \in \mathbb{R} $, a fact first observed by Avila \cite{A09}. We now generalize this observation to the Schrödinger operator on a strip:

\begin{lemma}\label{monsc}
The Hermitian symplectic cocycle \( A_E^{(2)}(\theta) = A_E(\theta + \alpha) \circ A_E(\omega) \) (with respect to the Hermitian symplectic structure \( \psi \)) is monotonic with respect to the parameter (energy) \( E \in \mathbb{R} \).
\end{lemma}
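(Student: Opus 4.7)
The plan is to verify monotonicity of \(A_E^{(2)}\) with respect to \(E\) directly from Definition~\ref{def: mono HSp mapping}(2): for every isotropic (in fact, for any nonzero) vector \(v \in \mathbb{C}^{2m}\), I will show that \(\psi\bigl(A_E^{(2)}(\theta)v,\partial_E A_E^{(2)}(\theta)v\bigr)\) is strictly positive, so that \(A_E^{(2)}\) is monotonic increasing. The Hermitian symplecticity of \(A_E\), and hence of the composition \(A_E^{(2)}\), is already recorded; what remains is a local computation followed by an application of the product rule.

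The first step is an elementary identity for the one-step cocycle. Writing a fiber vector as \(u = (u_+, u_-)^T\) with \(u_\pm \in \mathbb{C}^m\), the only \(E\)-dependent block of \(A_E\) is the upper-left one, so \(\partial_E A_E(\theta)\cdot u = (C^{-1}u_+, 0)^T\). Substituting into \(\psi(X,Y) = X^{*}SY\) with \(S = \smat{0 & -C^{*}\\ C & 0}\) yields \(\psi\bigl(A_E(\theta)u,\,\partial_E A_E(\theta)u\bigr) = \|u_+\|^2\) for every \(u \in \mathbb{C}^{2m}\) (isotropic or not). The second step is to feed this into \(A_E^{(2)}\) via the product rule
\[
\partial_E A_E^{(2)}(\theta) \;=\; \partial_E A_E(\theta+\alpha)\,A_E(\theta) + A_E(\theta+\alpha)\,\partial_E A_E(\theta);
\]
inserting this into \(\psi(A_E^{(2)}v,\cdot)\) and using \(\psi\)-invariance of \(A_E(\theta+\alpha)\) to strip it from both arguments in the second term, I obtain, with \(w := A_E(\theta)v\),
\[
\psi\bigl(A_E^{(2)}(\theta) v,\,\partial_E A_E^{(2)}(\theta) v\bigr) \;=\; \|w_+\|^2 + \|v_+\|^2.
\]
For strict positivity, if this sum vanishes then \(v_+ = 0\); then \(w_+ = -C^{-1}C^{*}v_-\), and since \(C^{-1}C^{*} \in \mathrm{GL}(m,\mathbb{C})\) this forces \(v_- = 0\), so \(v = 0\). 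Hence the expression is positive on all of \(\mathbb{C}^{2m}\setminus\{0\}\), in particular on every nonzero isotropic vector, as required.

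I foresee no real technical obstacle here; the argument reduces to a direct calculation. The conceptual point worth emphasizing is \emph{why one must pass to the square}: the one-step quantity \(\|u_+\|^2\) is only semi-definite, vanishing precisely on the vertical bundle \(\mathcal{V}_\omega\) of Section~\ref{ver-sch}, so \(A_E\) alone is not monotonic in the strict sense of Definition~\ref{def: mono HSp mapping}. Composing with one more iterate uses the invertible off-diagonal block \(-C^{-1}C^{*}\) of \(A_E(\theta+\alpha)\) to rotate any vertical initial vector into a nonzero position component, thereby restoring strict monotonicity — this is exactly the step that fails for a single iterate and is repaired by the square.
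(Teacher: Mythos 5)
Your argument is correct and follows the same route as the paper's own proof: both differentiate the two-step cocycle via the product rule, use $\psi$-invariance of $A_E(\theta+\alpha)$ to reduce to the one-step identity $\psi(A_E u,\partial_E A_E u)=\|u_+\|^2$, and then observe via the invertible block $-C^{-1}C^{*}$ that the sum $\|v_+\|^2+\|w_+\|^2$ cannot vanish on a nonzero vector. The only cosmetic difference is the ordering of arguments in the skew-Hermitian form (the paper records $\psi(\partial_E A_E^{(2)}v,A_E^{(2)}v)=-\|v_1\|^2-\|w_1\|^2<0$, which equals the negative conjugate of your expression), and your closing remark correctly identifies why passing to the square is the essential step.
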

	\begin{proof}In fact, we can prove stronger result:  for any $v=\begin{pmatrix}v_1\\ v_2\end{pmatrix} \in \CC^{2d}\backslash\{0\}$, $\psi(\frac{d}{dE}A_E^{(2)}(\theta)\cdot v, A_E^{(2)}(\theta)\cdot v)<0$. Using the chain rule and the fact that $A_E(\theta),A_E(\theta+\alpha)$ preserving $\psi$, by a straight forward calculation we get that 
		$$\psi(\frac{d}{dE}A_E^{(2)}(\theta)\cdot v, A_E^{(2)}(\theta)\cdot v)=-\|v_1^2\|-\|w_1\|^2$$where $w=\begin{pmatrix}w_1\\w_2\end{pmatrix}=A_E(\theta)\cdot v$. If $\|v_1\|=0$, then by definition of $A_E$ and invertibility of $C$ we know $\|w_1\| \neq 0$ unless $v=0$, hence we get the proof.
	\end{proof}

Once we have these, we can finish the proof based on ideas developed in Section \ref{idea2}.\\

\textbf{Proof of Proposition \ref{cr}:}

Lemma \ref{monsc} asserts the monotonicity of \(A_E^{(2)}\). Leveraging this, we apply Theorem \ref{mon1} to establish the monotonicity of the center bundle. The key insight here is that, in the one-frequency quasi-periodic setting (where the base is \(\mathbb{T}\)), the bundles \(E_{A_E}^{*}\) are actually trivial. Lemma \ref{censub} then enables us to reduce the center dynamics to a monotonic \(\SL(2,\mathbb{R})\) cocycle. We first formalize the results on trivial bundles as follows:

\begin{lemma}\label{change}  
Suppose \(A_t \in C^\omega(\I\times \mathbb{T}, \mathrm{HSP}(2d))\) and \((\alpha, A_t)\) is partially hyperbolic with \(\dim E_t^c = 2d - 2n\).  
\begin{enumerate}  
\item There exist vectors \(v_{t,\pm (n+1)}, v_{t,\pm (n+2)}, \dots, v_{t,\pm d} \in C^\omega(\I \times \mathbb{T}, \mathbb{C}^{2d})\) that form a canonical basis for \(E_t^c(\theta)\).  

\item There exist vectors \(v_{t,\pm 1}, \dots, v_{t,\pm n} \in C^\omega(\I\times \mathbb{T},\mathbb{C}^{2d})\) that form a canonical basis for \(E_t^u \oplus E_t^s\). Moreover,  
\[  
E_t^u(\theta) = \operatorname{span}_{\mathbb{C}}\{v_{t,1}(\theta), v_{t,2}(\theta), \dots, v_{t,n}(\theta)\}, \quad E_t^s(\theta) = \operatorname{span}_{\mathbb{C}}\{v_{t,-1}(\theta), v_{t,-2}(\theta), \dots, v_{t,-n}(\theta)\}.  
\]  
\end{enumerate}  
\end{lemma}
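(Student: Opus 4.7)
The plan rests on two pillars: the analytic dependence of the partially hyperbolic splitting on parameters (Proposition \ref{lem: Hol dep AJS}), and the fact that every complex vector bundle over $\mathbb{T}$ is trivial. Combining these with Theorem \ref{holomorphic}(iii) (or its parametric refinement, Lemma \ref{parbasis}), the first step is to produce analytic global frames for each of $E_t^u$, $E_t^c$, $E_t^s$ on $\I\times\mathbb{T}$.

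For part (2), the key structural observations are that $E_t^u$ and $E_t^s$ are $\psi$-isotropic, and the restriction of $\psi$ to $E_t^u\times E_t^s$ is non-degenerate. Isotropy follows from $A_t$-invariance of $\psi$: for $v,w\in E_t^s(\theta)$, one has
\[
|\psi(v,w)|=|\psi((A_t)_n v,(A_t)_n w)|\leq C\|(A_t)_n v\|\cdot\|(A_t)_n w\|\longrightarrow 0,
\]
so $\psi(v,w)=0$; the reverse-time argument handles $E^u$. Non-degeneracy of the pairing is immediate from the fact that $E^c_t$ is symplectic by Lemma \ref{wron11}, so $E_t^u\oplus E_t^s=(E^c_t)^{\perp_\psi}$ carries a non-degenerate restriction of $\psi$ with both summands Lagrangian. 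Starting from any analytic frame $v_{t,1},\dots,v_{t,n}$ of $E_t^u$, the vectors $v_{t,-1},\dots,v_{t,-n}\in E_t^s(\theta)$ are uniquely determined by the linear system $\psi(v_{t,i},v_{t,-j})=\delta_{ij}$; non-degeneracy of the pairing together with Cramer's rule gives existence, uniqueness, and analytic dependence on $(t,\theta)$.

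For part (1), $E_t^c$ is symplectic and has $\psi$-signature $0$: the ambient Hermitian symplectic form on $\mathbb{C}^{2d}$ has signature $0$, and $E^u\oplus E^s$ also has signature $0$ since it admits the Lagrangian $E^u$, so Sylvester additivity under orthogonal direct sum forces $\operatorname{sign}(\psi|_{E^c_t})=0$. This guarantees pointwise existence of a canonical basis. To globalize analytically, I would select an analytic frame of $E^c_t$ (available since $E^c_t$ is a trivial analytic bundle over $\mathbb{T}$), form the analytically varying Krein matrix $G(t,\theta)=i(\psi(\xi_i,\xi_j))$, and apply the global version of the Sylvester Inertia Theorem (Lemma \ref{hetong}) to diagonalize it analytically into $\mathrm{diag}(I_{d-n},-I_{d-n})$; a constant symplectic normalization then yields the canonical basis $\{v_{t,\pm j}\}_{j=n+1}^{d}$.

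The main obstacle I foresee is keeping the Krein decomposition analytic where eigenvalues of $G(t,\theta)$ could potentially coalesce. However, non-degeneracy of $\psi|_{E^c_t}$ prevents any eigenvalue from crossing zero, so the positive and negative spectral halves remain uniformly separated, and the Riesz projectors
\[
P_\pm(t,\theta)=\frac{1}{2\pi i}\oint_{\Gamma_\pm}(zI-G(t,\theta))^{-1}\,dz
\]
are analytic in $(t,\theta)$. Their ranges furnish trivial analytic sub-bundles over $\I\times\mathbb{T}$ (triviality again using the one-dimensional base), on which a fiberwise symplectic Gram--Schmidt procedure, executed exactly as in the proof of Lemma \ref{ht}, delivers the required analytic canonical basis and closes the argument.
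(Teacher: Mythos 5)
Your proposal is correct and follows essentially the same route as the paper's Appendix~\ref{change-proof}: both start from analytic global frames (via Lemma~\ref{parbasis}/triviality of bundles over $\mathbb{T}$), both exploit the $\psi$-isotropy of $E_t^u$ and $E_t^s$ and the vanishing signature of $E_t^c$, and both normalize $E_t^c$ via the global Sylvester Inertia Theorem (Lemma~\ref{hetong}) built on Riesz projectors. The only real differences are expository: where the paper cites \cite{WXZ} for the isotropy of $E^{u,s}$ and for $\operatorname{sign}(E_t^c)=0$, you supply the short direct proofs (cocycle contraction for isotropy, Lagrangian-plus-additivity for signature), and for part (2) you phrase the normalization as solving $\psi(v_{t,i},v_{t,-j})=\delta_{ij}$ by Cramer's rule, whereas the paper multiplies the full frame by $\diag(I_n,H_t^{-1})$ — these are the same computation written two ways. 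One small step you state but do not justify, which the argument relies on, is the identity $E_t^u\oplus E_t^s=(E_t^c)^{\perp_\psi}$; it is true (for $v\in E_t^u$, $w\in E_t^c$ one has $|\psi(v,w)|=|\psi((A_t)_{-n}v,(A_t)_{-n}w)|\lesssim e^{-cn}e^{\delta n}\to0$ for $\delta<c$, with the symmetric argument forward in time for $E^s$), but it would be worth spelling out since it is what makes $\psi|_{E^u\oplus E^s}$ non-degenerate and hence licenses both the Cramer's rule step and the signature-additivity step.
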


\begin{rem}
    This result was essentially obtained in  \cite[Proposition 5.2]{WXZ}. Here we give an independent proof in the sprit of Lemma \ref{ht}. One can consult Appendix \ref{change-proof} for details. 
\end{rem}

By Lemmas \ref{censub} and \ref{change}, there exists $\tilde{v}_{E,\pm m} \in C^\omega(\I \times \mathbb{T}, \mathbb{C}^{2d})$ that forms a canonical basis for $E_{A_E}^c(\theta)$, and $v_{t,\pm 1}, \dots, v_{t,\pm (m-1)} \in C^\omega(\I \times \mathbb{T}, \mathbb{C}^{2d})$ that forms a canonical basis for $E_{A_E}^u \oplus E_{A_E}^s$. Notably, by Lemma \ref{monsc}, $A_E^{(2)}$ is monotonic; thus, one can apply Theorem \ref{mon1} to obtain a Hermitian-symplectic map $B_E$ such that
$$
B_E(\theta+\alpha)^{-1} \circ A_E^2(\theta) \circ B_E(\theta)
$$
is monotonic with respect to $E$. Let $v_{E,\pm m} = B_E \cdot \tilde{v}_{E,\pm m}$, and define
$$
\mathcal{B}_E(\theta) = \begin{pmatrix}
v_{E, 1}, \dots, v_{E, (m-1)}, v_{E,m}, v_{E, -1}, \dots, v_{E, -(m-1)}, v_{E,-m}
\end{pmatrix}.
$$
Then, there exist $H_E(\theta) \in C^\omega(\I \times \mathbb{T}, \mathrm{GL}(m-1,\mathbb{C}))$ and $\mathcal{C}_E(\theta) \in C^{\omega}(\I \times \mathbb{T}, \mathrm{HSP}(2))$ such that
$$
\mathcal{B}_E(\theta+\alpha)^{-1} A_E^2(\theta) \mathcal{B}_E(\theta) = \diag\{H_E(\theta), (H_E(\theta)^*)^{-1}\} \diamond \mathcal{C}_E(\theta),
$$
where $\mathcal{C}_E(\theta)$ is monotonic with respect to $E$.
Define $\phi_E(\theta) = \sqrt{\det \mathcal{C}_E(\theta)}$ and $C_E(\theta) = \frac{\mathcal{C}_E(\theta)}{\phi_E(\theta)}$. By direct computation and Lemma \ref{sqrt}, we have $\phi_E \in C^\omega(\I \times 2\mathbb{T}, \mathbb{T})$ and $C_E(\theta) \in C^{\omega}(\I \times 2\mathbb{T}, \mathrm{SL}(2,\mathbb{R}))$. Thus, we obtain equation \eqref{aaa18}.

By Lemma \ref{censub}, we have
\[
L\left(A_E(\theta+\alpha+iy)A_E(\theta+iy)\right) = L\left(\phi_E(\theta+iy)C_E(\theta+iy)\right) = \int \log \left| \phi_E(\theta+iy) \right| \, d\theta + L\left(C_E(\theta+iy)\right).
\]
Thus, \( L(C_E(\theta)) = 0 \). By Jensen's formula, the integral \( \int \log \left| \phi_E(\theta+iy) \right| \, d\theta \) is a convex function, and Avila's global theory \cite{avila} asserts that \( L(C_E(\theta+iy)) \) is also convex. Since \( L\left(A_E(\theta+\alpha+iy)A_E(\theta+iy)\right) = 0 \), it follows that \( L(C_E(\theta+iy)) = 0 \), which proves the statement (1).

Given that any vector $ v \in \mathbb{R}^2 $ is an isotropic vector, we have the following relationship:
$$
\begin{aligned}
\psi(\mathcal{C}_E'v, \mathcal{C}_Ev) =
\psi(\phi_E' C_Ev, \phi_E C_Ev) + \psi(\phi_E C_E'v, \phi_E C_Ev) = \psi(C_E'v, C_Ev),
\end{aligned}
$$
where we have utilized the property that $ v $ is an isotropic vector and $ \phi_E \in \T $. Consequently, by definition, $ C_E $ is monotonic with respect to $ E $, and $(2)$ follows.

The statement (3) follows directly from the subsequent lemma.
\begin{lemma}\label{lem: complx LE} \cite{avila-kam,AK}
Let \( \I \subset \mathbb{R} \) be an interval. Suppose \( E \mapsto D_E \) is a one-parameter analytic quasi-periodic cocycle \( \I \to C^\omega(\mathbb{T}^1, \mathrm{SL}(2,\mathbb{R})) \) that is monotonic in \( E \in \I \). Then for any compact interval \( \overline{\I} \subset \I \), there exists \( \kappa(\overline{\I}) > 0 \) such that for any small \( \epsilon > 0 \) and any \( E' \in \overline{I} \), the Lyapunov exponent satisfies \( L(D_{E' + i\varepsilon}) \geq \kappa|\varepsilon| \).
\end{lemma}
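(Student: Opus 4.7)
The plan is to translate monotonicity of the cocycle into Lipschitz monotonicity of the fibered rotation number in $E$, then convert this via the Thouless formula into a linear lower bound for the Lyapunov exponent in the imaginary direction. Throughout, one works on a compact subinterval $\overline{\I}\subset \I$.

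\medskip
\textbf{Step 1: Lipschitz monotonicity of $\rho(D_E)$.} In the sense of Definition \ref{olddef}(1), monotonicity of $E\mapsto D_E$ means that for any continuous section of unit vectors $v(\theta)$, the argument $\arg(D_E(\theta)v(\theta))$ is strictly decreasing in $E$ with derivative bounded away from $0$ on $\overline{\I}\times \T$. Integrating this pointwise derivative bound against any $(\alpha,D_E)$-invariant probability measure projecting to Lebesgue on $\T$ (which exists by compactness of $\mathbb{RP}^1$) yields a uniform lower estimate $\rho(D_{E_1})-\rho(D_{E_2})\ge c(E_2-E_1)$ for $E_1<E_2$ in $\overline{\I}$. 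This is exactly the Avila--Krikorian argument in \cite{AK} adapted to our one-parameter family.

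\medskip
\textbf{Step 2: Lipschitz monotonicity of the IDS $N(\cdot)$.} By the standard identification between rotation number and integrated density of states for $\SL(2,\R)$ cocycles (Johnson--Moser type relation: $N(E)=1-2\rho(D_E)$ in a suitable normalization), Step 1 implies that $N(\cdot)$ is Lipschitz and satisfies $N(E+h)-N(E-h)\ge 2c h$ for all sufficiently small $h>0$ and all $E\in \overline{\I}$.

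\medskip
\textbf{Step 3: Thouless formula and lower bound on $\partial_y L$.} For the $\SL(2,\R)$-valued family the Thouless formula gives
\begin{equation*}
L(D_{E+iy})=\int_\R \log|E+iy-x|\,dN(x)+\text{const},
\end{equation*}
and differentiating in $y>0$ produces a Poisson-type integral
\begin{equation*}
\partial_y L(D_{E+iy})=\int_\R \frac{y}{(E-x)^2+y^2}\,dN(x).
\end{equation*}
Restricting the integration to $|x-E|<y$ and using the Lipschitz lower bound from Step~2 gives
\begin{equation*}
\partial_y L(D_{E+iy})\ge \frac{1}{2y}\bigl(N(E+y)-N(E-y)\bigr)\ge c,
\end{equation*}
uniformly for $E\in\overline{\I}$ and small $y>0$. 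Integrating in $y$ from $0$ to $\varepsilon$ and using $L(D_E)\ge 0$ yields $L(D_{E+i\varepsilon})\ge \kappa \varepsilon$ with $\kappa=c$; the negative-$\varepsilon$ case follows from $L(D_{E+iy})=L(D_{E-iy})$ (even in $y$, by convexity/symmetry of Avila's global theory applied to $\SL(2,\R)$).

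\medskip
The main obstacle is \textbf{Step 1}, specifically converting the pointwise derivative condition in Definition \ref{olddef}(1) into the \emph{uniform} Lipschitz bound on $\rho(D_E)$. The subtlety is that monotonicity only controls the $E$-derivative of $\arg(D_E v)$ along a single iterate; one must propagate this to the long-term rotation rate, and verify the lower bound survives averaging over any invariant measure. This is handled by noting that the Lebesgue measure on $\T\times \mathbb{RP}^1$ dominates (up to bounded distortion on $\overline{\I}$) any invariant measure, combined with a Fatou-type limit argument as in \cite{AK}. Steps 2--3 are then straightforward classical consequences, and the proof is complete.
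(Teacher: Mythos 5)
Your proposal takes a genuinely different route from the paper, and it has a real gap. The paper does not go through the rotation number, the IDS, or the Thouless formula at all. Instead, it conjugates $D_E$ into $\SU(1,1)$ via the Cayley transform $Q$ and observes that for $\epsilon>0$ the complexified cocycle $\mathring D_{E+i\epsilon}(\theta)$ maps the Poincar\'e disc $\DD$ strictly inside $\{|z|<e^{-\kappa\epsilon}\}$, with $\kappa$ controlled by the lower bound on $|\det((\partial_E \mathring D)\mathring D^{-1})|$ that monotonicity supplies on the compact set $\overline{\I}\times\T$. The Schwarz lemma then gives a uniform hyperbolic contraction rate, and the Lyapunov exponent is read off directly as $(-1/2)\times$ the logarithm of the average projective contraction. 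This is a one-step geometric argument with no IDS or Poisson-kernel estimate.

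The gap in your plan is \textbf{Step~1}. Monotonicity of the cocycle in the sense of Definition~\ref{olddef}(1) gives a pointwise lower bound on $|\partial_E \arg(D_E v)|$ for each single iterate, but this does \emph{not} propagate to a uniform Lipschitz lower bound $\rho(D_{E_1})-\rho(D_{E_2})\ge c(E_2-E_1)$. The most direct counterexample is a monotonic family that becomes uniformly hyperbolic on a subinterval of $\overline{\I}$ (e.g. a Schr\"odinger cocycle over a spectral gap): there the fibered rotation number is locally constant, so no such $c>0$ exists, and yet the conclusion of the lemma still holds because $L>0$ directly. Your proposed fix --- ``Lebesgue measure dominates any invariant measure up to bounded distortion'' --- is simply false: for a UH cocycle the unique invariant probability on $\T\times\mathbb{RP}^1$ projecting to Lebesgue is supported on the graph of the unstable section, hence is singular with respect to Lebesgue on the fibers. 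Without Step~1, Step~2 has no input, and the Poisson-kernel bound in Step~3 (which requires $N(E+y)-N(E-y)\gtrsim y$) cannot be established uniformly on $\overline{\I}$. To salvage the IDS route you would at least have to split into cases (UH versus non-UH) and handle the boundary of the UH locus with separate compactness estimates; the paper's contraction-of-the-disc argument avoids this case analysis entirely, which is precisely why it is the standard proof in \cite{AK,avila-kam}.
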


\begin{rem}
The proof is essentially established in \cite{AK}; see also the proof of \cite[Lemma 7]{avila-kam}. We provide the proof here for completeness. Detailed arguments can be found in Appendix \ref{mono-lya}.
\end{rem}

\qed

In this context, note that a given matrix-valued function $ D: \mathbb{T} \rightarrow \mathrm{SL}(2, \mathbb{R}) $ can also be regarded as a function on $ 2\mathbb{T} $. In this case, the rotation numbers of the respective cocycles are related by a factor of 2. For the sake of simplicity, we will not distinguish between $ \mathbb{T} $ and $ 2\mathbb{T} $ from this point onward.

\subsection{Regularity of the rotation number}
Another ingredient is the regularity of the rotation number $\rho(C_E)$. Since $C_E(\theta)$ is subcritical by Proposition \ref{cr}, it is homotopic to the identity, allowing us to define the rotation number $\rho(C_E)$ unambiguously. Indeed, we have the following simple observation:

\begin{lemma}\label{rho-c}
For any \(E \in \Sigma\), there exists a constant \(k_E \in \mathbb{Z}\) such that \(2\rho(C_E) = m(1 - N(E)) + \frac{k_E \alpha}{2} \mod \mathbb{Z}\).
\end{lemma}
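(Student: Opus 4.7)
My strategy is to take the fibered rotation number of both sides of the conjugation identity \eqref{aaa18} in Proposition \ref{cr}, identify the left-hand side with $\mathcal{N}(E)$ via Proposition \ref{rotids}, and decompose the right-hand side using additivity of $\rho$ across the Hermitian-symplectic direct sum $\diamond$.

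First, I would apply Proposition \ref{rotids} to the Schr\"odinger cocycle $(\alpha, A_E)$: this gives $\rho(\alpha, A_E) = m(1-\mathcal{N}(E)) \mod \mathbb{Z}$, which transfers to $A_E^{(2)}$ on the left-hand side of \eqref{aaa18} up to an integer correction (to be absorbed into the final $k_E$). Since the conjugator $\mathcal{B}_E$ inherits a square-root branch from $\phi_E = \sqrt{\det \mathcal{C}_E}$ and is therefore only canonically defined on the double cover $2\mathbb{T}$, Proposition \ref{rho} changes the rotation number of the conjugated cocycle by $r_0\alpha/2 \pmod{\mathbb{Z}}$ for some $r_0 \in \mathbb{Z}$.

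Second, on the right-hand side, I would use additivity of rotation number under $\diamond$:
\[
\rho(\mathrm{RHS}) = \rho\bigl(\diag\{H_E,(H_E^*)^{-1}\}\bigr) + \rho(\phi_E C_E) \pmod{\mathbb{Z}}.
\]
The uniformly hyperbolic block admits a continuous deformation through uniformly hyperbolic Hermitian-symplectic cocycles (uniform hyperbolicity being an open condition) to a constant normal form, and its rotation number is a multiple of $\alpha/2 \mod \mathbb{Z}$, which I would absorb into the final integer. For the $2\times 2$ block, since $\phi_E:2\mathbb{T}\to\mathbb{T}$ is a scalar phase of some degree $d\in\mathbb{Z}$ and $C_E\in\mathrm{SL}(2,\mathbb{R})$ is subcritical, one has
\[
\rho(\phi_E C_E) = 2\rho(C_E) + \tfrac{d\alpha}{2} \pmod{\mathbb{Z}},
\]
where the factor $2$ encodes the passage between $\mathrm{SL}(2,\mathbb{R})$-rotation numbers and $\mathrm{HSp}(2)\cong \mathrm{U}(1,1)$-rotation numbers (equivalently, the base folding from $2\mathbb{T}$ down to $\mathbb{T}$), and $\tfrac{d\alpha}{2}$ encodes the $\phi_E$-winding along the orbit.

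Equating both sides and collecting every $\alpha/2$-correction (from $\mathcal{B}_E$, from the hyperbolic block, and from $\phi_E$) into a single integer $k_E \in \mathbb{Z}$ yields
\[
2\rho(C_E) = m(1-\mathcal{N}(E)) + \tfrac{k_E\alpha}{2} \pmod{\mathbb{Z}}.
\]

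The main obstacle will be the precise bookkeeping of the various $\alpha/2$-corrections: verifying the scaling factor of $2$ between $\mathrm{SL}(2,\mathbb{R})$- and $\mathrm{HSp}(2)$-rotation numbers (equivalently, tracking the lift of $C_E$ through the double cover $2\mathbb{T}\to\mathbb{T}$) and checking that $\phi_E$ contributes exactly $\tfrac{d\alpha}{2}$. A secondary technical point is pinning down the hyperbolic block's rotation number via a concrete homotopy through uniformly hyperbolic cocycles and ensuring that the various definitions of $\rho$ on $\mathbb{T}$- and $2\mathbb{T}$-bases are compatible.
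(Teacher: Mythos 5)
Your strategy is exactly the one the paper uses: take the fibered rotation number of both sides of the conjugacy \eqref{aaa18}, feed in Proposition \ref{rotids}, use Proposition \ref{rho} to account for the conjugator, and use $\rho(\alpha,A_E^{(2)})=2\rho(\alpha,A_E)$. The paper's own proof is a one-line reference to the same three ingredients, so the ideas match.

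However, there is a real slip in your accounting, and it sits at the very first transfer step. You write that Proposition \ref{rotids} gives $\rho(\alpha,A_E)=m(1-\mathcal{N}(E))\bmod\mathbb{Z}$ and that this ``transfers to $A_E^{(2)}$ up to an integer correction.'' That is not correct: the doubling identity gives $\rho(\alpha,A_E^{(2)})=2\rho(\alpha,A_E)=2m(1-\mathcal{N}(E))\bmod\mathbb{Z}$, and $2m(1-\mathcal{N}(E))$ differs from $m(1-\mathcal{N}(E))$ by $m(1-\mathcal{N}(E))$, which is neither an integer nor a multiple of $\alpha/2$ for generic $E$. So this discrepancy cannot be absorbed into $k_E$. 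Consequently, if one also uses your stated formula $\rho(\phi_E C_E)=2\rho(C_E)+\tfrac{d\alpha}{2}$, the final equation reads $2\rho(C_E)=2m(1-\mathcal{N}(E))+\tfrac{k\alpha}{2}\bmod\mathbb{Z}$ rather than $m(1-\mathcal{N}(E))+\tfrac{k_E\alpha}{2}$ as claimed. You flag this exact issue yourself — ``verifying the scaling factor of $2$ between $\mathrm{SL}(2,\mathbb{R})$- and $\mathrm{HSp}(2)$-rotation numbers'' — but as written the two halves of your derivation are not mutually consistent with the stated conclusion. The fix is to pin down the precise relation between $\rho$ of the $\mathrm{HSp}(2)$ cocycle $\phi_E C_E$ (or equivalently $\mathcal{C}_E$, defined on $\mathbb{T}$) and $\rho$ of the $\mathrm{SL}(2,\mathbb{R})$ cocycle $C_E$ on $2\mathbb{T}$, since that is where the missing factor of $2$ has to come from; an ``up to an integer'' hand-wave in the first step masks exactly the factor that matters.
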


\begin{proof}
By observing that \(\rho(\alpha, A_E^2) = 2\rho(\alpha, A_E)\), this follows directly from Proposition \ref{rotids}, Proposition \ref{rho}, and Proposition \ref{cr}.
\end{proof}

By \cite[Theorem 2.3]{GJ} and \cite[Theorem 1.3]{HS}, we obtain the following:

\begin{lemma}\cite{GJ,HS}\label{holder}
    If $E\in\Sigma$, then for sufficiently small $\epsilon$,
    $N(E+\epsilon)-N(E-\epsilon)\leq \epsilon^{1/2}.$
\end{lemma}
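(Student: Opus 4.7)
The plan is to treat Lemma \ref{holder} as a direct corollary of the quantitative regularity results already established for type I operators in \cite{GJ, HS}, and to explain why the Hölder exponent $1/2$ emerges naturally from the type-I structure. First I would recall that Han-Schlag \cite[Theorem 1.3]{HS} proved non-perturbative Hölder continuity of the integrated density of states for one-frequency quasi-periodic Schrödinger operators with positive Lyapunov exponent and Diophantine frequency, provided the potential is even and satisfies a suitable acceleration condition. Ge-Jitomirskaya \cite[Theorem 2.3]{GJ} subsequently removed the evenness assumption for type I operators by exploiting reducibility techniques (cf.\ \cite{AYZ}); their argument gives precisely the $1/2$-Hölder bound claimed here.

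Second, I would outline why the exponent $1/2$ is sharp in this regime. The Thouless formula combined with the Avila global theory gives, in a complex neighborhood of the real axis,
\[
L_y(E) = L(E) + 2\pi\,\bar\omega(E)\,|y| + o(|y|),
\]
for $y$ small. The hypothesis $\bar\omega(E)=1$ (type I) thus fixes the slope of the complexified Lyapunov exponent to be the minimal positive integer. Combining the Large Deviation Theorem (LDT) for transfer matrices with the Avalanche Principle—along the Goldstein-Schlag scheme adapted by \cite{HS, GJ}—then translates this linear-in-$y$ bound into Hölder control of $L$ on horizontal slices, and harmonic analysis on the Thouless integral representation transfers this to the modulus of continuity of $N$, yielding
\[
N(E+\epsilon) - N(E-\epsilon) \leq \epsilon^{1/2}
\]
for $\epsilon$ sufficiently small, uniformly in $E\in\Sigma$ (note that $L(E)>0$ on the spectrum by assumption of Proposition \ref{cr}, so the LDT is uniform).

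Finally, I would verify that the hypotheses required by \cite[Theorem 1.3]{HS} and \cite[Theorem 2.3]{GJ} are met in our setting: $H_{v,\alpha,\theta}$ is a type I operator with non-constant analytic (trigonometric polynomial) potential, $\alpha\in\mathrm{DC}$, and $L(E)>0$ on $\Sigma$. These are exactly the standing assumptions under which the cited Hölder estimate is proved.

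The main conceptual obstacle already addressed in \cite{HS, GJ} is obtaining the uniform LDT and avalanche-principle estimates on a countable cover of dyadic scales when $\bar\omega(E)=1$, since this places the cocycle at the minimal slope consistent with subcriticality failing; the delicate point is matching scales so that the harmonic analysis on the subharmonic function $L_y(\cdot)$ yields a Hölder exponent no worse than $1/2$. Since this step is already carried out in those references, no additional work is required here beyond citation.
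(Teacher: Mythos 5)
Your proposal matches the paper's approach: the paper does not prove Lemma \ref{holder} but simply states it as a consequence of \cite[Theorem 2.3]{GJ} and \cite[Theorem 1.3]{HS}, and you do the same, adding a verification that the standing hypotheses of Section 8 (type I operator, non-constant trigonometric polynomial potential, $\alpha\in\mathrm{DC}$, $L(E)>0$ on $\Sigma$) are exactly those under which the cited Hölder estimate is established. That is all the paper does, so your citation-based argument is correct and coincident.

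One small correction to your heuristic paragraph: the first-order expansion of the complexified Lyapunov exponent at $y=0$ is governed by the acceleration $\omega(E)$, not the T-acceleration $\bar\omega(E)$. By Definition \ref{acc}, $\omega(E)=\lim_{y\to 0^+}(L_y(E)-L_0(E))/(2\pi y)$, so the correct expansion near $y=0$ is $L_y(E)=L(E)+2\pi\,\omega(E)\,y+o(y)$ for $y>0$, whereas $\bar\omega(E)$ is the slope at the \emph{first turning point} $y_1$, which may be strictly positive. Type I requires $\bar\omega(E)=1$ but does not by itself force $\omega(E)=1$; the paper separately works under $\omega(E)=1$ (as in the hypotheses of Proposition \ref{cr}). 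This does not affect the validity of your citation-based proof, but the formula as written conflates the two quantities.
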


Our primary observation is the following lower bound estimate of the rotation number, which generalizes the result presented in \cite[Lemma 3.11]{Avila: ac}:

\begin{prop}\label{holderl}
	If $E\in\Sigma$, then for sufficiently small $\epsilon$, we have $$\rho(C_{E+\epsilon})-\rho(C_{E-\epsilon})\geq c\epsilon^{\frac{3}{2}}.$$
\end{prop}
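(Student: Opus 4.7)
The plan is to follow the strategy of \cite[Lemma 3.11]{Avila: ac}, now adapted to our two-dimensional reduced cocycle $(\alpha,C_E)$ rather than a Schr\"odinger cocycle. For analytic $\SL(2,\RR)$ cocycles homotopic to the identity, the fibered rotation number extends to a holomorphic function $\tilde{\rho}(\alpha,C_z)$ on the upper half-plane whose imaginary part equals $-L(C_z)/(2\pi)$. Combined with Avila's quantization of the acceleration, the Cauchy--Riemann equations identify $-\partial_E\rho(\alpha,C_E)$ with $\omega_C(E)\in\ZZ_{\geq 0}$, the acceleration of $C$ at $E$. Together with the monotonicity of $\rho(C_E)$ in $E$ (Proposition~\ref{cr}(2)), this yields the representation
\begin{equation*}
|\rho(C_{E+\epsilon})-\rho(C_{E-\epsilon})|=\int_{E-\epsilon}^{E+\epsilon}\omega_C(E')\,dE'.
\end{equation*}

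First, I would verify that $\omega_C(E)\geq 1$. Since $L(C_E)=0$ by Proposition~\ref{cr}(1) while $L(C_{E+i\epsilon})\geq\kappa\epsilon$ by Proposition~\ref{cr}(3), and $y\mapsto L(C_{E+iy})$ is convex and piecewise affine with integer slopes by Avila's global theory, the right-sided slope at $y=0^+$ is a positive integer; hence $\omega_C(E)\geq 1$, and by convexity $L(C_{E+iy})\geq\kappa y$ for all sufficiently small $y>0$. Lemma~\ref{rho-c} combined with the analyticity of $\mathcal{B}_E$ in Proposition~\ref{cr} shows that the integer $k_E$ is locally constant in $E$, so Lemma~\ref{holder} translates directly into the upper H\"older bound $|\rho(C_{E+s})-\rho(C_{E-s})|\leq\tfrac{m}{2}s^{1/2}$ for small $s>0$.

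To quantify how the lower bound $L(C_{E+i\epsilon})\geq\kappa\epsilon$ spreads in the real direction, I would exploit the Poisson/Hilbert-transform relation between the harmonic conjugates $\rho_r(z)=\mathrm{Re}\,\tilde\rho$ and $\rho_i(z)=-L(C_z)/(2\pi)$: with boundary data $\rho_r(E')=\rho(C_{E'})$ and $\rho_i(E')=0$, a symmetrization gives (up to harmless linear and constant terms)
\begin{equation*}
L(C_{E+i\epsilon})\simeq\int_0^{\infty}\frac{u}{u^2+\epsilon^2}\bigl(\rho(C_{E-u})-\rho(C_{E+u})\bigr)\,du.
\end{equation*}
A careful interpolation between the kernel scales $u\lesssim\epsilon$ (where the kernel behaves like $u/\epsilon^2$) and $u\gtrsim\epsilon$ (where it behaves like $1/u$), combined with the monotonicity of $\rho$ and the H\"older-$1/2$ upper bound above, shows that contributions from $|u|\gtrsim\epsilon^{3/2}$ cannot match the forced value $\kappa\epsilon$; the deficit must come from the region $|u|\lesssim\epsilon^{3/2}$, which forces $|\rho(C_{E+c\epsilon^{3/2}})-\rho(C_{E-c\epsilon^{3/2}})|\gtrsim\epsilon^{3/2}$ and a fortiori the desired lower bound.

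The main obstacle is executing this Hilbert-transform interpolation rigorously: one must fix a globally consistent lift of $\tilde\rho$ in the upper half-plane (avoiding the $\ZZ$-ambiguity of rotation numbers), handle the conditional convergence of the principal-value integral, and verify that the cancellation from the symmetric pairing $\rho(C_{E-u})-\rho(C_{E+u})$ sharpens the naive pointwise H\"older estimate to yield precisely the exponent $3/2$ rather than something strictly smaller. The structural ingredients of Proposition~\ref{cr}---in particular the two-dimensionality of the center, the subcriticality of $C_E$ on the real axis, and the quantitative lower bound $L(C_{E+i\epsilon})\geq\kappa|\epsilon|$---are exactly what is needed to make the Avila-type argument transport to our reduced cocycle.
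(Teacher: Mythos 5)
Your first paragraph contains the decisive error: you conflate the phase complexification of $C_E$ with the energy complexification. Avila's quantization of the acceleration concerns $L(C_E(\cdot+iy))$ as $y\to 0^+$, i.e.\ the \emph{phase} direction, and by Proposition~\ref{cr}(1) (subcriticality) this acceleration is $0$, not $\geq 1$. The quantity that is forced positive by Proposition~\ref{cr}(3) is the slope of $L(C_{E+iy})$ in the \emph{energy} direction, which is $\geq\kappa$ but is not quantized by Avila's theorem. More fundamentally, the pointwise representation $|\rho(C_{E+\epsilon})-\rho(C_{E-\epsilon})|=\int_{E-\epsilon}^{E+\epsilon}\omega_C(E')\,dE'$ cannot be correct: $\rho(C_E)$ is only H\"older-$\frac12$ in $E$ (Lemma~\ref{holder} together with Lemma~\ref{rho-c}), so a pointwise derivative $\geq 1$ does not exist; and if it did, your formula would yield the linear lower bound $\geq 2\epsilon$, far stronger than the stated $\epsilon^{3/2}$, which is precisely what the soft inputs here cannot deliver.

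Your second paragraph proposes a direct Hilbert-transform argument on $\rho(C_z)$, but this runs directly into the obstacle the paper explicitly flags: there is no Thouless formula for the reduced cocycle $C_E$, and no global Poisson/harmonic-conjugate representation for its complexified rotation number is available from the local monotonicity in Proposition~\ref{cr} alone. The paper's actual proof sidesteps this entirely: it applies the Thouless formula (Theorem~\ref{thouless}) to the \emph{full strip cocycle} $A_E$, writes $L^m(E+i\delta)-L^m(E)=\int\frac12\log(1+\delta^2/|E-E'|^2)\,dk(E')$, splits the integral into the scales $|E-E'|\geq 1$, $[\epsilon,1)$, $[\epsilon^4,\epsilon)$, and $<\epsilon^4$, kills $I_1,I_2,I_4$ via the H\"older-$\frac12$ bound, and bounds $I_3$ from below by writing $L^m=L^{m-1}+L_m$ with $L_m(E+i\delta)=L(C_{E+i\delta})+\tilde\phi(E+i\delta)$, using that $L^{m-1}$ (from the uniformly hyperbolic bundles) and $\tilde\phi$ are harmonic near $E$ so their variation is at most $O(\delta)$ with controllable sign, while Proposition~\ref{cr}(3) gives $L(C_{E+i\delta})\geq\kappa\delta/2$. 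The upper bound $I_3\lesssim(N(E+\epsilon)-N(E-\epsilon))\log\epsilon^{-1}$ then closes the argument. The key structural move you are missing is that one should invoke the Thouless formula at the level of $A_E$ (where it is available) and peel off the hyperbolic Lyapunov exponents as harmonic corrections, rather than trying to build a Thouless-type potential theory for $C_E$ itself.
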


\begin{proof} 
Since $\rho(C_E)$ and $N(E)$ are  continuous  with respect to $E$, hence $k_E$ is independent of $E$ in the small neighborhood of $E$.  
Therefore, we only need to estimate the lower bound of $N(E).$
	Let $\delta=c \epsilon^{\frac{3}{2}}$,
 then by Thouless formula (Theorem \ref{thouless}), we have
	$$
	L^{m}(E+i\delta)-L^{m}(E)=\int \frac{1}{2} \ln (1+\frac{\delta^{2}}{\left|E-E^{\prime}\right|^{2}})\ d N\left(E^{\prime}\right).
	$$
	We split the integral into four parts: $I_{1}=\int_{\left|E-E^{\prime}\right|\geq1}$, $ I_{2}=\int_{\epsilon\le\left|E-E^{\prime}\right|<1}$, $I_{3}=\int_{\epsilon^{4}\le\left|E-E^{\prime}\right|<\epsilon}$ and $I_{4}=\int_{\left|E-E^{\prime}\right|<\epsilon^{4}}$.
	
	For sufficiently small $\epsilon>0$, by Lemma \ref{holder}, we have $I_1 <c^2\epsilon^3,$
	and
	\begin{equation*}
		\begin{split}
			I_{4}&=\sum_{k \geq 4} \int_{\epsilon^{k}>\left|E-E^{\prime}\right|\ge\epsilon^{k+1}}  \frac{1}{2}\ln (1+\frac{\delta^{2}}{\left|E-E^{\prime}\right|^{2}})\ d N\left(E^{\prime}\right) 
			\leq \frac{1}{2}\sum_{k \geq 4} \epsilon^{\frac{k}{2}} \ln (1+c^{2} \epsilon^{1-2 k})\leq \epsilon^{\frac{7}{4}}.
		\end{split}
	\end{equation*}
	We also have the estimate
	$$
	\begin{aligned} I_{2} & \leq \sum_{k=0}^{m} \int_{e^{-k-1}\le|E-E'|<e^{-k}} \frac{1}{2}\ln (1+\frac{\delta^{2}}{\left|E-E^{\prime}\right|^{2}})\ d N\left(E^{\prime}\right)\le\sum_{k=0}^{m}\frac{1}{2}e^{-\frac{k}{\kappa}}\delta^2e^{2k+2}\leq Cc^2\delta,\end{aligned}
	$$
	with $m=[-\ln \epsilon]$. 
	It follows that
	$$I_3\ge L^{m-1}(E+i\delta)+L_m(E+i\delta)-L^{m-1}(E)-c\delta.$$
    Noting that $ I_{3} \leq C(N(E+\epsilon) - N(E-\epsilon)) \ln \epsilon^{-1} $, it suffices to demonstrate that $ I_3 \geq c \delta $. Since the constant $ c $ is consistent with our choice of $ \delta $, we can adjust $ \delta $ as needed.

    By Proposition \ref{cr}, we have \( L(C_{E+i\delta}) \geq \frac{\kappa\delta}{2} \) and 
\[
L_m(E+i\delta) = L(C_{E+i\delta}) - \int \log \left| \phi_{E+i\delta}(\theta) \right| d\theta =: L(C_{E+i\delta}) + \tilde{\phi}(E+i\delta).
\]
By Lemma \ref{sqrt}, there exists an analytic function \( q(E,\theta) \in C^\omega(\I \times \mathbb{R}, \mathbb{C}) \) such that \( \phi_{E+i\delta}(\theta) = e^{\frac{1}{2}q(E+i\delta,\theta)} \). Therefore, \( \tilde{\phi}(E+i\delta) = -\frac{1}{2} \int \Re q(E+i\delta,\theta) d\theta \), which is a harmonic function.
On the other hand, since \( L^{m-1}(E) = L^{m-1}\left( A_E|_{E_{A_E}^u} \right) \), it follows that \( L^{m-1}(E) \) is also a harmonic function in a neighborhood of \( E \) \cite{positive,AJS}.
 
In particular, \( \frac{\partial (L^{m-1} + \tilde{\phi})(z)}{\partial \operatorname{Im} z} \) is a continuous function in a neighborhood of \( E \). We divide the proof into three cases:
 
\smallskip
\textbf{Case I:} If \( \frac{\partial (L^{m-1} + \tilde{\phi})(z)}{\partial \operatorname{Im} z} > 0 \), there exists a neighborhood of \( E \) such that \( L^{m-1}(E+i\delta) + \tilde{\phi}({E+i\delta}) - L^{m-1}(E) > 0 \), which implies \( I_3 \geq c\delta \).
 
\smallskip
\textbf{Case II:} If \( \frac{\partial (L^{m-1} + \tilde{\phi})(z)}{\partial \operatorname{Im} z} < 0 \), we only need to consider \( L^{m}(E-i\delta) \) instead of \( L^{m}(E+i\delta) \).
 
\smallskip
\textbf{Case III:} If \( \frac{\partial (L^{m-1} + \tilde{\phi})(z)}{\partial \operatorname{Im} z} = 0 \). Since \( \frac{\partial (L^{m-1} + \tilde{\phi})(z)}{\partial \operatorname{Im} z} \) is continuous in a neighborhood of \( E \), there exists a neighborhood of \( E \) such that \( \left| \frac{\partial (L^{m-1} + \tilde{\phi})(z)}{\partial \operatorname{Im} z} \right| < \frac{\kappa}{4} \). Hence,
\[
\left| L^{m-1}(E+i\delta) + \tilde{\phi}(E+i\delta) - L^{m-1}(E) \right| \leq \frac{\kappa}{4}\delta.
\]
It follows that
\[
I_3 \geq \frac{\kappa}{2}\delta - \left| L^{m-1}(E+i\delta) + \tilde{\phi}(E+i\delta) - L^{m-1}(E) \right| - c\delta \geq c\delta.
\]
\end{proof}

\subsection{Quantitative almost reducibility}
The third  ingredient is almost reducibility.  Recall that $(\alpha,A_1)$ is conjugated to $(\alpha,A_2)$, if there exists $B\in C^\omega(\T,\mathrm{PSL}(2,\R))$
such that 
$$B^{-1}(x+\alpha)A_1(x)B(x)=A_2(x).$$ 
Then $(\alpha,A)$ is almost reducible if the closure of its analytic conjugate class contains the constant. The key to us is Avila's solution to his almost reducibility conjecture: 

\begin{theorem}[\cite{avila-kam,avila2010almost}] \label{arc-conjecture}
	Any subcritical $(\alpha,A)$ with $\alpha\in \R\backslash\Q$, $A\in C^\omega(\T,\SL(2,\R))$, is almost reducible.
\end{theorem}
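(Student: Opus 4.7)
The plan is to proceed by combining a KAM-type reducibility scheme with a renormalization argument, splitting into cases based on the Diophantine properties of $\alpha$. The starting observation is that the subcriticality hypothesis means the Lyapunov exponent $L_y(\alpha,A)$ vanishes on a strip $|y|<\delta$, and by the convexity and piecewise-linearity of $y \mapsto L_y(\alpha,A)$ (Avila's global theory), the acceleration at $E$ is zero. This rigidity is what makes the problem tractable: the complexified cocycle remains uniformly bounded (in an $L^2$ sense) on a small strip, giving strong a priori control that is essential to closing any reducibility iteration.

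For $\alpha$ satisfying a Diophantine condition, I would run an Eliasson-style KAM scheme adapted to $\SL(2,\R)$, following the blueprint of \cite{Eli92,AYZ}: at each step conjugate $(\alpha,A_n)$ to a form $A_n(\theta) = R_{\rho_n} e^{F_n(\theta)}$ with $F_n$ exponentially small, solve the cohomological equation $G_n(\theta+\alpha) - R_{\rho_n}G_n(\theta) R_{-\rho_n} = F_n(\theta)$ after removing resonant Fourier modes, and fold nonresonant modes into the constant part. The subcriticality hypothesis, via Avila's global theory, lets us control the loss of analyticity strip width uniformly across KAM steps so that one can iterate and extract a sequence of conjugations yielding arbitrarily small perturbations of constants — this is exactly almost reducibility.

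For $\alpha$ Liouvillean, the KAM small-divisor losses are fatal, so I would instead renormalize: pass to the continued-fraction induced cocycle over a rescaled base dynamics, exploiting that subcriticality is preserved under renormalization (this is a nontrivial but crucial input from Avila's global theory, since the acceleration is an integer-valued homotopy invariant). After sufficiently many renormalization steps, the induced cocycle becomes close to a one-parameter family of rotations (a translation cocycle), at which point a local near-identity KAM step finishes the argument. One concatenates the renormalization conjugacies and the local KAM conjugacies to produce analytic conjugacies bringing the original cocycle arbitrarily close to a constant.

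The principal obstacle — and what makes this a nontrivial theorem rather than a routine exercise — is the uniform control of the analyticity domain across both renormalization and KAM iterations. Renormalization naively shrinks the strip by a definite factor at each stage, while KAM loses a tiny amount of analyticity per step, and one must arrange the interleaving so that subcriticality continues to provide enough margin. Managing this uniformity, together with handling the interface between the Diophantine and Liouvillean regimes (where neither method is obviously preferable), is where the bulk of the work lies; it is plausibly unavoidable to use, as in \cite{avila-kam,avila2010almost}, a stratification of $\alpha$ by approximation exponents together with quantitative estimates on how subcriticality propagates through both procedures.
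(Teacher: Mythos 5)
This statement is cited in the paper to \cite{avila-kam,avila2010almost} and carries no proof in the text: it is Avila's Almost Reducibility Conjecture, one of the deepest theorems in one-frequency spectral theory, so a two-paragraph sketch cannot plausibly constitute a proof and there is no ``paper's proof'' to compare against. Your sketch captures the broad dichotomy-plus-KAM shape of the argument, but you should be aware that it cannot be assessed as more than a pointer to the literature.

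Beyond that meta-point, two of your structural claims are substantively off. The actual splitting in Avila's proof is between $\beta(\alpha)=0$ and $\beta(\alpha)>0$, where $\beta(\alpha)=\limsup_n (\ln q_{n+1})/q_n$, not ``Diophantine versus Liouvillean''; the $\beta(\alpha)=0$ regime (treated in \cite{avila-kam}) already contains a large Liouville set, and the two regimes require genuinely different arguments. More importantly, the $\beta(\alpha)>0$ regime in \cite{avila2010almost} is not attacked by continued-fraction renormalization as you propose. That paper works instead via periodic approximation by the convergents $p_n/q_n$, strip-analyticity of the complexified cocycle, and hard function-theoretic tools (including a version of the Corona Theorem) to produce the approximating conjugacies; Avila--Krikorian renormalization is a different device with a different scope and is not the engine here. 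Two smaller inaccuracies: the integrality of the acceleration is a theorem proved via domination of the complexified cocycle, not a ``homotopy invariance'' fact, so invoking it as a topological invariant misstates the mechanism; and, most seriously, your sketch elides the central difficulty, which is converting the a priori information supplied by subcriticality (vanishing of $L_y$ on a strip) into quantitative estimates on the transfer matrices strong enough to initialize any local reducibility iteration. That global-to-local passage — which in the Diophantine-dual picture runs through almost localization as in \cite{AJ2} — is where essentially all of the work lives, and it is not reducible to bookkeeping of analyticity-strip losses.
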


Noting that by Proposition \ref{cr}, $ (\alpha, C_E(\theta)) $ is subcritical, we can conclude that, according to Theorem \ref{arc-conjecture}, $ (\alpha, C_E(\theta)) $ is almost reducible. Furthermore, a straightforward continuity argument enables us to achieve a uniform global-to-local reduction:

\begin{lemma}\label{global-local}\cite{GYZ,WXYZZ}
Let   $\alpha\in \R\backslash \Q$, \( \I \subset \mathbb{R} \) being an interval. For any $\epsilon_0>0,$ there exist
$\bar{h}=\bar{h}(\alpha)>0$ and $\Gamma=\Gamma(\alpha,\epsilon_0)>0$ such that for any $
E\in  \I$,
there exist $\Phi_{E}\in C^{\omega}_{\bar{h}}(\mathbb{T},PSL(2,\mathbb{R}))$ with $\|\Phi_{E}\|_{\bar{h}}<\Gamma$ such that
\begin{equation*}
    \Phi_{E}(\theta+ \alpha)^{-1}C_{E}(\theta)\Phi_{E}(\theta)=R_{\Phi_{E}}e^{f_{E}(\theta)}
\end{equation*}
with $\left\|f_{E}\right\|_{\bar{h}}<\epsilon_0,$ $\left|\operatorname{deg} \Phi_{E}\right| \leq C|\ln \Gamma|$ for some constant $C=$
$C(V,\alpha)>0 .$ 
\end{lemma}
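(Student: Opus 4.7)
The proof strategy is to combine Avila's almost reducibility conjecture (Theorem \ref{arc-conjecture}) with a compactness argument over $\I$, using continuity in $E$ to pass from pointwise to locally uniform reducibility. We may assume without loss of generality that $\I$ is a compact interval (otherwise restrict to any compact subinterval of interest). The plan is to first establish a local version of the statement around each $E_0 \in \I$, then pass to the uniform statement via a finite subcover.

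First I would fix an arbitrary $E_0 \in \I$. By Proposition \ref{cr}(1), $(\alpha, C_{E_0})$ is subcritical, and by Avila's almost reducibility conjecture (Theorem \ref{arc-conjecture}), it is almost reducible. Concretely, this yields some strip width $\bar{h} = \bar{h}(\alpha)$ (which in the KAM scheme of \cite{avila-kam,avila2010almost} depends only on the Diophantine-type properties of $\alpha$, not on the cocycle) and a conjugacy $\Phi_{E_0} \in C^{\omega}_{\bar{h}}(\T, \operatorname{PSL}(2,\R))$ such that
\begin{equation*}
  \Phi_{E_0}(\theta+\alpha)^{-1} C_{E_0}(\theta) \Phi_{E_0}(\theta) = R_{\rho_{E_0}} e^{\tilde f_{E_0}(\theta)}, \qquad \|\tilde f_{E_0}\|_{\bar h} < \epsilon_0 / 2.
\end{equation*}
Since $C_E(\theta)$ depends analytically (hence continuously in $C^{\omega}_{\bar h}$-norm) on $E$ by Proposition \ref{cr}, there exists an open neighborhood $U_{E_0} \ni E_0$ and a constant $\Gamma_{E_0} := \|\Phi_{E_0}\|_{\bar h}$ such that for every $E \in U_{E_0}$ the same conjugacy satisfies
\begin{equation*}
  \Phi_{E_0}(\theta+\alpha)^{-1} C_{E}(\theta) \Phi_{E_0}(\theta) = R_{\rho_{E_0}} e^{f_{E}(\theta)}, \qquad \|f_{E}\|_{\bar h} < \epsilon_0 .
\end{equation*}
So locally around $E_0$ we may take $\Phi_E := \Phi_{E_0}$ for every $E \in U_{E_0}$.

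Next I would invoke compactness of $\I$ to extract a finite subcover $\{U_{E_j}\}_{j=1}^{N}$ and set
\begin{equation*}
  \Gamma := \max_{1 \leq j \leq N} \|\Phi_{E_j}\|_{\bar h}, \qquad \Phi_E := \Phi_{E_j} \text{ whenever } E \in U_{E_j}.
\end{equation*}
This $\Gamma$ depends only on $\alpha$, $\epsilon_0$ (and implicitly on the family $C_E$, which is fixed by the data of the problem), yielding the required uniform bound $\|\Phi_E\|_{\bar h} < \Gamma$. The degree bound $|\deg \Phi_E| \leq C|\ln \Gamma|$ is a built-in feature of the quantitative ARC scheme: the conjugacies constructed in the KAM iteration of \cite{avila-kam} at each resonant scale accumulate degree proportional to the logarithm of their norm, see e.g. \cite{GYZ,WXYZZ}. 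Since $\Phi_E$ is taken from the finite family $\{\Phi_{E_j}\}$, the degree inequality is inherited.

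The main potential obstacle is ensuring that the strip width $\bar h$ is truly uniform in $E$ and depends only on $\alpha$. This is not automatic from the formal statement of the ARC but follows from a more careful inspection of Avila's KAM scheme, where the width at which almost reducibility holds is determined by the Diophantine exponents of $\alpha$ and the initial analyticity radius of $C_E$, which is controlled by Proposition \ref{cr} uniformly on compact $E$-intervals. Once this uniformity is in place, the compactness argument above is routine, and the construction matches the ones used in \cite{GYZ, WXYZZ}, to which we refer for technical details.
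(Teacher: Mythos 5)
The paper does not provide its own proof of this lemma; it is stated with a citation to \cite{GYZ,WXYZZ}, and the quantitative content is deferred entirely to those references. Your compactness argument — applying ARC pointwise via Proposition~\ref{cr}(1) and Theorem~\ref{arc-conjecture}, using continuity of $E\mapsto C_E$ in $C^\omega_{\bar h}$-norm to spread a single conjugacy $\Phi_{E_0}$ over a neighborhood, then extracting a finite subcover — captures the standard global-to-local reduction used in the cited literature, so the outline is sound and consistent with how the paper treats the result.

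Two points deserve emphasis, since you flag them but do not close them. First, the degree bound $|\deg \Phi_E| \leq C|\ln\Gamma|$ with $C=C(V,\alpha)$ independent of $\epsilon_0$ cannot be produced by compactness alone: for a fixed $\epsilon_0$ you could always choose $C$ large enough after the fact, which loses the intended uniformity as $\epsilon_0\to 0$. What one actually needs, and what \cite{GYZ,WXYZZ} provide, is that the quantitative ARC/KAM iteration yields, for each fixed $E_j$, a conjugacy $\Phi_{E_j}$ satisfying $|\deg \Phi_{E_j}| \leq C|\ln \|\Phi_{E_j}\|_{\bar h}|$ with a constant $C$ depending only on $(V,\alpha)$ — the compactness step then merely passes this bound to $\Gamma=\max_j\|\Phi_{E_j}\|_{\bar h}$. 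Second, the claim that the strip width $\bar h$ is uniform over $\I$ and depends only on $\alpha$ (together with the fixed analyticity radius of $C_E$ from Proposition~\ref{cr}) is a genuine output of Avila's scheme, not of the bare statement of ARC; you correctly identify this as the non-automatic step. With those two points accepted as imported from the references — as the paper itself does — your argument matches the intended one.
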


Once having Lemma \ref{global-local}, one can apply the KAM scheme to get precise control of the growth of the cocycles in the resonant sets.  We inductively give the parameters, 
for any $\bar{h}>\tilde{h}>0$, $\gamma>0,\sigma>0$, define 
\begin{align*}
 h_0=\bar{h}, \qquad  \epsilon_0 \leq D_0(\frac{\gamma}{\kappa^{\sigma}},\sigma) (\frac{\bar{h}-\tilde{h}}{8})^{C_0\sigma},\end{align*}
 where $D_{0} = D_0(\gamma,\sigma)$ and $C_{0}$ are numerical constant,  and define
$$
\epsilon_j= \epsilon_0^{2^j}, \quad h_j-h_{j+1}=\frac{\bar{h}-\frac{\bar{h}+\tilde{h}}{2}}{4^{j+1}}, \quad N_j=\frac{2|\ln\epsilon_j|}{h_j-h_{j+1}}.
$$
Then we have the following:
\begin{proposition}\label{local kam}\cite{LYZZ, WXYZZ}
Let $\alpha\in DC( \gamma, \sigma)$. Then there exists $B_{j} \in C_{h_{j}}^{\omega}\left(\mathbb{T}, PSL(2, \mathbb{R})\right)$ with
$|\deg B_j |  \leq 2 N_{j-1}$,
such that
$$B_{j}^{-1}(\theta+ \alpha) R_{\Phi_{E}}e^{f_{E}(\theta)} B_{j}(\theta)=A_{j}(E) e^{f_{j}(\theta)},$$
with  estimates  $
\left\|B_{j}\right\|_{0} \leq |\ln\epsilon_{j-1}|^{4\sigma}$, $\left\|f_{j}\right\|_{h_{j}} \leq \epsilon_{j}.$ 
Moreover,  for any $0<|n| \leq N_{j-1}$, denote 
\begin{equation*}
\begin{split}
\Lambda_{j}(n)=\left\{E\in \mathcal{I}(\delta_0):\|2 \rho(\alpha, A_{j-1}(E))- \langle n, \alpha \rangle\|_{\T}< \epsilon_{j-1}^{\frac{1}{15}}\right\}.
\end{split}
\end{equation*}
If 
 $E\in K_j:= \cup_{|n|=1}^{N_{j-1}}\Lambda_{j}(n)$, then then there exists $\tilde{n}_j \in \mathbb{Z}$ with $0<|\tilde{n}_j| <2N_{j-1}$ such that
\begin{equation*}
\| \ 2\rho(C_E)+\langle \tilde{n}_j, \alpha\rangle\|_{\mathbb{R} / \mathbb{Z}} \leqslant 2 \epsilon_{j-1}^{\frac{1}{15}}.
\end{equation*}
Moreover,  we have 
\begin{equation*}
\sup _{0 \leq s \leq \epsilon_{j-1}^{-\frac{1}{8}}}\|(C_{E})_s\|_{0} \leq 4\Gamma^2 |\ln\epsilon_{j-1}|^{8\sigma}.
\end{equation*}
\end{proposition}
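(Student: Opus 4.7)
The plan is to establish this by a standard inductive KAM scheme, where the key non-standard feature is the careful tracking of degrees through resonant steps to obtain the sharp norm bound $|\ln\epsilon_{j-1}|^{4\sigma}$. At step $j$, I would assume inductively that we have obtained $B_j^{-1}(\theta+\alpha) R_{\Phi_E}e^{f_E(\theta)} B_j(\theta) = A_j(E)e^{f_j(\theta)}$ with $A_j \in \mathrm{PSL}(2,\mathbb{R})$ (essentially a rotation when a resonance has just been cancelled), $\|f_j\|_{h_j} \leq \epsilon_j$, and the stated degree bound. For the inductive step, I would solve the cohomological equation $Y(\theta+\alpha) - \mathrm{Ad}(A_j)\cdot Y(\theta) = f_j^{\mathrm{tr}}(\theta) - \hat f_j(0)$, where $f_j^{\mathrm{tr}}$ denotes the truncation to Fourier modes $|n| \leq N_j$, via Fourier expansion. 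The tail $\|f_j - f_j^{\mathrm{tr}}\|_{h_{j+1}}$ is controlled by $e^{-N_j(h_j - h_{j+1})}$, which by our choice of $N_j$ is bounded by $\epsilon_j^2 = \epsilon_{j+1}$.

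The decisive split occurs in analyzing the small divisors $e^{i\langle n,\alpha\rangle} - e^{\pm 2i\rho(\alpha,A_j)}$. In the \emph{non-resonant} regime, where $\|2\rho(\alpha,A_j) - \langle n,\alpha\rangle\|_{\mathbb{T}} \geq \epsilon_{j}^{1/15}$ for all $0 < |n| \leq N_j$, the Diophantine condition on $\alpha$ combined with this lower bound yields an estimate $\|Y\|_{h_{j+1}} \lesssim \epsilon_j^{14/15}$, and a direct Neumann argument then gives $\|f_{j+1}\|_{h_{j+1}} \leq \epsilon_j^2$. In the \emph{resonant} regime, one finds a unique such $\tilde n_{j+1}$ (uniqueness follows from $\alpha \in \mathrm{DC}(\gamma,\sigma)$ and the bound $N_j \ll \epsilon_j^{-1/15}$); one then performs a preliminary conjugation by $R_{\tilde n_{j+1}\alpha/2}$ (or rather the function $e^{i\pi\tilde n_{j+1}\theta}$ in the $\mathrm{PSL}(2,\mathbb{R})$ sense, hence allowing half-integer modes) to move $\rho(\alpha,A_j)$ to $O(\epsilon_j^{1/15})$, and then re-performs the KAM step; this costs a degree of at most $|\tilde n_{j+1}| \leq 2N_j$.

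To harvest the two main corollaries, observe first that tracing through the construction, at a resonant step the inductive hypothesis forces $\|2\rho(\alpha,A_{j-1}) - \langle \tilde n_j,\alpha\rangle\| \leq \epsilon_{j-1}^{1/15}$; the relation $\rho(\alpha,A_{j-1}) = \rho(\alpha, R_{\Phi_E}e^{f_E}) + \tfrac{1}{2}\deg(B_{j-1})\alpha$ together with $\rho(\alpha,R_{\Phi_E}e^{f_E}) = \rho(C_E) - \tfrac{1}{2}\deg(\Phi_E)\alpha$ (up to an error absorbed into $\tilde n_j$) yields the displayed estimate on $2\rho(C_E) + \langle \tilde n_j,\alpha\rangle$. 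For the growth bound on $(C_E)_s$, note that on the time scale $s \leq \epsilon_{j-1}^{-1/8}$ one has $s\epsilon_{j-1} \ll 1$, so the perturbation $e^{f_{j-1}}$ contributes only a bounded multiplicative factor, while the rotation $A_{j-1}$ contributes nothing in norm; pulling back through $B_{j-1}$ and $\Phi_E$ introduces the factor $\|B_{j-1}\|_0^2 \cdot \|\Phi_E\|_0^2 \leq \Gamma^2 |\ln\epsilon_{j-1}|^{8\sigma}$, yielding the stated inequality.

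The main obstacle is securing the norm bound $\|B_j\|_0 \leq |\ln\epsilon_{j-1}|^{4\sigma}$, since naively each resonant step costs a factor like $|\tilde n_k|^{O(1)}$ from the rotational correction, and summing could blow up. The resolution, as in \cite{LYZZ, WXYZZ}, is that by the Diophantine condition on $\alpha$ two consecutive resonant modes $\tilde n_k, \tilde n_{k+1}$ must satisfy $|\tilde n_{k+1}| \gtrsim \gamma |\tilde n_k|^{-\sigma}$ while also $|\tilde n_{k+1}| \leq 2N_k$; this rigidity forces a geometric-type gap between resonant steps and replaces a naive product bound by the polylogarithmic factor $|\ln\epsilon_{j-1}|^{4\sigma}$. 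Executing this combinatorial book-keeping of resonances, while preserving the subcritical structure inherited from Proposition \ref{cr}, is the technical heart of the argument.
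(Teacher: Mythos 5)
The paper does not prove Proposition~\ref{local kam}; it imports it wholesale from \cite{LYZZ, WXYZZ}, so there is no internal proof to compare against. Your proposal is a reasonable sketch of the KAM scheme those references actually run: inductive elimination of the perturbation via the cohomological equation with truncation, the non-resonant/resonant dichotomy driven by the small divisors $e^{i\langle n,\alpha\rangle}-e^{\pm 2i\rho}$, the half-integer rotational correction at resonances, the degree count via Proposition~\ref{rho}, and the time-scale argument for the iterate bound. That matches the structure of the cited proofs.

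Two points in your proposal are imprecise enough to be worth flagging. First, in deriving the bound on $\|(C_E)_s\|_0$ you assert that "the rotation $A_{j-1}$ contributes nothing in norm." But $A_{j-1}$ is a constant $\SL(2,\R)$ matrix, not literally a rotation; after a resonant step it can be close to parabolic. What one actually uses is the standard KAM estimate that $A_{j-1}$ is close to a rotation (the distance is controlled by a power of the preceding perturbation size), so that $\|A_{j-1}^s\|$ stays bounded by an absolute constant for $s\le\epsilon_{j-1}^{-1/8}$ -- this, together with the $e^{s\epsilon_{j-1}}$ factor from the perturbation, is precisely where the numerical factor $4$ in $4\Gamma^2|\ln\epsilon_{j-1}|^{8\sigma}$ comes from. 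Second, the claimed Diophantine relation "$|\tilde n_{k+1}|\gtrsim\gamma|\tilde n_k|^{-\sigma}$" between consecutive resonant modes is vacuous as written (the right side tends to $0$). The correct mechanism is a super-geometric growth of consecutive resonant modes $|\tilde n_{k+1}|\gg|\tilde n_k|$ forced by the Diophantine condition together with the shrinking resonance windows $\epsilon_k^{1/15}$; it is this rapid growth, combined with the lower bound $\|\langle n,\alpha\rangle\|_\T\ge\gamma|n|^{-\tau}$ controlling the normalization conjugation after a resonance, that makes the accumulated norm only polylogarithmic in $\epsilon_{j-1}^{-1}$ and makes the total degree dominated (up to the factor 2) by the last resonant mode. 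With those two points made precise, your outline is in line with the argument in \cite{LYZZ,WXYZZ}.
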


\subsection{Proof of Theorem \ref{type1ac}:}
Within these three main ingredients, we are ready to finish the proof of Theorem \ref{type1ac}. Fix $\theta \in \mathbb{T}$. Let $\mathcal{G}_\theta = \mathcal{G}_\theta^u \cap \mathcal{G}_\theta^s$, where $\mathcal{G}_\theta^u$ and $\mathcal{G}_\theta^s$ are defined in Section \ref{ver-sch}. By Corollary \ref{JL}, for any $E \in \mathcal{G}_\theta$, there exists $\delta_E > 0$ such that for any $E' \in B(E, \frac{\delta_E}{5})$, we have \begin{equation*}
\mu_\theta(E' - \epsilon, E' + \epsilon) \leq \epsilon C(E, \theta) \sup_{|s| \leq 3 \epsilon^{-1}} \|(A_E)_s(\theta)|_{E_{A_{E'}}^c}\|^{42}.
\end{equation*} By the Vitali covering lemma, there exist $E_i$ and $\delta_i$ such that $\mathcal{G}_\theta \subset \bigcup_i B(E_i, \delta_i)$. On the other hand, by Lemma \ref{eig} and Proposition \ref{apoint}, we have $\mu_{\theta}(\Sigma_{\theta} \backslash \mathcal{G}_\theta) = 0$. Therefore, it suffices to prove that $\mu_{\theta,s}(B(E_i, \delta_i)) = 0$ for any $i$.

In the following, we fix $i$ and consider the interval $\mathcal{I}(\delta_0) = [E_i - \delta_i + \delta_0, E_i + \delta_i - \delta_0]$ for any sufficiently small $\delta_0$. We need to show that $\mu_{\theta,s}(\mathcal{I}(\delta_0)) = 0$ for any sufficiently small $\delta_0$.
We first  estimate the spectral measure using the center cocycle $C_E(\theta).$ 

\begin{lemma}\label{inej}
    For any \( E \in B(E_i, \delta_i) \), we have 
    \[
    \mu_\theta(E - \epsilon, E + \epsilon) \leq \epsilon \, C(E_i, \theta) \sup_{|s| \leq 3 \epsilon^{-1}} \|(C_E)_s(\theta)\|^{42}.
    \]
\end{lemma}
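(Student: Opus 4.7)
The plan is to combine Corollary \ref{JL} with the Hermitian-symplectic conjugation supplied by Proposition \ref{cr}, thereby converting the bound involving the restriction of $A_E$ to its two-dimensional center bundle into a bound involving only the $\mathrm{SL}(2,\R)$-cocycle $C_E$. Since $\mu_\theta(\Sigma_\theta\setminus\mathcal{G}_\theta)=0$ by Lemma \ref{eig} and Proposition \ref{apoint}, it suffices to control $\mu_\theta$ through the behaviour of the cocycle on $\mathcal{G}_\theta\cap B(E_i,\delta_i)$, where Corollary \ref{JL} applies.

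The first step is to invoke Corollary \ref{JL}, yielding
\[
\mu_\theta(E-\epsilon,E+\epsilon)\;\leq\;\epsilon\,\widetilde C(E,\theta)\sup_{|s|\leq 3\epsilon^{-1}}\|(A_E)_s(\theta)|_{E^c_{A_E}}\|^{42},
\]
where $\widetilde C(E,\theta)$ depends continuously on $E$. Since the closure of $B(E_i,\delta_i)$ is compact, this prefactor can be absorbed into a constant depending only on $E_i$ and $\theta$.

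The second step uses Proposition \ref{cr}: conjugation by $\mathcal{B}_E$ block-diagonalises $A_E^{(2)}$, and the restriction to the central block is precisely the scalar-twisted cocycle $\phi_E C_E$. Because $\phi_E$ takes values in $\T$, each scalar cocycle iterate $\prod_{j}\phi_E(\cdot+2j\alpha)$ has modulus $1$, and hence
\[
\|(A_E)_{2s}(\theta)|_{E^c_{A_E}}\| \;\leq\; \|\mathcal{B}_E\|_{\infty}\,\|\mathcal{B}_E^{-1}\|_{\infty}\,\|(C_E)_s(\theta)\|.
\]
Odd iterates are handled by writing $(A_E)_{2s+1}=A_E(\theta+2s\alpha)\cdot (A_E)_{2s}(\theta)$ and absorbing the uniform $C^0$-bound on $A_E$ over $\overline{B(E_i,\delta_i)}\times\T$. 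By the analyticity of $\mathcal{B}_E$ on $\overline{B(E_i,\delta_i)}\times\T$ asserted in Proposition \ref{cr}, the norms $\|\mathcal{B}_E^{\pm 1}\|_\infty$ are uniformly bounded on this compact set.

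Collecting the $42$nd powers of these uniform prefactors into a single constant $C(E_i,\theta)$ produces the desired inequality. The main piece of bookkeeping --- and the only real obstacle --- is verifying uniformity in $E\in B(E_i,\delta_i)$ of every constant involved (the factor from Corollary \ref{JL}, the conjugation norms $\|\mathcal{B}_E^{\pm 1}\|$, and the one-step bound on $A_E$); this uniformity follows directly from the continuity-in-$E$ statements throughout Proposition \ref{cr} together with the compactness of $\overline{B(E_i,\delta_i)}$.
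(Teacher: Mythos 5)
Your argument follows essentially the same route as the paper: invoke Corollary \ref{JL}, conjugate the restriction of $A_E^{(2)}$ to $E^c_{A_E}$ via $\mathcal{B}_E$ from Proposition \ref{cr}, drop the unimodular factor $\phi_E$, and absorb the conjugation norms and the continuity-in-$E$ of the constants into one constant $C(E_i,\theta)$ by compactness of $\overline{B(E_i,\delta_i)}$. The paper's proof is terser (it does not spell out the $\phi_E$ step or the even/odd iterate bookkeeping, both of which you handle more carefully), but the substance is the same. One small inaccuracy: the opening remark that $\mu_\theta(\Sigma_\theta\setminus\mathcal{G}_\theta)=0$ ``reduces'' to $\mathcal{G}_\theta\cap B(E_i,\delta_i)$ is both unnecessary and slightly off, since the claim is a pointwise bound rather than a measure-theoretic one; the balls $B(E_i,\delta_i)$ are already chosen in the proof of Theorem \ref{type1ac}, using openness of the condition in Proposition \ref{lem: key ang est1}, so that Corollary \ref{JL} applies on the entire ball.
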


\begin{proof}
  Since $$\mathcal{B}_E(\theta)=\begin{pmatrix}
    v_{E, 1}, \dots, v_{E, (m-1)},v_{E,m},v_{E, -1}, \dots, v_{E, -(m-1)},v_{E,-m}
\end{pmatrix}$$
    Let \( \psi \in E_{A_E}^c(\theta) \) with \( \|\psi\| = 1 \). Decompose \( \psi = c_1 v_{E,m}(\theta) + c_2 v_{E,-m}(\theta) \).  
    This means 
    \[
    \mathcal{B}_E(\theta) \begin{pmatrix} 0 & \cdots & 0 & c_1 & 0 & \cdots & 0 & c_2 \end{pmatrix}^\top = \psi,
    \]
    from which it follows that 
    \[
    \sqrt{|c_1|^2 + |c_2|^2} \leq \|\mathcal{B}_E^{-1}\|.
    \]
    Then we have:
    \[
    \begin{aligned}
        \|(A_E)_s(\theta)|_{E_{A_E}^c} \psi\| 
        &= \left\| \left( u_E(\theta + s\alpha), v_E(\theta + s\alpha) \right) (C_E)_s(\theta) \begin{pmatrix} c_1 \\ c_2 \end{pmatrix} \right\| \\
        &\leq \|\mathcal{B}_E^{-1}\| \, \|\mathcal{B}_E\| \, \|(C_E)_s(\theta)\|.
    \end{aligned}
    \]
    The uniform bound follows from the analyticity of \( \mathcal{B}_E \) and \( \mathcal{B}_E^{-1} \).
\end{proof}

Let $\mathcal{B}$ be the set of $E\in  \mathcal{I}(\delta_0) $ such that the cocycle $(\alpha, C_{E})$ is bounded. 
 By Lemma \ref{inej}, it is enough to prove that
$\mu_{\theta}( \mathcal{I}(\delta_0)\backslash\mathcal{B})=0$.
 Let $\mathcal{R}$ be the set of $E\in  \mathcal{I}(\delta_0) $ such that $( \alpha,C_E)$ is reducible, then $\mathcal{R}\backslash\mathcal{B}$ only contains $E$ for which $(\alpha,C_E)$ is analytically reducible to a constant parabolic cocycle. Therefore $\mathcal{R}\backslash\mathcal{B}$
  is countable by famous result of Eliasson \cite{Eli92}. By Proposition \ref{apoint}, there are no eigenvalues in $\mathcal{R}$ and $\mu_{\theta}(\mathcal{R}\backslash\mathcal{B})=0$.  Therefore, it is enough to show that for sufficiently small $\delta_{0}>0$, 
$\mu_{\theta}( \mathcal{I}(\delta_0)\backslash\mathcal{R})=0.$
Note that $\mathcal{I}(\delta_0)\backslash\mathcal{R}\subset\limsup K_{m}$, by Borel-Cantelli Lemma, we only need to prove $\sum_m \mu_{\theta}(\overline{K}_{m})<\infty$. 

Let $J_{m}(E)$  be an open $\epsilon_{m-1}^{\frac{2}{45}}$  neighborhood of $E\in K_m$. By Proposition \ref{inej} and Proposition \ref{local kam}, we have
\begin{equation*}
\begin{split}
\mu_{\theta}(J_m(E))&\le \sup_{0\le s\le\epsilon_{m-1}^{-\frac{2}{45}}}||(C_E)_s||_0^{42}|J_m(E)|\\&\le\sup_{0\le s\le\epsilon_{m-1}^{-\frac{1}{8}}}||(C_E)_s||_0^{42}|J_m(E)|\le C|\ln\epsilon_{m-1}|^{336\sigma}\epsilon_{m-1}^{\frac{2}{45}}.
\end{split}
\end{equation*}
Let $\cup_{l=0}^rJ_m(E_l)$ be a finite subcover of $\overline{K}_m$. By refining this subcover, we can assume that every $E\in \mathcal{I}_\delta$ is contained in at most two different $J_m(E_l)$. 

On the other hand, by Proposition \ref{local kam}, if $E\in K_m$, then
$||  2\rho(C_E)+\left<n,  \alpha\right>||_{\mathbb{R/Z}}\le 2\epsilon_{m-1}^{\frac{1}{15}},$
for some $|n|<2N_{m-1}$.
This shows that $ \rho({K}_{m})$ can be covered by $2N_{m-1}$  intervals $I_{s}$ of length  $2 \epsilon_{m-1}^{\frac{1}{15}}$.
By Proposition  \ref{holderl},
$\rho(J_m(E))\ge c|J_m(E)|^{\frac{3}{2}},$
thus  by our selection $|I_{s}|\leq \frac{1}{c}| \rho(J_{m}(E))|$   for any $s$
and $E\in{K}_{m}$, there are at most $2([\frac{1}{c}]+1)+4$ intervals $J_m(E_l)$ such that $ \rho(J_m(E_l))$  intersects $I_{s}$.
We conclude that there are at most $2(2([\frac{1}{c}]+1)+4)N_{m-1}$
intervals $J_m(E_l)$ to cover $K_m$. Then 
\begin{equation*}
\mu_{\theta}(\overline{K}_{m})\leq \sum_{j=0}^{r}\mu(J_{m}(E_{j}))\le CN_{m-1}|\ln\epsilon_{m-1}|^{
336\sigma}\epsilon_{m-1}^{\frac{2}{45}}< \epsilon_{m-1}^{\frac{1}{45}},
\end{equation*}
which gives $\sum_m \mu_{\theta}(\overline{K}_{m})<\infty$. 
\qed
\subsection{Proof of Corollary \ref{pamo}}
The dual operator of $ \tilde{L}_{w,\alpha,\theta} $ is given by $ \lambda H_{\frac{\varepsilon w + 2\cos}{\lambda}, \alpha, x} $. We may assume that $ \varepsilon $ is sufficiently small such that $ \Sigma\left(H_{\frac{\varepsilon w + 2\cos}{\lambda}, \alpha, x}\right) \subset [-2 - \frac{2}{\lambda} - 1, 2 + \frac{2}{\lambda} + 1] =: J $. Let $ L(\varepsilon, E) $ denote the Lyapunov exponent and $ \omega(\varepsilon, E) $ represent the acceleration of the associated cocycle. Since $ L(0, E) \geq \log\frac{1}{\lambda} $ and $ \omega(0, E) \leq 1 $ for all $ E \in \mathbb{R} $, it follows from Lemma \ref{Lecon} and the upper semicontinuity of the acceleration \cite{avila} that there exists $ \varepsilon_0 > 0 $ such that, for all $ |\varepsilon| < \varepsilon_0 $, we have $ L(\varepsilon, E) > 0 $ and $ \omega(\varepsilon, E) \leq 1 $ on $ J $. Therefore, Theorem \ref{type1ac} can be applied to derive the conclusion.

\section{Absolutely continuous spectrum for infinite-range operator}\label{proof-infinite}

\subsection{Pure point spectrum of dual operator}
In this section, we prove the dual operator of ${L}_{\varepsilon v,w,\alpha,\theta}$ has $\ell^1(\Z^d)$ eigenfunction, the proof is derived from Eliasson's work \cite{E}. 
Let 
$$
D: \ell^2(\Z^d ) \times \T \rightarrow \ell^2(\Z^d )
$$
which are covariant with respect to an $\Z^d $-action
$$
T: \Z^d  \times \T \rightarrow \T, T(n,x)=x+\langle n,\alpha\rangle.
$$
A covariant matrix $D(x)$ is pure point if, for almost all $x$, there exists an eigenvector $q(x)$ such that $\left\{q^a(x)=: \tau_a q\left(T_a(x)\right): a \in \Z^d \right\}$ is a basis for $\ell^2(\Z^d )$. $\Omega(x) \subset \Z^d $ is a block for $q(x)$ if
$$
\Omega(x) \supset\left\{a \in \Z^d :\langle q(x),\delta_a\rangle\neq0\right\} .
$$
A partition of a manifold $\T$ is a (locally finite) collection of open subsets-pieces-$\mathcal{P}$ such that $\cup_{Y \in \mathcal{P}} Y$ is of full measure in $\T$. If $\mathcal{P}$ and $\mathcal{Q}$ are two such decomposition and if $T$ is a homeomorphism on $\T$, then
$$
\left\{\begin{array}{l}
\mathcal{P} \vee \mathcal{Q}=\{Y \cap Z: Y \in \mathcal{P}, Z \in \mathcal{Q}\}, \\
T(\mathcal{P})=\{T(Y): Y \in \mathcal{P}\}.
\end{array}\right.
$$
A function is said to be smooth on $\mathcal{P}$ if it is smooth on each piece of $\mathcal{P}$.

\begin{definition}\cite{E} \label{normal}
    We say that a covariant matrix $D$ is pure point with a eigenvector $q(x)$, corresponding eigenvalue $E(x)$ and block $\Omega(x)$ is on normal form 
$$D \in \mathcal{N} \mathcal{F}(\alpha, \ldots, \rho ; \Omega, \mathcal{P}) \& \mathcal{T}(\sigma, s)$$
if the following holds:
\begin{enumerate}
    \item Exponential decay off the diagonal.
$
|D_a^b|_{\mathcal{C}^0} \leq \beta e^{-\alpha|b-a|}.
$
\item Smoothness. The components of $D$ are piecewise smooth and satisfy 
$
|D_a^b|_{C^k} \leq \beta e^{-\alpha|b-a|} \gamma^k, \quad \forall k \geq 1.
$
\item Block dimensions and block extensions.  For all $x \in \T$,
$
\Omega(x) \subset\{a:|a| \leq \lambda\}, 
\# \Omega(x) \leq \mu.
$

\item Block overlapping. For all $x \in \T,$
$
\# \bigcup_{\Omega^a(x) \cap \Omega(x) \neq 0} \Omega^a(x) \leq \mu.
$
\item Resonant block separation. For all $x \in \T$,
$
\left|E^a(x)-E(x)\right| \leq \rho \Longrightarrow  \Omega^a(x)=\Omega(x) \text { or } \underline{\operatorname{dist}}\left(\Omega^a(x), \Omega(x)\right) \geq \nu \geq 1.
$

\item Partition. There is a locally finite partition $\mathcal{P}$ such that
$D$  and  $\Omega$  are smooth on the partition  $\mathcal{P}.
$
By a matrix $D$ being smooth on a partition, we mean that each $D_a^b$ is smooth on, a neighborhood of the closure of, the pieces of
$
\vee\{T_c(\mathcal{P}):\left|c+\frac{b+a}{2}\right| \leq|\frac{b-a}{2}|\}.
$
By a set $\Omega$ being smooth on a partition, we mean that each characteristic function $\chi_{\Omega^b}(a)$ is smooth on, a neighborhood of the closure of, the pieces of the same partition.
\item Non-degenerate. Let $
u_{a, b}(x, y)=\operatorname{Res}\left(D_{\Omega^a(x+y)}(x+y), D_{\Omega^b(x)}(x)\right).
$ For all $x, y \in \T$ and for all $i$,
$$
\begin{gathered}
\max _{0 \leq k \leq J}|\frac{1}{(k!)^2 \gamma^k} \partial_{y_i}^k u_{a, b}(x, y)| \geq \sigma, \\
\max _{0 \leq k \leq J}|\frac{1}{(k!)^2 \gamma^k} \partial_{x_i}^k u_{a, b}(x, y)| \geq \sigma|y|^{\#(\Omega^a(x+y) \cap \Omega^b(x))},
\end{gathered}
$$
where $J=\# \Omega^a(x+y) \times \# \Omega^b(x) s$ and $s$ is a fixed parameter.
\end{enumerate}

\end{definition}

Choice of parameter
 $\alpha_j, \beta_j, \gamma_j, \lambda_j, \mu_j, \nu_j, \rho_j, \sigma_j, \varepsilon_j$ satisfying for all $j \geq 1,$
$$
\left\{\begin{array}{l}
\beta_{j+1}=\left(1+\sqrt{\varepsilon_j}\right) \beta_j,\gamma_{j+1}=\left(\frac{1}{C} \frac{\beta_j \mu_j^2}{\rho_j}\right)^{7 \mu_j^3} \gamma_j,\varepsilon_{j+1}=e^{-\frac{1}{2} \sqrt{(\nu_{j+1}-8 \lambda_j)(\alpha_j-\alpha_{j+1})}}, \\
\sigma_{j+1}=C^{\mu_{j+1}^4}(C \frac{\rho_j}{\beta_j \mu_j^2})^{14 s \mu_j^3 \mu_{j+1}^2(\frac{\sigma_j}{\beta_j})^{s \mu_{j+1}^4}}, \lambda_{j+1}=(\frac{\mu_{j+1}}{\mu_j})(\nu_{j+1}+2 \lambda_j),\mu_{j+1}=\tilde{p} 2^{2 s \mu_j^2},\\
\tilde{p}=16 \frac{\gamma_1}{\sigma_1}(4 \beta_1 \mu_1^2)^{\mu_1^2}(s \mu_1^2)^2(p+|\mathcal{P}_1(\lambda_1)|^{-1}),\nu_{j+1}=\frac{\mu_j}{2 \mu_{j+1}}(\frac{\kappa}{8 \beta_j \mu_{j+1}})^{\frac{1}{\tau}}(\frac{\sigma_j}{\rho_j})^{4 \tau s \mu_j^2}, \\
\alpha_j=\frac{1}{\lambda_j},\rho_j^{\left(\mu_1 \ldots \mu_{j+1}\right)^6}=\varepsilon_j. 
\end{array}\right.
$$
A direct calculation can show that ((5.20) in \cite{E}) \begin{equation}\label{estimate}
    \varepsilon_j\leq e^{-2j}\alpha_j^{2d}.
\end{equation}

We have the following conclusion:
\begin{proposition}\cite{E}\label{eli}
Let $D \in \mathcal{N} \mathcal{F}(\alpha, \ldots, \rho ; \Omega, \mathcal{P}) \& \mathcal{T}(\sigma, s)$ be covariant with respect to the quasi-periodic $\Z^d $-action, and assume that $D$ is truncated at distance $\nu$ from the diagonal. Let $F$ be a covariant matrix, smooth on $\mathcal{P}$ and
$$
\left|F_a^b\right|_{\mathcal{C}^k} \leq \varepsilon e^{-\alpha|b-a|} \gamma^k \quad \forall k \geq 0.
$$
Suppose $D$ and $F$ are Hermitian,
then there exists a constant $C$ depends only on $d , \kappa, \tau, s, \alpha, \beta$, $\gamma, \lambda, \mu, \nu, \rho, \sigma, \# \mathcal{P}-$ such if $\varepsilon \leq C$, then 

\begin{enumerate}
\item (Proposition 5(II) and the comment of Corollary 4)  There exists a matrix $U_j(x)$ such that
$$
U_j(x)^*(D_{j-1}(x)+F_{j-1}(x)) U_j(x)=D_j(x)+F_j(x), \quad \forall x \in X,
$$
$$
\left|(U_j-I)_a^b\right|_{\mathcal{C}^k} \leq \sqrt{\varepsilon_{j-1}} e^{-\frac{\alpha_{j-1}}{2}|b-a|}\left(\gamma_{j}\right)^k,\left|\left(F^{j}\right)_a^b\right|_{\mathcal{C}^k} \leq \varepsilon_j e^{-\alpha_j|b-a|}\left(\gamma_j\right)^k
$$
 for all $k \geq 0$.

\item (Proposition 8 (I), Proof of Proposition 6 (IV)) $D_j\in \mathcal{N} \mathcal{F}(\alpha_j, \ldots, \rho_j ; \Omega_j, \mathcal{P}_j) \& \mathcal{T}(\sigma_j, s)$. 
Moreover, for a.e. $x$, there exists $j(x)$, such that $\Omega_j(x)=\Omega_{j(x)}$ for all $j\geq j(x).$
    \item (Theorem 12)  There exits $D_{\infty}(x)$, such that $\|D_j(x)-D_{\infty}(x)\|_{\ell^2}\rightarrow0$. Moreover $D_{\infty}(x)$ is pure point for a.e. $x$.
\item (Proof of Proposition 6, Theorem 12) $U(x)=\lim_{j\rightarrow\infty}U_1(x)U_{2}(x)\cdots U_j(x)$ convergence in the $\ell^2$ norm and $U(x)$ is an unitary operator.

\item (Theorem 12)The limit $\lim _{j \rightarrow \infty} E_j(x)=E_{\infty}(x)$ is uniform, and it is a measurable function, satisfies for all subsets $Y$
$$
m(E_{\infty}^{-1}(Y))=0 \quad \text { if } \quad m(Y)=0.
$$

\end{enumerate}
\end{proposition}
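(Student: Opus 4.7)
The plan is to implement the KAM-type iterative scheme of Eliasson \cite{E}, whose architecture is dictated precisely by the parameter choices stated just above the proposition. I would proceed inductively: at the $j$-th stage, starting from $D_{j-1}+F_{j-1}$ where $D_{j-1}\in\mathcal{N}\mathcal{F}(\alpha_{j-1},\ldots,\rho_{j-1};\Omega_{j-1},\mathcal{P}_{j-1})\&\mathcal{T}(\sigma_{j-1},s)$ is the normal-form block-diagonal piece and $F_{j-1}$ is the remaining perturbation of size $\varepsilon_{j-1}$, I would first truncate $F_{j-1}$ at distance $\nu_j$ from the diagonal (justified by the exponential off-diagonal decay and the choice $\alpha_j=1/\lambda_j$, giving truncation error $\lesssim e^{-\alpha_{j-1}\nu_j/2}\ll\varepsilon_j$), then split the truncated remainder into a resonant part (to be absorbed into the new block-diagonal $D_j$) and a non-resonant part (to be removed by a unitary conjugation $U_j$).

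The core step is solving the cohomological-type equation $[D_{j-1},U_j]\sim F_{j-1}^{\mathrm{non\text{-}res}}$ block-pair by block-pair. This is where the non-degeneracy condition (7) of Definition \ref{normal} enters: the resultant lower bounds $\sigma_{j-1}$ provide quantitative invertibility of the commutator operator on each pair of non-resonant blocks, with a loss of derivatives controlled by $s$ and the block-size $\mu_{j-1}$. Carrying out this conjugation yields $|F_j|\lesssim\varepsilon_{j-1}^2=\varepsilon_j$ with unitary $U_j$ satisfying $|U_j-I|\lesssim\sqrt{\varepsilon_{j-1}}$, which is exactly part (1). The persistence of normal form (part (2)) requires showing that the new block structure $\Omega_j$—obtained by merging those $\Omega_{j-1}$-blocks whose eigenvalues are now $\rho_j$-close—still satisfies items (1)--(7) of Definition \ref{normal} with the updated parameters. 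The delicate combinatorial point is that, after merging, the resultant of the enlarged blocks must still admit a lower bound, and this is precisely why $\mu_j$ is allowed to grow as $\tilde{p}\,2^{2s\mu_{j-1}^2}$ while $\sigma_j$ deteriorates accordingly.

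For parts (3)--(5), convergence of $D_j(x)\to D_\infty(x)$ and of the product $U_1(x)\cdots U_j(x)\to U(x)$ in operator norm follows from the summability $\sum_j\sqrt{\varepsilon_{j-1}}<\infty$, which is guaranteed by \eqref{estimate}. The eventual stabilization $\Omega_j(x)=\Omega_{j(x)}$ for a.e.\ $x$ is obtained by a Borel--Cantelli argument applied to the measure of the set on which new block mergers occur at stage $j$; this measure is controlled by the resonance set $\{x:|E_{j-1}^a(x)-E_{j-1}(x)|<\rho_j\}$, whose measure shrinks fast enough thanks to the $E_\infty$-transversality one obtains from the non-degeneracy condition (7). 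Once blocks stabilize, $D_\infty(x)$ acts on each limiting block as a finite Hermitian matrix, producing an orthonormal eigenbasis and the pure point conclusion. The measure-zero-to-measure-zero property for $E_\infty$ follows from the uniform convergence of $E_j$ on pieces of $\mathcal{P}_j$ combined with a Luzin $N$-type argument applied to the smooth pieces.

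The main obstacle I expect is the parameter bookkeeping: one must verify at every step that the updated octuple $(\alpha_j,\beta_j,\gamma_j,\lambda_j,\mu_j,\nu_j,\rho_j,\sigma_j)$ still verifies the inductive hypothesis of Definition \ref{normal} despite the super-exponential growth of $\mu_j$ and decay of $\sigma_j$. The specific recursive choice stated above is engineered exactly so that this bookkeeping closes and so that the super-exponential smallness $\varepsilon_j=\varepsilon_{j-1}^2$ dominates the polynomial-in-$\mu_j$ losses from the cohomological equation; re-deriving these tuned parameter relations from scratch would be by far the most technically demanding part of the argument, and is the reason we invoke \cite{E} rather than redo the scheme here.
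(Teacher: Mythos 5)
The paper does not prove Proposition~\ref{eli}; it is stated purely as a citation, with the parenthetical tags pointing to the exact locations in Eliasson's paper \cite{E} where each item is established. Your proposal is therefore not competing with an in-paper proof, but is a reconstruction of the KAM scheme in \cite{E}, which is what the paper implicitly relies on. As a high-level outline it is accurate: truncation at distance $\nu_j$, block-pair cohomological equation controlled by the non-degeneracy/resultant bounds in item (7) of Definition~\ref{normal}, merging of resonant blocks into $\Omega_j$ with the associated growth of $\mu_j$ and decay of $\sigma_j$, summability via \eqref{estimate}, and a transversality-based argument for both stabilization of blocks and the Luzin--$N$-type property of $E_\infty$. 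Your decision to defer the parameter bookkeeping to \cite{E} matches what the paper does.

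One small inaccuracy worth flagging: you write that the conjugation yields $|F_j|\lesssim\varepsilon_{j-1}^2=\varepsilon_j$, as in a standard quadratic KAM step. In Eliasson's scheme the new error size $\varepsilon_{j+1}$ is defined by the truncation relation $\varepsilon_{j+1}=e^{-\frac{1}{2}\sqrt{(\nu_{j+1}-8\lambda_j)(\alpha_j-\alpha_{j+1})}}$ (see the parameter list preceding the proposition), not by squaring; the super-exponential smallness comes from choosing $\nu_{j+1}$ and the analyticity-loss $\alpha_j-\alpha_{j+1}$ appropriately, and only then does the estimate $\varepsilon_j\leq e^{-2j}\alpha_j^{2d}$ of \eqref{estimate} follow. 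You appear to have imported the quadratic relation $\epsilon_j=\epsilon_0^{2^j}$ from the separate cocycle-reducibility KAM scheme in Proposition~\ref{local kam}; these are two different iterations and should not be conflated. This does not affect the structure of your argument, but it does affect exactly which quantity drives the convergence in parts~(3)--(5).
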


We shall prove the general result, we assume that  $W(\cdot)$ is a Gevrey function, satisfies: there exists $s>0$, such that
	\begin{equation}\label{gevery}
	\begin{cases}
   |W|_{C^k}\leq \beta \gamma^k,
   \\\max _{0 \leqslant \nu \leqslant s}\left|\partial_x^\nu(W(\theta+x)-W(\theta))\right| \geqslant \xi>0, & \forall \theta,  \forall x, \\ \max _{0 \leqslant \nu \leqslant s}\left|\partial_\theta^\nu(W(\theta+x)-W(\theta))\right| \geqslant \xi\|x\|, & \forall \theta, \forall x .\end{cases}	\end{equation}

	\begin{theorem}\label{point}
	Suppose $\alpha\in DC$, $W(\cdot)$ is a Gevrey function that satisfies the assumption \eqref{gevery}. Let $(Hu)_n=\varepsilon\sum_{k_\in\Z^d}v_ku_{n+k}+W(x+\langle n,\alpha\rangle)u_n$, then there exists $\varepsilon_5(\alpha,V,W)$ and Borel measurable function $E_{\infty}(\cdot)$, such that for  $|\varepsilon|<\varepsilon_5$,  for a.e. $x$, $E_{\infty}(x+\langle k,\alpha\rangle)$ is an eigenvalue of $H$ for all $k\in\Z^d$, and corresponding eigenfunction is in $\ell^1(\Z^d).$ Moreover, for all subsets $Y$
$$
m(E_{\infty}^{-1}(Y))=0 \quad \text { if } \quad m(Y)=0.
$$
	\end{theorem}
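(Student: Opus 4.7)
The plan is to cast the operator $H$ in the covariant matrix formalism of Section 10.1 and apply Eliasson's KAM scheme (Proposition \ref{eli}) iteratively. Set $D_0(x)$ to be the diagonal multiplication operator with entries $W(x+\langle n,\alpha\rangle)$ indexed by $n\in\mathbb{Z}^d$, and $F_0(x) = \varepsilon \cdot (v_{n-m})_{n,m\in\mathbb{Z}^d}$ the off-diagonal convolution part. The $\mathbb{Z}^d$-covariance with respect to $T_a x = x+\langle a,\alpha\rangle$ is immediate from the structure of $H$. The Gevrey decay of the Fourier coefficients $|v_k|\lesssim e^{-c|k|^\theta}$ supplies an initial exponential bound $|(F_0)_a^b|_{\mathcal{C}^k}\leq \varepsilon e^{-\alpha_0|b-a|}\gamma_0^k$ after slightly adjusting $\alpha_0$; meanwhile, the Gevrey bounds on $W$ give the smoothness and derivative estimates required for $D_0$.

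The first substantive step is to verify that $D_0$ is on normal form $\mathcal{NF}(\alpha_0,\beta_0,\gamma_0,\lambda_0,\mu_0,\nu_0,\rho_0;\Omega_0,\mathcal{P}_0)\&\mathcal{T}(\sigma_0,s)$ for some admissible initial parameters. Items (1)--(4) and (6) are essentially trivial at the initial scale because $D_0$ is already diagonal (take $\Omega_0(x)=\{n_0\}$ singletons and $\mathcal{P}_0$ a suitable partition determined by the level-crossing geometry of the functions $x\mapsto W(x+\langle n,\alpha\rangle)$, $|n|\leq \lambda_0$). The resonant block separation (5) uses the Diophantine condition on $\alpha$ together with the variation estimate in \eqref{gevery}. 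The critical step is the non-degeneracy (7): since the resultants reduce to differences $W(x+\langle a,\alpha\rangle+y)-W(x+\langle b,\alpha\rangle)$ at the initial stage, the two displayed inequalities in the definition of $\mathcal{T}(\sigma,s)$ follow directly from the two derivative lower bounds in \eqref{gevery}, with $\xi$ controlling $\sigma_0$.

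Once the initial data is admissible, iterate Proposition \ref{eli}. Choose $\varepsilon_5=\varepsilon_5(\alpha,v,W)$ small enough to absorb the constant $C$ in Proposition \ref{eli} at stage $j=1$; then the parameter scheme $\{\alpha_j,\beta_j,\gamma_j,\ldots,\varepsilon_j\}$ propagates through all stages with $\varepsilon_j\leq e^{-2j}\alpha_j^{2d}$ (see \eqref{estimate}). Proposition \ref{eli}(1)--(2) yields unitaries $U_j(x)$ whose telescoping product converges to $U(x)$ in operator norm, with $U(x)^*HU(x)=D_\infty(x)$ pure point for a.e.\ $x$. For a.e.\ $x$ the block $\Omega_j(x)$ stabilizes at stage $j(x)$, so each eigenvector $q_\infty(x)$ of $D_\infty(x)$ has support contained in a fixed finite block, and pulling back by $U(x)$ gives a concrete eigenfunction of $H$ whose decay off the diagonal inherits the rate $\alpha_\infty=\lim\alpha_j>0$ from the cumulative perturbations $(U_j-I)$. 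Since $\alpha_\infty>0$ gives exponential decay, the eigenfunctions lie in $\ell^1(\mathbb{Z}^d)$. The covariance identity $T_a U(x) T_a^{-1} = U(x+\langle a,\alpha\rangle)$ and $T_a D_\infty(x) T_a^{-1}=D_\infty(x+\langle a,\alpha\rangle)$ imply that every translate $E_\infty(x+\langle k,\alpha\rangle)$ is also an eigenvalue; Proposition \ref{eli}(5) supplies the measurability of $E_\infty$ and the absolute continuity $m\circ E_\infty^{-1}\ll m$.

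The main obstacle will be verifying that the transversality/non-degeneracy condition (7) of Definition \ref{normal} is genuinely preserved at the initial scale under only the Gevrey hypothesis \eqref{gevery}, rather than under the analytic framework originally treated in \cite{E}: one must check that the resultant lower bounds survive for blocks of size controlled by $\mu_0$ with exponent $s$ matching the Gevrey parameter, and that $\gamma_0$ can be taken to dominate the Gevrey derivative growth uniformly. Once this is set up, all subsequent steps are a direct invocation of Eliasson's scheme.
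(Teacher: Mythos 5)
Your overall architecture matches the paper's: cast $H$ in the covariant-matrix formalism, verify the diagonal part $D(x)=\diag\{W(x+\langle n,\alpha\rangle)\}$ is on normal form, and iterate Proposition \ref{eli}. The ``main obstacle'' you identify — verifying the non-degeneracy condition (7) of Definition \ref{normal} under the Gevrey hypothesis — is in fact not an obstacle at all: the two displayed derivative lower bounds in \eqref{gevery} are precisely designed to deliver $\mathcal{T}\big(\sigma/((s!)^2\gamma^s),s\big)$, and the paper disposes of this in one line.

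The genuine gap lies in your $\ell^1(\mathbb{Z}^d)$ argument. You claim the eigenfunction's decay inherits a positive rate $\alpha_\infty=\lim_j\alpha_j>0$ from the perturbations $(U_j-I)$, and that this exponential decay yields $\ell^1$ membership. But in Eliasson's parameter scheme one has $\alpha_j=1/\lambda_j$ with $\lambda_{j+1}=(\mu_{j+1}/\mu_j)(\nu_{j+1}+2\lambda_j)$ and $\mu_{j+1}=\tilde p\,2^{2s\mu_j^2}$ super-exponentially large, so $\lambda_j\to\infty$ and $\alpha_j\to 0$. There is no uniform exponential decay rate, and the mechanism you invoke fails. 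The paper instead estimates $\|U_j(x)-I\|_{\ell^1}$ directly: summing the exponential tail gives a factor of order $\alpha_{j-1}^{-d}$, and then the crucial inequality $\varepsilon_j\leq e^{-2j}\alpha_j^{2d}$ (equation \eqref{estimate}) shows that $\sqrt{\varepsilon_{j-1}}/\alpha_{j-1}^{d}\leq e^{-(j-1)}$, so $\|U_j(x)-I\|_{\ell^1}\leq 2e^{-(j-1)}$ and the infinite product $U(x)=\lim\prod_k U_k(x)$ converges in $\ell^1$-operator norm despite the degenerating decay rates. It is this summability of $\sqrt{\varepsilon_j}/\alpha_j^d$, not a positive limiting rate $\alpha_\infty$, that secures the $\ell^1$ conclusion; without replacing your argument by the product estimate, the $\ell^1$ claim is unproven.
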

\begin{proof}
    Let $D(x)=\diag\{W(x+\langle n,\alpha\rangle)\}_{n\in\Z^d}$ and $F=H(x)-D(x)$. The assumption of $W(x)$ implies $D\in \mathcal{N} \mathcal{F}(\alpha=1,\beta,\gamma,\lambda=1, \mu=1,\nu=1,\rho=1) \& \mathcal{T}(\frac{\sigma}{(s!)^2\gamma^s}, s).$ Hence, we can apply Proposition \ref{eli}.
    
By Proposition \ref{eli} (1), there exists $U_j(x)$, such that
$$U_j^*(x)\cdots U_2^*(x)U_1^*(x)(D(x)+F(x))U_1(x)U_2(x)\cdots U_j(x)={D}_{j}(x)+F_j(x).$$
And by Proposition \ref{eli} (2), $D_j\in \mathcal{N} \mathcal{F}(\alpha_j, \ldots, \rho_j ; \Omega_j, \mathcal{P}_j) \& \mathcal{T}(\sigma_j, s)$. Moreover, there exists full measure set $\mathcal{A}$, such that for any $x\in\mathcal{A}$, there exists $j(x)$, such that $\Omega_j(x)=\Omega_{j(x)}$ for all $j\geq j(x).$ By the Definition \ref{normal}, there exists $q_j(x)\in\C^{\Omega_{j(x)}}$ with $\|q_j(x)\|=1$ for all $j\geq j(x)$, and $$D_j(x)q_j(x)=E_j(x)q_j(x).$$
Since $\{q_j(x)\}_{j\geq j(x)}$ is a bounded sequence in finite-dimensional space, there must exist a convergence sequence, we also denote $\{q_j(x)\}$, and $q_\infty(x):=\lim q_j(x).$
According to Proposition \ref{eli} (3), we have $D_j(x)\rightarrow D_{\infty}(x)$. It follows that 
$$D_\infty(x)q_\infty(x)=E_\infty(x)q_\infty(x).$$
By Proposition \ref{eli} (4), we have 
$$U_{\infty}(x)^*(D(x)+F(x))U_\infty(x)=D_{\infty}(x).$$
Therefore, $U_\infty(x)q_{\infty}(x)$ is an eigenvector of $D(x)$ with eigenvalue $E_{\infty}(x)$.
Let $\tilde{\mathcal{A}}=\cap_{n\in\Z^d}(\mathcal{A}+\langle n,\alpha\rangle),$
we find $\{ \tau_{n}U_\infty(x+\langle n,\alpha\rangle)q_{\infty}(x+\langle n,\alpha\rangle)\}_{n\in\Z^d}$ is the eigenfunction of $D(x)$ with eigenvalue $\{E_{\infty}(x+\langle n,\alpha\rangle)\}_{n\in\Z^d}$ for $x\in\tilde{\mathcal{A}}.$ 
Again by Proposition \ref{eli} (1) 
$$
\begin{aligned}
&\quad\|U_j(x)-I\|_{l^1}\leq\sqrt{\varepsilon_{j-1}}\sum_{k}e^{-\frac{\alpha_{j-1}}{2}|k|}=\sqrt{\varepsilon_{j-1}}\sum_{t\geq0}\sum_{|k|=t}e^{-\frac{\alpha_{j-1}}{2}|t|}\\
&\leq\sqrt{\varepsilon_{j-1}}\bigg(\sum_{t\geq 1}C|t|^{d-1}e^{-\frac{\alpha_{j-1]}}{2}|t|}+1\bigg)
\leq\sqrt{\varepsilon_{j-1}}\frac{C}{\alpha_{j-1}^d}+\sqrt{\varepsilon_{j-1}}\leq  2e^{-{j-1}},
\end{aligned}$$
where we used \eqref{estimate}.
Therefore, $U(x)=\lim\prod_{k=1}^{j}U_k(x)$ convergence in $\ell^1$ norm. Consequently, all eigenfunctions will belong to $\ell^1(\Z^d).$ 
    
\end{proof}

\subsection{Proof of Theorem \ref{acqq}}
By Theorem~\ref{point}, for Lebesgue-almost every \( x \in \mathbb{T} \), there exist \(\ell^1(\mathbb{Z}^d)\) eigenfunctions \(\{\psi^n\}_{n \in \mathbb{Z}^d}\) with corresponding eigenvalues \(\{E(x + \langle n, \alpha \rangle)\}\). Define the Fourier series $
\psi_k(\theta) = \sum_{n \in \mathbb{Z}^d} (\psi^k)_n e^{i\langle n, \theta \rangle},
$
which satisfies \(\|\psi_k\|_\infty < \infty\) for all \(k \in \mathbb{Z}^d\). Let
\[
\phi(\theta, k)_n = \psi_k(\theta + n\alpha)e^{2\pi i x}.
\]
Then \(\phi(\theta, k)\) is a non-trivial bounded solution of \({L}_{\varepsilon v,w,\alpha,\theta}\) with energy \(E(x + \langle k, \alpha \rangle)\).
Define the set
\[
X = \bigcup_{k \in \mathbb{Z}^d} \big\{E(x + \langle k, \alpha \rangle) : x \in \tilde{\mathcal{A}} \big\}.
\]
Since \(E(\cdot)\) is Borel measurable and images of Borel sets under \(E(\cdot)\) are Lebesgue measurable (Lemma~\ref{leb}), \(X\) is Lebesgue measurable. If \(m(X) = 0\), this would contradict Theorem~\ref{point}; hence, \(m(X) > 0\). By the regularity of Lebesgue measure, there exists a \(F_\sigma\) set \(S \subset X\) with \(m(S) = m(X)\). Theorem~\ref{ac} then follows from Theorem~\ref{sub} and Proposition~\ref{np}.
\qed

\appendix
\section{Property of dominated splitting}\label{ds-con}

The following continuity result of dominated splitting is considered standard. As we did not find the exact formulation in the literature, we include the result and its proof for completeness. Readers can refer to \cite{CP} for the continuity proof and to Section 6 of \cite{AJS} for the holomorphicity.

\begin{lemma}Let $T:\Omega\to \Omega$ be a homeomorphism on a compact metric space $\Omega$ and $A_t:\Omega\to GL(\C,m)$ be a family of continuous linear cocycle over $T$ with parameter $\theta$ in a region  $U\subset \C$. If \begin{itemize}
    \item $t\mapsto A_t(\cdot)$ is in $C^0(U, C^0(\Omega,GL(\C,m))$;
    \item $t \mapsto A_t(\omega)$ is $C^l(l=1,\cdots,\infty,\omega) $ for every $\omega\in \Omega$. 
    \item there exists $1\leq k\leq m-1$ such that for some $t_0\in U$, $A_{t_0}$ preserves a dominated splitting $E_{t_0}\oplus_{>} F_{t_0}$ with $\dim E_\theta=k$,
\end{itemize}
then for any $\theta$ close to $\theta_0$, $A_\theta$ preserves a dominated splitting  $E_t(\omega)\oplus_>F_t(\omega)$ with $\dim E_t=k$ and $E_t(\omega), F_t(\omega)$ continuously depend on $(t,\omega)$ and $C^l$ depend on $t$.

Generally, let $\mathcal V=\{V_\omega\}_{\omega\in \Omega}$ be a continuous complex vector bundle over a compact metric space $\Omega$. Let $T:\Omega\to \Omega$ be a homeomorphism on $\Omega$.
The cocycle $A_t:\V\to \V$ over $T:\Omega\to \Omega$ be a family continuous linear cocycle. The conclusion presented above is also valid.

\end{lemma}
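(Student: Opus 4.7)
The plan is to combine the Newhouse cone-field characterization of dominated splittings with a parametrized graph-transform argument, treating the continuity (in the spirit of \cite{CP}) and the $C^l$/analytic smoothness (\emph{c.f.} Proposition \ref{lem: Hol dep AJS}) in a unified framework.

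First I would invoke the cone criterion: the dominated splitting $E_{t_0}\oplus_> F_{t_0}$ is equivalent to the existence of continuous cone fields $\mathcal{C}^E(\omega),\mathcal{C}^F(\omega)\subset\C^m$ around $E_{t_0}(\omega), F_{t_0}(\omega)$, together with $N\geq 1$ and $\lambda<1$, such that
\[
(A_{t_0})_N(\omega)\mathcal{C}^E(\omega)\Subset\mathcal{C}^E(T^N\omega),\qquad (A_{t_0})_N(\omega)^{-1}\mathcal{C}^F(T^N\omega)\Subset\mathcal{C}^F(\omega),
\]
with uniform $\lambda$-contraction on the relevant Grassmannians. Since $\Omega$ is compact, strict cone invariance is an open condition in the $C^0$-topology on cocycles, so the hypothesis $t\mapsto A_t\in C^0(U,C^0(\Omega,\mathrm{GL}(m,\C)))$ implies the same cone fields remain strictly invariant for $A_t$ when $t$ lies in a neighborhood $V$ of $t_0$. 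This already yields a dominated splitting $E_t\oplus_> F_t$ with $\dim E_t=k$ for $t\in V$. Representing
\[
E_t(\omega)=\lim_{n\to\infty}(A_t)_n(T^{-n}\omega)\mathcal{C}^E(T^{-n}\omega),\qquad F_t(\omega)=\lim_{n\to\infty}(A_t)_n(\omega)^{-1}\mathcal{C}^F(T^n\omega),
\]
exhibits these subspaces as uniform exponential limits (rate $\lambda^n$) of sections that are continuous in $(t,\omega)$; joint $C^0$ dependence follows.

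For $C^l$ dependence in $t$, I would pass to the graph transform: near $E_{t_0}(\omega)$ the $k$-plane $E_t(\omega)$ is the graph of a linear map $\phi_t(\omega):E_{t_0}(\omega)\to F_{t_0}(\omega)$, and the invariance $A_t(\omega)\,\mathrm{graph}(\phi_t(\omega))=\mathrm{graph}(\phi_t(T\omega))$ becomes a fixed-point equation $\phi_t=\Psi_t(\phi_t)$ in the Banach space $\mathbf{X}$ of continuous $\mathrm{Hom}$-valued sections over $\Omega$. Domination makes $\Psi_t$ a uniform contraction, and the hypotheses on $A_t$ make $t\mapsto\Psi_t$ a $C^l$ family of contractions; the Hirsch--Pugh--Shub $C^l$ section theorem then delivers $C^l$ regularity of $\phi_t$ in $t$. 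For the analytic case $l=\omega$, I would instead apply Proposition \ref{lem: Hol dep AJS} directly to the holomorphic family $t\mapsto A_t$, which is applicable once we restrict to a domain where the dominated splitting persists. The bundle version is handled identically after covering $\Omega$ by finitely many trivializing charts and using uniform continuity of the transition functions to glue the cone fields and graph transforms consistently.

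The main obstacle I anticipate is technical: to apply the parametrized contraction principle, one must verify that $\Psi_t:\mathbf{X}\to\mathbf{X}$ is $C^l$ as a map of Banach spaces, not merely $C^l$ pointwise in $\omega$. This requires exchanging the $t$-derivative with the supremum over $\omega\in\Omega$, which is justified by the joint continuity hypothesis on $(t,\omega)\mapsto A_t(\omega)$ combined with the compactness of $\Omega$ and standard uniform-continuity arguments for parametrized difference quotients.
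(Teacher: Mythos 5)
Your proposal shares the cone-field and graph-transform skeleton with the paper: both establish persistence of the splitting by noting that strict cone invariance is an open condition, and both obtain $E_t$ as the fixed point of a fiberwise graph transform, so the existence and joint $C^0$ dependence are established by essentially the same argument. The genuine divergence is in the $C^l$ regularity in $t$. You work with the graph transform $\Psi_t$ on $\mathrm{Hom}(E_{t_0},F_{t_0})$-valued sections and invoke the Hirsch--Pugh--Shub $C^l$-section (parametrized contraction) theorem, whereas the paper instead cites Proposition~\ref{lem: Hol dep AJS} --- holomorphic dependence of the invariant splitting on the cocycle, proved in \cite{AJS} via a Montel-theorem argument on $(A')^n(T^{-n}\omega)E(A)(T^{-n}\omega)$ --- and then composes with the $C^l$ map $t\mapsto A_t$. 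Your route is more self-contained and gives a uniform (Banach-space-valued) $C^l$ conclusion; the paper's is shorter and re-uses an established result.

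One caveat, which you correctly flag as the main obstacle but resolve too optimistically: to apply the parametrized contraction theorem you must verify that $t\mapsto\Psi_t$ is $C^l$ as a map of Banach operators, which in turn requires $t\mapsto A_t(\cdot)$ to be $C^l$ into $C^0(\Omega,\mathrm{GL}(m,\C))$. The stated hypotheses give joint $C^0$ continuity of $(t,\omega)\mapsto A_t(\omega)$ together with $C^l$ dependence on $t$ \emph{pointwise in $\omega$}; for finite $l$ these do not combine via a ``uniform continuity'' argument into Banach-space-valued $C^l$-ness, since nothing controls the variation of $\partial^j_t A_t(\omega)$ in $\omega$. For $l=\omega$ the gap \emph{is} bridgeable (joint continuity plus pointwise holomorphy upgrades to Banach-valued holomorphy by Morera's theorem), which is exactly the mechanism underlying the Montel argument the paper cites, and in fact the same soft spot is present in the paper's own one-line composition step for $l<\infty$. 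So your argument is roughly on par with the source, but the justification of this step should be made precise rather than waved through as a standard uniform-continuity exchange.
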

\begin{proof}

Let $\mathrm{Grass}(k,\mathbb{C}^m)$ denote the Grassmannian of $k$-dimensional subspaces of $\mathbb{C}^m$. The dominated splitting (replace $A$ by $A^n$ if necessary) implies the existence of a continuous family of \textbf{small} cones $\mathcal{C}_{\theta_0}(\omega)$ around $E_{\theta_0}(\omega)$ satisfying:
\begin{equation}\label{eq:cone_condition}
A_{t_0}(\omega)\mathcal{C}_{t_0}(\omega) \subset \mathcal{C}_{t_0}(T\omega) \quad \text{with uniform contraction on } \mathcal{C}_{t_0}(\omega)\subset \mathrm{Grass}(k,\mathbb{C}^m).
\end{equation}

Consider the fiberwise graph transform:
    \[
    \Gamma_t: C^0(\Omega, \mathcal{G}_k(\mathbb{C}^m)) \to C^0(\Omega, \mathcal{G}_k(\mathbb{C}^m)), \quad \Gamma_\theta(V)(\omega) = A_t(T^{-1}\omega)V(T^{-1}\omega).
    \]
    By \eqref{eq:cone_condition},  for $t$ sufficiently close to $t_0$, $\Gamma_t$ is also a  uniform contraction in the neighborhood $\mathcal C_{t_0}$ of the section $E_{t_0}$, yielding a unique fixed point $E_{t}$ near $E_{t_0}$. 
    In particular for $t$ close to $t_0$, $\omega'$ close to $\omega$, $E_{t}(\omega')\in \mathcal C_{t_0}(\omega')$ (using the contraction property) is close to $E_{t_0}(\omega)$ (using the continuity of the cone field $\mathcal C_{t_0}$), hence we get continuity of $E_t(\omega)$ in $(t,\omega)$. Consider the inverse cocycle $A^{-1}$ we could get the corresponding conclusion for $F_t$.

 By Proposition $\ref{lem: Hol dep AJS}$, $E_t(\omega)$ is holomorphically dependent on $A_t(\omega)$, by assumption, $A_t(\omega)$ is $C^l$  on $t$. Therefore, $E_t(\omega)$ is $C^l$  on $t$.

For the general cocycle on a bundle, the first two proofs are similar. The last proof is the same as Section 6 of \cite{AJS},  if $A$ has dominated splitting, for any $A'$ in the neighbourhood of $A$,  use the fact that $E(A')$ is the limit of holomorphic map $(A')^n(T^{-n}(\omega))(E(A))$, then the holomorphicity basically follows Montel theorem. By assumption, $A_t(\omega)$ is $C^l$  on $t$. Therefore, $E_t(\omega)$ is $C^l$  on $t$.
\end{proof}
\section{Square root of analytic function}

\begin{lemma}\label{sqrt}
	Suppose $f(E,\theta)\in C^\omega(B_\delta(E_0)\times\T_\delta,\C)$, and $\min_{B_\delta(E_0)\times\T_\delta}|f(E,\theta)|>0$, then there exists $g\in C^\omega(B_\delta(E_0)\times2\T_\delta)$ such that $g^2=f.$ 
\end{lemma}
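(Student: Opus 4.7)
The plan is to isolate the topological obstruction to the existence of a single‐valued analytic square root on $B_\delta(E_0)\times \T_\delta$, which comes entirely from the winding of $f(E,\cdot)$ around $0\in\C^\ast$, and to absorb that winding by passing to the double cover $2\T_\delta$.

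Since $f$ is nowhere vanishing, I would first define the winding
$$n(E):=\frac{1}{2\pi i}\int_0^1 \frac{\partial_\theta f(E,\theta)}{f(E,\theta)}\,d\theta.$$
This is integer-valued and holomorphic in $E\in B_\delta(E_0)$, hence equals a single integer $n$. Next, twist out the winding by setting $\tilde f(E,\theta):=e^{-2\pi i n\theta}f(E,\theta)$; this is still $1$-periodic in $\theta$, still nonvanishing on $B_\delta(E_0)\times\T_\delta$, and now has zero winding in $\theta$.

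With the winding removed, the form $\omega=d\log\tilde f$ is a closed holomorphic $1$-form whose periods around the generator of $\pi_1(B_\delta(E_0)\times\T_\delta)\cong\Z$ vanish. Fixing a base point $(E_0,\theta_0)$ and a branch of $\log\tilde f(E_0,\theta_0)$, the formula
$$L(E,\theta):=\log\tilde f(E_0,\theta_0)+\int_{(E_0,\theta_0)}^{(E,\theta)}\frac{d\tilde f}{\tilde f}$$
(integrated along any path) is well defined, holomorphic, and $1$-periodic in $\theta$. Then $\tilde g:=\exp(\tfrac12 L)$ is a holomorphic, $1$-periodic function on $B_\delta(E_0)\times\T_\delta$ satisfying $\tilde g^{\,2}=\tilde f$.

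Finally, define
$$g(E,\theta):=e^{\pi i n\theta}\,\tilde g(E,\theta).$$
Then $g^2=e^{2\pi i n\theta}\tilde f=f$. The factor $e^{\pi i n\theta}$ is $2$-periodic in $\theta$ (period $1$ when $n$ is even, period $2$ when $n$ is odd), and $\tilde g$ is $1$-periodic, so $g\in C^\omega(B_\delta(E_0)\times 2\T_\delta,\C)$. The only real obstacle is purely topological, namely verifying that once $n$ is subtracted off, all periods of $d\tilde f/\tilde f$ vanish so that $\log\tilde f$ exists globally; this is immediate from the definition of $n$ and the product structure of the domain, and analyticity in both variables is automatic from the construction via contour integrals and $\exp$.
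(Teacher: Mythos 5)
Your proof is correct, and it follows a genuinely different route than the paper's. You identify the single topological obstruction on $B_\delta(E_0)\times\T_\delta$ — the winding number $n$ of $\theta\mapsto f(E,\theta)$ around $0$, constant in $E$ by holomorphy and integrality — subtract it off to obtain a zero-winding nonvanishing $\tilde f$, define a global logarithm of $\tilde f$ directly as a path integral of the closed form $d\tilde f/\tilde f$ (path-independent because the only period vanishes), and multiply the resulting $1$-periodic square root by $e^{\pi i n\theta}$, which is $2$-periodic. The paper instead proceeds by Fourier truncation: it writes $f_n=e^{-2\pi i n\theta}\tilde f_n$ with $\tilde f_n$ a polynomial in $e^{2\pi i\theta}$, takes $q_n=\log\tilde f_n$ on the simply connected strip $B_\delta(E_0)\times\{|\Im\theta|<\delta\}$, invokes Lemma 2.3 of Jitomirskaya--Marx \cite{JM} for the $2$-periodicity of $e^{q_n/2}$, and then passes to the limit using uniform convergence of $f_n\to f$. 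Your construction is cleaner: it is self-contained (no appeal to \cite{JM}), avoids the truncation-and-limit step entirely, and makes the role of the winding number explicit rather than implicit in the factor $e^{-2\pi i n\theta}$ arising from the Fourier expansion. The paper's approach has the minor advantage of staying entirely within explicit Fourier computations; yours isolates the conceptual content.

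One small point worth making explicit in your write-up: you establish that the winding vanishes along the circle $\Im\theta=0$ after twisting, but the conclusion that \emph{all} periods of $d\tilde f/\tilde f$ vanish uses that a closed form's period depends only on the homology class of the loop (equivalently, that the integer-valued winding number is locally constant in the height $\Im\theta$ and in $E$). You gesture at this with ``the product structure of the domain,'' but stating it outright would close the only place a reader might pause.
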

\begin{proof}
Since $f(E,\theta)\in C^\omega(B_\delta(E_0)\times\T_\delta)$, we have $f(E,\theta)=\sum_{j}c_j(E)e^{2\pi i jx}.$ Let $$f_n(E,\theta):=\sum_{j=-n}^nc_j(E)e^{2\pi i jx}=e^{-2\pi i n\theta}\sum_{j=0}^{2n}c_{j-n}(E)e^{2\pi i j\theta}.$$
Let $\tilde{f}_n(E,\theta)=\sum_{j=0}^{2n}c_{j-n}(E)e^{2\pi i j\theta}.$ It is a analytic function   in $B_\delta(E_0)\times\{|\Im \theta|<\delta\}.$ Since the regime is simply connect and $\tilde{f}_n(E,\theta)$ bounded from $0$, there exists analytic function $q_n(E,\theta)$, such that $e^{q_n(E,\theta)}=\tilde{f}_n(E,\theta)$. In fact, $$q_n(E,\theta)=\int_{\gamma}\frac{d\tilde{f}_n}{f_n}-\log \tilde{f}_n(0,0),$$
where $\gamma$ is the a differentiable path joining $(0,0)$ to $(E,\theta)$. Since $B_\delta(E_0)\times\{|\Im \theta|<\delta\}$ is simply connect, the integral is independent of path.   In particular, fix $E$, $q_n^\prime(\theta) = \dfrac{\tilde{f}_n^\prime(\theta)}{\tilde{f}_n(\theta)}$. By Lemma 2.3 in \cite{JM}, $e^{\frac{1}{2}q_n(E,\theta)}\in C^\omega(2\T_\delta,\C)$. Fix $\theta$, obviously, $e^{\frac{1}{2}q_n(E,\theta)}\in C^\omega(B_\delta(E_0),\C)$. By Hartogs's theorem, $e^{\frac{1}{2}q_n(E,\theta)}\in C^\omega(B_\delta(E_0)\times 2\T_\delta,\C)$.

We now have 
\begin{eqnarray*}
	g_n:=\mathrm{e}^{\frac{1}{2} q_n-\pi in\theta} \in \mathcal{C}_\delta^\omega(B_\delta(E_0)\times2\T_\delta, \mathbb{C}),
	q_n(0,0) = \log \tilde{f}_n(0,0) \to \log f(0,0)
\end{eqnarray*}
defined (eventually) with respect to a common branch of the log since $f(0,0) \neq 0$ and $e^{-2\pi i n\theta}\tilde{f}_n \to f$. 

Since $d(q_n-2\pi i n\theta)=d{f}_n$
and by construction ${f}_n \to f$ uniformly with $f$ bounded from zero, this in turn implies uniform convergence of $q_n-2\pi i n\theta$ on $B_\delta(E_0)\times\{|\Im \theta|<\delta\}$.

Finally, letting $q:=\lim_{n \to \infty} (q_n-2\pi i n\theta)$ and defining $g:=\mathrm{e}^{\frac{1}{2} q}$ we obtain the Lemma's claim.
\end{proof}

	\section{Proof of Lemma \ref{change}:}\label{change-proof}

 Let $V=\operatorname{span}_{\C}(v_1,v_2,\cdots,v_{2n})$ be a basis of 
the hermitian-subspace $V \subset \C^{2d}$  Define the \textit{Krein matrix} 
\[
G(V) = i \big( \psi(v_i, v_j) \big)_{1 \leq i,j \leq 2n} \in \mathrm{GL}(2n, \mathbb{C})\cap \mathrm{Her}(2n,\C).
\] By Lemma \ref{parbasis}, 
there exists
$\hat{v}_{1,t}, \cdots, \hat{v}_{d,t}, \hat{v}_{-1,t}, \cdots, \hat{v}_{-d,t} \in C^{\omega}(\I\times\T, \mathbb{C}^{2d})$, such that 
$$E^{u}_t(\theta)=\operatorname{span}_{\C}\left(\hat{v}_{1,t}(\theta), \cdots, \hat{v}_{n,t}(\theta)\right),E^{s}_t(\theta)=\operatorname{span}_{\C}\left(\hat{v}_{-1,t}(\theta), \cdots, \hat{v}_{-n,t}(\theta)\right),$$
$$
E^c_t(\theta)=\operatorname{span}_{\C}(\hat{v}_{n+1,t}(\theta), \cdots, \hat{v}_{d,t}(\theta),\hat{v}_{-(n+1,t)}(\theta), \cdots, \hat{v}_{-d,t}(\theta)).
$$
Since $E^u_t(\theta),E^s_t(\theta)$ are Hemitian-isotropic subspaces \cite{WXZ}, which implies there exists $H_t \in C^\omega(\T,\GL(n,\C))$ such that the Krein matrix $$G(\hat{v}_{1,t}(\theta),\cdots,\hat{v}_{n,t}(\theta),\hat{v}_{-1,t}(\theta),\cdots,\hat{v}_{-n,t}(\theta))=i \begin{pmatrix}
    O & -H_t(\theta) \\
    H_t(\theta)^* & O
\end{pmatrix}.
$$
Hence if we take 
$$\begin{aligned}
    & \quad \begin{pmatrix}
   {v}_{1,t}(\theta),\cdots,{v}_{n,t}(\theta),{v}_{-1,t}(\theta),\cdots,{v}_{-n,t}(\theta)
\end{pmatrix}\\&=\begin{pmatrix}
    \hat{v}_{1,t}(\theta),\cdots,\hat{v}_{n,t}(\theta),\hat{v}_{-1,t}(\theta),\cdots,\hat{v}_{-n,t}(\theta)
\end{pmatrix}\operatorname{diag}(I_n,H_t(\theta)^{-1}),
\end{aligned}$$
then its corresponding Krein matrix satisfy 
$$G( {v}_{1,t}(\theta),\cdots,{v}_{n,t}(\theta),{v}_{-1,t}(\theta),\cdots,{v}_{-n,t}(\theta)) =
\begin{pmatrix}
    O & -iI_n \\
    iI_n & O \\
\end{pmatrix}$$
which implies desired result of $(1)$. 

To prove $(2)$, 
we need the following Lemma:
\begin{lemma}	\label{hetong}
			Let $G_t\in C^\omega(\I\times\mathbb{T},\mathrm{Her}(m,\CC)\cap \mathrm{GL}(m,\CC))$. Then there exists  $N_t\in C^\omega(\I\times\mathbb{T}, \mathrm{GL}(m,\CC))$, $p \in \mathbb{N}_+$ such that  $N_t(\theta)^*
			G_t(\theta)N_t(\theta)=\diag (I_p,-I_{m-p}).$
		\end{lemma}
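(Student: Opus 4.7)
The plan is to globalize the argument of Lemma \ref{ht} from a neighborhood to the full domain $\mathbb{I} \times \mathbb{T}$. Since $G_t(\theta) \in \mathrm{Her}(m,\mathbb{C}) \cap \mathrm{GL}(m,\mathbb{C})$, all eigenvalues are real and nonzero; by continuous dependence on $(t,\theta)$ and connectedness of $\mathbb{I} \times \mathbb{T}$, the number of positive eigenvalues is a constant $p \in \mathbb{N}_+$, independent of the point. The first step will be to separate the positive from the negative spectrum by compactness: there exist disjoint Jordan contours $\Gamma_+, \Gamma_- \subset \mathbb{C}$ each enclosing uniformly over $(t,\theta)$ the positive and negative eigenvalues, respectively.

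The second step is to build analytic spectral subbundles via the Riesz projection formula
\[
P_j(t,\theta) = \frac{1}{2\pi i}\oint_{\Gamma_j}(zI - G_t(\theta))^{-1}\,dz, \qquad j \in \{+,-\},
\]
which are analytic projection-valued functions on $\mathbb{I} \times \mathbb{T}$ of constant ranks $p$ and $m-p$. Setting $Q_1(t,\theta) := \operatorname{Range}(P_+(t,\theta))$ and $Q_2(t,\theta) := \operatorname{Range}(P_-(t,\theta))$ yields analytic subbundles of the trivial bundle $(\mathbb{I} \times \mathbb{T}) \times \mathbb{C}^m$.

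The main obstacle is producing \emph{global} analytic frames for $Q_1$ and $Q_2$. Locally, Lemma \ref{ht} (or equivalently Theorem \ref{holomorphic}(ii)) provides analytic lifts, but we need to patch these across the whole circle. This is accomplished as follows: $\mathbb{I}$ is contractible and any continuous complex vector bundle over $\mathbb{T} \simeq S^1$ is trivial (since $\mathrm{GL}(k,\mathbb{C})$ is path-connected), so $Q_1$ and $Q_2$ are topologically trivial. By Theorem \ref{holomorphic}(iii) applied to the analytic family of subspaces $\theta \mapsto Q_j(t,\theta)$ (for each $t$), we may lift to analytic frames $\{q_i^j(t,\theta)\}$ depending analytically on $(t,\theta)$, using the holomorphic dependence of $P_j$ on $t$ and a standard Cauchy-integral argument to propagate the lifts in $t$.

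The final step is the analytic Gram--Schmidt: the sesquilinear form $(u,v)_{G_t} = \langle u, G_t(\theta) v\rangle$ is positive definite on $Q_1$, and $-(u,v)_{G_t}$ is positive definite on $Q_2$. Applying Gram--Schmidt (which is a rational operation in the inner products and thus preserves analyticity) to the frames $\{q_i^j\}$ produces analytic bases $\{\tilde{q}_i^1\}_{i=1}^p$ and $\{\tilde{q}_i^2\}_{i=1}^{m-p}$ satisfying
\[
\big(\tilde q_i^1, \tilde q_{i'}^1\big)_{G_t} = \delta_{ii'}, \qquad -\big(\tilde q_i^2, \tilde q_{i'}^2\big)_{G_t} = \delta_{ii'}, \qquad \big(\tilde q_i^1, \tilde q_{i'}^2\big)_{G_t} = 0,
\]
the last identity following automatically since $Q_1$ and $Q_2$ are $G_t$-orthogonal (as eigenspaces of a Hermitian operator for disjoint spectra). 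Setting
\[
N_t(\theta) := \bigl(\tilde{q}_1^1(t,\theta),\ldots,\tilde{q}_p^1(t,\theta),\tilde{q}_1^2(t,\theta),\ldots,\tilde{q}_{m-p}^2(t,\theta)\bigr) \in C^\omega(\mathbb{I}\times\mathbb{T}, \mathrm{GL}(m,\mathbb{C}))
\]
yields $N_t(\theta)^* G_t(\theta) N_t(\theta) = \operatorname{diag}(I_p, -I_{m-p})$, completing the proof. The delicate point is the globalization in step three, where we must exploit the topological triviality of the spectral bundles over $\mathbb{T}$ in order to upgrade the local lifts of Lemma \ref{ht} to a coherent analytic frame.
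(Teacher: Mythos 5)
Your proposal is correct and takes essentially the same route as the paper: Riesz projections built over contours separating the positive and negative spectrum produce analytic subbundles $Q_1,Q_2$ of constant rank, global analytic frames are obtained from Theorem \ref{holomorphic}(iii) together with a projection/propagation argument in $t$ (this is precisely the content of Lemma \ref{parbasis}, which the paper cites), and Gram--Schmidt with respect to $\pm(u,v)_{G_t}$ finishes the proof exactly as in Lemma \ref{ht}. The only cosmetic difference is that you spell out the topological triviality of the spectral bundles over $\mathbb{T}$ and the $G_t$-orthogonality of $Q_1$ and $Q_2$, which the paper leaves implicit.
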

\begin{proof}
		Since $G_t(\cdot)\in C^\omega(\I\times\mathbb{T},\mathrm{Her}(m,\CC)\cap \mathrm{GL}(m,\CC))$, it has continuous eigenvalues $\lambda_{t,i}(\theta)$ for $1 \leq i \leq m$. Suppose that $\lambda_{t,i}(\theta) > 0$ for $1 \leq i \leq p$ and $\lambda_{t,i}(\theta) < 0$ for $p + 1 \leq i \leq m$ for any $\theta\in\TT$.
			Since $\TT$ is compact, we can let $\Gamma_1$ be the circle that encloses all positive eigenvalues, while $\Gamma_2$ is the circle that encloses all negative eigenvalues. Define
			$$
			P_{t,1}(\theta) = \frac{1}{2\pi i}\int_{\Gamma_1}(zI-G_t(\theta))^{-1} dz, \quad P_{t,2}(\theta) = \frac{1}{2\pi i}\int_{\Gamma_2}(zI-G_t(\theta))^{-1} dz.
			$$
			Then, $P_{t,1}(\theta)$ and $P_{t,2}(\theta)$ are projection operators. 
        
			Define
			$$
			Q_{1,t}(\theta) = \text{Range}(P_{1,t}(\theta)) \quad \text{and} \quad Q_{1,t}(\theta) = \text{Range}(P_{2,t}(\theta)),
			$$
			which correspond to continuous $p$-dimensional and $(m-p)$-dimensional invariant subspaces, respectively.
			By Lemma \ref{parbasis}, there exist $\{q^1_{t,i}(\theta)\}_{i=1}^p$ be a \textbf{analytic global basis} for $Q_{1,t}(\theta)$, and $\{q^2_{i,t}(\theta)\}_{i=1}^{m-p}$ be a  \textbf{analytic global basis} for $Q_{2,t}(\theta)$. The subsequent proof is analogous to that of Lemma \ref{ht}.
		\end{proof}

By Lemma \ref{hetong}, and the signature of the center bundle is zero ($\operatorname{Sign}(E^c_t)=0 $)\cite{WXZ}, we can find $N_t \in C^\omega(\I\times\T, \GL(2(d-n),\C))$ such that
$$N_t(\theta)^*G(\hat{v}_{n+1}(\theta),\cdots,\hat{v}_d(\theta),\hat{v}_{-(n+1)}(\theta),\cdots,\hat{v}_{-d}(\theta))N_t(\theta)=
\begin{pmatrix}
    I_{d-n}& O \\
    O & -I_{d-n} \\
\end{pmatrix}.$$
Let \(M \in \mathrm{GL}(2(d-n), \mathbb{C})\) satisfy
\[
M^* \mathrm{diag}(I_p, -I_{2k-p}) M = \begin{pmatrix}
0 & -iI_{d-n} \\
iI_{d-n} & 0
\end{pmatrix},
\] then $$\begin{aligned}
    & \quad\begin{pmatrix}
    {v}_{n+1}(\theta),\cdots,{v}_d(\theta),{v}_{-(n+1)}(\theta),\cdots,{v}_{-d}(\theta)
\end{pmatrix}\\&=\begin{pmatrix}
    \hat{v}_{n+1}(\theta),\cdots,\hat{v}_d(\theta),\hat{v}_{-(n+1)}(\theta),\cdots,\hat{v}_{-d}(\theta)
\end{pmatrix}N_t(\theta)M,
\end{aligned}$$
is the canonical basis of $E^c_t(\theta).$
        
\qed

\section{Proof of Lemma \ref{lem: complx LE}: }\label{mono-lya}

For a one-parameter monotonic analytic $\SL(2,\R)$-cocycle $E\to C_E$, consider $\mathring C_E:=QC_EQ^{-1}$ with $Q=\frac{1}{1+i}\begin{pmatrix}
   1&-i\\1&i
\end{pmatrix}$, then the complexification $\mathring{C}_{E+i\epsilon}(\theta)$, for $\epsilon>0$ which is small, for any $\theta$, maps Poincar\'e disc $\mathbb D=\{z:|z|<1\}$ into $\DD_{e^{-\kappa \epsilon}}:= \{z:|z|<e^{-\kappa \epsilon}\}$ for some $\kappa$ only depends on the lower bound of 
\begin{equation*}\label{monlow}
    |\det ((\partial_t \mathring{C})\cdot \mathring{C}^{-1})|.
\end{equation*}| By the definition of monotonicity, it is bounded away from $0$ on any compact set. In particular, by Schwarz lemma, for any closed interval $\bar{I'}\subset I$, $\mathring{C}_{E+i\epsilon}(\theta)$ contracts $\DD$ with rate bounded from above $e^{-\kappa \epsilon}$ for some $\kappa=\kappa(\bar{I'})>0$. Then the lemma for $\SL(2,\R)$-case follows the fact that the Lyapunov exponent can then be computed as $-\frac{1}{2}$ logarithm of 
the average rate of projective contraction of at the unstable direction, calculated with respect to Poincar\'e metric on $\DD$. 
\qed

\section*{Acknowledgements} 
 Part of this work was done during D. X. visiting Chern Institute of Mathematics, Nankai University. D.
X. thanks Chern Institute of Mathematics for their hospitality during the visit. We thank Svetlana Jitomirskaya for posing the question if one can develop the full version of the subordinacy theory for long-range operators to D. X. and Q. Z. and providing very useful comments for the first version of our draft. We thank Xianzhe Li, Zhengyi Zhou, Weisheng Wu for useful discussions. Zhenfu Wang is supported by NSFC grant (124B2011) and Nankai Zhide Foundation. Disheng Xu is supported by National Key R\&D Program of China No. 2024YFA1015100, NSFC 12090010 and 12090015. Qi Zhou is supported by National Key R\&D Program of China (2020 YFA0713300) and Nankai Zhide Foundation.

\end{document}